\numberwithin{equation}{section}
\numberwithin{figure}{section}
\theoremstyle{plain}
\newtheorem{thm}{\protect\theoremname}[section]
\theoremstyle{definition}
\newtheorem{defn}[thm]{\protect\definitionname}
\theoremstyle{plain}
\newtheorem{cor}[thm]{\protect\corollaryname}
\theoremstyle{definition}
\newtheorem{example}[thm]{\protect\examplename}
\theoremstyle{plain}
\newtheorem{lem}[thm]{\protect\lemmaname}
\theoremstyle{plain}
\newtheorem{prop}[thm]{\protect\propositionname}
\theoremstyle{plain}
\newtheorem*{thm*}{\protect\theoremname}
\providecommand{\corollaryname}{Corollary}
\providecommand{\definitionname}{Definition}
\providecommand{\examplename}{Example}
\providecommand{\lemmaname}{Lemma}
\providecommand{\propositionname}{Proposition}
\providecommand{\theoremname}{Theorem}
\begin{document}

\title{On the Rajchman property for self-similar measures on $\mathbb{R}^{d}$}

\author{Ariel Rapaport}

\subjclass[2000]{\noindent Primary: 28A80, Secondary: 42A16.}

\keywords{Self-similar measure, Rajchman measure, P.V. $k$-tuple.}
\begin{abstract}
We establish a complete algebraic characterization of self-similar
iterated function systems $\Phi$ on $\mathbb{R}^{d}$, for which
there exists a positive probability vector $p$ so that the Fourier
transform of the self-similar measure corresponding to $\Phi$ and
$p$ does not tend to $0$ at infinity.
\end{abstract}

\maketitle

\section{\label{sec:Introduction-and-the}Introduction and the main result}

\subsection{Introduction}

Let $d\ge1$ be an integer. Given a Borel probability measure $\nu$
on $\mathbb{R}^{d}$ its Fourier transform is denoted by $\widehat{\nu}$.
That is,
\[
\widehat{\nu}(\xi):=\int e^{i\left\langle \xi,x\right\rangle }\:d\nu(x)\text{ for }\xi\in\mathbb{R}^{d},
\]
where $\left\langle \cdot,\cdot\right\rangle $ is the standard inner
product of $\mathbb{R}^{d}$. It is said that $\nu$ is a Rajchman
measure if $|\widehat{\nu}(\xi)|\rightarrow0$ as $|\xi|\rightarrow\infty$.
The Riemann--Lebesgue lemma says that $\nu$ is Rajchman whenever
it is absolutely continuous with respect to the Lebesgue measure.
For singular measures determining which ones are Rajchman is a subtle
question with a long history (see \cite{Ly}). In this paper we study
the Rajchman property in the context of self-similar measures on $\mathbb{R}^{d}$.

Denote the orthogonal group of $\mathbb{R}^{d}$ by $O(d)$. A similarity
of $\mathbb{R}^{d}$ is a map $\varphi:\mathbb{R}^{d}\rightarrow\mathbb{R}^{d}$
of the form $\varphi(x)=rUx+a$, where $r>0$, $U\in O(d)$ and $a\in\mathbb{R}^{d}$.
When $0<r<1$, the map $\varphi$ is said to be a contracting similarity.
A finite collection $\Phi=\{\varphi_{i}\}_{i=1}^{\ell}$ of contracting
similarities is called a self-similar iterated function system (IFS)
on $\mathbb{R}^{d}$. It is well known (see \cite{Hut}) that there
exists a unique nonempty compact $K\subset\mathbb{R}^{d}$ which satisfies
the relation
\[
K=\cup_{i=1}^{\ell}\varphi_{i}(K)\:.
\]
It is called the self-similar set, or attractor, corresponding to
$\Phi$.

Following \cite{Ho} we make the following definition.
\begin{defn}
We say that $\Phi$ is affinely irreducible if there does not exist
a proper affine subspace $\mathbb{V}$ of $\mathbb{R}^{d}$ so that
$\varphi_{i}(\mathbb{V})=\mathbb{V}$ for all $1\le i\le\ell$.
\end{defn}

It is easy to see that $\Phi$ is not affinely irreducible if and
only if its attractor $K$ is contained in a proper affine subspace
$\mathbb{V}$ of $\mathbb{R}^{d}$. Observe that when $d=1$, the
IFS $\Phi$ is affinely irreducible if and only if the maps in $\Phi$
do not all have the same fixed point.

It is also well known (again, see \cite{Hut}) that given a probability
vector $p=(p_{i})_{i=1}^{\ell}$ there exists a unique Borel probability
measure $\mu$ on $\mathbb{R}^{d}$ which satisfies the relation,
\[
\mu=\sum_{i=1}^{\ell}p_{i}\cdot\varphi_{i}\mu,
\]
where $\varphi_{i}\mu:=\mu\circ\varphi_{i}^{-1}$ it the pushforward
of $\mu$ via $\varphi_{i}$. The measure $\mu$ is called the self-similar
measure corresponding to $\Phi$ and $p$, and it is supported on
the attractor $K$. If $p_{i}>0$ for all $1\le i\le\ell$ we say
that $p$ is positive and write $p>0$. When $p>0$ the support of
$\mu$ is equal to $K$. If $\Phi$ is not affinely irreducible then
$\mu(\mathbb{V})=1$ for some proper affine subspace $\mathbb{V}\subset\mathbb{R}^{d}$,
in which case it is easy to see that $\mu$ is not Rajchman. For this
reason, we shall always assume that our function systems are affinely
irreducible.

Before stating our main theorem we mention some relevant previous
results, mainly regarding the Fourier decay of self-similar measures
on $\mathbb{R}$. We start with the basic case of Bernoulli convolutions.
Given $\lambda\in(0,1)$, write $\nu_{\lambda}$ for the distribution
of the random sum $\sum_{n\ge0}\pm\lambda^{n}$, where the $\pm$
are independent unbiased random variables. This measure is called
the Bernoulli convolution with parameter $\lambda$. It can also be
realised as the self-similar measure corresponding to the IFS $\{t\rightarrow\lambda t\pm1\}$
and the probability vector $(\frac{1}{2},\frac{1}{2})$.

Erd\H{o}s \cite{Er1} proved that $\nu_{\lambda}$ is not Rajchman
whenever $\lambda^{-1}$ is a Pisot number different from $2$. Recall
that a Pisot number, also called a Pisot--Vijayaraghavan number or
a P.V. number, is an algebraic integer greater than one whose algebraic
(Galois) conjugates are all less than one in modulus. Note that $\nu_{1/2}$
is absolutely continuous and in particular Rajchman. Later Salem \cite{Sa}
showed that if $\lambda^{-1}$ is not a Pisot number then $\nu_{\lambda}$
is Rajchman, thus providing a characterization of Rajchman Bernoulli
convolution measures. Erd\H{o}s \cite{Er2} proved that $\widehat{\nu_{\lambda}}$
has power decay for a.e. $\lambda\in(0,1)$. That is, there exist
$s>0$ and $C>1$ so that $|\widehat{\nu_{\lambda}}(\xi)|\le C|\xi|^{-s}$
for $\xi\in\mathbb{R}$. Kahane \cite{Kah} later observed that this
actually holds for all $\lambda\in(0,1)$ outside a set of zero Hausdorff
dimension.

We turn to discuss the Fourier decay of general orientation preserving
self-similar measures on the real line, in which case a lot of recent
progress has been made. Let $\Phi=\{\varphi_{i}(t)=r_{i}t+a_{i}\}_{i=1}^{\ell}$
be a self-similar IFS on $\mathbb{R}$, with $r_{i}>0$ for $1\le i\le\ell$.
Set
\[
\Delta:=\{(p_{i})_{i=1}^{\ell}\in(0,1]^{\ell}\::\:p_{1}+...p_{\ell}=1\},
\]
and for $p\in\Delta$ write $\mu_{p}$ for the self-similar measure
corresponding to $\Phi$ and $p$. Let $\mathbf{H}\subset\mathbb{R}_{>0}$
be the group generated by the contractions $\{r_{i}\}_{i=1}^{\ell}$,
where $\mathbb{R}_{>0}$ is the multiplicative group of positive real
numbers. It is desirable to characterize the systems $\Phi$ for which
there exists $p\in\Delta$ so that $\mu_{p}$ is non-Rajchman. The
following result, due to Li and Sahlsten, reduces this problem to
the case in which $\mathbf{H}$ is cyclic.
\begin{thm}
[Li--Sahlsten, \cite{LS}]\label{thm:li and sahl}Suppose that $\Phi$
is affinely irreducible and that $\mathbf{H}$ is not cyclic. Then
$\mu_{p}$ is Rajchman for every $p\in\Delta$.
\end{thm}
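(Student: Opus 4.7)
The plan is a contradiction argument combining iterated self-similarity with a renewal theorem for the multiplicative random walk generated by $r_1,\dots,r_\ell$. Writing $\mu := \mu_p$, the fixed-point relation $\mu = \sum_{j=1}^{\ell} p_j\, \varphi_j \mu$ becomes on the Fourier side
\[
\widehat{\mu}(\xi) = \sum_{j=1}^{\ell} p_j\, e^{i\xi a_j}\, \widehat{\mu}(r_j \xi),
\]
and iterating $n$ times yields
\[
\widehat{\mu}(\xi) = \mathbb{E}\bigl[e^{i\xi S_n}\, \widehat{\mu}(R_n \xi)\bigr],
\]
where $R_n := r_{i_1}\cdots r_{i_n}$, $S_n := a_{i_1} + r_{i_1}a_{i_2} + \cdots + r_{i_1}\cdots r_{i_{n-1}}a_{i_n}$, and $(i_k)_{k\ge1}$ is an i.i.d.\ sequence of law $p$. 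Rather than fix $n$, I would introduce the stopping time $\tau(\xi) := \min\{n \ge 0 : R_n|\xi| \le 1\}$, which forces $R_{\tau(\xi)}\xi$ into a compact set independent of $|\xi|$; the strong Markov property then gives
\[
\widehat{\mu}(\xi) = \mathbb{E}\bigl[e^{i\xi S_{\tau(\xi)}}\, \widehat{\mu}\bigl(R_{\tau(\xi)}\xi\bigr)\bigr].
\]

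Next I would exploit the hypothesis on $\mathbf{H}$. A nontrivial subgroup of $\mathbb{R}_{>0}$ is either cyclic or dense, so the assumption means $\mathbf{H}$ is dense, equivalently $\log \mathbf{H} \subset \mathbb{R}$ is non-arithmetic. Blackwell's renewal theorem, applied to the random walk $W_n := -\log R_n$ whose i.i.d.\ steps $-\log r_{i_k}$ have positive mean and non-arithmetic distribution, then furnishes an absolutely continuous weak limit for the law of the overshoot $W_{\tau(\xi)} - \log|\xi|$, hence a weak limit $\widetilde{\rho}$ (with a density on a compact interval) for the law of $R_{\tau(\xi)}\xi$. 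Upgraded to a joint statement, one would aim to show that the pair $\bigl(R_{\tau(\xi)}\xi,\, S_{\tau(\xi)}\bigr)$ converges in law to a measure on $\mathbb{R}^2$ that is absolutely continuous in the $S$-coordinate.

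Granting such a joint absolutely continuous limit, the conclusion is a Riemann--Lebesgue argument: as $|\xi|\to\infty$, the oscillatory factor $e^{i\xi S_{\tau(\xi)}}$, integrated against a density in $S$, tends to $0$ uniformly in $R_{\tau(\xi)}\xi$, and combined with the continuity and boundedness of $\widehat{\mu}$ on the support of $\widetilde{\rho}$ this gives $\widehat{\mu}(\xi) \to 0$, proving the theorem. I expect the main obstacle to be exactly this joint absolute continuity: here the affine irreducibility of $\Phi$ enters essentially, since if all $\varphi_i$ shared a common fixed point then $S_{\tau(\xi)}$ would be constant and the oscillation argument would collapse; a rigorous treatment will likely involve viewing $(W_n, S_n)$ as a Markov additive process and invoking a quantitative renewal statement for such processes driven by a random walk on the dense group $\log \mathbf{H}$.
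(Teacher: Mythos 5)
Your setup (the iterated Fourier fixed-point identity with the stopping time $\tau(\xi)$, and the use of Blackwell's renewal theorem for the non-arithmetic walk $-\log R_n$, which is where the non-cyclicity of $\mathbf{H}$ enters) matches the skeleton of the actual argument. But the concluding step rests on a claim that is false, and the failure is not a technicality. The translation part satisfies $S_n=\varphi_{i_1}\circ\dots\circ\varphi_{i_n}(0)$, so $S_n$ converges almost surely to a random point whose law is exactly $\mu_p$; since $\tau(\xi)\to\infty$ as $|\xi|\to\infty$, the law of $S_{\tau(\xi)}$ converges to $\mu_p$ itself. Hence the joint limit law of $\bigl(R_{\tau(\xi)}\xi,\,S_{\tau(\xi)}\bigr)$ has second marginal $\mu_p$, which is precisely the (typically singular) measure under study, and no conditioning can make it absolutely continuous in the $S$-coordinate (a mixture of absolutely continuous conditionals would force the marginal to be absolutely continuous). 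Moreover, even heuristically, $\mathbb{E}\bigl[e^{i\xi S_{\tau(\xi)}}\bigr]$ is essentially $\widehat{\mu_p}(\xi)$ again, so the proposed Riemann--Lebesgue step is circular: the oscillation sits in the wrong variable.

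The missing idea is to square first. Writing the cut-set decomposition $\mu=\sum_{w}p_w\varphi_w\mu$ and applying Jensen's inequality gives
\[
|\widehat{\mu_p}(\xi)|^2\le\sum_w p_w\Bigl|\int e^{i\xi\varphi_w(x)}d\mu_p(x)\Bigr|^2=\int\mathbb{E}\bigl[e^{i R_{\tau}\xi\, x}\bigr]\,d\sigma(x),
\]
where $\sigma$ is the autocorrelation (difference) measure $\int\int f(x-y)d\mu_p(x)d\mu_p(y)$: the unimodular phases $e^{i\xi\varphi_w(0)}$ coming from the translations cancel, and the only remaining randomness is the contraction ratio $R_{\tau}$, whose renormalized law \emph{does} converge to an absolutely continuous limit by the renewal theorem (this is the content of Lemmas \ref{lem:ub on fur by double integral} and \ref{lem:initial upper bd} in the paper). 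One then applies Riemann--Lebesgue in the scaling variable: $\int e^{i s 2^{-t}z}\rho_0(t)\,dt\to0$ as $s\to\infty$ for each $z\ne0$, and affine irreducibility enters not through $S_{\tau}$ but through the fact that $\mu_p$ is non-atomic, so $\sigma\{0\}=0$ (in $\mathbb{R}^d$ this becomes the non-concentration of $\sigma$ near hyperplanes, Lemmas \ref{lem:0 mass aff sub} and \ref{lem:sig mass of strips}). Your intuition that affine irreducibility prevents a degenerate collapse is correct, but the mechanism is the non-degeneracy of the autocorrelation measure, not any smoothness of the law of the translation part.
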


A related result has recently been obtained by Algom, Rodriguez Hertz
and Wang \cite[Corollary 1.2]{AHW}, which verifies the Rajchman property
for self-conformal measures under mild assumptions. In \cite{SS},
Sahlsten and Stevens have established power Fourier decay for self-conformal
measures under certain conditions.

The proof of Theorem \ref{thm:li and sahl} is based on the classical
renewal theorem for transient random walks on $\mathbb{R}$. This
approach was initiated by Li \cite{Li}, who established the Rajchman
property for the Furstenberg measure on $\mathbb{RP}^{1}$ under mild
assumptions. Renewal theory also plays a major role in the proof of
the main result of this paper.

The situation in which $\mathbf{H}$ is cyclic has been considered
by Brémont (see also the paper by Varj{\'u} and Yu \cite{VY}). We
continue to consider the orientation preserving system $\Phi=\{\varphi_{i}(t)=r_{i}t+a_{i}\}_{i=1}^{\ell}$
on $\mathbb{R}$.
\begin{thm}
[Brémont, \cite{Br}]\label{thm:bermont}Suppose that $\Phi$ is affinely
irreducible and that $\mathbf{H}$ is cyclic. Let $r\in(0,1)$ be
with $\mathbf{H}=\{r^{n}\}_{n\in\mathbb{Z}}$. Then there exists $p\in\Delta$
so that $\mu_{p}$ is non-Rajchman if and only if, $r^{-1}$ is a
Pisot number and $\Phi$ can be conjugated by a suitable similarity
to a form such that $a_{i}\in\mathbb{Q}(r)$ for $1\le i\le\ell$.
\end{thm}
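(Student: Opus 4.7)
The plan is to establish the two implications separately. Using the hypothesis $\mathbf{H}=\langle r\rangle$, write $r_i=r^{n_i}$ for positive integers $n_i$, and set $\beta:=r^{-1}$. I realise $\mu_p$ as the law of the random series $X=\sum_{k\ge1}a_{I_k}r^{S_{k-1}}$, where $(I_k)_{k\ge1}$ are i.i.d.\ $p$-distributed, $S_0=0$, $S_k=n_{I_1}+\cdots+n_{I_k}$; thus $\widehat{\mu_p}(\xi)=\mathbb{E}[e^{i\xi X}]$ and satisfies the functional equation $\widehat{\mu_p}(\xi)=\sum_{i}p_i e^{ia_i\xi}\widehat{\mu_p}(r^{n_i}\xi)$. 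Throughout, $\|\cdot\|$ denotes distance to the nearest integer.

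For the sufficiency direction, after applying the given similarity conjugation assume $a_i\in\mathbb{Q}(\beta)$, and pick $q\in\mathbb{N}$ with $qa_i\in\mathbb{Z}[\beta]$ for all $i$. Evaluate $\widehat{\mu_p}$ along $\xi_N:=2\pi q\beta^N$. The Pisot property of $\beta$ yields the uniform estimate $\|\alpha\beta^m\|\le C\theta^m$ for every $\alpha\in\mathbb{Z}[\beta]$ and $m\ge0$, where $\theta<1$ is the maximal modulus of the Galois conjugates of $\beta$ other than $\beta$ itself. Split $X=X_{\mathrm{head}}+Y_N$ according to whether $S_{k-1}\le N$. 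For the head,
\[
\xi_NX_{\mathrm{head}}=2\pi M_N+E_N,\qquad M_N\in\mathbb{Z},\quad|E_N|\le C/(1-\theta),
\]
so $e^{i\xi_NX_{\mathrm{head}}}$ is a bounded random phase of controlled law. For the tail, since $(I_k)$ are i.i.d., the strong Markov property at $K_N=\min\{k:S_k>N\}$ gives $Y_N=r^{S_{K_N}}\widetilde X$ for an independent copy $\widetilde X$ of $X$, whence $\xi_NY_N=2\pi q\beta^{N-S_{K_N}}\widetilde X$ is a bounded-frequency evaluation. Renewal considerations on the overshoot $S_{K_N}-N$ give a limit law as $N\to\infty$; combined with dominated convergence on the head, this produces $\limsup_N|\widehat{\mu_p}(\xi_N)|>0$ for any positive $p$, so $\mu_p$ is non-Rajchman.

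For the necessity direction, assume $\mu_p$ is non-Rajchman and fix $\xi_k\to\infty$ with $|\widehat{\mu_p}(\xi_k)|\ge c>0$. Factor $\xi_k=u_k\beta^{m_k}$ with $u_k\in[1,\beta)$ and, by compactness, pass to a subsequence with $u_k\to u_\infty$. I first show $\beta$ must be Pisot: otherwise, a Salem--Erd\H{o}s--Kahane style argument, applied by iterating the functional equation and using that $\beta$ has at least one Galois conjugate of modulus $\ge1$, propagates equidistribution and forces $\widehat{\mu_p}(\xi_k)\to0$, a contradiction. Granted Pisot-ness, iterate the functional equation $n$ times and exploit $|\widehat{\mu_p}(\xi_k)|\ge c$ to deduce, for some integer $q\ge1$ and every fixed $n$, simultaneous Diophantine approximations $\|qa_iu_k\beta^{m_k-n}\|\to0$ along the subsequence. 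This forces $u_\infty a_i$ to lie, modulo an additive constant independent of $i$, in $\mathbb{Q}(\beta)$. Translating so that one fixed point of the conjugated IFS is at $0$ and rescaling by a suitable algebraic multiple of $u_\infty^{-1}$ produces the required similarity conjugation yielding $a_i\in\mathbb{Q}(r)$.

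The main obstacle is the necessity direction, and within it the passage from Fourier non-decay to the algebraic condition on $(a_i)$. The Pisot step is a quantitative Salem-type estimate, but obtaining the precise statement that the translation tuple is conjugate to a tuple in $\mathbb{Q}(r)$ requires controlling simultaneous smallness of $\|qa_iu_k\beta^{m_k-n}\|$ \emph{uniformly} in $n$; Br\'{e}mont handles this via a renewal-theoretic analysis of the random walk $S_k$ together with a lift to a dynamical system on a compact group on which the translations $a_i$ act by rotations, reducing the algebraic conclusion to a cohomological triviality statement for this action.
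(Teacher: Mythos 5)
First, a point of orientation: the paper does not prove this theorem --- it is quoted from Br\'emont and used as motivation. The closest thing to a proof in the paper is the specialisation of the main theorem's machinery (Section \ref{sec:The-discrete-case}) to $d=1$: Proposition \ref{prop:conv disc case} for sufficiency and Proposition \ref{prop:main disc case} for necessity. Measured against that machinery, your sufficiency argument contains a genuine gap. You claim that the head/tail splitting along $\xi_{N}=2\pi q\beta^{N}$ yields $\limsup_{N}|\widehat{\mu_{p}}(\xi_{N})|>0$ \emph{for any positive} $p$. This is false: for $\Phi=\{t\mapsto t/2\pm1\}$ all hypotheses hold ($r^{-1}=2$ is Pisot, $a_{i}\in\mathbb{Q}$), yet $\mu_{(1/2,1/2)}=\nu_{1/2}$ is absolutely continuous and Rajchman. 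The renewal limit of your tail expectation --- in the paper's notation, $f(p)=\int\widehat{\mu_{p}}(2\pi(B^{-m}\xi).g)\,d\nu_{p}(g)$ --- can vanish, and your head phase $e^{iE_{N}}$ with $|E_{N}|\le C/(1-\theta)$ is merely bounded, not close to $1$, so nothing prevents cancellation. The paper repairs both defects: it inserts an extra factor $B^{-m}$ with $m$ large so that the accumulated phase error is at most $1/8$, and it observes that $f$ is continuous on the simplex with $f(e_{1})=1$ (after translating the fixed point of $\varphi_{1}$ to the origin, $\mu_{e_{1}}=\delta_{0}$), whence $|f(p)|\ge1/2$ for \emph{some} positive $p$ near $e_{1}$. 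Without an argument of this kind you have not produced even one $p$ for which the measure is non-Rajchman.

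Your necessity direction is closer in spirit to what the paper does, but the step you yourself flag as the obstacle --- uniform-in-$n$ control of $\Vert qa_{i}u_{k}\beta^{m_{k}-n}\Vert$ --- is precisely the content you would need to supply, and deferring it to Br\'emont's ``cohomological triviality'' leaves the proof incomplete. The paper's route is worth comparing: it builds a submartingale $Z_{\xi,n}|\widehat{\mu}(\xi.Y_{n})|$ from pairs of words $u,u'$ in a minimal cut-set with $g_{u}=g_{u'}$, applies Doob's optional stopping and a renewal-theoretic lower bound on the hitting probabilities (Lemma \ref{lem:lb on prob =00003D g}) to deduce $\sum_{j\ge0}\Vert\langle\varphi_{u}(0)-\varphi_{u'}(0),B^{j}\xi\rangle\Vert^{2}\le C$ whenever $|\widehat{\mu}(2\pi\xi)|\ge\epsilon$, passes to a limit point of $B^{n_{k}}\xi_{k}$, and then invokes Pisot's theorem (Theorem \ref{thm:gen of pisot}) once: the hypothesis $\sum_{n}\Vert\lambda\theta^{n}\Vert^{2}<\infty$ with $\lambda\ne0$ yields simultaneously that $\theta$ is Pisot and that $\lambda\in\mathbb{Q}(\theta)$. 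So the separate ``Salem--Erd\H{o}s--Kahane'' step you propose for Pisotness is unnecessary; the real work is the $\ell^{2}$ bound, and using translation differences $\varphi_{u}(0)-\varphi_{u'}(0)=(I-A^{L})a_{j}$ (after normalising $a_{1}=0$) is what cleanly disposes of the ``modulo an additive constant'' ambiguity in your sketch.
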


Brémont also proved that when $\mathbf{H}=\{r^{n}\}_{n\in\mathbb{Z}}$
for a Pisot number $r$ and $a_{i}\in\mathbb{Q}(r)$ for $1\le i\le\ell$,
then in fact $\mu_{p}$ is non-Rajchman for every $p\in\Delta$ outside
a finite union of proper submanifolds of $\Delta$. Moreover, in this
case he also showed that $\mu_{p}$ is absolutely continuous whenever
it is Rajchman.

Theorems \ref{thm:li and sahl} and \ref{thm:bermont} provide a complete
algebraic characterization of the systems $\Phi$ on $\mathbb{R}$
for which there exists $p\in\Delta$ so that $\mu_{p}$ is non-Rajchman.
The purpose of this paper is to extend this characterization to arbitrary
self-similar IFSs on $\mathbb{R}^{d}$.

We point out that Solomyak \cite{So} has recently shown that there
exists $\mathcal{E}\subset(0,1)^{\ell}$ of zero Hausdorff dimension
so that when $\Phi$ is affinely irreducible and $(r_{i})_{i=1}^{\ell}\notin\mathcal{E}$,
it holds that $\widehat{\mu_{p}}$ has power decay for all $p\in\Delta$.
For explicit parameters, and under additional diophantine assumptions,
logarithmic decay rate has recently been obtained in \cite{LS}, \cite{VY}
and \cite{AHW}. In the present paper we are only interested in the
complete characterization of self-similar IFSs generating non-Rajchman
measures, and do not consider the Fourier rate of decay.

Finally, we mention that in the context of self-affine measures on
$\mathbb{R}^{d}$, the Rajchman property has recently been considered
by Li and Sahlsten \cite{LS2}. Assuming the group generated by the
linear parts of the affine maps is proximal and totally irreducible
and that the attractor is not a singleton, they have established that
all self-affine measures, corresponding to positive probability vectors,
are Rajchman. When $d=2,3$, or under additional assumptions on the
group generated by the linear parts, they have also obtained power
Fourier decay. The proximality assumption makes the situation studied
in that paper very different compared to the self-similar setup studied
here.

\subsection{\label{subsec:The-main-result.}The main result}

Following \cite{Ca} and \cite[Section 9.2]{BDGPS} we make the following
definition, which is necessary in order to state our main result.
\begin{defn}
Given $k\ge1$, a finite collection $\{\theta_{1},...,\theta_{k}\}$
of distinct algebraic integers is said to be a P.V. $k$-tuple if
the following conditions are satisfied.
\begin{enumerate}
\item $|\theta_{j}|>1$ for $1\le j\le k$;
\item there exists a monic polynomial $P\in\mathbb{Z}[X]$ so that $P(\theta_{j})=0$
for $1\le j\le k$, and $|z|<1$ for $z\in\mathbb{C}\setminus\{\theta_{1},...,\theta_{k}\}$
with $P(z)=0$.
\end{enumerate}
\end{defn}

We make some remarks regarding this definition. In what follows, let
$\{\theta_{1},...,\theta_{k}\}$ be a P.V. $k$-tuple.
\begin{itemize}
\item For each $1\le j_{0}\le k$ there exists $1\le j_{1}\le k$ so that
$\theta_{j_{1}}=\overline{\theta_{j_{0}}}$. Additionally, writing
$J$ for the set of $1\le j\le k$ so that $\theta_{j}$ is conjugate
to $\theta_{j_{0}}$ over $\mathbb{Q}$, it holds that $\{\theta_{j}\}_{j\in J}$
is also a P.V. tuple.
\item Note that a positive real number $\theta$ is a Pisot number precisely
when $\{\theta\}$ is a P.V. $1$-tuple. A nonreal complex number
$\theta$ so that $\{\theta,\overline{\theta}\}$ is a P.V. $2$-tuple
is commonly called a complex Pisot number.
\item We shall be interested in P.V. tuples whose elements have the same
modulus. Obviously every P.V. $1$-tuple and every P.V. $2$-tuple
of the form $\{\theta,\overline{\theta}\}$ has this property. Assuming
$|\theta_{1}|=...=|\theta_{k}|$, for every $m\ge1$ the collection
\[
\{z\in\mathbb{C}\::\:z^{m}=\theta_{j}\text{ for some }1\le j\le k\}
\]
is a P.V. $mk$-tuple with this property. Further examples can be
obtained by considering the products of real or complex Pisot numbers
with certain primitive roots of unity. For instance, as pointed out
in \cite{Bo}, if $\theta$ and $\overline{\theta}$ are the complex
Pisot numbers whose minimal polynomial is $X^{3}+X^{2}-1$ and $u$
and $\overline{u}$ are the primitive $6\text{th}$ roots of unity,
then $\{\theta u,\overline{\theta}u,\theta\overline{u},\overline{\theta u}\}$
is a P.V. $4$-tuple whose elements are all conjugates over $\mathbb{Q}$
and have the same modulus.
\item Suppose that $|\theta_{1}|=...=|\theta_{k}|$ and that $\theta_{1},...,\theta_{k}$
are conjugates over $\mathbb{Q}$. It is natural to ask whether we
can say more about the structure of the P.V. $k$-tuple $\{\theta_{1},...,\theta_{k}\}$
under these additional assumptions. From a result of Boyd \cite{Bo}
and Ferguson \cite{Fer} it follows that if $\theta_{j}$ is real
for some $1\le j\le k$ then $\{\theta_{1},...,\theta_{k}\}=\{e^{2\pi ij/k}\theta_{1}\}_{j=1}^{k}$.
Considering this result and the previous remark, one might think that,
under the additional assumptions, for every $1\le j\le k$ at least
one of the numbers $\theta_{j}/\theta_{1}$ and $\theta_{j}/\overline{\theta_{1}}$
is a root of unity. In Example \ref{exa:interesting} below we show
that this is not the case.
\end{itemize}
\smallskip{}
$\quad$As noted above, we shall always assume that our function systems
are affinely irreducible. On the other hand, the situation in which
there exists a nontrivial linearly invariant subspace should be taken
into account. Let $\Phi=\{\varphi_{i}(x)=r_{i}Ux+a_{i}\}_{i=1}^{\ell}$
be a self-similar IFS on $\mathbb{R}^{d}$. Given a linear subspace
$\mathbb{V}$ of $\mathbb{R}^{d}$ we write $\pi_{\mathbb{V}}$ for
the orthogonal projection onto $\mathbb{V}$. Observe that if $d':=\dim\mathbb{V}>0$,
$U_{i}(\mathbb{V})=\mathbb{V}$ for $1\le i\le\ell$, and $S:\mathbb{V}\rightarrow\mathbb{R}^{d'}$
is an isometry (which is necessarily an affine map), then $\{S\circ\pi_{\mathbb{V}}\circ\varphi_{i}\circ S^{-1}\}_{i=1}^{\ell}$
is a self-similar IFS on $\mathbb{R}^{d'}$. Moreover, in this situation
for every $1\le i\le\ell$ there exists $U_{i}'\in O(d')$ and $a_{i}'\in\mathbb{R}^{d'}$
so that
\[
S\circ\pi_{\mathbb{V}}\circ\varphi_{i}\circ S^{-1}(x)=r_{i}U_{i}'x+a_{i}'\text{ for }x\in\mathbb{R}^{d'}\:.
\]

We are now ready to state the main result of this paper. In what follows
we consider $\mathbb{R}^{d}$ as a subset of $\mathbb{C}^{d}$. We
denote the standard inner product of $\mathbb{C}^{d}$ by $\left\langle \cdot,\cdot\right\rangle $,
that is $\left\langle z,w\right\rangle =\sum_{j=1}^{d}z_{j}\overline{w_{j}}$
for $z,w\in\mathbb{C}^{d}$. Given a linear operator $A$ on $\mathbb{R}^{d}$
we consider it also as a linear operator on $\mathbb{C}^{d}$ in the
natural way, that is by setting $A(x+iy):=Ax+iAy$ for $x,y\in\mathbb{R}^{d}$.
\begin{thm}
\label{thm:main}Let $\Phi=\{\varphi_{i}(x)=r_{i}U_{i}x+a_{i}\}_{i=1}^{\ell}$
be an affinely irreducible self-similar IFS on $\mathbb{R}^{d}$,
with $0<r_{i}<1$, $U_{i}\in O(d)$ and $a_{i}\in\mathbb{R}^{d}$
for $1\le i\le\ell$. Then there exists a probability vector $p=(p_{i})_{i=1}^{\ell}>0$
such that the self-similar measure corresponding to $\Phi$ and $p$
is non-Rajchman if and only if there exists a linear subspace $\mathbb{V}\subset\mathbb{R}^{d}$,
with $d':=\dim\mathbb{V}>0$ and $U_{i}(\mathbb{V})=\mathbb{V}$ for
$1\le i\le\ell$, and an isometry $S:\mathbb{V}\rightarrow\mathbb{R}^{d'}$
so that the following conditions are satisfied.
\begin{enumerate}
\item \label{enu:main thm first cond}For $1\le i\le\ell$ let $U_{i}'\in O(d')$
and $a_{i}'\in\mathbb{R}^{d'}$ be with $S\circ\pi_{\mathbb{V}}\circ\varphi_{i}\circ S^{-1}(x)=r_{i}U_{i}'x+a_{i}'$.
Let $\mathbf{H}\subset GL_{d'}(\mathbb{R})$ be the group generated
by $\{r_{i}U_{i}'\}_{i=1}^{\ell}$, and set $\mathbf{N}:=\mathbf{H}\cap O(d')$.
Then $\mathbf{N}$ is finite, $\mathbf{N}\triangleleft\mathbf{H}$
and $\mathbf{H}/\mathbf{N}$ is cyclic.
\item \label{enu:main thm second cond}For every contracting $A\in\mathbf{H}$
with $\{A^{n}\mathbf{N}\}_{n\in\mathbb{Z}}=\mathbf{H}/\mathbf{N}$,
there exist $k\ge1$, $\theta_{1},...,\theta_{k}\in\mathbb{C}$ and
$\zeta_{1},...,\zeta_{k}\in\mathbb{C}^{d'}\setminus\{0\}$, so that
\begin{enumerate}
\item $\{\theta_{1},...,\theta_{k}\}$ is a P.V. $k$-tuple;
\item $A^{-1}\zeta_{j}=\theta_{j}\zeta_{j}$ for $1\le j\le k$;
\item for every $1\le i\le\ell$ and $V\in\mathbf{N}$ there exists $P_{i,V}\in\mathbb{Q}[X]$
so that $\left\langle Va_{i}',\zeta_{j}\right\rangle =P_{i,V}(\theta_{j})$
for $1\le j\le k$.
\end{enumerate}
\end{enumerate}
\end{thm}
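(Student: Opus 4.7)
The plan is to prove both directions separately, using as the main analytic vehicle the functional equation
\[
\widehat{\mu}(\xi) \;=\; \sum_{i=1}^{\ell} p_{i}\, e^{i\langle\xi,a_{i}\rangle}\, \widehat{\mu}(r_{i} U_{i}^{T}\xi),
\]
iterated along finite words in $\{1,\dots,\ell\}^{n}$. Sufficiency extends the Pisot--Salem--Bremont construction to the multi-dimensional P.V.\ $k$-tuple setting; necessity is the core of the paper and proceeds in three steps: reduction to an invariant subspace, derivation of the group structure in condition (\ref{enu:main thm first cond}), and algebraic extraction of the data in condition (\ref{enu:main thm second cond}).

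For sufficiency, assume the algebraic hypotheses and fix a contracting generator $A$ of $\mathbf{H}/\mathbf{N}$ together with $(\theta_{j},\zeta_{j})$ as in (\ref{enu:main thm second cond}). I would construct an unbounded sequence $\xi_{n}\in\mathbb{R}^{d}$ along which $|\widehat{\mu}(\xi_{n})|$ does not decay by taking $\xi_{n}$ to be the real part of $(A^{T})^{n}$ applied to a suitable $\mathbb{C}$-linear combination of the covectors dual to the $\zeta_{j}$. The key classical input is that for a P.V.\ $k$-tuple $\{\theta_{1},\dots,\theta_{k}\}$ and $P\in\mathbb{Q}[X]$, the sum $\sum_{j} P(\theta_{j})\theta_{j}^{n}$ lies within geometrically small distance of $\mathbb{Z}$, since it equals a rational multiple of the Galois trace of $P(\theta)\theta^{n}$ minus contributions from the remaining algebraic conjugates, which all have modulus strictly less than one. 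Combined with the hypothesis $\langle Va_{i}',\zeta_{j}\rangle = P_{i,V}(\theta_{j})$, this forces the phases $e^{i\langle\xi_{n},Va_{i}'\rangle}$ along all admissible coded orbits to cluster near a common value; choosing $p$ to be $\mathbf{N}$-symmetric collapses the finite quotient action of $\mathbf{N}$ as in Bremont, and the iterated functional equation then yields $|\widehat{\mu}(\xi_{n})|\not\to 0$.

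For necessity, start with $\xi_{n}\to\infty$ satisfying $|\widehat{\mu}(\xi_{n})|\ge c>0$. The first step extracts the invariant subspace $\mathbb{V}$: by a renewal-theoretic argument in the spirit of Li--Sahlsten, any component of $\xi_{n}$ whose orbit under the random walk driven by $\{r_{i}U_{i}^{T}\}$ equidistributes freely across scale--rotation classes must produce decay of $\widehat{\mu}$ in that direction. This forces $\xi_{n}$ to concentrate asymptotically on a $\{U_{i}\}$-invariant direction $\mathbb{V}$; passing to the projected IFS on $\mathbb{R}^{d'}$ we may assume $\mathbb{V}=\mathbb{R}^{d'}$ for the remainder of the argument. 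The second step establishes condition (\ref{enu:main thm first cond}) through a higher-dimensional extension of Theorem \ref{thm:li and sahl}: applying the renewal theorem to the $\log r_{i}$-projection together with equidistribution on the compact $O(d')$-factor shows that either cyclicity of $\mathbf{H}/\mathbf{N}$ and finiteness of $\mathbf{N}$ hold, or else $|\widehat{\mu}(\xi_{n})|\to 0$, contradicting our assumption.

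The third step is the algebraic heart of the proof and its main obstacle. Fix a contracting $A\in\mathbf{H}$ generating $\mathbf{H}/\mathbf{N}$. Iterating the functional equation along words of length $n$ and rescaling, the vectors $(r_{i_{1}}\cdots r_{i_{n}})U_{i_{n}}^{T}\cdots U_{i_{1}}^{T}\xi_{n}$ must accumulate on the top-modulus generalized eigenspaces of $(A^{T})^{-1}$, yielding eigenvalues $\theta_{j}$ and covectors $\zeta_{j}$ satisfying property (b) of (\ref{enu:main thm second cond}). A simultaneous diophantine analysis of the pairings $\langle Va_{i}',\zeta_{j}\rangle$ across $1\le i\le\ell$ and $V\in\mathbf{N}$, extending Bremont's one-dimensional use of Garsia-type separation estimates, then forces both the P.V.\ $k$-tuple structure of $\{\theta_{1},\dots,\theta_{k}\}$ and the rationality of the polynomials $P_{i,V}$. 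The main difficulty is that in dimension $d>1$ the tuple can have $k>2$ with elements that are not all Galois conjugate over $\mathbb{Q}$, as illustrated by the complex Pisot example from \cite{Bo}; so the witnessing monic integer polynomial $P$ must be extracted by combining information from all top-modulus eigendirections of $A^{-1}$ simultaneously rather than eigenspace by eigenspace.
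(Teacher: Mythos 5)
Your outline reproduces the coarse architecture of the actual proof (reduction to the subspace fixed by the connected component of the scale--rotation group, discreteness of the quotient, then an algebraic extraction), but the two steps that carry the real content are missing or misidentified. In the necessity direction, the bridge from $|\widehat{\mu}(\xi_{n})|\ge c$ to the algebraic conclusion is \emph{not} a ``simultaneous diophantine analysis extending Garsia-type separation estimates''; Garsia's separation lemma plays no role here. What is actually needed is a quantitative statement of the form
\[
\sum_{n\ge0}\bigl\Vert\langle V(\varphi_{u}(0)-\varphi_{u'}(0)),B^{n}\xi\rangle\bigr\Vert^{2}\le C
\]
for all $\xi$ with $|\widehat{\mu}(2\pi\xi)|\ge\epsilon$ (Proposition \ref{prop:ub on sum of dist to int}). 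Obtaining it requires (i) producing two words $u,u'$ in a minimal cut-set with \emph{identical} linear parts $g_{u}=g_{u'}$ but distinct translation parts, so that the iterated functional equation contains a factor $|p_{u}e^{i\langle\xi,\varphi_{u}(0)\rangle}+p_{u'}e^{i\langle\xi,\varphi_{u'}(0)\rangle}|<p_{u}+p_{u'}$, and (ii) a submartingale/optional-stopping argument combined with the renewal theorem (to lower-bound the probability that the random walk hits the prescribed coset at each scale) to show this loss recurs a positive fraction of the time. Your sketch never isolates such a pair of words and never explains how non-decay yields square-summability of distances to $\mathbb{Z}$; without that, the algebraic step has no input. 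The correct algebraic tool is then the P.V.\ $k$-tuple generalization of Pisot's theorem (Theorem \ref{thm:gen of pisot}), applied to $\eta_{n}=\sum_{j}\theta_{j}^{n}\lambda_{j}(Va_{i})$, which simultaneously delivers the P.V.\ structure, the membership $\lambda_{j}\in\mathbb{Q}(\theta_{j})$, and the Galois-equivariance needed for a single polynomial $P_{i,V}$. Also note that since $A$ is a similarity, \emph{all} eigenvalues of $A^{-1}$ have the same modulus, so there is no ``top-modulus'' eigenspace to accumulate on; the limit vector $\xi=\lim_{k}B^{n_{k}}\xi_{k}$ is decomposed over every eigenspace, and one must separately rule out, using affine irreducibility, that all the pairings $\lambda_{j}(Va_{i})$ vanish.

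In the sufficiency direction, your mechanism for choosing $p$ is not substantiated and is not what works. Even after the geometric estimate $\Vert\langle B^{-m-n}\xi,\varphi_{w}(0)\rangle\Vert\le C\delta^{m}$ (which you correctly identify via Lemma \ref{lem:exp decay}), the phases clustering near $1$ only shows $\widehat{\mu_{p}}(2\pi B^{-m-n}\xi)\approx\sum_{w}p_{w}\widehat{\mu_{p}}(2\pi(B^{-m}\xi).(\gamma_{-n\beta}g_{w}))$; one still has to show this renewal average stays bounded away from $0$. An ``$\mathbf{N}$-symmetric'' $p$ gives no such lower bound. The paper instead evaluates the renewal-theoretic limit $f(p)=\int\widehat{\mu_{p}}(2\pi(B^{-m}\xi).g)\,d\nu_{p}(g)$ at the degenerate vertex $e_{1}$ (where $\mu_{e_{1}}=\delta_{0}$, using the normalization $a_{1}=0$, so $f(e_{1})=1$) and uses continuity in $p$ together with Proposition \ref{prop:conv in dist} to find a \emph{positive} $p$ with $|f(p)|\ge1/2$. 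This continuity-from-the-vertex step, and the preliminary conjugation making $a_{1}=0$ while preserving the rationality of the pairings, are essential and absent from your proposal.
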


We make some remarks regarding the theorem.
\begin{itemize}
\item When $d=1$ and the system $\Phi$ is orientation preserving, Theorem
\ref{thm:main} is easily seen to be equivalent to Theorems \ref{thm:li and sahl}
and \ref{thm:bermont}.
\item As the proof will show, if condition (\ref{enu:main thm first cond})
holds and condition (\ref{enu:main thm second cond}) is satisfied
for some contracting $A\in\mathbf{H}$ with $\{A^{n}\mathbf{N}\}_{n\in\mathbb{Z}}=\mathbf{H}/\mathbf{N}$,
then there exists a probability vector $p=(p_{i})_{i=1}^{\ell}>0$
so that the corresponding self-similar measure is non-Rajchman.
\item In condition (\ref{enu:main thm first cond}), since $\mathbf{N}$
is the kernel of the homomorphism sending $rU\in\mathbf{H}$ with
$r>0$ and $U\in O(d')$ to $r$, it is obvious that $\mathbf{N}\triangleleft\mathbf{H}$.
The statements regarding the finiteness of $\mathbf{N}$ and $\mathbf{H}/\mathbf{N}$
being cyclic are the interesting part of this condition.
\item In condition (\ref{enu:main thm second cond}), note that by restricting
to a suitable nonempty subset of $\{\theta_{1},...,\theta_{k}\}$
we may assume that $\theta_{1},...,\theta_{k}$ are conjugates over
$\mathbb{Q}$.
\item As we show in Example \ref{exa:dep on A} below, the parameters $k$
and $\theta_{1},...,\theta_{k}$ in condition (\ref{enu:main thm second cond})
may depend on the choice of $A$. On the other hand, it is not hard
to show that if conditions (\ref{enu:main thm first cond}) and (\ref{enu:main thm second cond})
are satisfied and $A_{1},A_{2}\in\mathbf{H}$ are contractions with
$\{A_{i}^{n}\mathbf{N}\}_{n\in\mathbb{Z}}=\mathbf{H}/\mathbf{N}$
for $i=1,2$, then for every eigenvalue $\theta$ of $A_{1}$ there
exists a root of unity $u$ so that $u\theta$ is an eigenvalue of
$A_{2}$.
\item In condition (\ref{enu:main thm second cond}), since $A^{-1}$ is
a member of $\mathbf{H}$ all of its eigenvalues have the same modulus.
In particular $|\theta_{1}|=...=|\theta_{k}|$.
\item Theorem \ref{thm:main} provides many explicit examples of affinely
irreducible self-similar function systems for which there exists a
positive probability vector so that the corresponding self-similar
measure is non-Rajchman. In fact, for every $k\ge1$ and $\theta_{1},...,\theta_{k}\in\mathbb{C}$
such that $\{\theta_{1},...,\theta_{k}\}$ is a P.V. $k$-tuple and
$|\theta_{1}|=...=|\theta_{k}|$, we can construct a corresponding
self-similar IFS on $\mathbb{R}^{k}$ with these properties (see Example
\ref{exa:many examples} below).
\item For a system $\Phi$ satisfying conditions (\ref{enu:main thm first cond})
and (\ref{enu:main thm second cond}), it could be interesting to
study the exceptional set of positive probability vectors for which
the corresponding self-similar measure is Rajchman. As noted after
the statement of Theorem \ref{thm:bermont}, this has been carried
out by Brémont \cite{Br} in the case of orientation preserving systems
on $\mathbb{R}$.
\end{itemize}
\smallskip{}
$\quad$Theorem \ref{thm:main} can be used to verify the Rajchman
property in many situations. For instance we have the following simple
corollary.
\begin{cor}
Let $\Phi=\{\varphi_{i}(x)=r_{i}U_{i}x+a_{i}\}_{i=1}^{\ell}$ be an
affinely irreducible self-similar IFS on $\mathbb{R}^{d}$. Suppose
that there exists a probability vector $p=(p_{i})_{i=1}^{\ell}>0$
such that the self-similar measure corresponding to $\Phi$ and $p$
is non-Rajchman. Then there exists an algebraic integer $\theta>1$
so that for every $1\le i\le\ell$ there exists a rational integer
$n_{i}\ge1$ with $r_{i}=\theta^{-n_{i}}$.
\end{cor}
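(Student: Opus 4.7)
The plan is to apply Theorem \ref{thm:main} to the IFS $\Phi$ and read off the algebraic conclusion. The theorem provides a linear subspace $\mathbb{V}\subset\mathbb{R}^{d}$ with $d'=\dim\mathbb{V}>0$ invariant under all $U_{i}$, an isometry $S\colon\mathbb{V}\to\mathbb{R}^{d'}$, the projected IFS with linear parts $\{r_{i}U_{i}'\}_{i=1}^{\ell}$ generating a group $\mathbf{H}\subset GL_{d'}(\mathbb{R})$, its finite normal orthogonal subgroup $\mathbf{N}=\mathbf{H}\cap O(d')$ with $\mathbf{H}/\mathbf{N}$ cyclic, a contracting element $A\in\mathbf{H}$ with $\{A^{n}\mathbf{N}\}_{n\in\mathbb{Z}}=\mathbf{H}/\mathbf{N}$, and a P.V. $k$-tuple $\{\theta_{1},\dots,\theta_{k}\}$ of eigenvalues of $A^{-1}$.

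The first step is to extract the relation $r_{i}=r_{A}^{n_{i}}$. The map $\sigma\colon\mathbf{H}\to\mathbb{R}_{>0}$ defined by $\sigma(rU)=r$ (for $r>0$, $U\in O(d')$) is a well-defined homomorphism with kernel exactly $\mathbf{N}$, and so it factors through an injective homomorphism $\mathbf{H}/\mathbf{N}\hookrightarrow\mathbb{R}_{>0}$. Writing $A=r_{A}U_{A}$, each coset $r_{i}U_{i}'\mathbf{N}$ equals $A^{n_{i}}\mathbf{N}$ for a unique $n_{i}\in\mathbb{Z}$, hence $r_{i}=r_{A}^{n_{i}}$. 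Since $0<r_{i}<1$ and $0<r_{A}<1$, we must have $n_{i}\ge 1$ for every $i$.

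The second step is to verify that $\theta:=r_{A}^{-1}>1$ is an algebraic integer. Since $A^{-1}=r_{A}^{-1}U_{A}^{-1}$ is a scalar multiple of an orthogonal operator, all of its eigenvalues share the modulus $r_{A}^{-1}$, so $|\theta_{j}|=\theta$ for every $j$. If some $\theta_{j}$ is real, then $\theta=\pm\theta_{j}$ is already an algebraic integer. Otherwise, as noted in the remarks following the definition of a P.V. tuple, the collection $\{\theta_{1},\dots,\theta_{k}\}$ is closed under complex conjugation, so $\overline{\theta_{j}}$ lies in the tuple and is an algebraic integer; then $\theta^{2}=\theta_{j}\overline{\theta_{j}}$ is a product of two algebraic integers, hence an algebraic integer. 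If $P\in\mathbb{Z}[X]$ is a monic polynomial annihilating $\theta^{2}$, then $P(X^{2})\in\mathbb{Z}[X]$ is monic and annihilates $\theta$, so $\theta$ is an algebraic integer. Combining with $r_{i}=\theta^{-n_{i}}$ from the previous step completes the proof.

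The main obstacle, such as it is, lies entirely in the passage from the statement ``$\theta^{2}$ is an algebraic integer'' to ``$\theta$ is an algebraic integer'' in the non-real case; everything else is a direct unpacking of Theorem \ref{thm:main}.
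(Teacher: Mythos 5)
Your proposal is correct and follows essentially the same route as the paper: apply Theorem \ref{thm:main}, use the cyclic structure of $\mathbf{H}/\mathbf{N}$ to write $r_{i}U_{i}'=A^{n_{i}}V_{i}$ and hence $r_{i}=r_{A}^{n_{i}}$, and then observe that $r_{A}^{-1}=|\theta_{1}|$ is an algebraic integer because the $\theta_{j}$ are. The only difference is presentational: the paper extracts $r_{i}=\Vert A\Vert^{n_{i}}$ via operator norms and simply cites that the modulus of an algebraic integer is an algebraic integer, whereas you use the homomorphism $rU\mapsto r$ and spell out the modulus argument via $\theta^{2}=\theta_{j}\overline{\theta_{j}}$.
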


\begin{proof}
By Theorem \ref{thm:main} there exist $\mathbb{V}$ and $S$ as in
the statement of the theorem. Let $A$ and $\theta_{1}$ be as in
condition (\ref{enu:main thm second cond}), and note that $A=rU$
for some $0<r<1$ and $U\in O(d')$. Since $\theta_{1}^{-1}$ is an
eigenvalue of $A$ we have $\Vert A\Vert=|\theta_{1}|^{-1}$, where
$\Vert\cdot\Vert$ is the operator norm. From $\{A^{n}\mathbf{N}\}_{n\in\mathbb{Z}}=\mathbf{H}/\mathbf{N}$
and $\Vert A\Vert<1$, it follows that for $1\le i\le\ell$ there
exists $n_{i}\ge1$ and $V_{i}\in\mathbf{N}$ so that $r_{i}U_{i}'=A^{n_{i}}V_{i}$.
Thus,
\[
r_{i}=\Vert r_{i}U_{i}'\Vert=\Vert A^{n_{i}}V_{i}\Vert=\Vert A\Vert^{n_{i}}=|\theta_{1}|^{-n_{i}}\:.
\]
Since $\theta_{1}$ is a member of a P.V. $k$-tuple it is an algebraic
integer. Since the modulus of an algebraic integer is still an algebraic
integer, this completes the proof of the corollary.
\end{proof}

\subsection{Examples}
\begin{example}
\label{exa:many examples}Let $k\ge1$ and $\theta_{1},...,\theta_{k}\in\mathbb{C}$
be such that $\{\theta_{1},...,\theta_{k}\}$ is a P.V. $k$-tuple
and $|\theta_{1}|=...=|\theta_{k}|$. In this example we show that
it is possible to construct an affinely irreducible self-similar IFS
$\Phi$ on $\mathbb{R}^{k}$ so that conditions (\ref{enu:main thm first cond})
and (\ref{enu:main thm second cond}) in Theorem \ref{thm:main} are
satisfied with the parameters $k$ and $\theta_{1},...,\theta_{k}$,
where we take $\mathbb{V}=\mathbb{R}^{k}$ and $S=Id$.

Since $\{\theta_{1},...,\theta_{k}\}$ is a P.V. $k$-tuple, for every
$1\le j_{1}\le k$ there exists $1\le j_{2}\le k$ so that $\theta_{j_{2}}=\overline{\theta_{j_{1}}}$.
Thus, there exists $A\in GL_{k}(\mathbb{R})$ so that $\theta_{1},...,\theta_{k}$
are the eigenvalues of $A^{-1}$ and $A=rU$ for some $0<r<1$ and
$U\in O(k)$. Let $\zeta_{1},...,\zeta_{k}\in\mathbb{C}^{k}$ be such
that $\{\zeta_{1},...,\zeta_{k}\}$ is an orthonormal basis for $\mathbb{C}^{k}$,
$A^{-1}\zeta_{j}=\theta_{j}\zeta_{j}$ for $1\le j\le k$, and $\zeta_{j_{2}}=\overline{\zeta_{j_{1}}}$
for $1\le j_{1},j_{2}\le k$ with $\theta_{j_{2}}=\overline{\theta_{j_{1}}}$.
Set $\xi:=\sum_{j=1}^{k}\zeta_{k}$, so that $\xi\in\mathbb{R}^{k}$.
Let $\Phi:=\{\varphi_{i}\}_{i=0}^{k}$ be the self-similar IFS on
$\mathbb{R}^{k}$ with $\varphi_{0}(x)=Ax$ and $\varphi_{i}(x)=Ax+A^{1-i}\xi$
for $1\le i\le k$.

Let us show that $\Phi$ is affinely irreducible. Denote the attractor
of $\Phi$ by $K$. Let $y_{0}$ be the zero vector of $\mathbb{R}^{k}$,
and for $1\le i\le k$ write $y_{i}:=(I-A)^{-1}A^{1-i}\xi$. For $0\le i\le k$
we have $\varphi_{i}(y_{i})=y_{i}$, and so $y_{0},...,y_{k}\in K$.
The matrix $(\left\langle A^{1-i}\xi,\zeta_{j}\right\rangle )_{i,j=1}^{k}$
is equal to the Vandermonde matrix $\{\theta_{j}^{i-1}\}_{i,j=1}^{k}$,
and so its determinant is nonzero. It follows that $\{A^{1-i}\xi\}_{i=1}^{k}$
are linearly independent, and so $\{y_{i}\}_{i=1}^{k}$ are also linearly
independent. This shows that the affine span of $K$ is equal to $\mathbb{R}^{k}$,
which implies that $\Phi$ is affinely irreducible.

It is obvious that condition (\ref{enu:main thm first cond}) in Theorem
\ref{thm:main} is satisfied with $\mathbf{N}=\{Id\}$. Moreover,
for $1\le i,j\le k$ we have $\left\langle \varphi_{i}(0),\zeta_{j}\right\rangle =\theta_{j}^{i-1}$.
From this, and since $\left\langle \varphi_{0}(0),\zeta_{j}\right\rangle =0$
for $1\le j\le k$, it follows that condition (\ref{enu:main thm second cond})
is also satisfied. From Theorem \ref{thm:main} we now get that there
exists a probability vector $p=(p_{i})_{i=1}^{k}>0$ so that the self-similar
measure corresponding to $\Phi$ and $p$ is non-Rajchman.
\end{example}

\begin{example}
\label{exa:dep on A}The purpose of this example is to show that the
parameters $k$ and $\theta_{1},...,\theta_{k}$, appearing in condition
(\ref{enu:main thm second cond}) in Theorem \ref{thm:main}, may
depend on the choice of $A$. Set $r_{1}=r_{2}=1/2$, let $U_{1}$
be the identity map of $\mathbb{R}^{2}$, let $U_{2}\in O(2)$ be
a planar rotation of angle $\pi/2$ (i.e. $U_{2}(x_{1},x_{2})=(-x_{2},x_{1})$),
set $a_{1}=(1,0)$ and $a_{2}=0$, and set $\varphi_{i}(x)=r_{i}U_{i}x+a_{i}$
for $i=1,2$ and $x\in\mathbb{R}^{2}$. It is easy to verify that
the IFS $\Phi:=\{\varphi_{1},\varphi_{2}\}$ is affinely irreducible.

Let $\mathbf{H}\subset GL_{2}(\mathbb{R})$ be the group generated
by $r_{1}U_{1}$ and $r_{2}U_{2}$, and set $\mathbf{N}:=\mathbf{H}\cap O(2)$.
We have $\mathbf{N}=\{U_{2}^{l}\}_{l=1}^{4}$, and for $A_{1}:=r_{1}U_{1}$
and $A_{2}:=r_{2}U_{2}$ it holds that $\{A_{1}^{n}\mathbf{N}\}_{n\in\mathbb{Z}}=\{A_{2}^{n}\mathbf{N}\}_{n\in\mathbb{Z}}=\mathbf{H}/\mathbf{N}$.
Thus, condition (\ref{enu:main thm first cond}) of Theorem \ref{thm:main}
is satisfied (with $\mathbb{V}=\mathbb{R}^{2}$ and $S=Id$). It is
also easy to verify that if we take $k=1$, $\theta_{1}=2$ and $\zeta_{1}=(1,0)$
then condition (\ref{enu:main thm second cond}) holds with respect
to $A_{1}$, and if we take $k=2$, $\theta_{1}=2i$, $\theta_{2}=-2i$,
$\zeta_{1}=(1,i)$ and $\zeta_{2}=(1,-i)$ then condition (\ref{enu:main thm second cond})
holds with respect to $A_{2}$. Moreover, since $2i$ and $-2i$ are
conjugates over $\mathbb{Q}$, with $k=1$ condition (\ref{enu:main thm second cond})
cannot hold with respect to $A_{2}$. This shows that the parameters
$k$ and $\theta_{1},...,\theta_{k}$ depend on the choice of $A$.
\end{example}

\begin{example}
\label{exa:interesting}The purpose of this example is to construct
a P.V $k$-tuple $\{\theta_{1},...,\theta_{k}\}$ such that $k\ge3$,
$\theta_{1},...,\theta_{k}$ are all conjugates over $\mathbb{Q}$,
$|\theta_{1}|=...=|\theta_{k}|$, and for every $1\le j_{1}<j_{2}\le k$
the number $\theta_{j_{1}}\theta_{j_{2}}^{-1}$ is not a root of unity.

A polynomial $P\in\mathbb{Z}[X]$ of degree $n$ is said to be reciprocal
if $P(X)=X^{n}P(X^{-1})$. In this case the roots of $P$ fall into
reciprocal pairs, that is $z^{-1}$ is a root of $P$ whenever $z\in\mathbb{C}$
is a root of $P$. We say that $P\in\mathbb{Z}[X]$ is a Salem polynomial
if it is the minimal polynomial of a Salem number. This means that
$P$ is irreducible, monic, reciprocal, it has degree at least $4$,
there exists $s>1$ with $P(s)=0$, and $|z|=1$ for every $z\in\mathbb{C}\setminus\{s,s^{-1}\}$
with $P(z)=0$. The number $s$ is called a Salem number.

Let $m\ge4$ be even, and let $P$ be a Salem polynomial of degree
$2m$. For example, we can take $P(X)$ to be $X^{8}-X^{5}-X^{4}-X^{3}+1$.
Let $z_{1}>1$ be the Salem number corresponding to $P$, and let
$z_{2},...,z_{m}$ be the roots of $P$ located on the upper half
of the unit circle in $\mathbb{C}$. Set $I:=\{1,...,m\}$, and for
$J\subset I$ write $\theta_{J}:=\Pi_{j\in J}z_{j}\cdot\Pi_{j\in I\setminus J}z_{j}^{-1}$.
The Galois group of $P$ is analysed in \cite[Theorem 1.1]{CM}. From
that result it follows that $\{\theta_{J}\}_{J\subset I}$ is a complete
set of algebraic conjugates over $\mathbb{Q}$.

Let $\mathbb{F}\subset\mathbb{C}$ be the splitting field of $P$
over $\mathbb{Q}$. Note that given $j_{0}\in I$, $J_{1},J_{2}\subset I$
with $j_{0}\in J_{1}\setminus J_{2}$, and an automorphism $\sigma:\mathbb{F}\rightarrow\mathbb{F}$
with $\sigma(z_{j_{0}})=z_{1}$, we have $|\sigma(\theta_{J_{1}})|=z_{1}>1$
and $|\sigma(\theta_{J_{2}})|=z_{1}^{-1}<1$. Since $\sigma(u)$ is
a root of unity whenever $u\in\mathbb{F}$ is a root of unity, it
follows that $\theta_{J_{1}}\ne\theta_{J_{2}}e^{2\pi iq}$ for all
distinct $J_{1},J_{2}\subset I$ and $q\in\mathbb{Q}$. This shows
that $\{\theta_{J}\::\:1\in J\subset I\}$ is a P.V. $2^{m-1}$-tuple,
and that it satisfies the required properties.
\end{example}

\subsection{\label{subsec:About-the-proof}About the proof}

Let $\Phi=\{\varphi_{i}(x)=r_{i}U_{i}x+a_{i}\}_{i=1}^{\ell}$ be an
affinely irreducible self-similar IFS on $\mathbb{R}^{d}$. Most of
the proof of Theorem \ref{thm:main} deals with the direction in which
$\Phi$ is assumed to generate a non-Rajchman measure. We present
a general outline of the argument for this direction. Everything will
be repeated in a rigorous manner in later parts of the paper.

Let $p=(p_{i})_{i=1}^{\ell}$ be a positive probability vector, let
$\mu$ be the self-similar measure corresponding to $\Phi$ and $p$,
and suppose that $\mu$ is non-Rajchman. Write $G\subset\mathbb{R}\times O(d)$
for the closed subgroup generated by $\{(\log r_{i}^{-1},U_{i})\}_{i=1}^{\ell}$.
For $(t,U)=g\in G$ set $\psi g=t$. Since $\psi$ is a proper continuous
map, $\psi(G)$ is a closed subgroup of $\mathbb{R}$. Let $\gamma:\psi(G)\rightarrow G$
be a continuous homomorphism with $\psi\circ\gamma=Id$. We define
a right action of $G$ on $\mathbb{R}^{d}$ by setting $x.(t,U):=2^{-t}U^{-1}x$
for $(t,U)\in G$ and $x\in\mathbb{R}^{d}$.

Let $X_{1},X_{2},...$ be i.i.d. $G$-valued random elements with
$\mathbb{P}\{X_{1}=(\log r_{i}^{-1},U_{i})\}=p_{i}$ for $1\le i\le\ell$,
and for $n\ge1$ set $Y_{n}:=X_{1}\cdot...\cdot X_{n}$. For $t>0$
denote by $\tau_{t}$ the stopping time which is equal to the smallest
$n\ge1$ for which $\psi Y_{n}\ge t$. Using a result obtained in
\cite{BDGHU}, which extends the classical renewal theorem, we show
that as $t\rightarrow\infty$ the random elements $\gamma_{-t}Y_{\tau_{t}}$
converge in distribution to a probability measure $\nu$ on $G$ which
is absolutely continuous with respect to the Haar measure of $G$.
This key fact will be used several times during the paper. In particular,
we use it to prove the following lemma.
\begin{lem}
\label{lem:ATP ini up bd}For every $\epsilon>0$ there exists $T>1$
such that the following holds. Let $t\ge T$ be with $t\in\psi(G)$
and let $\xi\in\mathbb{R}^{d}$ be with $|\xi|\le\epsilon^{-1}$,
then
\[
|\widehat{\mu}(\xi.\gamma_{-t})|^{2}\le\epsilon+\int\int\left|\int e^{i\left\langle \xi.g,x-y\right\rangle }d\nu(g)\right|\:d\mu(x)\:d\mu(y)\:.
\]
\end{lem}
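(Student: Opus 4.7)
The plan is to combine the self-similarity of $\mu$ at the stopping time $\tau_t$ with the weak convergence $\gamma_{-t}Y_{\tau_t}\Rightarrow\nu$ and express $|\widehat{\mu}(\xi.\gamma_{-t})|^{2}$ as a quantity that, after Cauchy--Schwarz, compares directly with an integral against $\nu$. Iterating $\mu=\sum_{i}p_{i}\varphi_{i}\mu$ along the random walk up to the stopping time gives $\mu=\mathbb{E}[(\varphi_{X_{1}}\circ\cdots\circ\varphi_{X_{\tau_{t}}})\mu]$. Setting $b_{n}:=\varphi_{X_{1}}\circ\cdots\circ\varphi_{X_{n}}(0)$ and noting that the transposed linear part of the random composition is precisely the right action of $Y_{n}$ on $\xi$, one gets
\[
\widehat{\mu}(\xi)=\mathbb{E}\bigl[e^{i\langle\xi,b_{\tau_{t}}\rangle}\,\widehat{\mu}(\xi.Y_{\tau_{t}})\bigr].
\]
Replacing $\xi$ by $\xi.\gamma_{-t}$ and writing $g_{t}:=\gamma_{-t}Y_{\tau_{t}}$ produces
\[
\widehat{\mu}(\xi.\gamma_{-t})=\mathbb{E}\bigl[e^{i\phi_{t}}\,\widehat{\mu}(\xi.g_{t})\bigr]
\]
for a real random phase $\phi_{t}$ (depending on $\xi,t$ and the random walk).

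Applying Cauchy--Schwarz to discard the unimodular factor yields $|\widehat{\mu}(\xi.\gamma_{-t})|^{2}\le\mathbb{E}[|\widehat{\mu}(\xi.g_{t})|^{2}]$. Expanding the square and using Fubini,
\[
\mathbb{E}\bigl[|\widehat{\mu}(\xi.g_{t})|^{2}\bigr]=\int\!\!\int F_{t}(x,y)\,d\mu(x)\,d\mu(y),\qquad F_{t}(x,y):=\mathbb{E}\bigl[e^{i\langle\xi.g_{t},x-y\rangle}\bigr].
\]
The left-hand side is real and nonnegative, so the triangle inequality gives
\[
|\widehat{\mu}(\xi.\gamma_{-t})|^{2}\le\int\!\!\int|F_{t}(x,y)|\,d\mu(x)\,d\mu(y).
\]

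Setting $F(x,y):=\int e^{i\langle\xi.g,x-y\rangle}\,d\nu(g)$, the bound to be proved reduces to showing $\int\!\int|F_{t}|\,d\mu\,d\mu\le\int\!\int|F|\,d\mu\,d\mu+\epsilon$ uniformly for $|\xi|\le\epsilon^{-1}$ once $t$ is sufficiently large. Since the map $(\xi,g,x,y)\mapsto e^{i\langle\xi.g,x-y\rangle}$ is jointly continuous and bounded by $1$, and since the parameter set $\{|\xi|\le\epsilon^{-1}\}\times\operatorname{supp}\mu\times\operatorname{supp}\mu$ is compact, the weak convergence $g_{t}\Rightarrow\nu$ together with the (automatic) tightness of the laws of $g_{t}$ yields uniform convergence $F_{t}\to F$ on this parameter set by a routine truncation-plus-equicontinuity argument. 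Choosing $T$ so that the uniform error is at most $\epsilon$ for all $t\ge T$ completes the proof. The \emph{main technical point} is precisely this uniformity in $\xi$: the weak convergence $g_{t}\Rightarrow\nu$ only gives $F_{t}\to F$ for each fixed parameter, and upgrading to uniform convergence in $\xi$ over a ball requires both the joint continuity of the exponential kernel and the tightness of $\{\operatorname{law}(g_{t})\}$.
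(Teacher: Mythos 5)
Your proposal is correct and follows essentially the same route as the paper: decompose $\widehat{\mu}$ along the stopping time $\tau_{t}$ (equivalently, the minimal cut-set it defines), apply Jensen/Cauchy--Schwarz to pass to $\mathbb{E}[|\widehat{\mu}(\xi.g_{t})|^{2}]$, expand into a double integral over $\mu\times\mu$, and use the convergence in distribution $\gamma_{-t}Y_{\tau_{t}}\Rightarrow\nu$ made uniform over $\{|\xi|\le\epsilon^{-1}\}\times\operatorname{supp}\mu\times\operatorname{supp}\mu$. The uniformity step you flag is exactly what the paper packages as Corollary \ref{cor:conv in dist lip} (convergence tested against uniformly Lipschitz functions, using that the laws of $\gamma_{-t}Y_{\tau_{t}}$ and $\nu$ live on a fixed compact subset of $G$), and your equicontinuity-plus-compactness argument is a valid substitute.
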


This lemma is inspired by the argument in \cite{LS} used in the proof
of Theorem \ref{thm:li and sahl}. The lemma is only useful when the
group $G$ is nondiscrete.

As we show below, from the affine irreducibility of $\Phi$ it follows
that $\mu(\mathbb{V})=0$ for every proper affine subspace $\mathbb{V}$
of $\mathbb{R}^{d}$. Using this fact we prove the following lemma.
\begin{lem}
\label{lem:ATP ub int sig of fur of curve mass}For every $\epsilon>0$
there exists $S>1$ so that the following holds. Let $s\ge S$ and
let $c:[0,1]\rightarrow\mathbb{R}^{d}$ be a smooth curve with $|c'(t)|\ge\epsilon$
and $|c''(t)|\le\epsilon^{-1}$ for all $0\le t\le1$, then
\[
\int\int\left|\int_{0}^{1}e^{is\left\langle c(t),x-y\right\rangle }\:dt\right|\:d\mu(x)\:d\mu(y)<\epsilon\:.
\]
\end{lem}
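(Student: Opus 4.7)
The plan is to estimate the inner oscillatory integral pointwise in $(x,y)$ using integration by parts against a smooth cutoff adapted to the phase derivative, and to control the contribution of the near-stationary region by exploiting the non-concentration of $\mu$ on affine subspaces.

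Set $\tau := \mu \ast \tilde{\mu}$ with $\tilde{\mu}(A) := \mu(-A)$; writing $v=x-y$ and $\sigma := c_{\ast}(\mathcal{L}|_{[0,1]})$, the double integral equals $\int |\widehat{\sigma}(sv)|\,d\tau(v)$. The hypothesis $\mu(\mathbb{V})=0$ for every proper affine $\mathbb{V}$ implies that $\tau$ vanishes on every proper linear subspace of $\mathbb{R}^{d}$ and has compact support, say in $B(0,M)$. Set
\[
F(r) := \sup_{u\in S^{d-1}}\tau\bigl(\{v\in\mathbb{R}^{d}:|\langle u,v\rangle|<r\}\bigr).
\]
For each fixed $u$, $\tau(\{|\langle u,v\rangle|<r\})\to\tau(u^{\perp})=0$ as $r\to 0$; combined with the estimate $|\langle u-u',v\rangle|\le M|u-u'|$ on $\mathrm{supp}(\tau)$, compactness of $S^{d-1}$ upgrades this to $F(r)\to 0$ as $r\to 0$.

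Fix $\beta>0$ (to be optimized) and a smooth cutoff $\chi\colon\mathbb{R}\to[0,1]$ with $\chi\equiv 0$ on $[-1/2,1/2]$ and $\chi\equiv 1$ off $[-1,1]$. For each $v\ne 0$ set $\phi_{v}(t):=\langle c(t),v\rangle$ and $\chi_{\beta}(t):=\chi(\phi_{v}'(t)/\beta)$, and decompose
\[
\widehat{\sigma}(sv)=\int_{0}^{1}(1-\chi_{\beta})e^{is\phi_{v}}\,dt+\int_{0}^{1}\chi_{\beta}e^{is\phi_{v}}\,dt.
\]
The first summand has modulus at most $|T_{v}|$, where $T_{v}:=\{t\in[0,1]:|\phi_{v}'(t)|<\beta\}$, since $1-\chi_{\beta}$ vanishes outside $T_{v}$. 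On the support of $\chi_{\beta}$ one has $|\phi_{v}'|\ge\beta/2$, so integrating by parts via the identity $e^{is\phi_{v}}=(is\phi_{v}')^{-1}\tfrac{d}{dt}e^{is\phi_{v}}$ (valid where $\phi_{v}'\neq 0$), together with $|\phi_{v}''(t)|=|\langle c''(t),v\rangle|\le |v|/\epsilon$, yields the pointwise estimate
\[
\left|\int_{0}^{1}\chi_{\beta}e^{is\phi_{v}}\,dt\right|\le\frac{4}{s\beta}+\frac{C_{0}|v|}{\epsilon s\beta^{2}},
\]
with $C_{0}$ depending only on $\|\chi'\|_{\infty}$.

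Averaging over $\tau$, Fubini together with the inclusion $\{v:|\langle c'(t),v\rangle|<\beta\}\subset\{v:|\langle c'(t)/|c'(t)|,v\rangle|<\beta/\epsilon\}$ gives
\[
\int|T_{v}|\,d\tau(v)=\int_{0}^{1}\tau\bigl(\{v:|\langle c'(t),v\rangle|<\beta\}\bigr)\,dt\le F(\beta/\epsilon).
\]
Combining with the pointwise IBP bound and using $|v|\le M$ on $\mathrm{supp}(\tau)$, one arrives at
\[
\int|\widehat{\sigma}(sv)|\,d\tau(v)\le F(\beta/\epsilon)+\frac{4}{s\beta}+\frac{C_{0}M}{\epsilon s\beta^{2}}.
\]
Choosing $\beta:=s^{-1/3}$, all three summands tend to $0$ as $s\to\infty$, so for $s\ge S(\epsilon)$ the bound is strictly below $\epsilon$. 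The main technical obstacle is the uniform slab estimate $F(r)\to 0$, which uses compactness of $S^{d-1}$ together with the vanishing of $\tau$ on every hyperplane through the origin --- itself a consequence of the affine irreducibility of $\Phi$.
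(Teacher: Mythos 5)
Your proof is correct, but the core oscillatory estimate is carried out by a genuinely different route than the paper's. Both arguments share the same first ingredient: the uniform slab estimate (your $F(r)\to 0$, which is the paper's Lemma \ref{lem:sig mass of strips}), deduced from $\mu(\mathbb{V})=0$ on proper affine subspaces plus compactness of the sphere; you only need slabs through the origin, which slightly simplifies this step. Where you diverge is in handling the curve. The paper first proves a decay lemma for straight segments (Lemma \ref{lem:fur of line curves}), where the inner integral is computed explicitly as $\bigl|\tfrac{1}{s\langle u,x\rangle}(e^{is\langle u,x\rangle}-1)\bigr|$, and then reduces the general curve to this case by subdividing $[0,1]$ into roughly $s^{2/3}$ pieces and Taylor-linearizing on each, balancing the linearization error $O(s/n^{2})$ against the effective frequency $s/n$. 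You instead run a classical non-stationary-phase argument directly on the curve: a smooth cutoff isolates the set $T_{v}$ where the phase derivative $\langle c'(t),v\rangle$ is below a threshold $\beta$, one integration by parts (using $|c'|\ge\epsilon$, $|c''|\le\epsilon^{-1}$ and $|v|\le M$) controls the complementary region by $O(s^{-1}\beta^{-1})+O(s^{-1}\beta^{-2})$, and Fubini converts $\int|T_{v}|\,d\tau(v)$ into an average of slab measures bounded by $F(\beta/\epsilon)$; choosing $\beta=s^{-1/3}$ closes the argument. Your version avoids the subdivision and the Taylor-error bookkeeping entirely and is uniform over the admissible class of curves, as required, since all constants depend only on $\epsilon$, $\|\chi'\|_{\infty}$ and $\mathrm{supp}(\tau)$; the paper's version is more elementary in that it never differentiates the amplitude and only ever integrates a pure exponential. (Two cosmetic points you may wish to tidy: $1-\chi_{\beta}$ is supported in $\{|\phi_{v}'|\le\beta\}$ rather than the open set $T_{v}$, so either define $T_{v}$ with $\le$ or enlarge to $F(2\beta/\epsilon)$; and the function $\chi_{\beta}/(is\phi_{v}')$ should be observed to be smooth on all of $[0,1]$ because the open sets $\{\phi_{v}'\ne 0\}$ and $\{|\phi_{v}'|<\beta/2\}$ cover it, which justifies the integration by parts.)
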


Now it is not difficult to show that $\psi(G)\ne\mathbb{R}$. Indeed,
assuming this is not the case we can represent $\nu$ as an average
of smooth $1$-dimensional probability measures, each of which is
supported on a single coset of the subgroup $\gamma(\mathbb{R})$.
By using this decomposition together with Lemmata \ref{lem:ATP ini up bd}
and \ref{lem:ATP ub int sig of fur of curve mass}, we show that $\mu$
must be Rajchman which contradicts our assumption.

Next we want to make a reduction from the case in which $G$ is nondiscrete
with $\psi(G)\ne\mathbb{R}$, to the case in which $G$ is discrete.
For this we need to choose appropriately the subspace $\mathbb{V}\subset\mathbb{R}^{d}$
appearing in the statement of Theorem \ref{thm:main}. Denote by $G_{0}$
the connected component of $G$ containing the identity element. We
choose $\mathbb{V}$ to be the linear subspace consisting of all $x\in\mathbb{R}^{d}$
so that $x.g=x$ for all $g\in G_{0}$. From $G_{0}\triangleleft G$
it follows that $x.g\in\mathbb{V}$ for $x\in\mathbb{V}$ and $g\in G$,
which implies that $U_{i}(\mathbb{V})=\mathbb{V}$ for $1\le i\le\ell$.

Note that from $\psi(G)\ne\mathbb{R}$ it follows that the connected
Lie group $G_{0}$ is contained in the compact group $\{0\}\times O(d)$.
By using this fact, by representing $\nu$ as an average of certain
smooth $1$-dimensional measures, and by applying Lemmata \ref{lem:ATP ini up bd}
and \ref{lem:ATP ub int sig of fur of curve mass} once more, we prove
the following proposition. It will enable us to perform the aforementioned
reduction.
\begin{prop}
\label{prop:ATP reduc}For every $\epsilon>0$ there exists $R>1$
so that $|\widehat{\mu}(\xi)|<\epsilon$ for every $\xi\in\mathbb{R}^{d}$
with $|\pi_{\mathbb{V}^{\perp}}\xi|\ge\max\{R,\epsilon|\pi_{\mathbb{V}}\xi|\}$.
\end{prop}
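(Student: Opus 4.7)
The plan is to combine Lemmata~\ref{lem:ATP ini up bd} and~\ref{lem:ATP ub int sig of fur of curve mass} with the infinitesimal structure of $G_0$. Since $\psi(G)\ne\mathbb{R}$, the connected subgroup $\psi(G_0)$ of the proper closed subgroup $\psi(G)\subset\mathbb{R}$ must be trivial, so $G_0\subset\{0\}\times O(d)$ is a compact connected Lie group, identified with a subgroup of $O(d)$ whose Lie algebra $\mathfrak{g}_0$ consists of skew-symmetric operators on $\mathbb{R}^{d}$ vanishing on $\mathbb{V}$. By the defining property of $\mathbb{V}$ as the $G_0$-fixed locus, no nonzero $v\in\mathbb{V}^\perp$ is annihilated by every element of $\mathfrak{g}_0$; compactness of the unit sphere of $\mathbb{V}^\perp$ then lets me fix, once and for all, a finite family $X_1,\dots,X_m\in\mathfrak{g}_0$, each generating a closed $1$-parameter subgroup of $G_0$, a constant $c_0>0$, and a smooth partition of unity $\{\chi_j\}$ on that sphere satisfying $|X_j v|\ge c_0$ on the support of $\chi_j$.

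Given $\eta\in\mathbb{R}^{d}$ with $|\pi_{\mathbb{V}^\perp}\eta|\ge\max\{R,\epsilon|\pi_{\mathbb{V}}\eta|\}$, apply Lemma~\ref{lem:ATP ini up bd} with a parameter $\epsilon_1>0$ (to be fixed at the end) to obtain a threshold $T$. Since $\psi(G)$ is a nontrivial proper closed subgroup of $\mathbb{R}$, hence of the form $\alpha\mathbb{Z}$, once $R$ is large enough one can choose $t\in\psi(G)\cap[T,\infty)$ so that the unique $\xi\in\mathbb{R}^{d}$ with $\xi.\gamma_{-t}=\eta$ simultaneously satisfies $|\xi|\le\epsilon_1^{-1}$ and a prescribed lower bound required by the curve estimate below. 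Because $\gamma(-t)\in G$ preserves $\mathbb{V}$ and $\mathbb{V}^\perp$ orthogonally, $|\pi_{\mathbb{V}^\perp}\xi|/|\xi|=|\pi_{\mathbb{V}^\perp}\eta|/|\eta|\gtrsim\epsilon$, and Lemma~\ref{lem:ATP ini up bd} yields
\[
|\widehat{\mu}(\eta)|^{2}\le\epsilon_{1}+\int\!\!\int\Bigl|\int e^{i\langle\xi.g,\,x-y\rangle}\,d\nu(g)\Bigr|\,d\mu(x)\,d\mu(y),
\]
reducing the problem to bounding the double integral by $\epsilon^{2}-\epsilon_{1}$.

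Next, truncate $\nu$ to a compact $K\subset G$ with $\nu(K^{c})$ small; on $K$ the quantity $\psi h$ is bounded, so $|\xi.h|=2^{-\psi h}|\xi|$ stays in a fixed positive interval that can be made arbitrarily large by enlarging $R$. Using the $L^{1}$ Haar density of $\nu|_{K}$ together with the $h$-dependent weights $\chi_{j}\bigl(U_{h}^{-1}\pi_{\mathbb{V}^\perp}\xi/|\pi_{\mathbb{V}^\perp}\xi|\bigr)$, decompose the inner integral as a sum over $j$ of integrals fibered along the orbits of the $1$-parameter subgroup $\{\exp(sX_{j})\}$ in $G$. Along each fiber, $\xi.(h\exp(sX_{j}))=\exp(-sX_{j})(\xi.h)$ has constant norm $|\xi.h|$, and its $s$-derivative, $-X_{j}\exp(-sX_{j})(\xi.h)$, has constant norm $|X_{j}(\xi.h)|$, which by the partition-of-unity support condition and $|\pi_{\mathbb{V}^\perp}\xi|/|\xi|\gtrsim\epsilon$ is at least a constant multiple of $\epsilon|\xi.h|$. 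After reparametrizing the period to $[0,1]$ and normalizing by $|\xi.h|$, Lemma~\ref{lem:ATP ub int sig of fur of curve mass} applies with frequency $|\xi.h|$, made large by enlarging $R$. Integrating over the transversal, summing over $j$, and adding the $K^{c}$-error gives the required bound on the double integral.

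The main obstacle, as I see it, is precisely this fiber decomposition: because no single $X\in\mathfrak{g}_0$ moves every direction of $\mathbb{V}^\perp$, one must fiber $\nu$ in an $h$-dependent way via the partition of unity on the sphere of $\mathbb{V}^\perp$, while simultaneously keeping the smooth density along each fiber well-behaved so that Lemma~\ref{lem:ATP ub int sig of fur of curve mass}---which is stated for uniform integration on $[0,1]$ without density---can still be brought to bear (e.g.\ by absorbing an $L^\infty$ prefactor or piecewise-constant approximation of the fiber density). Calibrating $\epsilon_{1}$, $|\xi|$, the diameter of $K$, $\nu(K^{c})$, and the thresholds $T$ and $S$ produced by the two lemmas so that all bounds fit together to yield $|\widehat{\mu}(\eta)|<\epsilon$ will then complete the argument.
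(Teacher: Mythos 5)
Your overall strategy is the paper's: after Lemma \ref{lem:ATP ini up bd} reduces matters to bounding $\int\bigl|\int e^{i\langle\xi.g,x-y\rangle}d\nu(g)\bigr|$, one exploits the fact that $G_{0}$ moves every direction of $\mathbb{V}^{\perp}$ and averages along one-parameter subgroups of $G_{0}$ so that Lemma \ref{lem:ATP ub int sig of fur of curve mass} applies. But your implementation of the $G_{0}$-averaging step differs from the paper's in a way that creates the very obstacle you identify, and your proposed fixes for it are not adequate as stated. First, ``absorbing an $L^{\infty}$ prefactor'' cannot work: the bound in Lemma \ref{lem:ATP ub int sig of fur of curve mass} comes from oscillatory cancellation inside $\int_{0}^{1}e^{i\langle c(t),sx\rangle}\,dt$, and a nonconstant density destroys exactly that cancellation, so no pointwise bound on the density rescues the estimate. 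A piecewise-constant approximation could in principle be made to work (the fibre densities $s\mapsto\chi_{j}(\exp(-sX_{j})v)$ have uniformly bounded derivative), but this requires a genuine additional argument you do not supply. Second, your requirement that each $X_{j}$ generate a \emph{closed} one-parameter subgroup is not automatic; the closure of $\{\exp(sX)\}$ is in general a torus of dimension $>1$, and arranging closedness while preserving the lower bound $|X_{j}v|\ge c_{0}$ needs a perturbation argument (rational directions in the ambient maximal torus) that is again missing.

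The paper sidesteps both issues, and the key observation you miss is the identity $c_{X,y}'(t)=c_{X,y}'(0).\exp(tX)$: since $X$ commutes with $\exp(tX)$ and $\exp(tX)$ is orthogonal, $|c_{X,y}'(t)|$ is \emph{constant} in $t$. Consequently, for each single unit vector $y\in\mathbb{S}_{\mathbb{V}^{\perp}}$ one can choose one $X=X(y)$ with $|c_{X,y}'|\ge\delta$ along the \emph{entire} orbit (Lemma \ref{lem:lb on der of cur}, via compactness of the sphere), so no partition of unity is needed at all: in the final estimate the direction $y_{g}=w_{g}/|w_{g}|$ arising from each $g$ in the outer integral is handled with its own subgroup. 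And instead of insisting the orbit be closed, the paper takes $G_{1}$ to be the \emph{closure} of $\{\exp(tX)\}$ and uses the equidistribution of the flow in $G_{1}$ (Lemma \ref{lem:equi dist of flow}) to write the $\mathbf{m}_{G_{1}}$-average as a limit of uniform time-averages over unit intervals $[k,k+1]$; Lemma \ref{lem:ATP ub int sig of fur of curve mass} then applies verbatim to each such interval, with no density whatsoever. Finally the Haar measure of $G_{0}$ is decomposed through $G_{1}$-cosets, $\mathbf{m}_{G_{0}}(f)=\int\int f(gg_{0})\,d\mathbf{m}_{G_{1}}(g)\,d\mathbf{m}_{G_{0}}(g_{0})$, and $\mathbf{m}_{G}$ through $G_{0}$-cosets. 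If you adopt these two devices --- constancy of the derivative norm along the orbit, and equidistribution in the closure in place of periodicity --- your argument collapses onto the paper's and the ``main obstacle'' disappears.
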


Next we consider the case in which $G$ is discrete. Cleary $\psi(G)\ne\mathbb{R}$
in this case, and so $\psi(G)=\beta\mathbb{Z}$ for some $\beta>0$.
Let $U\in O(d)$ be with $(\beta,U)\in G$, and set $A=2^{-\beta}U$.
Under the additional technical assumption $a_{1}=0$, we show that
condition (\ref{enu:main thm second cond}) in the statement of Theorem
\ref{thm:main} holds for the matrix $A$. The proof is a nontrivial
extension of the argument used in \cite{VY} for the direction of
Theorem \ref{thm:bermont} in which the IFS is assumed to generate
a non-Rajchman measure. One of the main ingredients of that argument
is a classical theorem of Pisot. This theorem says that if $\theta>1$
and $0\ne\lambda\in\mathbb{R}$ satisfy $\sum_{n\ge0}\Vert\lambda\theta^{n}\Vert^{2}<\infty$,
where $\Vert\cdot\Vert$ is the distance to the nearest integer, then
$\theta$ is a Pisot number and $\lambda\in\mathbb{Q}(\theta)$. In
our proof we shall need to use a generalisation of this result for
P.V. $k$-tuples, which is basically contained in Pisot's original
paper \cite{Pi}.

Observe that from proposition \ref{prop:ATP reduc}, and since $\mu$
is not Rajchman, it follows that $d':=\dim\mathbb{V}>0$. Let $S:\mathbb{V}\rightarrow\mathbb{R}^{d'}$
be an isometry. By using Proposition \ref{prop:ATP reduc}, the fact
that $\mu$ is not Rajchman, and the self-similarity of $\mu$, we
can show that $S\pi_{\mathbb{V}}\mu$ is also not Rajchman. The measure
$S\pi_{\mathbb{V}}\mu$ is the self-similar measure corresponding
to the self-similar IFS $\Phi':=\{S\circ\pi_{\mathbb{V}}\circ\varphi_{i}\circ S^{-1}\}_{i=1}^{\ell}$
on $\mathbb{R}^{d'}$ and the probability vector $p$. Let $\mathbf{H}$
be the closed group generated by the linear parts of $\Phi'$, and
set $\mathbf{N}:=\mathbf{H}\cap O(d')$. By using our choice of $\mathbb{V}$,
it is not hard to show that $\mathbf{H}$ is discrete, $\mathbf{N}$
is finite and $\mathbf{H}/\mathbf{N}$ is cyclic. Moreover, we can
choose the isometry $S$ so that the technical assumption $a_{1}=0$
is satisfied for the IFS $\Phi'$. At this point we complete the proof
by applying on $\Phi'$ our result for the case in which $G$ is discrete.

\subsection*{Organisation of the paper}

In Section \ref{sec:Preliminaries} we develop notations and establish
some basic properties of the group $G$. Assuming irreducibility,
we also prove that self-similar measures vanish on proper affine subspaces.
In Section \ref{sec:Renewal-theory-and first hit} we state the version
of the renewal theorem for $G$, and derive the statement regarding
the limit distribution of $\gamma_{-t}Y_{\tau_{t}}$. Section \ref{sec:The-nondiscrete-case}
deals with the parts of the argument in which $G$ is assumed to be
nondiscrete. In Section \ref{sec:The-discrete-case} we consider the
case in which $G$ is discrete. In particular, in this section we
construct non-Rajchman self-similar measures when $G$ is discrete
and the IFS satisfies assumptions similar to condition (\ref{enu:main thm second cond})
in Theorem \ref{thm:main}. In Section \ref{sec:Proof-of-the main}
we connect all the pieces, and complete the proof of Theorem \ref{thm:main}.

\section{\label{sec:Preliminaries}Preliminaries}

\subsection{\label{subsec:General-notations}General notations}

For an integer $m$ we write $\mathbb{Z}_{\ge m}:=\{m,m+1,...\}$.
We use the notations $\mathbb{Z}_{>m}$, $\mathbb{Z}_{\le m}$ and
$\mathbb{Z}_{<m}$ in a similar way.

Let $d\in\mathbb{Z}_{\ge1}$ be fixed. We denote the standard inner
product of $\mathbb{R}^{d}$ or $\mathbb{C}^{d}$ by $\left\langle \cdot,\cdot\right\rangle $,
that is
\[
\left\langle z,w\right\rangle =\sum_{j=1}^{d}z_{j}\overline{w_{j}}\text{ for }z,w\in\mathbb{C}^{d}\:.
\]
For a linear subspace $\mathbb{V}$ of $\mathbb{R}^{d}$ or $\mathbb{C}^{d}$,
the orthogonal projection onto $\mathbb{V}$ is denoted by $\pi_{\mathbb{V}}$.
We write $\mathbb{V}^{\perp}$ for the orthogonal complement of $\mathbb{V}$.
The orthogonal group of $\mathbb{R}^{d}$ is denoted by $O(d)$. Given
a Borel probability measure $\sigma$ on $\mathbb{R}^{d}$ its Fourier
transform $\widehat{\sigma}$ is defined by
\[
\widehat{\sigma}(\xi):=\int e^{i\left\langle \xi,x\right\rangle }\:d\sigma(x)\text{ for }\xi\in\mathbb{R}^{d}\:.
\]

For a locally compact Hausdorff space $X$, we write $C_{c}(X)$ for
the space of continuous functions $f:X\rightarrow\mathbb{R}$ with
compact support. We denote by $\mathcal{M}(X)$ the collection of
all compactly supported Borel probability measures on $X$. If $Y$
is another topological space, $\sigma$ is a Borel measure on $X$,
and $F:X\rightarrow Y$ is Borel measurable, then we write $F\sigma$
for the pushforward of $\sigma$ via $F$. That is, $F\sigma:=\sigma\circ F^{-1}$.

Throughout the paper $\Phi=\{\varphi_{i}(x)=r_{i}U_{i}x+a_{i}\}_{i=1}^{\ell}$
is an affinely irreducible self-similar IFS on $\mathbb{R}^{d}$,
so that $0<r_{i}<1$, $U_{i}\in O(d)$ and $a_{i}\in\mathbb{R}^{d}$
for $1\le i\le\ell$. We consider $\Phi$ as fixed, and so usually
the dependence of various parameters on $\Phi$ will not be indicated.
We denote by $K$ the attractor of $\Phi$, that is $K$ is the unique
nonempty compact subset of $\mathbb{R}^{d}$ with
\[
K=\cup_{i=1}^{\ell}\varphi_{i}(K)\:.
\]
Given a probability vector $p=(p_{i})_{i=1}^{\ell}$ there exists
a unique $\mu\in\mathcal{M}(K)$ which satisfies the relation
\begin{equation}
\mu=\sum_{i=1}^{\ell}p_{i}\cdot\varphi_{i}\mu\:.\label{eq:SS relation}
\end{equation}
It is called the self-similar measure corresponding to $\Phi$ and
$p$. We usually assume that $p_{i}>0$ for $1\le i\le\ell$, in which
case we say that $p$ is positive and write $p>0$.

We sometimes write $\Lambda$ for the index set $\{1,...,\ell\}$,
and denote the set of finite words over $\Lambda$ by $\Lambda^{*}$.
Following \cite{BP}, we say that a finite set of words $\mathcal{W}\subset\Lambda^{*}$
is a minimal cut-set for $\Lambda^{*}$ if every infinite sequence
in $\Lambda^{\mathbb{N}}$ has a unique prefix in $\mathcal{W}$.
Given a group $Y$, indexed elements $\{y_{i}\}_{i=1}^{\ell}\subset Y$,
and a word $i_{1}...i_{n}=w\in\Lambda^{*}$, we often write $y_{w}$
in place of $y_{i_{1}}\cdot...\cdot y_{i_{n}}$. For the empty word
$\emptyset$ we write $y_{\emptyset}$ in place of $1_{Y}$, where
$1_{Y}$ is the identity of $Y$. Note that if $\mathcal{W}$ is a
minimal cut-set for $\Lambda^{*}$, then by the self-similarity relation
(\ref{eq:SS relation})
\[
\mu=\sum_{w\in\mathcal{W}}p_{w}\cdot\varphi_{w}\mu\:.
\]

For $1\le i\le\ell$ set
\[
g_{i}:=(\log r_{i}^{-1},U_{i})\in\mathbb{R}\times O(d),
\]
where throughout the paper the base of the $\log$ function is always
$2$. Let $G$ be the smallest closed subgroup of $\mathbb{R}\times O(d)$
containing the elements $\{g_{i}\}_{i=1}^{\ell}$. We always equip
$G$ with the subspace topology inherited from $\mathbb{R}\times O(d)$.
Since $G$ is a closed subgroup of the Lie group $\mathbb{R}\times O(d)$,
it is itself a Lie group. We denote by $G_{0}$ the connected component
of $G$ containing the identity element. We write,
\[
x.(t,U):=2^{-t}U^{-1}x\text{ for }(t,U)\in G\text{ and }x\in\mathbb{R}^{d},
\]
which defined a right action of $G$ on $\mathbb{R}^{d}$.

Let $\psi:G\rightarrow\mathbb{R}$ be the projection onto the first
coordinate, that is $\psi(t,U)=t$ for $(t,U)\in G$, and write $N$
for the kernel of $\psi$. Since the homomorphism $\psi$ is continuous
and proper, it is also a closed map. In particular $\psi(G)$ is a
closed subgroup of $\mathbb{R}$. Given $T\in\mathbb{R}$ we write
$G_{\le T}$ for the set $\psi^{-1}(-\infty,T]$, and use the notations
$G_{<T}$, $G_{\ge T}$ and $G_{>T}$ in a similar way.

Let $\mathbf{m}_{\mathbb{R}}$ be the Lebesgue measure of $\mathbb{R}$.
Let $\mathbf{m}_{\psi(G)}$ be the Haar measure of $\psi(G)$, normalized
so that $\mathbf{m}_{\psi(G)}=\mathbf{m}_{\mathbb{R}}$ if $\psi(G)=\mathbb{R}$
and $\mathbf{m}_{\psi(G)}\{t\}=\beta$ for all $t\in\psi(G)$ if $\psi(G)=\beta\mathbb{Z}$
with $\beta>0$. We show in Corollary \ref{cor:G unimodular and m_G=00003D}
below that $G$ is unimodular. It is easy to see that if $\mathbf{m}$
is a Haar measure for $G$ then $\psi\mathbf{m}$ is a Haar measure
for $\psi(G)$. We denote by $\mathbf{m}_{G}$ the Haar measure of
$G$, normalized so that $\psi\mathbf{m}_{G}=\mathbf{m}_{\psi(G)}$.
Our choice of normalization for $\mathbf{m}_{G}$ can be explained
by the version of the renewal theorem for $G$ stated below (see Section
\ref{subsec:A-version-of ren thm G}).

\subsection{\label{subsec:Basic-properties-of G}Basic properties of $G$}
\begin{lem}
\label{lem:proper homo from R}There exists a continuous and proper
homomorphism $\gamma:\psi(G)\rightarrow G$ such that $\psi\circ\gamma=Id$.
If $\psi(G)=\mathbb{R}$, then $\gamma$ is smooth. If $\psi(G)=\beta\mathbb{Z}$
for $\beta>0$, then for any $g\in G$ with $\psi(g)=\beta$ it is
possible to define $\gamma$ so that $\gamma(\beta)=g$.
\end{lem}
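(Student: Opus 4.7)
The plan is to argue by cases according to the classification of closed subgroups of $\mathbb{R}$: either $\psi(G)=\{0\}$, $\psi(G)=\beta\mathbb{Z}$ for some $\beta>0$, or $\psi(G)=\mathbb{R}$. The first case is trivial (take $\gamma$ to be the trivial homomorphism), so the real content lies in the other two.

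For the discrete case $\psi(G)=\beta\mathbb{Z}$, given any $g\in G$ with $\psi(g)=\beta$ I would simply set $\gamma(n\beta):=g^{n}$ for $n\in\mathbb{Z}$. This is a homomorphism, it is automatically continuous because $\psi(G)=\beta\mathbb{Z}$ carries the discrete topology, and it satisfies $\psi\circ\gamma=Id$ as well as the normalisation $\gamma(\beta)=g$.

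The substantive case is $\psi(G)=\mathbb{R}$, where the goal is to produce a smooth homomorphism $\gamma:\mathbb{R}\to G$. Since $G$ is a closed subgroup of the Lie group $\mathbb{R}\times O(d)$, it is itself a Lie group, and $\psi:G\to\mathbb{R}$ is a Lie group homomorphism. Let $G_{0}$ denote the identity component of $G$ and $\mathfrak{g}$ its Lie algebra. The image $\psi(G_{0})$ is a connected subgroup of $\mathbb{R}$, hence either $\{0\}$ or $\mathbb{R}$. But $G$ is second countable (as a subspace of $\mathbb{R}\times O(d)$), so the discrete quotient $G/G_{0}$ is at most countable, and consequently so is $\psi(G)/\psi(G_{0})$; the hypothesis $\psi(G)=\mathbb{R}$ therefore forces $\psi(G_{0})=\mathbb{R}$. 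In particular the differential $d\psi_{e}:\mathfrak{g}\to\mathbb{R}$ is surjective, because its image contains an open neighbourhood of $0$ via the local inverse of the exponential map. Pick $X\in\mathfrak{g}$ with $d\psi_{e}(X)=1$ and define $\gamma(t):=\exp(tX)$. Then $\gamma$ is smooth, and since $\psi$ intertwines the exponential maps, $\psi(\gamma(t))=\exp(t\cdot d\psi_{e}(X))=t$ (where $\exp$ on $\mathbb{R}$ is the identity map).

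Properness is uniform across all cases. If $K\subset G$ is compact, then for any $t\in\gamma^{-1}(K)$ one has $t=\psi(\gamma(t))\in\psi(K)$, so $\gamma^{-1}(K)\subset\psi(K)$, a compact subset of $\mathbb{R}$; since $\gamma^{-1}(K)$ is also closed in $\psi(G)$, it is compact. The main obstacle is the continuous case: one needs the Lie-theoretic observation that $\psi(G_{0})=\mathbb{R}$ in order to extract a one-parameter subgroup of $G$ mapping isomorphically onto the line. Once $X$ is chosen, everything else reduces to the formal verification that $\gamma$ is a homomorphism, smooth, and proper.
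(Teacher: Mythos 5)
Your proof is correct and follows essentially the same route as the paper: in the discrete case set $\gamma(n\beta)=g^{n}$, and in the case $\psi(G)=\mathbb{R}$ choose $X\in\mathfrak{g}$ with $d\psi_{e}(X)=1$ and take $\gamma(t)=\exp(tX)$. You merely supply more detail than the paper does for why such an $X$ exists (via $\psi(G_{0})=\mathbb{R}$) and for properness, both of which the paper leaves as easy verifications.
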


\begin{proof}
If $\psi(G)=\beta\mathbb{Z}$ for $\beta>0$ then the lemma is trivial.
Given $g\in G$ with $\psi(g)=\beta$, we simply set $\gamma(\beta n)=g^{n}$
for $n\in\mathbb{Z}$. Clearly $\gamma$ satisfies the required properties.

Suppose next that $\psi(G)=\mathbb{R}$. Let $\mathfrak{g}$ and $\mathfrak{o}(d)$
be the Lie algebras of $G$ and $O(d)$ respectively. We may identify
$\mathfrak{g}$ as a Lie subalgebra of $\mathbb{R}\times\mathfrak{o}(d)$.
Since $\psi(G)=\mathbb{R}$, there exists $X\in\mathfrak{g}$ so that
its projection onto the first coordinate of $\mathbb{R}\times\mathfrak{o}(d)$
is equal to $1$. Set $\gamma(t)=\exp(tX)$ for every $t\in\mathbb{R}$,
where $\exp:\mathfrak{g}\rightarrow G$ is the exponential map of
$G$. It is easy to check that $\gamma$ satisfies the required properties.
\end{proof}
We consider the homomorphism $\gamma$ from the previous lemma as
fixed. In later sections we shall often write $\gamma_{t}$ in place
of $\gamma(t)$. Recall that we write $N$ for the kernel of $\psi$,
and let $H$ denote the image of $\gamma$. Since $\gamma$ is continuous
and proper, $H$ is a closed subgroup of $G$. Since $N\triangleleft G$,
the subgroup $H$ acts on $N$ by conjugation. For $h\in H$ and $n\in N$
we write $n^{h}$ in place of $hnh^{-1}$. Let $N\rtimes H$ be the
semidirect product of $N$ by $H$. That is, $N\rtimes H$ is the
group whose underlying set is $N\times H$ with the following group
operation,
\[
(n_{1},h_{1})\cdot(n_{2},h_{2})=(n_{1}n_{2}^{h_{1}},h_{1}h_{2})\text{ for }(n_{1},h_{1}),(n_{2},h_{2})\in N\rtimes H\:.
\]
We equip $N$ and $H$ with the subspace topologies inherited from
$G$, and $N\rtimes H$ with the product topology. It is easy to verify
that this makes $N\rtimes H$ into a locally compact group. Let $F:N\rtimes H\rightarrow G$
be with $F(n,h)=nh$ for $(n,h)\in N\rtimes H$.
\begin{lem}
\label{lem:G splits and F iso}$G$ is a split extension of $N$ by
$H$, that is $HN=G$ and $H\cap N=\{1_{G}\}$. Consequently, the
map $F$ is an isomorphism of topological groups.
\end{lem}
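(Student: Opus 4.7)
The plan is to use the section $\gamma$ supplied by Lemma \ref{lem:proper homo from R} to decompose an arbitrary $g\in G$ into a piece in $H$ and a piece in $N$, and then to show that this decomposition is continuous so that $F^{-1}$ is continuous.

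First I would verify $H\cap N=\{1_G\}$. If $h\in H\cap N$, write $h=\gamma(t)$ for some $t\in\psi(G)$. Since $h\in N=\ker\psi$ we have $0=\psi(h)=\psi(\gamma(t))=t$, so $h=\gamma(0)=1_G$. Next, for the equality $HN=G$, fix $g\in G$, set $t:=\psi(g)\in\psi(G)$ and $h:=\gamma(t)\in H$. Then $\psi(h^{-1}g)=-t+\psi(g)=0$, so $n:=h^{-1}g\in N$, and therefore $g=hn\in HN$. Together these show that $G$ is a split extension of $N$ by $H$.

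For the second part, I would check that $F:N\rtimes H\to G$ is a homomorphism, which is routine: for $(n_1,h_1),(n_2,h_2)\in N\rtimes H$,
\[
F((n_1,h_1)(n_2,h_2))=F(n_1 n_2^{h_1},h_1h_2)=n_1 h_1 n_2 h_1^{-1}h_1 h_2=(n_1 h_1)(n_2 h_2)\:.
\]
Continuity of $F$ follows from continuity of the group operation of $G$. Injectivity: if $F(n_1,h_1)=F(n_2,h_2)$, then applying $\psi$ gives $\psi(h_1)=\psi(h_2)$, hence $h_1=h_2$ (because $\psi|_H=\gamma^{-1}$ is injective), and then $n_1=n_2$. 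Surjectivity is exactly $HN=G$, which we just proved.

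It remains to show that $F^{-1}$ is continuous, which is the only mildly nontrivial point. Define $\Gamma:G\to H$ by $\Gamma(g):=\gamma(\psi(g))$. Since both $\psi$ and $\gamma$ are continuous, $\Gamma$ is continuous, and so is $g\mapsto\Gamma(g)^{-1}g$. For $g\in G$ write $h=\Gamma(g)\in H$ and $n=h^{-1}g\in N$; the computation in the previous paragraph shows $F(n,h)=g$, so $F^{-1}(g)=(\Gamma(g)^{-1}g,\Gamma(g))$. Both coordinates are continuous functions of $g$, and by the definition of the product topology on $N\rtimes H=N\times H$ this shows $F^{-1}$ is continuous. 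Hence $F$ is an isomorphism of topological groups. The main subtlety in the argument is the very last step, where one has to display the inverse explicitly via the section $\gamma$ rather than invoke an abstract open-mapping principle.
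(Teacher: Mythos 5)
Your argument for the splitting ($HN=G$ and $H\cap N=\{1_G\}$) is exactly the paper's. For the topological-isomorphism part you diverge slightly: the paper observes that $F$ is proper, hence closed, hence a homeomorphism, whereas you exhibit a continuous inverse explicitly via the section $\gamma$. Both routes are fine, and yours is arguably more self-contained.

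There is, however, one genuine error in your last step. The map is defined by $F(n,h)=nh$, with $N$ on the \emph{left}. You take $h=\Gamma(g)=\gamma(\psi(g))$ and $n=h^{-1}g$, and assert that $F(n,h)=g$ "by the computation in the previous paragraph". But that computation shows $g=hn$, while
\[
F(n,h)=nh=(h^{-1}g)h=h^{-1}gh,
\]
which is not $g$ in general: $G$ sits inside $\mathbb{R}\times O(d)$ and need not be abelian. The correct choice is $n:=g\,\Gamma(g)^{-1}$, which lies in $N$ because $\psi(g\Gamma(g)^{-1})=\psi(g)-\psi(g)=0$, and then $F(n,h)=nh=g\Gamma(g)^{-1}\Gamma(g)=g$. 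With this correction $F^{-1}(g)=(g\Gamma(g)^{-1},\Gamma(g))$, which is still manifestly continuous, so the rest of your argument (and the conclusion) stands. The same left/right issue is harmless in your surjectivity remark only because $N\triangleleft G$ gives $HN=NH$, but it should be said.
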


\begin{proof}
For $g\in G$ we have $\psi(\gamma(\psi g)^{-1}g)=0$. Hence,
\[
g=\gamma(\psi g)\cdot\gamma(\psi g)^{-1}g\in HN,
\]
which shows that $HN=G$. Next let $g\in H\cap N$, then $\psi g=0$
and there exists $t\in\mathbb{R}$ with $\gamma(t)=g$. Thus,
\[
t=\psi(\gamma(t))=\psi g=0,
\]
and so $1_{G}=\gamma(0)=\gamma(t)=g$, which shows that $H\cap N=\{1_{G}\}$.

It is easy to verify that $F$ is a homomorphism. From $HN=G$ and
$H\cap N=\{1_{G}\}$ it follows that $F$ is a group isomorphism.
It is obvious that $F$ is continuous. It is also easy to see that
$F$ is a proper map, and so it is a closed map. This shows that $F$
is an isomorphism of topological groups, and completes the proof of
the lemma.
\end{proof}
Since $N$ is a closed subgroup of $\{0\}\times O(d)$ it is compact.
Let $\mathbf{m}_{N}$ be the Haar measure of $N$, normalized so that
$\mathbf{m}_{N}(N)=1$. By Lemma \ref{lem:proper homo from R} the
map $\gamma:\psi(G)\rightarrow H$ is an isomorphism of topological
groups. Write $\mathbf{m}_{H}$ for $\gamma\mathbf{m}_{\psi(G)}$,
so that $\mathbf{m}_{H}$ is a Haar measure for $H$.
\begin{lem}
\label{lem:haar for semi prod}$\mathbf{m}_{N}\times\mathbf{m}_{H}$
is a left and right Haar measure for $N\rtimes H$.
\end{lem}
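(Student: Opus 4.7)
The plan is to verify left and right invariance of $\mathbf{m}_{N}\times\mathbf{m}_{H}$ on $N\rtimes H$ by a direct change-of-variables computation, resting on two elementary observations about the two factors. First, since $N$ is a compact group, $\mathbf{m}_{N}$ is the unique Haar probability measure on $N$ and is therefore preserved by every topological automorphism of $N$. In particular, for each $h\in H$ the conjugation map $c_{h}\colon N\to N$ defined by $c_{h}(n)=n^{h}=hnh^{-1}$ (well defined as a self-map of $N$ because $N=\ker\psi\triangleleft G$ and $H\subset G$) pushes $\mathbf{m}_{N}$ forward to itself. Second, because $H$ is isomorphic as a topological group to $\psi(G)\subset\mathbb{R}$, it is abelian, so $\mathbf{m}_{H}$ is simultaneously left and right Haar on $H$; the same holds automatically for $\mathbf{m}_{N}$ on the compact group $N$.

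For right invariance, I would fix $(n_{0},h_{0})\in N\rtimes H$ and $f\in C_{c}(N\rtimes H)$, and use $(n,h)\cdot(n_{0},h_{0})=(n\cdot n_{0}^{h},hh_{0})$, which does not twist the variable $n$. Applying Fubini and then right invariance of $\mathbf{m}_{N}$ for each fixed $h$ to absorb the factor $n_{0}^{h}$, and finally right invariance of $\mathbf{m}_{H}$ to absorb $h_{0}$, one recovers $\int f\,d(\mathbf{m}_{N}\times\mathbf{m}_{H})$.

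For left invariance I would use $(n_{0},h_{0})\cdot(n,h)=(n_{0}\cdot n^{h_{0}},h_{0}h)$. Writing out the double integral and applying Fubini, the inner $\mathbf{m}_{N}$-integral contains the twist $n\mapsto n^{h_{0}}=c_{h_{0}}(n)$. I would first change variables $m=c_{h_{0}}(n)$ and invoke $c_{h_{0}}$-invariance of $\mathbf{m}_{N}$ (the first key fact) to eliminate the twist, then apply left invariance of $\mathbf{m}_{N}$ to absorb $n_{0}$, and finally left invariance of $\mathbf{m}_{H}$ to absorb $h_{0}$.

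The only substantive point is handling the conjugation twist that appears in the left-multiplication calculation, and this is disposed of entirely by the uniqueness of the Haar probability measure on the compact group $N$. No serious obstacle is anticipated; the argument is a short and standard verification once the two key facts above are in hand.
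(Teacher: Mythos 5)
Your proposal is correct and follows essentially the same route as the paper: both rest on the facts that $\mathbf{m}_{N}$ and $\mathbf{m}_{H}$ are bi-invariant (compactness of $N$, commutativity of $H$) and that the conjugation automorphism $n\mapsto n^{h_{0}}$ preserves $\mathbf{m}_{N}$ because the Haar probability measure on a compact group is preserved by every topological automorphism. The paper writes out the left-invariance computation and notes the right-invariance case is simpler, exactly as you do.
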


\begin{proof}
Since $N$ is compact and $H$ is abelian, $\mathbf{m}_{N}$ and $\mathbf{m}_{H}$
are both left and right Haar measures. Let $(n_{0},h_{0})\in N\rtimes H$
and $f\in C_{c}(N\rtimes H)$ be given. Since $N$ is compact, the
automorphism $n\rightarrow n^{h_{0}}$ preserves $\mathbf{m}_{N}$
(see e.g. \cite[Section 1.1]{Wa}). Thus,
\begin{eqnarray*}
\int f((n_{0},h_{0})\cdot(n,h))\:d\mathbf{m}_{N}\times\mathbf{m}_{H}(n,h) & = & \int\int f(n_{0}n^{h_{0}},h_{0}h)\:d\mathbf{m}_{N}(n)\:d\mathbf{m}_{H}(h)\\
 & = & \int\int f(n_{0}n,h_{0}h)\:d\mathbf{m}_{N}(n)\:d\mathbf{m}_{H}(h)\\
 & = & \int f(n,h)\:d\mathbf{m}_{N}\times\mathbf{m}_{H}(n,h),
\end{eqnarray*}
which shows that $\mathbf{m}_{N}\times\mathbf{m}_{H}$ is a left Haar
measure for $N\rtimes H$. The proof that it is also a right Haar
measure is even simpler, and is therefore omitted.
\end{proof}
\begin{cor}
\label{cor:G unimodular and m_G=00003D}$G$ is unimodular, and $\mathbf{m}_{G}=F(\mathbf{m}_{N}\times\mathbf{m}_{H})$.
\end{cor}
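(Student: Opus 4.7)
The plan is to deduce the corollary directly from Lemmata \ref{lem:G splits and F iso} and \ref{lem:haar for semi prod} together with the definition of $\mathbf{m}_{G}$, without needing any new input.

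First I would argue unimodularity. By Lemma \ref{lem:G splits and F iso}, $F:N\rtimes H\to G$ is an isomorphism of topological groups, so pushforward by $F$ carries left (resp.\ right) Haar measures on $N\rtimes H$ to left (resp.\ right) Haar measures on $G$. By Lemma \ref{lem:haar for semi prod}, $\mathbf{m}_{N}\times\mathbf{m}_{H}$ is simultaneously a left and a right Haar measure on $N\rtimes H$. Therefore $F(\mathbf{m}_{N}\times\mathbf{m}_{H})$ is simultaneously a left and a right Haar measure on $G$, which is precisely the statement that $G$ is unimodular.

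Next I would verify the normalization. Since both $\mathbf{m}_{G}$ and $F(\mathbf{m}_{N}\times\mathbf{m}_{H})$ are Haar measures on $G$, they differ by a positive scalar, and to identify the scalar it suffices to compare their images under $\psi$. For $(n,h)\in N\rtimes H$ we have $\psi\bigl(F(n,h)\bigr)=\psi(nh)=\psi(n)+\psi(h)=\psi(h)$, since $N=\ker\psi$. Because $\mathbf{m}_{N}$ is a probability measure, an application of Fubini gives
\[
\psi F(\mathbf{m}_{N}\times\mathbf{m}_{H})=\psi\mathbf{m}_{H}=\psi\gamma\,\mathbf{m}_{\psi(G)}=\mathbf{m}_{\psi(G)},
\]
where in the last equality we used $\psi\circ\gamma=\mathrm{Id}$ from Lemma \ref{lem:proper homo from R}. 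By the defining normalization $\psi\mathbf{m}_{G}=\mathbf{m}_{\psi(G)}$, it follows that $\mathbf{m}_{G}=F(\mathbf{m}_{N}\times\mathbf{m}_{H})$.

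There is no real obstacle here: the only small point to be careful about is that $\gamma$ may fail to be a group isomorphism onto all of $\mathbb{R}$ only in the discrete case, but since we defined $\mathbf{m}_{H}=\gamma\mathbf{m}_{\psi(G)}$ and $\psi\circ\gamma=\mathrm{Id}$ in both cases, the computation above goes through uniformly.
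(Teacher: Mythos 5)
Your proof is correct and follows essentially the same route as the paper: deduce unimodularity and the fact that $F(\mathbf{m}_{N}\times\mathbf{m}_{H})$ is a Haar measure from Lemmata \ref{lem:G splits and F iso} and \ref{lem:haar for semi prod}, then pin down the normalization by computing $\psi F(\mathbf{m}_{N}\times\mathbf{m}_{H})=\mathbf{m}_{\psi(G)}$. The paper's argument is identical, merely stated more tersely.
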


\begin{proof}
From Lemmata \ref{lem:G splits and F iso} and \ref{lem:haar for semi prod}
it follows that $G$ is unimodular and that $F(\mathbf{m}_{N}\times\mathbf{m}_{H})$
is a Haar measure for $G$. For every $(n,h)\in N\rtimes H$ we have
$\psi F(n,h)=\psi h$. Hence,
\[
\psi F(\mathbf{m}_{N}\times\mathbf{m}_{H})=\psi\mathbf{m}_{H}=\psi\gamma\mathbf{m}_{\psi(G)}=\mathbf{m}_{\psi(G)}\:.
\]
Since $\mathbf{m}_{G}$ is the unique Haar measure for $G$ whose
image under $\psi$ is equal to $\mathbf{m}_{\psi(G)}$ the corollary
follows.
\end{proof}

\subsection{Self-similar measures vanish on proper affine subspaces}

The following lemma is a consequence of the affine irreducibility
of $\Phi$. It is well known, but since we could not find a proof
in the existing literature we provide one for completeness. The lemma
will be used in Section \ref{sec:The-nondiscrete-case} when we consider
the case in which the group $G$ is nondiscrete.
\begin{lem}
\label{lem:0 mass aff sub}Let $p=(p_{i})_{i=1}^{\ell}>0$ be a probability
vector, and let $\mu$ be the self-similar measure corresponding to
$\Phi$ and $p$. Then $\mu(\mathbb{V})=0$ for every proper affine
subspace $\mathbb{V}$ of $\mathbb{R}^{d}$.
\end{lem}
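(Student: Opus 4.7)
The plan is to induct on $j$ to show $\alpha_{j}:=\sup\{\mu(\mathbb{V}):\mathbb{V}\text{ affine, }\dim\mathbb{V}=j\}=0$ for $0\le j\le d-1$. Arguing by contradiction, let $j$ be the minimal index with $\alpha_{j}>0$, so that every affine subspace of dimension $<j$ is $\mu$-null.

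First I would show $\alpha_{j}$ is attained. Affine $j$-subspaces meeting the compact attractor $K$ form a compact space (a fiber bundle over $K$ with fiber the Grassmannian $\mathrm{Gr}(j,d)$), and the map $\mathbb{V}\mapsto\mu(\mathbb{V})$ is upper semicontinuous since $\mu$ has compact support. A maximising sequence therefore yields a nonempty
\[
\mathcal{F}:=\{\mathbb{V}:\dim\mathbb{V}=j,\,\mu(\mathbb{V})=\alpha_{j}\},
\]
and this collection is finite: distinct elements of $\mathcal{F}$ intersect in an affine subspace of dimension $<j$, which is $\mu$-null by minimality of $j$, so $|\mathcal{F}|\alpha_{j}\le\mu(\bigcup\mathcal{F})\le1$.

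Next, the self-similarity relation $\mu=\sum_{i}p_{i}\varphi_{i}\mu$ together with $p>0$ gives, for $\mathbb{V}\in\mathcal{F}$,
\[
\alpha_{j}=\mu(\mathbb{V})=\sum_{i=1}^{\ell}p_{i}\mu(\varphi_{i}^{-1}\mathbb{V}),
\]
and since $\mu(\varphi_{i}^{-1}\mathbb{V})\le\alpha_{j}$ for every $i$, equality forces $\varphi_{i}^{-1}\mathbb{V}\in\mathcal{F}$. Thus each $\varphi_{i}$ induces a permutation $\pi_{i}$ of the finite set $\mathcal{F}$. The group $\Pi:=\langle\pi_{1},\ldots,\pi_{\ell}\rangle\le\mathrm{Sym}(\mathcal{F})$ is finite; since in a finite group every element is a positive product of the generators of bounded length $C=C(\Pi)$, every $u\in\Lambda^{*}$ admits $v\in\Lambda^{*}$ with $|v|\le C$ and $\pi_{uv}=\mathrm{id}$, so the sub-semigroup $S:=\{w\in\Lambda^{*}:\pi_{w}=\mathrm{id}\}$ is cofinal in $\Lambda^{*}$.

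For $w\in S$ and $\mathbb{V}=q+V\in\mathcal{F}$, the invariance $\varphi_{w}(\mathbb{V})=\mathbb{V}$ with $\varphi_{w}(x)=r_{w}U_{w}x+b_{w}$ forces $U_{w}V=V$. Since the eigenvalues of $r_{w}U_{w}$ have modulus $r_{w}<1$, the operator $I-r_{w}U_{w}$ is invertible on $\mathbb{R}^{d}$ and its restriction to $V$ is invertible; the fixed-point equation rewrites as $(I-r_{w}U_{w})(z_{w}-q)=r_{w}U_{w}q+b_{w}-q\in V$, placing the unique fixed point $z_{w}$ of $\varphi_{w}$ inside $\mathbb{V}$. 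Given any $x=\pi(\omega)\in K$, let $u_{n}$ be the length-$n$ prefix of $\omega$, pick $v_{n}$ with $|v_{n}|\le C$ and $u_{n}v_{n}\in S$, and consider the periodic code $(u_{n}v_{n})^{\infty}$; its image under $\pi$ equals $z_{u_{n}v_{n}}\in\mathbb{V}$, and these points converge to $x$. Hence $K\subseteq\mathbb{V}$, contradicting the affine irreducibility of $\Phi$. The main obstacle is this last step: leveraging the finite permutation action on $\mathcal{F}$ to manufacture a sub-semigroup of compositions whose fixed points all lie in a single proper affine subspace, and then verifying density of those fixed points in the attractor via periodic-orbit approximation.
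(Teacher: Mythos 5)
Your proposal is correct. It coincides with the paper's proof in its entire first half: take the minimal dimension $m$ with a positively charged affine $m$-plane, form the finite family $\mathbf{M}$ of maximisers (finiteness via the $\mu$-nullity of pairwise intersections), and use self-similarity plus positivity of $p$ to conclude that each $\varphi_{i}^{-1}$ permutes $\mathbf{M}$. The two arguments diverge only in how they extract the contradiction with affine irreducibility. The paper stays measure-theoretic and geometric: since the pairwise intersections are $\mu$-null and $\mu$ is supported on $K$, there is a point $x\in K\cap\mathbb{V}_{1}$ lying in none of $\mathbb{V}_{2},\dots,\mathbb{V}_{s}$, and a short cylinder argument (diameters of $\varphi_{w}(K)$ shrink to $0$ around $x$, while $\varphi_{w}(K)\subset K\subset\cup_{j}\mathbb{V}_{j}$) places $\varphi_{w}(K)$, hence $K$, inside the single proper subspace $\varphi_{w}^{-1}(\mathbb{V}_{1})$. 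You instead exploit the algebra of the induced permutation action: the words $w$ with $\pi_{w}=\mathrm{id}$ form a cofinal sub-semigroup whose maps fix each $\mathbb{V}\in\mathcal{F}$ and therefore have their fixed points in $\mathbb{V}$, and density of these periodic points in $K$ gives $K\subset\mathbb{V}_{1}$ directly. Both endgames are sound; the paper's is a little shorter and avoids the coding map and the bounded-length generation fact for finite groups, while yours has the mild advantage of not needing to locate a point of $K$ lying in exactly one member of the family, and it identifies the cleaner structural fact that some fixed power of the IFS genuinely preserves each extremal subspace.
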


\begin{proof}
For $0\le k\le d$ denote by $\mathbf{A}_{k}$ the collection of all
$k$-dimensional affine subspaces of $\mathbb{R}^{d}$. Let $m$ be
the smallest nonnegative integer for which there exists $\mathbb{V}\in\mathbf{A}_{m}$
with $\mu(\mathbb{V})>0$. Assume by contradiction that $m<d$.

Set
\[
\kappa:=\sup\{\mu(\mathbb{V})\::\:\mathbb{V}\in\mathbf{A}_{m}\},
\]
and
\[
\mathbf{M}=\{\mathbb{V}\in\mathbf{A}_{m}\::\:\mu(\mathbb{V})=\kappa\}\:.
\]
By the definition of $m$ we have $\kappa>0$. From this, since $\mu$
is a finite measure, and since $\mu(\mathbb{V})=0$ for all $\mathbb{V}\in\cup_{k=0}^{m-1}\mathbf{A}_{k}$,
it follows that $\mathbf{M}$ is nonempty and finite. Let $\{\mathbb{V}_{j}\}_{j=1}^{s}$
be an enumeration of the elements in $\mathbf{M}$.

Set $\mathbb{Y}:=\cup_{j=1}^{s}\mathbb{V}_{j}$. For $1\le j\le s$,
\[
\kappa=\mu(\mathbb{V}_{j})=\sum_{i=1}^{\ell}p_{i}\cdot\mu(\varphi_{i}^{-1}(\mathbb{V}_{j}))\:.
\]
By the definition of $\kappa$ we have $\mu(\varphi_{i}^{-1}(\mathbb{V}_{j}))\le\kappa$
for $1\le i\le\ell$, and so $\varphi_{i}^{-1}(\mathbb{V}_{j})\in\mathbf{M}$
for $1\le i\le\ell$. This implies that $\mathbb{Y}$ is invariant
with respect to the maps in $\Phi$. From this and since $\mathbb{Y}$
is closed, it follows that the attractor $K$ is contained in $\mathbb{Y}$.

For every $1\le j_{1}<j_{2}\le s$ it holds that $\mathbb{V}_{j_{1}}\cap\mathbb{V}_{j_{2}}$
is either empty, or it is an affine subspace of dimension strictly
less than $m$. Thus, by the definition of $m$ we have $\mu(\mathbb{V}_{j_{1}}\cap\mathbb{V}_{j_{2}})=0$
for $1\le j_{1}<j_{2}\le s$. Since $\mu$ is supported on $K$, this
implies that $K$ is not contained in 
\[
\cup_{1\le j_{1}<j_{2}\le s}\mathbb{V}_{j_{1}}\cap\mathbb{V}_{j_{2}}\:.
\]
Hence, by reordering $\{\mathbb{V}_{j}\}_{j=1}^{s}$ if necessary,
we may assume that there exists $x\in K\cap\mathbb{V}_{1}$ with $x\notin\mathbb{V}_{j}$
for $2\le j\le s$. From
\[
\inf\{\mathrm{dist}(x,\mathbb{V}_{j})\::\:2\le j\le s\}>0
\]
and 
\[
\inf\{\mathrm{diam}(\varphi_{w}(K))\::\:w\in\Lambda^{*}\text{ and }x\in\varphi_{w}(K)\}=0,
\]
it follows that there exists $w\in\Lambda^{*}$ such that $\varphi_{w}(K)\cap\mathbb{V}_{j}=\emptyset$
for $2\le j\le s$. From this and $\varphi_{w}(K)\subset K\subset\mathbb{Y}$,
we obtain that $K\subset\varphi_{w}^{-1}(\mathbb{V}_{1})$. Since
$\dim(\varphi_{w}^{-1}(\mathbb{V}_{1}))=m<d$, this contradicts the
affine irreducibility of $\Phi$ and completes the proof of the lemma.
\end{proof}

\section{\label{sec:Renewal-theory-and first hit}Renewal theory and first
hitting distribution}
\begin{defn}
Given $q\in\mathcal{M}(G)$ we say that $q$ is adapted if the subgroup
generated by the support of $q$ is dense in $G$.
\end{defn}

Throughout this section we fix an adapted finitely supported probability
measure $q$ on $G$ with $q(G_{>0})=1$, where recall that $G_{>0}:=\psi^{-1}(0,\infty)$.
We write $\lambda$ in place of $\int\psi\:dq$, so that $\lambda>0$.

\subsection{\label{subsec:A-version-of ren thm G}A version of the renewal theorem
for $G$}

Set $Q:=\sum_{n\ge0}q^{n}$, where $q^{n}$ is the $n$-fold convolution
of $q$ with itself for $n\ge1$ and $q^{0}$ is the Dirac mass as
$1_{G}$. Since $q(G_{>0})=1$, it is obvious that $Q$ is a Radon
measure on $G$. For $g\in G$ let $L_{g}:G\rightarrow G$ be with
$L_{g}g'=gg'$ for $g'\in G$, and note that $L_{g}Q:=Q\circ L_{g}^{-1}$
is also a Radon measure on $G$.

The following theorem follows directly from \cite[Theorem A.1]{BDGHU}.
It extends the classical renewal theorem for closed subgroups of $\mathbb{R}$
(see e.g. \cite[Chapter 5]{Re}) to the group $G$.
\begin{thm}
\label{thm:renewal for G}Let $h_{1},h_{2},...\in G$ be with $\psi h_{n}\overset{n}{\rightarrow}-\infty$.
Then for every $f:G\rightarrow\mathbb{R}$ which is Borel measurable,
bounded, compactly supported and satisfies
\[
\mathbf{m}_{G}\{g\in G\::\:f\text{ is not continuous at }g\}=0,
\]
we have
\[
\underset{n\rightarrow\infty}{\lim}\:\int f\:dL_{h_{n}}Q=\lambda^{-1}\int f\:d\mathbf{m}_{G}\:.
\]
\end{thm}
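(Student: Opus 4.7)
The plan is to reduce the theorem to a direct invocation of \cite[Theorem~A.1]{BDGHU}, checking that $G$ fits its framework and that our normalization of $\mathbf{m}_{G}$ produces the stated constant $\lambda^{-1}$.

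First I would collect the structural facts from Section~\ref{subsec:Basic-properties-of G}: $G$ is a unimodular Lie group, $\psi:G\rightarrow\mathbb{R}$ is a continuous proper homomorphism with compact kernel $N$, and by Lemma~\ref{lem:G splits and F iso} we have the splitting $G\cong N\rtimes H$ with $H$ topologically isomorphic to $\psi(G)$ via $\gamma$. This is precisely the class of groups treated in \cite{BDGHU}. Next I would verify the hypotheses on $q$: adaptedness is assumed, the finite support makes every integrability and exponential-moment condition automatic, and $\int\psi\,dq=\lambda>0$ asserts that the random walk driven by $q$ has positive drift under $\psi$, so that the condition $\psi h_{n}\rightarrow-\infty$ corresponds to shifting the potential $Q$ far to the left, which is exactly the asymptotic regime covered by the theorem.

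Second I would address the admissible class of test functions. The hypothesis that $f$ is bounded, compactly supported, and $\mathbf{m}_{G}$-almost everywhere continuous is the standard Riemann-integrability-type condition used in renewal theory; it is handled by sandwiching $f$ between continuous compactly supported functions from above and below and comparing their Haar integrals, which is built into the statement of \cite[Theorem~A.1]{BDGHU}. Invoking that theorem yields vague convergence of $L_{h_{n}}Q$ to a positive multiple $c\cdot\mathbf{m}_{G}$ of the Haar measure, for some constant $c>0$ independent of $f$.

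Finally I would identify $c$. Pushing forward under $\psi$ and using that $\psi$ is a continuous proper homomorphism, one has $\psi(L_{h_{n}}Q)=L_{\psi h_{n}}(\psi Q)$, and $\psi Q=\sum_{n\ge 0}(\psi q)^{n}$ is the classical renewal measure on $\psi(G)$ associated with the probability measure $\psi q$ of mean $\lambda$. The classical renewal theorem on $\psi(G)$, in the real line case or the lattice case (\cite[Chapter~5]{Re}), then gives $\psi(L_{h_{n}}Q)\rightarrow\lambda^{-1}\mathbf{m}_{\psi(G)}$ vaguely, while by Corollary~\ref{cor:G unimodular and m_G=00003D} we have $\psi\mathbf{m}_{G}=\mathbf{m}_{\psi(G)}$. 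Comparing the two forces $c=\lambda^{-1}$. The only substantive obstacle is the extension of the renewal theorem from $\mathbb{R}$ to the group $G$, which is exactly what \cite{BDGHU} supplies; everything else is verifying hypotheses and normalization.
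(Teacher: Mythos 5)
Your proposal is correct and takes essentially the same route as the paper, which offers no proof at all beyond the assertion that the statement follows directly from \cite[Theorem A.1]{BDGHU}. Your additional verifications (properness of $\psi$, the compact kernel $N$, the test-function class, and pinning down the constant $\lambda^{-1}$ by pushing forward under $\psi$ and comparing with the classical renewal theorem using $\psi\mathbf{m}_{G}=\mathbf{m}_{\psi(G)}$) are exactly the routine checks the paper leaves implicit.
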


\subsection{\label{subsec:Limit-distribution-of}First hitting distribution}

Let $X_{1},X_{2},...$ be i.i.d. $G$-valued random elements with
distribution $q$. Set $Y_{0}:=1_{G}$, and for $n\ge1$ let $Y_{n}:=X_{1}\cdot...\cdot X_{n}$.
For $t>0$ write,
\[
\tau_{t}:=\inf\{n\ge1\::\:\psi Y_{n}\ge t\}\:.
\]
For $g\in G$ set,
\[
\rho(g):=\lambda^{-1}\mathbb{P}\{\psi X_{1}>\psi g\ge0\}\:.
\]
Note that since $\lambda=\mathbb{E}[\psi X_{1}]$, and by our choice
of $\mathbf{m}_{G}$, it follows that $\int\rho\:d\mathbf{m}_{G}=1$.
Write $\nu$ in place of $\rho\:d\mathbf{m}_{G}$, so that $\nu\in\mathcal{M}(G)$. 

Recall the homomorphism $\gamma:\psi(G)\rightarrow G$ from Lemma
\ref{lem:proper homo from R}, and that for $t\in\psi(G)$ we often
write $\gamma_{t}$ in place of $\gamma(t)$. The idea of the proof
of the following proposition is based on \cite[Section 4, Proof of Theorem 3]{St}.
\begin{prop}
\label{prop:conv in dist}The random elements $\{\gamma_{-t}Y_{\tau_{t}}\}_{t\in\psi(G_{>0})}$
converge in distribution to $\nu$ as $t\rightarrow\infty$. That
is, for every continuous and bounded $f:G\rightarrow\mathbb{C}$,
\[
\underset{t\rightarrow\infty}{\lim}\:\mathbb{E}\left[f(\gamma_{-t}Y_{\tau_{t}})\right]=\int f\:d\nu\:.
\]
\end{prop}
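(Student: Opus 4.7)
The plan is to rewrite $\mathbb{E}[f(\gamma_{-t}Y_{\tau_t})]$ as the integral of a $t$-independent function against the translated renewal measure $L_{\gamma_{-t}}Q$, so that Theorem \ref{thm:renewal for G} applies directly. Since $q(G_{>0})=1$, the sequence $\psi Y_n$ is strictly increasing in $n$, so $\{\tau_t=n\}=\{\psi Y_{n-1}<t\leq\psi Y_n\}$. Decomposing $Y_n=Y_{n-1}X_n$, using the independence of $Y_{n-1}$ from $X_n$, and summing over $n\geq 1$ (which collapses $\sum_n q^{n-1}$ to $Q$), I obtain
\[
\mathbb{E}[f(\gamma_{-t}Y_{\tau_t})]=\int\int f(\gamma_{-t}gh)\mathbf{1}\{\psi g<t\leq\psi(gh)\}\,dq(h)\,dQ(g).
\]
Substituting $g=\gamma_t g'$ and using $\psi(\gamma_t g')=t+\psi g'$ converts the right-hand side into $\int F\,dL_{\gamma_{-t}}Q$, where
\[
F(g'):=\int f(g'h)\,\mathbf{1}\{-\psi h\leq\psi g'<0\}\,dq(h)
\]
no longer depends on $t$.

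Next I would verify the hypotheses of Theorem \ref{thm:renewal for G} for $F$. Boundedness is immediate. Compact support holds because $q$ is finitely supported, so $F$ vanishes outside $\psi^{-1}[-M,0]$ for $M:=\max\psi(\mathrm{supp}\,q)$, and this set is compact since $\psi$ is proper. The $\mathbf{m}_G$-almost-everywhere continuity splits into two cases. When $\psi(G)=\mathbb{R}$, the discontinuities of $F$ lie in finitely many fibers $\psi^{-1}(c)$, each of which is $\mathbf{m}_G$-null since $\psi\mathbf{m}_G$ is Lebesgue measure. When $\psi(G)=\beta\mathbb{Z}$, each fiber of $\psi$ is clopen in $G$, so any convergent sequence in $G$ has eventually constant $\psi$-value; this makes $F$ continuous everywhere. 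Applying Theorem \ref{thm:renewal for G} to $h_n=\gamma_{-t_n}$ for arbitrary $t_n\to\infty$ in $\psi(G_{>0})$ then gives
\[
\lim_{t\to\infty,\,t\in\psi(G_{>0})}\mathbb{E}[f(\gamma_{-t}Y_{\tau_t})]=\lambda^{-1}\int F\,d\mathbf{m}_G.
\]

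Finally I would evaluate $\int F\,d\mathbf{m}_G$ via Fubini and, for each fixed $h$, the substitution $g=g'h$; by right-invariance of $\mathbf{m}_G$, which follows from the unimodularity established in Corollary \ref{cor:G unimodular and m_G=00003D}, the condition $-\psi h\leq\psi g'<0$ transforms into $0\leq\psi g<\psi h$, yielding
\[
\int F\,d\mathbf{m}_G=\int f(g)\,\mathbb{P}\{\psi X_1>\psi g\geq 0\}\,d\mathbf{m}_G(g)=\lambda\int f\,d\nu
\]
directly from the definitions of $\rho$ and $\nu$. The anticipated main obstacle is the almost-everywhere continuity of $F$ in the discrete case, where individual fibers of $\psi$ carry positive $\mathbf{m}_G$-measure; the resolution is the observation that $\psi$ is automatically locally constant when its image is discrete, which forces full continuity of $F$ on $G$.
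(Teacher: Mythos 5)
Your proof is correct and follows essentially the same route as the paper's: both rewrite $\mathbb{E}[f(\gamma_{-t}Y_{\tau_t})]$ as the integral of a $t$-independent function against $L_{\gamma_{-t}}Q$, verify the a.e.-continuity hypothesis of Theorem \ref{thm:renewal for G} (null fibers of $\psi$ when $\psi(G)=\mathbb{R}$, local constancy of $\psi$ in the discrete case), and evaluate the limit via the right-invariance of $\mathbf{m}_G$. The only cosmetic difference is that you keep the two-sided indicator $\mathbf{1}\{-\psi h\le\psi g'<0\}$ inside $F$ (which also makes $F$ compactly supported for arbitrary bounded continuous $f$), whereas the paper first reduces to $f\in C_c(G)$ vanishing on $G_{<0}$ so that the upper constraint is absorbed by $f$.
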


For the proof we need the following lemma.
\begin{lem}
\label{lem:conv in dist first step}For $f\in C_{c}(G)$ we have,
\[
\underset{t\rightarrow\infty}{\lim}\:\int1_{G_{<0}}(g)\int f(gh)\:dq(h)\:dL_{\gamma_{-t}}Q(g)=\frac{1}{\lambda}\int1_{G_{<0}}(g)\int f(gh)\:dq(h)\:d\mathbf{m}_{G}(g)\:.
\]
\end{lem}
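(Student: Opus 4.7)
My plan is to recognize the identity as a direct application of the renewal theorem (Theorem \ref{thm:renewal for G}) to the bounded, compactly supported function $g \mapsto 1_{G_{<0}}(g) \int f(gh) \, dq(h)$, along any sequence $h_n = \gamma_{-t_n}$ with $t_n \to \infty$ in $\psi(G_{>0})$. Since $\psi(\gamma_{-t_n}) = -t_n \to -\infty$, the driving hypothesis of Theorem \ref{thm:renewal for G} is automatic, so the work reduces to checking that this function is bounded, Borel measurable, compactly supported, and continuous off a set of $\mathbf{m}_G$-measure zero.

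Boundedness is immediate, since the function is bounded in absolute value by $\|f\|_\infty$. For the compact support, I would use that $q$ is finitely supported on some $\{h_1,\ldots,h_m\} \subset G_{>0}$, so
\[
\int f(gh) \, dq(h) = \sum_{i=1}^{m} q(\{h_i\}) f(gh_i)
\]
vanishes outside the compact set $\bigcup_{i=1}^{m} \operatorname{supp}(f) \cdot h_i^{-1}$. The same finite-sum expression shows that $g \mapsto \int f(gh) \, dq(h)$ is continuous on all of $G$, so any discontinuity of the test function can only come from the factor $1_{G_{<0}}$, and the discontinuity set is contained in the topological boundary $\partial G_{<0}$.

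The one genuine step, and what I view as the main (mild) obstacle, is to show $\mathbf{m}_G(\partial G_{<0}) = 0$. When $\psi(G) = \beta\mathbb{Z}$, each level set $\psi^{-1}(k\beta)$ is clopen in $G$, hence so is $G_{<0}$, and $\partial G_{<0}$ is empty. When $\psi(G) = \mathbb{R}$, one has $\partial G_{<0} = N$, and combining Lemma \ref{lem:G splits and F iso} with Corollary \ref{cor:G unimodular and m_G=00003D} gives
\[
\mathbf{m}_G(N) = \mathbf{m}_N(N) \cdot \mathbf{m}_H(\{1_G\}) = 1 \cdot 0 = 0,
\]
since in the continuous case $\mathbf{m}_H = \gamma\mathbf{m}_{\mathbb{R}}$ assigns no mass to a single point. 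With all hypotheses verified, Theorem \ref{thm:renewal for G} applied along an arbitrary sequence $t_n \to \infty$ in $\psi(G_{>0})$ then yields the claimed identity with the constant $\lambda^{-1}$.
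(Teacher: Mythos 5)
Your proof is correct and follows essentially the same route as the paper: verify the hypotheses of Theorem \ref{thm:renewal for G} for $\tilde f(g)=1_{G_{<0}}(g)\int f(gh)\,dq(h)$, with the only nontrivial point being $\mathbf{m}_G(\partial G_{<0})=0$, handled by cases exactly as in the paper. The paper computes the measure of $\psi^{-1}\{0\}$ directly from the normalization $\psi\mathbf{m}_G=\mathbf{m}_{\mathbb{R}}$ rather than via the semidirect-product decomposition, but this is an immaterial difference.
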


\begin{proof}
Let $f\in C_{c}(G)$, and for $g\in G$ set
\[
\tilde{f}(g)=1_{G_{<0}}(g)\int f(gh)\:dq(h)\:.
\]
It is clear that $\tilde{f}$ is Borel measurable and bounded. Since
$q$ and $f$ are compactly supported so does $\tilde{f}$. The set
of points at which $\tilde{f}$ is discontinuous is contained in the
boundary of $G_{<0}$, which we denoted by $\partial G_{<0}$. If
$\psi(G)=\mathbb{R}$ then,
\[
\mathbf{m}_{G}(\partial G_{<0})=\mathbf{m}_{G}(\psi^{-1}\{0\})=\mathbf{m}_{\mathbb{R}}\{0\}=0\:.
\]
If $\psi(G)\ne\mathbb{R}$ then $\partial G_{<0}=\emptyset$. From
Theorem \ref{thm:renewal for G} and since $\psi\circ\gamma=Id$ we
now get,
\[
\underset{t\rightarrow\infty}{\lim}\:\int\tilde{f}\:dL_{\gamma_{-t}}Q=\lambda^{-1}\int\tilde{f}\:d\mathbf{m}_{G},
\]
which completes the proof of the lemma.
\end{proof}
\begin{proof}[Proof of Proposition \ref{prop:conv in dist}]
Let $f\in C_{c}(G)$ be nonnegative and with $f(g)=0$ for $g\in G_{<0}$.
Since $\nu(G_{<0})=0$, it suffices to show
\begin{equation}
\underset{t\rightarrow\infty}{\lim}\:\mathbb{E}\left[f(\gamma_{-t}Y_{\tau_{t}})\right]=\int f\:d\nu\:.\label{eq:suf to show conv in dist}
\end{equation}
Note that for $n\ge1$ the distribution of $Y_{n}$ is equal to $q^{n}$.
Hence, for $t\in\psi(G_{>0})$
\begin{eqnarray*}
\int1_{G_{<0}}(g)\int f(gh)\:dq(h)\:dL_{\gamma_{-t}}Q(g) & = & \sum_{n\ge0}\int1_{G_{<0}}(g)\int1_{G_{\ge0}}(gh)f(gh)\:dq(h)\:dL_{\gamma_{-t}}q^{n}(g)\\
 & = & \sum_{n\ge0}\mathbb{E}\left[1_{G_{<0}}(\gamma_{-t}Y_{n})1_{G_{\ge0}}(\gamma_{-t}Y_{n+1})f(\gamma_{-t}Y_{n+1})\right]\\
 & = & \sum_{n\ge0}\mathbb{E}\left[1_{\{\psi Y_{n}<t\}}1_{\{\psi Y_{n+1}\ge t\}}f(\gamma_{-t}Y_{n+1})\right]\\
 & = & \sum_{n\ge0}\mathbb{E}\left[1_{\{\tau_{t}=n+1\}}f(\gamma_{-t}Y_{n+1})\right]=\mathbb{E}\left[f(\gamma_{-t}Y_{\tau_{t}})\right]\:.
\end{eqnarray*}
Moreover, by the right-invariance of $\mathbf{m}_{G}$
\begin{eqnarray*}
\lambda^{-1}\int1_{G_{<0}}(g)\int f(gh)\:dq(h)\:d\mathbf{m}_{G}(g) & = & \lambda^{-1}\int f(g)\int1_{G_{\ge0}}(g)1_{G_{<0}}(gh^{-1})\:dq(h)\:d\mathbf{m}_{G}(g)\\
 & = & \lambda^{-1}\int f(g)\mathbb{P}\{\psi X_{1}>\psi g\ge0\}\:d\mathbf{m}_{G}(g)\\
 & = & \int f(g)\rho(g)\:d\mathbf{m}_{G}(g)=\int f\:d\nu\:.
\end{eqnarray*}
Thus, the equality (\ref{eq:suf to show conv in dist}) follows from
Lemma \ref{lem:conv in dist first step}, which completes the proof
of the proposition.
\end{proof}
We shall need a uniform version of Proposition \ref{prop:conv in dist}.
Define a metric $d_{op}$ on $G$ by setting
\begin{equation}
d_{op}((r,U),(s,V))=\Vert2^{-r}U^{-1}-2^{-s}V^{-1}\Vert\text{ for }(r,U),(s,V)\in G,\label{eq:def of d_op}
\end{equation}
where $\Vert\cdot\Vert$ is the operator norm. It is clear that the
topology induced by $d_{op}$ is equal to the subspace topology inherited
from $\mathbb{R}\times O(d)$. Given $C>0$, we say that $f:G\rightarrow\mathbb{C}$
is $C$-Lipschitz with respect to $d_{op}$ if
\[
|f(g)-f(g')|\le Cd_{op}(g,g')\text{ for }g,g'\in G\:.
\]

Observe that there exists a compact subset $B$ of $G$ so that $\nu(B)=1$
and $\mathbb{P}\{\gamma_{-t}Y_{\tau_{t}}\in B\}=1$ for all $t\in\psi(G_{>0})$.
The following corollary follows directly from this, from Proposition
\ref{prop:conv in dist}, and from \cite[Lemma A.3.3]{BP}.
\begin{cor}
\label{cor:conv in dist lip}For every $\epsilon>0$ there exists
$T=T(q,\epsilon)>1$ so that the following holds. Let $f:G\rightarrow\mathbb{C}$
be $\epsilon^{-1}$-Lipschitz with respect to $d_{op}$, then
\[
\left|\mathbb{E}\left[f(\gamma_{-t}Y_{\tau_{t}})\right]-\int f\:d\nu\right|<\epsilon\text{ for all }t\in\psi(G_{\ge T})\:.
\]
\end{cor}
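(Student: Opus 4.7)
The plan is to combine Proposition~\ref{prop:conv in dist} with an equicontinuous-family argument of the type packaged by \cite[Lemma A.3.3]{BP}. The first step is to locate a fixed compact $B\subset G$ that simultaneously carries $\nu$ and the distribution of $\gamma_{-t}Y_{\tau_{t}}$ for every $t\in\psi(G_{>0})$. Since $q$ is finitely supported on $G_{>0}$, the quantity $M:=\max\{\psi h:h\in\mathrm{supp}(q)\}$ is finite. The definition of $\tau_{t}$ forces $\psi Y_{\tau_{t}-1}<t$ while $\psi X_{\tau_{t}}\le M$, whence $\psi(\gamma_{-t}Y_{\tau_{t}})\in[0,M)$ almost surely; an analogous inspection of $\rho$ shows that $\nu$ is also supported in $\psi^{-1}([0,M))$. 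Consequently $B:=G\cap\psi^{-1}([0,M])$ is the desired set, and it is compact because $\psi$ is proper.

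Next I would normalise $f$ to sidestep the fact that general Lipschitz functions on the noncompact group $G$ need not be uniformly bounded. Fixing any basepoint $g_{0}\in B$ and writing $\tilde f:=f-f(g_{0})$, both $\mathbb{E}[f(\gamma_{-t}Y_{\tau_{t}})]$ and $\int f\,d\nu$ differ from their $\tilde f$ analogues by the single constant $f(g_{0})$, which cancels in the quantity the corollary controls. The restrictions $\tilde f|_{B}$, as $f$ ranges over all $\epsilon^{-1}$-Lipschitz functions, form a uniformly bounded (by $\epsilon^{-1}\,\mathrm{diam}(B,d_{op})$) and equicontinuous family on the compact metric space $(B,d_{op})$.

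The final step is to invoke \cite[Lemma A.3.3]{BP}, which packages the classical principle that weak convergence of probability measures with common compact support upgrades to uniform convergence of integrals against any uniformly bounded, equicontinuous family. Feeding that lemma the weak convergence supplied by Proposition~\ref{prop:conv in dist} and the family $\{\tilde f|_{B}\}$ just constructed produces a single threshold $T=T(q,\epsilon)$ that works for every admissible $f$; rewriting the uniform estimate back in terms of $f$ gives the statement of the corollary.

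I expect the main (and essentially only) obstacle is the first step: identifying a compact $B$ that is uniform in $t$. This uses crucially that $q$ is finitely supported on $G_{>0}$, so that the overshoot $\psi Y_{\tau_{t}}-t$ admits the deterministic bound $M$; without finite support one would need a separate tightness argument for $\{\gamma_{-t}Y_{\tau_{t}}\}$. Once $B$ is fixed, the rest is routine, since the normalisation trick reduces the Lipschitz estimate to the standard equicontinuous-family version of weak convergence on a compact metric space.
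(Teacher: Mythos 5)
Your proposal is correct and follows essentially the same route as the paper, which simply observes the existence of a common compact set $B$ carrying $\nu$ and all the laws of $\gamma_{-t}Y_{\tau_{t}}$ and then cites Proposition \ref{prop:conv in dist} together with \cite[Lemma A.3.3]{BP}. Your explicit overshoot bound $\psi(\gamma_{-t}Y_{\tau_{t}})\in[0,M)$ and the normalisation $\tilde f=f-f(g_{0})$ are exactly the details the paper leaves implicit.
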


\section{\label{sec:The-nondiscrete-case}The nondiscrete case}

In this section we consider the situation in which the group $G$
is nondiscrete. In Section \ref{subsec:The-case psi(G)=00003DR} we
show that self-similar measures corresponding to positive probability
vectors are always Rajchman whenever $\psi(G)=\mathbb{R}$. In Section
\ref{subsec:Reduction-to-the disc case} we assume $\psi(G)\ne\mathbb{R}$,
and prove a result which will enable us to make a reduction to the
case in which $G$ is discrete

Recall that $\Phi=\{\varphi_{i}(x)=r_{i}U_{i}x+a_{i}\}_{i=1}^{\ell}$
is an affinely irreducible self-similar IFS on $\mathbb{R}^{d}$,
and that $g_{i}=(\log r_{i}^{-1},U_{i})$ for $1\le i\le\ell$. Throughout
this section let $p=(p_{i})_{i=1}^{\ell}$ be a fixed positive probability
vector. Since we consider $p$ as fixed for this section, usually
the dependence of various parameters on $p$ will not be indicated.
Let $\mu$ be the self-similar measure corresponding to $\Phi$ and
$p$. Let $\sigma\in\mathcal{M}(\mathbb{R}^{d})$ be defined by,
\[
\sigma(f)=\int\int f(x-y)\:d\mu(x)\:d\mu(y)\text{ for }f\in C_{c}(\mathbb{R}^{d})\:.
\]

Set
\[
q:=\sum_{i=1}^{\ell}p_{i}\delta_{g_{i}},
\]
where $\delta_{g_{i}}$ is the Dirac mass at $g_{i}$. By definition
$G$ is the closed subgroup generated be the support of $q$, and
so $q$ is adapted. As before, we write $\lambda$ in place of $\int\psi\:dq$.
Let $\{X_{n}\}_{n\ge1}$, $\{Y_{n}\}_{n\ge0}$, $\{\tau_{t}\}_{t>0}$,
$\rho:G\rightarrow[0,\infty)$ and $\nu\in\mathcal{M}(G)$ be as defined
in Section \ref{subsec:Limit-distribution-of}, where we assume that
these objects are defined with respect to the present choice of $q$.

\subsection{An initial upper bound}

The purpose of this subsection is to prove Lemma \ref{lem:initial upper bd}.
Recall that for $(t,U)=g\in G$ and $x\in\mathbb{R}^{d}$ we write
$x.g:=2^{-t}U^{-1}x$.
\begin{lem}
\label{lem:initial upper bd}For every $\epsilon>0$ there exists
$T>1$ such that the following holds. Let $t\in\psi(G_{\ge T})$ and
let $\xi\in\mathbb{R}^{d}$ be with $|\xi|\le\epsilon^{-1}$, then
\[
|\widehat{\mu}(\xi.\gamma_{-t})|^{2}\le\epsilon+\int\left|\int e^{i\left\langle \xi.g,x\right\rangle }d\nu(g)\right|\:d\sigma(x)\:.
\]
\end{lem}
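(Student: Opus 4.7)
The idea is to combine the self-similarity of $\mu$ at a random stopping-time ``scale'' with Corollary \ref{cor:conv in dist lip}. Fix $t>0$ and let
\[
\mathcal{W}_{t}:=\{i_{1}\ldots i_{n}\in\Lambda^{*}\::\:\psi g_{i_{1}\ldots i_{n-1}}<t\le\psi g_{i_{1}\ldots i_{n}}\}.
\]
Since $\psi g_{i}>0$ for each $i$, this is a (finite) minimal cut-set for $\Lambda^{*}$, and by construction the distribution of $Y_{\tau_{t}}$ is $\sum_{w\in\mathcal{W}_{t}}p_{w}\delta_{g_{w}}$. The self-similarity relation applied at $\mathcal{W}_{t}$ gives $\mu=\sum_{w\in\mathcal{W}_{t}}p_{w}\cdot\varphi_{w}\mu$, and taking Fourier transforms and using $\varphi_{w}(x)=r_{w}U_{w}x+\varphi_{w}(0)$ yields
\[
\widehat{\mu}(\eta)=\sum_{w\in\mathcal{W}_{t}}p_{w}e^{i\langle\eta,\varphi_{w}(0)\rangle}\widehat{\mu}(\eta.g_{w})\text{ for every }\eta\in\mathbb{R}^{d}.
\]
Applying Cauchy--Schwarz in the probability vector $(p_{w})_{w\in\mathcal{W}_{t}}$, and noting that $|\widehat{\mu}|^{2}=\widehat{\sigma}$ (because $\sigma=\mu*\check{\mu}$), I obtain
\[
|\widehat{\mu}(\eta)|^{2}\le\sum_{w\in\mathcal{W}_{t}}p_{w}|\widehat{\mu}(\eta.g_{w})|^{2}=\sum_{w\in\mathcal{W}_{t}}p_{w}\widehat{\sigma}(\eta.g_{w})=\mathbb{E}\bigl[\widehat{\sigma}(\eta.Y_{\tau_{t}})\bigr].
\]

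Next I specialise $\eta=\xi.\gamma_{-t}$, so that $\eta.Y_{\tau_{t}}=\xi.(\gamma_{-t}Y_{\tau_{t}})$, and define $F:G\to\mathbb{R}$ by $F(g)=\widehat{\sigma}(\xi.g)=\int e^{i\langle\xi.g,x\rangle}d\sigma(x)$. The previous display reads
\[
|\widehat{\mu}(\xi.\gamma_{-t})|^{2}\le\mathbb{E}\bigl[F(\gamma_{-t}Y_{\tau_{t}})\bigr],
\]
and the goal is to compare this expectation with $\int F\,d\nu$ via Corollary \ref{cor:conv in dist lip}. To do so I need $F$ to be Lipschitz with respect to $d_{op}$ with a constant controllable in terms of $\epsilon$. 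Since $\sigma$ is supported on the compact set $K-K$, letting $M:=\mathrm{diam}(K-K)$, for every $y,y'\in\mathbb{R}^{d}$,
\[
|\widehat{\sigma}(y)-\widehat{\sigma}(y')|\le\int|\langle y-y',x\rangle|\,d\sigma(x)\le M|y-y'|.
\]
Combined with $|\xi.g-\xi.g'|\le d_{op}(g,g')|\xi|$ (immediate from the definition of $d_{op}$ in \eqref{eq:def of d_op}), this shows that for $|\xi|\le\epsilon^{-1}$ the function $F$ is $(M/\epsilon)$-Lipschitz on $G$ with respect to $d_{op}$.

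Now let $\epsilon':=\min\{\epsilon/2,\epsilon/(2M)\}$ and set $T:=T(q,\epsilon')$ from Corollary \ref{cor:conv in dist lip}. Since $F/(2M/\epsilon)$ is $(\epsilon')^{-1}$-Lipschitz (after rescaling by a harmless factor), the corollary yields
\[
\mathbb{E}\bigl[F(\gamma_{-t}Y_{\tau_{t}})\bigr]\le\int F\,d\nu+\epsilon/2\quad\text{for every }t\in\psi(G_{\ge T}).
\]
By Fubini,
\[
\int F\,d\nu=\int\int e^{i\langle\xi.g,x\rangle}\,d\nu(g)\,d\sigma(x),
\]
and since the left-hand side is real it is bounded above by $\int\bigl|\int e^{i\langle\xi.g,x\rangle}d\nu(g)\bigr|\,d\sigma(x)$. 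Combining the inequalities yields the claimed bound.

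\textbf{Main obstacle.} The conceptual step is the passage from the deterministic self-similarity identity to a Cauchy--Schwarz bound expressed as an expectation over the random walk; after that, the proof is essentially the verification that $F$ is Lipschitz with constant depending linearly on $|\xi|$, so that Corollary \ref{cor:conv in dist lip} can be applied uniformly over $|\xi|\le\epsilon^{-1}$. The only mild subtlety is that the Lipschitz constant of $F$ and the desired error $\epsilon/2$ must be matched by invoking the corollary with a suitably smaller parameter $\epsilon'$.
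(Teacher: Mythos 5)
Your argument is correct and follows essentially the same route as the paper's: a minimal cut-set decomposition plus Jensen/Cauchy--Schwarz reduces $|\widehat{\mu}(\xi.\gamma_{-t})|^{2}$ to $\mathbb{E}\bigl[\widehat{\sigma}(\xi.(\gamma_{-t}Y_{\tau_{t}}))\bigr]$, and the Lipschitz equidistribution statement (Corollary \ref{cor:conv in dist lip}) converts this into $\int\widehat{\sigma}(\xi.g)\,d\nu(g)$ up to an $\epsilon$ error. The one blemish is the constant bookkeeping in the final step: with $\epsilon'=\min\{\epsilon/2,\epsilon/(2M)\}$, applying the corollary to $F/(2M/\epsilon)$ gives $|\mathbb{E}[F]-\int F\,d\nu|<\epsilon'\cdot 2M/\epsilon$, which is bounded by $1$ rather than by $\epsilon/2$; this is repaired by instead taking $\epsilon':=\min\{\epsilon/2,\epsilon/M\}$ and applying the corollary directly to $F$, which is $(\epsilon')^{-1}$-Lipschitz since $(\epsilon')^{-1}\ge M/\epsilon$.
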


We first need the following lemma, whose proof is similar to the proof
of \cite[Lemma 3.1]{LS}.
\begin{lem}
\label{lem:ub on fur by double integral}For every $\xi\in\mathbb{R}^{d}$
and $t\in\psi(G_{>0})$,
\[
|\widehat{\mu}(\xi)|^{2}\le\int\mathbb{E}\left[e^{i\left\langle \xi.Y_{\tau_{t}},x\right\rangle }\right]\:d\sigma(x)\:.
\]
\end{lem}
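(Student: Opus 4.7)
The plan is to iterate the self-similarity relation along the random stopping time and then apply Jensen's inequality. For $t\in\psi(G_{>0})$ let $\mathcal{W}_{t}\subset\Lambda^{*}$ be the set of words $w=i_{1}\cdots i_{n}$ satisfying $r_{i_{1}\cdots i_{n-1}}>2^{-t}\ge r_{i_{1}\cdots i_{n}}$. This is a minimal cut-set for $\Lambda^{*}$, so by the remark following (\ref{eq:SS relation}) we have $\mu=\sum_{w\in\mathcal{W}_{t}}p_{w}\,\varphi_{w}\mu$. Moreover, writing each $X_{k}$ as $g_{j_{k}}$ for a random index $j_{k}\in\Lambda$ distributed according to $p$, the word $W:=j_{1}\cdots j_{\tau_{t}}$ almost surely lies in $\mathcal{W}_{t}$ and equals any given $w\in\mathcal{W}_{t}$ with probability exactly $p_{w}$.

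Next, a straightforward induction shows that for $w=i_{1}\cdots i_{n}$ one has $\varphi_{w}(y)=r_{w}U_{w}y+b_{w}$ for some translation vector $b_{w}\in\mathbb{R}^{d}$, while the direct-product operation on $\mathbb{R}\times O(d)$ gives $g_{i_{1}}\cdots g_{i_{n}}=(\log r_{w}^{-1},U_{w})$. Consequently
\[
\widehat{\varphi_{w}\mu}(\xi)=e^{i\left\langle \xi,b_{w}\right\rangle }\widehat{\mu}(r_{w}U_{w}^{-1}\xi)=e^{i\left\langle \xi,b_{w}\right\rangle }\widehat{\mu}(\xi.g_{w}),
\]
and summing against the weights $p_{w}$ transcribes the deterministic decomposition into the probabilistic identity
\[
\widehat{\mu}(\xi)=\mathbb{E}\bigl[e^{i\left\langle \xi,b_{W}\right\rangle }\widehat{\mu}(\xi.Y_{\tau_{t}})\bigr].
\]

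To conclude, I would apply the Jensen/Cauchy--Schwarz bound $|\mathbb{E}[Z]|^{2}\le\mathbb{E}[|Z|^{2}]$ to the above expectation; the unit-modulus phase $e^{i\left\langle \xi,b_{W}\right\rangle }$ drops out and one obtains $|\widehat{\mu}(\xi)|^{2}\le\mathbb{E}[|\widehat{\mu}(\xi.Y_{\tau_{t}})|^{2}]$. Finally, from the definition of $\sigma$ one has the elementary identity
\[
|\widehat{\mu}(\eta)|^{2}=\widehat{\mu}(\eta)\overline{\widehat{\mu}(\eta)}=\int e^{i\left\langle \eta,x\right\rangle }\,d\sigma(x),
\]
and Fubini's theorem pushes the expectation past the $\sigma$-integral to produce the claimed bound. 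There is no substantive obstacle here: the only real content is recognising that the stopping time $\tau_{t}$ is tailored precisely so that the corresponding cut-set $\mathcal{W}_{t}$ records exactly one level of self-similar iteration under the random walk, which is what lets the deterministic sum over $\mathcal{W}_{t}$ be read as an expectation in $Y_{\tau_{t}}$.
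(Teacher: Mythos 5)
Your proposal is correct and follows essentially the same route as the paper: the stopping time $\tau_{t}$ corresponds exactly to the minimal cut-set $\mathcal{W}_{t}$, Jensen's inequality removes the phases $e^{i\left\langle \xi,b_{w}\right\rangle }$, and the resulting sum of $|\widehat{\mu}(\xi.g_{w})|^{2}$ is rewritten as an integral against $\sigma$ (the paper just performs the Jensen step on the weighted sum and expands each square as a double integral, which is the same computation in a slightly different order).
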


\begin{proof}
Recall that $\Lambda:=\{1,...,\ell\}$, and that for a group $Z$,
elements $\{z_{i}\}_{i=1}^{\ell}\subset Z$ and a word $i_{1}...i_{n}=w\in\Lambda^{*}$,
we write $z_{w}$ in place of $z_{i_{1}}\cdot...\cdot z_{i_{n}}$.
Let,
\[
\mathcal{W}=\{i_{1}...i_{n}\in\Lambda^{*}\::\:\psi(g_{i_{1}...i_{n}})\ge t>\psi(g_{i_{1}...i_{n-1}})\}\:.
\]
Since $\mathcal{W}$ is a minimal cut-set (see Section \ref{subsec:General-notations}),
\[
\mu=\sum_{w\in\mathcal{W}}p_{w}\cdot\varphi_{w}\mu\:.
\]
This implies,
\[
\widehat{\mu}(\xi)=\sum_{w\in\mathcal{W}}p_{w}\int e^{i\left\langle \xi,\varphi_{w}(x)\right\rangle }\:d\mu(x).
\]
Thus be Jensen's inequality,
\begin{eqnarray*}
|\widehat{\mu}(\xi)|^{2} & \le & \sum_{w\in\mathcal{W}}p_{w}\left|\int e^{i\left\langle \xi,\varphi_{w}(x)\right\rangle }\:d\mu(x)\right|^{2}\\
 & = & \sum_{w\in\mathcal{W}}p_{w}\int e^{i\left\langle \xi,\varphi_{w}(x)\right\rangle }\:d\mu(x)\cdot\int e^{-i\left\langle \xi,\varphi_{w}(y)\right\rangle }\:d\mu(y)\\
 & = & \int\int\sum_{w\in\mathcal{W}}p_{w}e^{i\left\langle \xi,r_{w}U_{w}(x-y)\right\rangle }\:d\mu(x)\:d\mu(y)\\
 & = & \int\sum_{w\in\mathcal{W}}p_{w}e^{i\left\langle \xi.g_{w},x\right\rangle }\:d\sigma(x)\:.
\end{eqnarray*}
The lemma now follows since the distribution of $Y_{\tau_{t}}$ is
equal to $\sum_{w\in\mathcal{W}}p_{w}\delta_{g_{w}}$.
\end{proof}
\begin{proof}[Proof of Lemma \ref{lem:initial upper bd}]
Let $\epsilon>0$ and let $t\in\psi(G_{>0})$ be large with respect
to $\epsilon$, $p$ and $\Phi$. Fix $\xi\in\mathbb{R}^{d}$ with
$|\xi|\le\epsilon^{-1}$. By Lemma \ref{lem:ub on fur by double integral},
\[
|\widehat{\mu}(\xi.\gamma_{-t})|^{2}\le\int\mathbb{E}\left[e^{i\left\langle \xi.(\gamma_{-t}Y_{\tau_{t}}),x\right\rangle }\right]\:d\sigma(x)\:.
\]

Recall the metic $d_{op}$ on $G$ defined in (\ref{eq:def of d_op}).
Observe that $\sigma$ is supported on the compact set $K-K$, where
$K$ is the attractor of $\Phi$. From this and since $|\xi|\le\epsilon^{-1}$,
there exists a constant $C>1$, which depends only on $\epsilon$
and $\Phi$, so that for every $x\in\mathrm{supp}(\sigma)$ the map
which takes $g\in G$ to $e^{i\left\langle \xi.g,x\right\rangle }$
is $C$-Lipschitz with respect to $d_{op}$. Thus, by assuming that
$t$ is large enough and by Corollary \ref{cor:conv in dist lip},
\[
|\widehat{\mu}(\xi.\gamma_{-t})|^{2}\le\epsilon+\int\left|\int e^{i\left\langle \xi.g,x\right\rangle }\:d\nu(g)\right|\:d\sigma(x),
\]
which completes the proof of the lemma.
\end{proof}

\subsection{Average Fourier decay of measures on curves}

The purpose of this subsection is to prove Lemma \ref{lem:ub int sig of fur of curve mass}.
It will enable us to make use of the upper bound obtained in the previous
section.

For $y\in\mathbb{R}^{d}$ and $\delta>0$ we write $B(y,\delta)$
for the closed ball in $\mathbb{R}^{d}$ with centre $y$ and radius
$\delta$. Let $\mathbb{RP}^{d-1}$ be the projective space of $\mathbb{R}^{d}$,
and for $0\ne x\in\mathbb{R}^{d}$ write $\overline{x}\in\mathbb{RP}^{d-1}$
for the line spanned by $x$. Recall that for a linear subspace $\mathbb{V}\subset\mathbb{R}^{d}$
we denote its orthogonal projection by $\pi_{\mathbb{V}}$.
\begin{lem}
\label{lem:sig mass of strips}For every $\epsilon>0$ there exists
$\delta>0$ such that,
\[
\pi_{\overline{x}}\sigma(B(y,\delta))<\epsilon\text{ for all }\overline{x}\in\mathbb{RP}^{d-1}\text{ and }y\in\overline{x}\:.
\]
\end{lem}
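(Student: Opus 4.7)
\bigskip

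\noindent\textbf{Proof proposal.} The plan is to argue by compactness and contradiction, reducing the statement to the fact that $\sigma$ assigns zero mass to every affine hyperplane of $\mathbb{R}^d$, which in turn follows from Lemma \ref{lem:0 mass aff sub} by Fubini.

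First I would reformulate the conclusion in terms of slabs. Given a unit vector $u$ spanning $\overline{x}$ and a point $y = tu \in \overline{x}$, the preimage $\pi_{\overline{x}}^{-1}(B(y,\delta)\cap\overline{x})$ is precisely the slab $S(u,t,\delta):=\{z\in\mathbb{R}^{d}:|\langle z,u\rangle-t|\le\delta\}$, so that $\pi_{\overline{x}}\sigma(B(y,\delta))=\sigma(S(u,t,\delta))$. It therefore suffices to show that for every $\epsilon>0$ there exists $\delta>0$ with $\sigma(S(u,t,\delta))<\epsilon$ for all unit $u\in\mathbb{R}^{d}$ and all $t\in\mathbb{R}$.

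Suppose this fails. Then there exist $\epsilon>0$ and sequences of unit vectors $u_{n}$, reals $t_{n}$, and $\delta_{n}\searrow 0$ with $\sigma(S(u_{n},t_{n},\delta_{n}))\ge\epsilon$. Because $\sigma$ has compact support, say contained in a ball of radius $R$, the inequality $\sigma(S(u_{n},t_{n},\delta_{n}))>0$ forces $|t_{n}|\le R+1$ for all $n$ large. Passing to a subsequence, we may assume $u_{n}\to u$ (a unit vector) and $t_{n}\to t$. For any $\eta>0$, once $n$ is sufficiently large every $z\in S(u_{n},t_{n},\delta_{n})\cap\mathrm{supp}(\sigma)$ satisfies
\[
|\langle z,u\rangle-t|\le R\cdot|u-u_{n}|+\delta_{n}+|t_{n}-t|<\eta,
\]
so $S(u_{n},t_{n},\delta_{n})\cap\mathrm{supp}(\sigma)\subset S(u,t,\eta)$. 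Hence $\sigma(S(u,t,\eta))\ge\epsilon$ for every $\eta>0$, and continuity of measure from above gives $\sigma(\mathbb{W})\ge\epsilon$ for the affine hyperplane $\mathbb{W}:=\{z:\langle z,u\rangle=t\}$.

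Finally I would rule this out using Lemma \ref{lem:0 mass aff sub}. Writing $\sigma$ as the distribution of $X-Y$ for independent $X,Y\sim\mu$, Fubini gives
\[
\sigma(\mathbb{W})=\int\mu(\mathbb{W}+y)\:d\mu(y),
\]
and for each $y\in\mathbb{R}^{d}$ the translate $\mathbb{W}+y$ is a proper affine subspace of $\mathbb{R}^{d}$, so $\mu(\mathbb{W}+y)=0$ by Lemma \ref{lem:0 mass aff sub}. Thus $\sigma(\mathbb{W})=0$, contradicting $\sigma(\mathbb{W})\ge\epsilon>0$ and completing the proof. The only step requiring any care is the uniform-in-$u$ compactness argument, which is essentially a standard tightness manipulation; the crux of the lemma is really the use of affine irreducibility through Lemma \ref{lem:0 mass aff sub}.
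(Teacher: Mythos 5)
Your proposal is correct and follows essentially the same route as the paper: a compactness/contradiction argument passing to a limiting hyperplane of positive $\sigma$-measure, which is then ruled out by writing $\sigma(\mathbb{W})=\int\mu(\mathbb{W}+y)\,d\mu(y)$ and invoking Lemma \ref{lem:0 mass aff sub}. The slab reformulation and the explicit Fubini step are just slightly more spelled-out versions of what the paper does.
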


\begin{proof}
Assume by contradiction that the lemma fails for some $\epsilon>0$.
Then for every $n\ge1$ there exist $\overline{x_{n}}\in\mathbb{RP}^{d-1}$
and $y_{n}\in\overline{x_{n}}$ so that $\pi_{\overline{x_{n}}}\sigma(B(y_{n},\frac{1}{n}))\ge\epsilon$.
For every $n\ge1$ we have
\[
B(y_{n},\frac{1}{n})\cap\pi_{\overline{x_{n}}}(\mathrm{supp}(\sigma))\ne\emptyset\:.
\]
From this, and since $\sigma$ is compactly supported, it follows
that there exists $M>1$ so that $y_{1},y_{2},...\in B(0,M)$. Thus,
there exist $\overline{x}\in\mathbb{RP}^{d-1}$, $y\in\overline{x}$
and an increasing sequence $\{n_{k}\}_{k\ge1}\subset\mathbb{Z}_{\ge1}$,
so that $\overline{x_{n_{k}}}\overset{k}{\rightarrow}\overline{x}$
and $y_{n_{k}}\overset{k}{\rightarrow}y$.

For $\eta>0$ and sufficiently large $k\ge1$,
\[
\pi_{\overline{x_{n_{k}}}}^{-1}(B(y_{n_{k}},\frac{1}{n_{k}}))\cap\mathrm{supp}(\sigma)\subset\pi_{\overline{x}}^{-1}(B(y,\eta))\:.
\]
Since $\pi_{\overline{x_{n_{k}}}}\sigma(B(y_{n_{k}},\frac{1}{n_{k}}))\ge\epsilon$,
this implies $\pi_{\overline{x}}\sigma(B(y,\eta))\ge\epsilon$. Since
this holds for all $\eta>0$ we have $\pi_{\overline{x}}\sigma\{y\}\ge\epsilon$.
Hence by the definition of $\sigma$,
\[
\epsilon\le\int\int1_{\pi_{\overline{x}}^{-1}\{y\}}(z-\xi)\:\mu(z)\:d\mu(\xi)=\int\mu(\pi_{\overline{x}}^{-1}\{y+\pi_{\overline{x}}\xi\})\:d\mu(\xi)\:.
\]
Thus, there exists a proper affine subspace $\mathbb{V}$ of $\mathbb{R}^{d}$
so that $\mu(\mathbb{V})>0$. This contradicts Lemma \ref{lem:0 mass aff sub},
which completes the proof.
\end{proof}
\begin{lem}
\label{lem:fur of line curves}For every $\epsilon>0$ there exists
$S>1$ so that the following holds. Let $s\ge S$ and let $u\in\mathbb{R}^{d}$
be with $|u|\ge1$, then
\[
\int\left|\int_{0}^{1}e^{i\left\langle tu,sx\right\rangle }\:dt\right|\:d\sigma(x)<\epsilon\:.
\]
\end{lem}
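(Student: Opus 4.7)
The plan is to combine an elementary bound on the inner oscillatory integral with the strip-mass estimate from Lemma \ref{lem:sig mass of strips}, which is exactly tailored for this kind of application.

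First I would compute the inner integral. Since $\langle tu, sx\rangle = ts\langle u, x\rangle$, a direct evaluation yields
\[
\left|\int_0^1 e^{i\langle tu, sx\rangle}\, dt\right| = \left|\int_0^1 e^{its\langle u,x\rangle}\, dt\right| \le \min\left\{1,\, \frac{2}{s|\langle u, x\rangle|}\right\},
\]
using $|e^{i\theta}-1| \le 2$ for the second bound and the triangle inequality for the first. Writing $\overline{u} \in \mathbb{RP}^{d-1}$ for the line spanned by $u$, we have $|\langle u, x\rangle| = |u|\cdot|\pi_{\overline{u}}(x)|$, and since $|u| \ge 1$ this is at least $|\pi_{\overline{u}}(x)|$. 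Thus the integrand is pointwise bounded by $\min\{1,\, 2/(s|\pi_{\overline{u}}(x)|)\}$.

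Given $\epsilon > 0$, apply Lemma \ref{lem:sig mass of strips} with $\epsilon/3$ in place of $\epsilon$ to obtain $\delta > 0$ such that $\pi_{\overline{v}}\sigma(B(y, \delta)) < \epsilon/3$ for every $\overline{v} \in \mathbb{RP}^{d-1}$ and every $y \in \overline{v}$. Taking $y = 0$ (which lies on every line through the origin), this gives in particular
\[
\sigma\bigl\{x \in \mathbb{R}^d : |\pi_{\overline{u}}(x)| \le \delta\bigr\} = \pi_{\overline{u}}\sigma\bigl(B(0,\delta)\bigr) < \epsilon/3,
\]
with the key point being that the estimate is uniform in the direction $\overline{u}$.

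Finally I would split the integral $\int |\int_0^1 e^{i\langle tu,sx\rangle}\,dt|\, d\sigma(x)$ at the level $|\pi_{\overline{u}}(x)| = \delta$. On the set where $|\pi_{\overline{u}}(x)| \le \delta$ I use the trivial bound by $1$, contributing at most $\epsilon/3$. On the complement I use the decay bound $2/(s\delta)$, contributing at most $2/(s\delta)$, which is below $\epsilon/3$ once $s \ge S := 6/(\epsilon\delta)$. Summing gives a total strictly below $\epsilon$. The only nontrivial step is the uniformity of $\delta$ in the direction $\overline{u}$, which is precisely what Lemma \ref{lem:sig mass of strips} (ultimately relying on the affine irreducibility of $\Phi$ via Lemma \ref{lem:0 mass aff sub}) provides; no further obstacle is expected.
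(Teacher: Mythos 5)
Your proposal is correct and follows essentially the same route as the paper's proof: bound the inner integral by $\min\{1,2/(s|\langle u,x\rangle|)\}$, use $|\langle u,x\rangle|\ge|\pi_{\overline{u}}x|$ for $|u|\ge 1$, invoke Lemma \ref{lem:sig mass of strips} at $y=0$ for a direction-uniform $\delta$, and split the integral at $|\pi_{\overline{u}}x|=\delta$. The only differences are cosmetic (an $\epsilon/3$ split instead of $\epsilon/2$).
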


\begin{proof}
Let $\epsilon>0$, let $\delta>0$ be small with respect to $\epsilon$,
and let $s>4/(\delta\epsilon)$. Fix $u\in\mathbb{R}^{d}$ with $|u|\ge1$.
By Lemma \ref{lem:sig mass of strips} we may assume that $\pi_{\overline{u}}\sigma(B(0,\delta))<\epsilon/2$.
For every $x\in\mathbb{R}^{d}$ with $|\pi_{\overline{u}}x|\ge\delta$,
\[
\left|\int_{0}^{1}e^{i\left\langle tu,sx\right\rangle }\:dt\right|=\left|\frac{1}{s\left\langle u,x\right\rangle }(e^{is\left\langle u,x\right\rangle }-1)\right|\le2s^{-1}\delta^{-1}<\epsilon/2\:.
\]
Hence,
\[
\int\left|\int_{0}^{1}e^{i\left\langle tu,sx\right\rangle }\:dt\right|\:d\sigma(x)<\int1_{\{|\pi_{\overline{u}}x|\ge\delta\}}\left|\int_{0}^{1}e^{i\left\langle tu,sx\right\rangle }\:dt\right|\:d\sigma(x)+\frac{\epsilon}{2}<\epsilon,
\]
which completes the proof of the lemma.
\end{proof}
\begin{lem}
\label{lem:ub int sig of fur of curve mass}For every $\epsilon>0$
there exists $S>1$ so that the following holds. Let $s\ge S$ and
let $c:[0,1]\rightarrow\mathbb{R}^{d}$ be a smooth curve with $|c'(t)|\ge\epsilon$
and $|c''(t)|\le\epsilon^{-1}$ for all $0\le t\le1$, then
\[
\int\left|\int_{0}^{1}e^{i\left\langle c(t),sx\right\rangle }\:dt\right|\:d\sigma(x)<\epsilon\:.
\]
\end{lem}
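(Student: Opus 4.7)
The plan is to reduce Lemma \ref{lem:ub int sig of fur of curve mass} to the straight-line case handled in Lemma \ref{lem:fur of line curves} by localizing and linearizing. Let $M:=\sup\{|x|:x\in\mathrm{supp}(\sigma)\}<\infty$, since $\sigma$ is supported on the compact set $K-K$. Fix $\epsilon>0$, and choose $S$ large at the end. Partition $[0,1]$ into $N$ subintervals $I_{j}=[t_{j},t_{j+1}]$ of length $1/N$, with $N=N(s)$ to be chosen, satisfying roughly $C\epsilon^{-1}\sqrt{s}\le N\le \epsilon s / S_{0}$, where $S_{0}=S_{0}(\epsilon)$ comes from Lemma \ref{lem:fur of line curves}; as computed below, these two constraints are compatible once $s$ is large.

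On each $I_{j}$ Taylor expand
\[
c(t)=c(t_{j})+(t-t_{j})c'(t_{j})+R_{j}(t),\qquad |R_{j}(t)|\le \tfrac{1}{2}\epsilon^{-1}(t-t_{j})^{2}\le \tfrac{\epsilon^{-1}}{2N^{2}}.
\]
Then for $x\in\mathrm{supp}(\sigma)$, factoring out the constant phase $e^{i\langle c(t_{j}),sx\rangle}$ and changing variables $u=N(t-t_{j})$,
\[
\int_{I_{j}} e^{i\langle c(t),sx\rangle}\,dt=\frac{e^{i\langle c(t_{j}),sx\rangle}}{N}\int_{0}^{1}e^{iu\langle c'(t_{j}),(s/N)x\rangle}e^{i\langle \widetilde{R}_{j}(u),sx\rangle}\,du,
\]
with $|\widetilde{R}_{j}(u)|\le \epsilon^{-1}/(2N^{2})$. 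Since $|e^{i\langle \widetilde{R}_{j}(u),sx\rangle}-1|\le |\widetilde R_{j}(u)|\,s|x|\le \epsilon^{-1}sM/(2N^{2})$, replacing the error exponential by $1$ costs at most $\epsilon^{-1}sM/(2N^{2})$ per piece, hence at most $\epsilon^{-1}sM/(2N^{2})$ total after summing $N$ pieces and dividing by $N$; demanding this be $<\epsilon/2$ gives the lower bound $N^{2}>\epsilon^{-2}sM$.

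For the remaining straight-line integral on $I_{j}$, set $v_{j}=c'(t_{j})/\epsilon$, so $|v_{j}|\ge 1$ by hypothesis, and note
\[
\int_{0}^{1}e^{iu\langle c'(t_{j}),(s/N)x\rangle}\,du=\int_{0}^{1}e^{iu\langle v_{j},(\epsilon s/N)x\rangle}\,du.
\]
By Lemma \ref{lem:fur of line curves} applied at scale $\epsilon s/N$ and tolerance $\epsilon/2$, there exists $S_{0}=S_{0}(\epsilon)$ such that, provided $\epsilon s/N\ge S_{0}$,
\[
\int\left|\int_{0}^{1}e^{iu\langle v_{j},(\epsilon s/N)x\rangle}\,du\right|d\sigma(x)<\epsilon/2.
\]
Summing over $j$ after the triangle inequality and using $\sum_{j}1/N=1$, the linearized contribution is bounded by $\epsilon/2$. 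Combining with the Taylor error and choosing $N\sim s^{2/3}$ (for instance), both constraints $N^{2}\ge 2\epsilon^{-2}sM$ and $N\le \epsilon s/S_{0}$ hold for all $s\ge S$ with $S=S(\epsilon,M,S_{0})$ large enough, yielding the claimed bound.

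The main technical point is precisely this balancing of the two errors: the Taylor remainder forces $N$ to grow with $s$ (to kill the $sR$ phase error), while the hypothesis of Lemma \ref{lem:fur of line curves} forces $s/N$ to be large; one must verify that there is a window where both are satisfied, which requires $s$ to exceed a polynomial expression in $\epsilon^{-1}$, $M$, and $S_{0}(\epsilon)$. Everything else is routine, relying only on the compactness of $\mathrm{supp}(\sigma)$ and the uniform bounds $|c'|\ge \epsilon$, $|c''|\le \epsilon^{-1}$ granted by hypothesis.
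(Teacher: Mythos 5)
Your proposal is correct and follows essentially the same route as the paper: partition $[0,1]$ into $N\sim s^{2/3}$ subintervals, linearize via Taylor's theorem so that the phase error $\lesssim sM/(\epsilon N^{2})$ is small while the rescaled frequency $\epsilon s/N\gtrsim s^{1/3}$ is large, and then invoke Lemma \ref{lem:fur of line curves} on each linearized piece. The balancing of the two constraints on $N$ that you highlight is exactly the content of the paper's choice $(n-1)^{3/2}<s\le n^{3/2}$.
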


\begin{proof}
Let $\epsilon>0$ and let $s>1$ be large with respect to $\epsilon$
and $\mathrm{supp}(\sigma)$. Fix a smooth curve $c:[0,1]\rightarrow\mathbb{R}^{d}$
with $|c'(t)|\ge\epsilon$ and $|c''(t)|\le\epsilon^{-1}$ for all
$0\le t\le1$. Let $n\ge1$ be such that $(n-1)^{3/2}<s\le n^{3/2}$.
By assuming that $s$ is large enough,
\begin{equation}
\frac{s}{n}>\frac{1}{2}\frac{s}{n-1}>\frac{1}{2}(n-1)^{1/2}>\frac{1}{4}n^{1/2}\ge\frac{1}{4}s^{1/3}\:.\label{eq:s/n >}
\end{equation}

Firstly, we have
\begin{equation}
\int\left|\int_{0}^{1}e^{i\left\langle c(t),sx\right\rangle }\:dt\right|\:d\sigma(x)\le\sum_{k=0}^{n-1}\int\left|\int_{0}^{1/n}e^{i\left\langle c(t+\frac{k}{n}),sx\right\rangle }\:dt\right|\:d\sigma(x)\:.\label{eq:<=00003D sum int}
\end{equation}
For $0\le k<n$ set $v_{k}=c(\frac{k}{n})$ and $u_{k}=c'(\frac{k}{n})$,
and note that $|u_{k}|\ge\epsilon$. By Taylor's theorem it follows
that for $0\le t\le1/n$,
\[
|c(t+\frac{k}{n})-v_{k}-tu_{k}|\le d\frac{\Vert c''\Vert_{\infty}}{2}n^{-2}\le\frac{d}{2\epsilon n^{2}}\:.
\]
Hence for $x\in\mathrm{supp}(\sigma)$,
\begin{eqnarray*}
\left|e^{i\left\langle c(t+\frac{k}{n}),sx\right\rangle }-e^{i\left\langle v_{k}+tu_{k},sx\right\rangle }\right| & \le & \left|\left\langle c(t+\frac{k}{n})-v_{k}-tu_{k},sx\right\rangle \right|\\
 & \le & n^{3/2}|x|\cdot|c(t+\frac{k}{n})-v_{k}-tu_{k}|\\
 & \le & \frac{d|x|}{2\epsilon n^{1/2}}\le\frac{d|x|}{2\epsilon s^{1/3}}\:.
\end{eqnarray*}
By taking $s$ to be large enough, we may assume that the last expression
is at most $\epsilon$. Thus,
\begin{eqnarray}
\sum_{k=0}^{n-1}\int\left|\int_{0}^{1/n}e^{i\left\langle c(t+\frac{k}{n}),sx\right\rangle }\:dt\right|\:d\sigma(x) & \le & \epsilon+\sum_{k=0}^{n-1}\int\left|\int_{0}^{1/n}e^{i\left\langle v_{k}+tu_{k},sx\right\rangle }\:dt\right|\:d\sigma(x)\nonumber \\
 & = & \epsilon+\sum_{k=0}^{n-1}\int\left|\int_{0}^{1/n}e^{i\left\langle tu_{k},sx\right\rangle }\:dt\right|\:d\sigma(x)\:.\label{eq:sum in <=00003D}
\end{eqnarray}
Additionally, for every $0\le k<n$,
\[
\int\left|\int_{0}^{1/n}e^{i\left\langle tu_{k},sx\right\rangle }\:dt\right|\:d\sigma(x)=\frac{1}{n}\int\left|\int_{0}^{1}e^{i\left\langle t\epsilon^{-1}u_{k},\epsilon sn^{-1}x\right\rangle }\:dt\right|\:d\sigma(x)\:.
\]
From this, since $|\epsilon^{-1}u_{k}|\ge1$, by Lemma \ref{lem:fur of line curves},
by (\ref{eq:s/n >}), and by assuming that $s$ is large enough,
\[
\int\left|\int_{0}^{1/n}e^{i\left\langle tu_{k},sx\right\rangle }\:dt\right|\:d\sigma(x)\le\epsilon/n\:.
\]
From this, (\ref{eq:<=00003D sum int}) and (\ref{eq:sum in <=00003D}),
\[
\int\left|\int_{0}^{1}e^{i\left\langle c(t),sx\right\rangle }\:dt\right|\:d\sigma(x)\le2\epsilon,
\]
which completes the proof of the lemma.
\end{proof}

\subsection{\label{subsec:The-case psi(G)=00003DR}The case $\psi(G)=\mathbb{R}$}

Recall that $\psi(t,U)=t$ for $(t,U)\in G$, that $N$ is the kernel
of $\psi$, and that $\mathbf{m}_{N}$ is the Haar measure of $N$
normalized so that $\mathbf{m}_{N}(N)=1$. By reordering the maps
$\{\varphi_{i}\}_{i=1}^{\ell}$ if necessary, we may assume that $\log r_{i}^{-1}\le\log r_{j}^{-1}$
for $1\le i<j\le\ell$. For $1\le i\le\ell$ set $b_{i}=\log r_{i}^{-1}$
and $\alpha_{i}=\sum_{k=i}^{\ell}p_{k}$, and write $b_{0}=0$. Let
$\rho_{0}:\mathbb{R}\rightarrow[0,\infty)$ be such that,
\[
\rho_{0}(t)=\begin{cases}
0 & \text{ for }t<0\text{ and }t\ge b_{\ell}\\
\alpha_{i}/\lambda & \text{ for }1\le i\le\ell\text{ and }b_{i-1}\le t<b_{i}
\end{cases}\:.
\]

\begin{lem}
\label{lem:nu =00003D double int}Suppose that $\psi(G)=\mathbb{R}$.
Then $\int\rho_{0}(t)\:dt=1$, and for every bounded and continuous
$f:G\rightarrow\mathbb{C}$
\[
\nu(f)=\int\int f(\gamma_{t}n)\rho_{0}(t)\:dt\:d\mathbf{m}_{N}(n)\:.
\]
\end{lem}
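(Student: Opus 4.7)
The plan is to recognize $\nu$ as the pushforward of a product measure and compute it directly via Corollary \ref{cor:G unimodular and m_G=00003D}.

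First, I observe that the density $\rho$ factors through $\psi$. Indeed, since $X_1$ takes the value $g_i=(b_i,U_i)$ with probability $p_i$, for $g\in G$ with $\psi g=t$ one has
\[
\rho(g)=\lambda^{-1}\mathbb{P}\{\psi X_1>t\ge 0\}=\lambda^{-1}\sum_{k:\,b_k>t}p_k\cdot\mathbf{1}_{[0,\infty)}(t).
\]
Using the ordering $b_1\le\ldots\le b_\ell$, the sum equals $\alpha_i$ whenever $t\in[b_{i-1},b_i)$ and vanishes when $t<0$ or $t\ge b_\ell$. Hence $\rho(g)=\rho_0(\psi g)$ for every $g\in G$.

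Next, I would apply Corollary \ref{cor:G unimodular and m_G=00003D}, which gives $\mathbf{m}_G=F(\mathbf{m}_N\times\mathbf{m}_H)$ for $F(n,h)=nh$, together with $\mathbf{m}_H=\gamma\mathbf{m}_{\psi(G)}=\gamma\mathbf{m}_{\mathbb{R}}$ under the hypothesis $\psi(G)=\mathbb{R}$. Since $\psi(n\gamma_t)=t$ for $n\in N$ and $t\in\mathbb{R}$, a change of variables yields, for bounded continuous $f$,
\[
\nu(f)=\int f\,\rho\,d\mathbf{m}_G=\int\int f(n\gamma_t)\,\rho_0(t)\,d\mathbf{m}_N(n)\,dt.
\]

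To convert $f(n\gamma_t)$ into $f(\gamma_t n)$, I would invoke the normality of $N$ in $G$: for each fixed $t$, conjugation $n\mapsto\gamma_t^{-1}n\gamma_t$ is a continuous automorphism of the compact group $N$, so it preserves $\mathbf{m}_N$ (as already used in the proof of Lemma \ref{lem:haar for semi prod}). Substituting and using Fubini then gives
\[
\nu(f)=\int\int f(\gamma_t n)\,\rho_0(t)\,dt\,d\mathbf{m}_N(n),
\]
as required. The only mildly subtle step is this conjugation maneuver; everything else is bookkeeping with the decomposition of $\mathbf{m}_G$.

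Finally, for the normalization, I would compute directly:
\[
\int\rho_0(t)\,dt=\lambda^{-1}\sum_{i=1}^{\ell}(b_i-b_{i-1})\alpha_i=\lambda^{-1}\sum_{i=1}^{\ell}(b_i-b_{i-1})\sum_{k\ge i}p_k=\lambda^{-1}\sum_{k=1}^{\ell}p_k b_k=1,
\]
by reversing the order of summation and recalling $b_0=0$ and $\lambda=\sum_k p_k b_k$. Alternatively, this is automatic since $\nu$ is already known to be a probability measure and $\mathbf{m}_N(N)=1$.
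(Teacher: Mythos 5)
Your proof is correct and follows essentially the same route as the paper: identify $\rho=\rho_0\circ\psi$, decompose $\mathbf{m}_G$ via Corollary \ref{cor:G unimodular and m_G=00003D} with $\mathbf{m}_H=\gamma\mathbf{m}_{\mathbb{R}}$, and pass from $f(n\gamma_t)$ to $f(\gamma_t n)$ using that conjugation by $\gamma_t$ preserves $\mathbf{m}_N$ on the compact group $N$. The only (immaterial) difference is that you verify $\int\rho_0=1$ by direct summation, whereas the paper deduces it from $\int\rho\,d\mathbf{m}_G=1$ and $\psi\mathbf{m}_G=\mathbf{m}_{\mathbb{R}}$ — which is exactly your stated alternative.
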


\begin{proof}
By the definition of $\rho$ (see Section \ref{subsec:Limit-distribution-of})
we have $\rho(g)=\rho_{0}(\psi g)$ for $g\in G$. Since $\psi(G)=\mathbb{R}$,
and by the way we defined $\mathbf{m}_{G}$ (see Section \ref{subsec:General-notations}),
it follows that $\psi\mathbf{m}_{G}$ is equal to the Lebesgue measure
$\mathbf{m}_{\mathbb{R}}$. Thus,
\[
1=\int\rho\:d\mathbf{m}_{G}=\int\rho_{0}\:d\psi\mathbf{m}_{G}=\int\rho_{0}(t)\:dt\:.
\]
Let $f:G\rightarrow\mathbb{C}$ be bounded and continuous. By Corollary
\ref{cor:G unimodular and m_G=00003D} and since $\mathbf{m}_{H}=\gamma\mathbf{m}_{\mathbb{R}}$,
\begin{eqnarray*}
\nu(f)=\int f\rho\:d\mathbf{m}_{G} & = & \int\int f(nh)\rho(nh)\:d\mathbf{m}_{N}(n)\:d\gamma\mathbf{m}_{\mathbb{R}}(h)\\
 & = & \int\int f(n\gamma_{t})\rho_{0}(\psi(n\gamma_{t}))\:d\mathbf{m}_{N}(n)\:dt\:.
\end{eqnarray*}
Since $\psi\circ\gamma=Id$ (see Lemma \ref{lem:proper homo from R})
we have $\psi(n\gamma_{t})=t$ for $n\in N$ and $t\in\mathbb{R}$.
Thus,
\[
\nu(f)=\int\int f(\gamma_{t}(\gamma_{t}^{-1}n\gamma_{t}))\rho_{0}(t)\:d\mathbf{m}_{N}(n)\:dt\:.
\]
Fort every $t\in\mathbb{R}$ the map $n\rightarrow\gamma_{t}^{-1}n\gamma_{t}$
is a continuous automorphism of $N$. Since $N$ is compact this automorphism
preserves $\mathbf{m}_{N}$. The lemma now follows from the last equality.
\end{proof}
\begin{prop}
\label{prop:case Psi(G)=00003DR}Recall that $\mu$ is the self-similar
measure corresponding to $\Phi$ and the positive probability vector
$p$. Suppose that $\psi(G)=\mathbb{R}$, then $\mu$ is a Rajchman
measure. That is, $\widehat{\mu}(\xi)\rightarrow0$ as $\xi\rightarrow\infty$.
\end{prop}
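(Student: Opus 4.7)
The plan is to combine Lemma~\ref{lem:initial upper bd}, Lemma~\ref{lem:ub int sig of fur of curve mass}, and Lemma~\ref{lem:nu =00003D double int}. Given $\eta>0$, I want to find $R>0$ such that $|\widehat{\mu}(\xi)|<\eta$ whenever $|\xi|\geq R$. Since $\psi(G)=\mathbb{R}$, for any fixed $M>0$ and any large $\xi\in\mathbb{R}^{d}$ I can set $t:=\log_{2}(|\xi|/M)$ and $\xi_{0}:=\xi.\gamma_{t}$, so that $|\xi_{0}|=M$ and $\xi_{0}.\gamma_{-t}=\xi$. Choosing a parameter $\epsilon\leq\min(\eta^{2}/2,1/M)$, Lemma~\ref{lem:initial upper bd} then yields, for $t\geq T(\epsilon)$,
\[
|\widehat{\mu}(\xi)|^{2}\leq\frac{\eta^{2}}{2}+\int\left|\int e^{i\left\langle \xi_{0}.g,x\right\rangle }\,d\nu(g)\right|\,d\sigma(x).
\]

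The next step is to control the double integral on the right. Lemma~\ref{lem:nu =00003D double int} expands $\int e^{i\langle\xi_{0}.g,x\rangle}\,d\nu(g)$ as $\int_{N}\int_{0}^{b_{\ell}}e^{i\langle\xi_{0}.(\gamma_{s}n),x\rangle}\rho_{0}(s)\,ds\,d\mathbf{m}_{N}(n)$. Because $\rho_{0}$ is piecewise constant on the intervals $[b_{i-1},b_{i})$, the $s$-integral decomposes into finitely many pieces which, after the change of variables $u=(s-b_{i-1})/(b_{i}-b_{i-1})$, take the form
\[
\frac{\alpha_{i}(b_{i}-b_{i-1})}{\lambda}\int_{0}^{1}e^{i\left\langle c_{n,i}(u),Mx\right\rangle }\,du,\qquad c_{n,i}(u):=2^{-s(u)}U_{n}^{-1}W_{s(u)}^{-1}\hat{\xi}_{0},
\]
where $\hat{\xi}_{0}:=\xi_{0}/M$, $U_{n}$ is the $O(d)$-component of $n\in N$, $W_{s}$ is the $O(d)$-component of $\gamma_{s}$, and $s(u):=b_{i-1}+u(b_{i}-b_{i-1})$. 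The main technical step is to verify that, uniformly in $n$, in the index $i$ (restricted to those with $b_{i}>b_{i-1}$), and in $\hat{\xi}_{0}$, one has $|c_{n,i}'(u)|\geq\delta$ and $|c_{n,i}''(u)|\leq\delta^{-1}$ for some constant $\delta=\delta(\Phi)>0$.

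For this uniform curve estimate I would use that $\gamma$ is a smooth one-parameter subgroup with $\psi\circ\gamma=\mathrm{Id}$ (Lemma~\ref{lem:proper homo from R}), so $W_{s}=e^{s\tilde{X}}$ for some skew-symmetric $\tilde{X}\in\mathfrak{o}(d)$. Setting $B:=-(\ln 2)I-\tilde{X}$ one gets $c_{n,i}(u)=U_{n}^{-1}e^{s(u)B}\hat{\xi}_{0}$. Since $\tilde{X}$ is skew, $\langle Bv,v\rangle=-(\ln 2)|v|^{2}$, and Cauchy--Schwarz gives $|Bv|\geq(\ln 2)|v|$. Moreover, since $(\ln 2)I$ and $\tilde{X}$ commute, $e^{sB}=2^{-s}e^{-s\tilde{X}}$ is $2^{-s}$ times an orthogonal matrix, hence $|e^{sB}v|=2^{-s}|v|$ for every $v$. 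Combined with $s'(u)=b_{i}-b_{i-1}\geq\min\{b_{j}-b_{j-1}:b_{j}>b_{j-1}\}>0$, the required bounds on $c_{n,i}'$ and $c_{n,i}''$ follow with $\delta$ depending only on $\Phi$.

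With the curve estimate in hand, Lemma~\ref{lem:ub int sig of fur of curve mass} bounds each $d\sigma$-integrated piece by $\epsilon_{1}$ whenever $M\geq S(\epsilon_{1})$, for any $\epsilon_{1}\leq\delta$. Summing in $i$ via the identity $\sum_{i}(\alpha_{i}/\lambda)(b_{i}-b_{i-1})=1$ (which is $\int\rho_{0}\,dt=1$, established in Lemma~\ref{lem:nu =00003D double int}) and averaging in $n$ against $\mathbf{m}_{N}(N)=1$, the double integral is at most $\epsilon_{1}$; taking $\epsilon_{1}\leq\eta^{2}/2$ by choosing $M$ sufficiently large, and then $R:=M\cdot 2^{T(\epsilon)}$, one concludes $|\widehat{\mu}(\xi)|^{2}<\eta^{2}$ for all $|\xi|\geq R$. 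The main obstacle is the uniform curve-derivative bound: it relies crucially on the splitting $B=-(\ln 2)I-\tilde{X}$ into its symmetric (positive-definite, scalar) and skew parts, which is what makes $e^{sB}$ a conformal contraction and $B$ bounded below.
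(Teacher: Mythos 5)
Your proposal is correct and follows essentially the same route as the paper: the initial bound of Lemma \ref{lem:initial upper bd}, the disintegration of $\nu$ via Lemma \ref{lem:nu =00003D double int} into pieces over $N$ and the intervals $[b_{i-1},b_{i})$, and the uniform curve estimates feeding into Lemma \ref{lem:ub int sig of fur of curve mass}. The only (cosmetic) difference is in establishing the lower bound on $|c'|$: you use the splitting of the generator into $-(\ln 2)I$ plus a skew part, whereas the paper differentiates $|c(s)|^{2}=2^{-2(s(b_{j}-b_{j-1})+b_{j-1})}$ — these are the same computation.
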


\begin{proof}
Let $\epsilon>0$, let $r>1$ be large with respect to $\epsilon$,
$\Phi$, $p$ and $\gamma$, and let $T>1$ be large with respect
to $r$. Fix $\xi\in\mathbb{R}^{d}$ with $|\xi|\ge2^{T}r$. We prove
the proposition by showing that $|\widehat{\mu}(\xi)|^{2}\le\epsilon$.

Write $\mathbb{S}^{d-1}$ for the unit sphere in $\mathbb{R}^{d}$.
Let $u\in\mathbb{S}^{d-1}$ and $t\ge T$ be such that $\xi=2^{t}ru$.
Note that since $\psi(G)=\mathbb{R}$, the domain of $\gamma$ is
$\mathbb{R}$. Let $U\in O(d)$ be such that $\gamma_{-t}=(-t,U)$.
By Lemma \ref{lem:initial upper bd}, and by assuming that $T$ is
large enough with respect to $r$ and $\epsilon$,
\[
|\widehat{\mu}(\xi)|^{2}=|\widehat{\mu}((rUu).\gamma_{-t})|^{2}\le\epsilon/2+\int\left|\int e^{i\left\langle (rUu).g,x\right\rangle }\:d\nu(g)\right|\:d\sigma(x)\:.
\]
From this, Lemma \ref{lem:nu =00003D double int} and the definition
of $\rho_{0}$,
\begin{eqnarray}
|\widehat{\mu}(\xi)|^{2} & \le & \epsilon/2+\int\int\left|\int e^{i\left\langle (Uu).(\gamma_{s}n),rx\right\rangle }\rho_{0}(s)\:ds\right|\:d\mathbf{m}_{N}(n)\:d\sigma(x)\nonumber \\
 & \le & \epsilon/2+\int\sum_{j=1}^{\ell}\frac{\alpha_{j}}{\lambda}\int\left|\int_{b_{j-1}}^{b_{j}}e^{i\left\langle (Uu).(\gamma_{s}n),rx\right\rangle }\:ds\right|\:d\sigma(x)\:d\mathbf{m}_{N}(n)\:.\label{eq:triple int initial bd}
\end{eqnarray}
For $1\le j\le\ell$, $n\in N$ and $v\in\mathbb{S}^{d-1}$, let $c_{j,n}^{v}:[0,1]\rightarrow\mathbb{R}^{d}$
be such that
\[
c_{j,n}^{v}(s):=v.(\gamma_{s(b_{j}-b_{j-1})+b_{j-1}}n)\:\text{ for }s\in[0,1]\:.
\]
From (\ref{eq:triple int initial bd}) we get,
\begin{equation}
|\widehat{\mu}(\xi)|^{2}\le\epsilon/2+\int\sum_{j=1}^{\ell}\frac{\alpha_{j}(b_{j}-b_{j-1})}{\lambda}\int\left|\int_{0}^{1}e^{i\left\langle c_{j,n}^{Uu}(s),rx\right\rangle }\:ds\right|\:d\sigma(x)\:d\mathbf{m}_{N}(n)\:.\label{eq:triple int bd with c}
\end{equation}

By Lemma \ref{lem:proper homo from R} and since $\psi(G)=\mathbb{R}$,
it follows that $\gamma$ is smooth. Hence, the curves $c_{j,n}^{v}$
are also smooth. Let $C>1$ be large with respect to $\{b_{j}\}_{j=0}^{\ell}$
and the curve $\gamma$. For $x\in\mathbb{R}^{d}$ set $f(x)=|x|^{2}$.
From $\psi\circ\gamma=Id$ and $N\subset\{0\}\times O(d)$, we get
that for every $1\le j\le\ell$, $n\in N$, $v\in\mathbb{S}^{d-1}$
and $s\in[0,1]$,
\[
f(c_{j,n}^{v}(s))=2^{-2(s(b_{j}-b_{j-1})+b_{j-1})}\:.
\]
By differentiating the last equality with respect to $s$ and by assuming
that $C$ is sufficiently large, it follows that for every $1\le j\le\ell$
with $b_{j}>b_{j-1}$,
\[
|\frac{d}{ds}c_{j,n}^{v}(s)|\ge C^{-1}\text{ for }n\in N,\:v\in\mathbb{S}^{d-1}\text{ and }s\in[0,1]\:.
\]
Additionally, by assuming that $C$ is sufficiently large,
\[
|\frac{d^{2}}{ds^{2}}c_{j,n}^{v}(s)|\le C\text{ for }1\le j\le\ell,\:n\in N,\:v\in\mathbb{S}^{d-1}\text{ and }s\in[0,1]\:.
\]
Hence, from Lemma \ref{lem:ub int sig of fur of curve mass}, from
(\ref{eq:triple int bd with c}), and by assuming as we may that $r$
is large enough with respect to $\epsilon$, $C$, $\{\alpha_{j}/\lambda\}_{j=1}^{\ell}$
and $\{b_{j}\}_{j=0}^{\ell}$, we get $|\widehat{\mu}(\xi)|^{2}\le\epsilon$.
This completes the proof of the proposition.
\end{proof}

\subsection{\label{subsec:Reduction-to-the disc case}Reduction to the discrete
case}

Throughout this subsection we assume that $\psi(G)\ne\mathbb{R}$.
Recall that we write $G_{0}$ for the connected component of $G$
containing the identity element. Since $G$ is a Lie group it is locally
path connected, and so $G_{0}$ is an open and closed normal subgroup
of $G$. From $\psi(G)\ne\mathbb{R}$ it follows that $\psi(G)$ is
a discrete subgroup of $\mathbb{R}$. This implies that $N$ is also
open and close in $G$, and so $G_{0}\subset N$. In particular $G_{0}$
is compact.

Let $\mathbb{V}$ be the linear subspace of $\mathbb{R}^{d}$ consisting
of all $x\in\mathbb{R}^{d}$ so that $x.g=x$ for all $g\in G_{0}$.
Recall that $\mathbb{V}^{\perp}$ denotes the orthogonal complement
of $\mathbb{V}$.
\begin{lem}
\label{lem:G inv subspaces}The subspaces $\mathbb{V}$ and $\mathbb{V}^{\perp}$
are $G$-invariant. That is, $v.g\in\mathbb{V}$ and $w.g\in\mathbb{V}^{\perp}$
for all $v\in\mathbb{V}$, $w\in\mathbb{V}^{\perp}$ and $g\in G$.
\end{lem}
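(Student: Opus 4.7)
The plan is to prove $\mathbb{V}$-invariance first using the normality $G_0\triangleleft G$, and then deduce $\mathbb{V}^\perp$-invariance from the $\mathbb{V}$-invariance via a short computation with the inner product.

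For the first part, fix $v\in\mathbb{V}$ and $g\in G$. To show $v.g\in\mathbb{V}$, I need $(v.g).g_0=v.g$ for every $g_0\in G_0$. Since the formula $x.(t,U)=2^{-t}U^{-1}x$ defines a right action (one checks this is a right action against the product rule in $\mathbb{R}\times O(d)$), we have $(v.g).g_0=v.(gg_0)=v.((gg_0g^{-1})g)=(v.(gg_0g^{-1})).g$. Now $gg_0g^{-1}\in G_0$ because $G_0\triangleleft G$, so by the definition of $\mathbb{V}$ we have $v.(gg_0g^{-1})=v$, giving $(v.g).g_0=v.g$ as required. This step is completely formal and the main input is only the normality of $G_0$.

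For the second part, fix $w\in\mathbb{V}^\perp$ and $g=(t,U)\in G$. I need to show $\langle v',w.g\rangle=0$ for every $v'\in\mathbb{V}$. A direct computation gives
\[
\langle v',w.g\rangle=\langle v',2^{-t}U^{-1}w\rangle=2^{-t}\langle Uv',w\rangle=2^{-2t}\langle v'.g^{-1},w\rangle,
\]
using that $v'.g^{-1}=2^{t}Uv'$. By the first part, applied to $g^{-1}\in G$, the vector $v'.g^{-1}$ again lies in $\mathbb{V}$, so pairs to zero with $w\in\mathbb{V}^\perp$. Hence $\langle v',w.g\rangle=0$ and $w.g\in\mathbb{V}^\perp$.

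There is no real obstacle here: the lemma is a purely formal consequence of the normality of $G_0$ (together with the fact that the action is conformal so that orthogonality is preserved up to a positive scalar). The only thing to be careful about is keeping straight the right-action convention and the identity $v'.g^{-1}=2^{t}Uv'$ used in the second step.
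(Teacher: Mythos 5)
Your proof is correct and follows essentially the same route as the paper: the $\mathbb{V}$-invariance is exactly the paper's argument (your $gg_0g^{-1}$ is the paper's $g_0'$ supplied by normality of $G_0$), and for $\mathbb{V}^\perp$ the paper simply declares it obvious while you correctly fill in the computation using that each $g$ acts as a positive scalar times an orthogonal map, hence preserves orthogonality.
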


\begin{proof}
Let $g_{0}\in G_{0}$, $g\in G$ and $v\in\mathbb{V}$ be given. Since
$G_{0}\triangleleft G$, there exists $g_{0}'\in G_{0}$ with $gg_{0}=g_{0}'g$.
Thus,
\[
(v.g).g_{0}=v.(gg_{0})=v.(g_{0}'g)=(v.g_{0}').g=v.g\:.
\]
This shows that $v.g\in\mathbb{V}$, and so $\mathbb{V}$ is $G$-invariant.
It is now obvious that $\mathbb{V}^{\perp}$ is also $G$-invariant.
\end{proof}
The purpose of this subsection is to prove the following proposition.
In Section \ref{sec:Proof-of-the main}, when we complete the proof
of our main result, it will enable us to make a reduction to the case
in which $G$ is discrete.
\begin{prop}
\label{prop:cont case psi(G) not R}Recall that $\mu$ is the self-similar
measure corresponding to $\Phi$ and the positive probability vector
$p$. Suppose that $\psi(G)\ne\mathbb{R}$. Then for every $\epsilon>0$
there exists $R=R(\epsilon,p)>1$ so that $|\widehat{\mu}(\xi)|<\epsilon$
for every $\xi\in\mathbb{R}^{d}$ with $|\pi_{\mathbb{V}^{\perp}}\xi|\ge\max\{R,\epsilon|\pi_{\mathbb{V}}\xi|\}$.
\end{prop}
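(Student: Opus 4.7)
If $G$ is discrete then $G_{0}=\{1_{G}\}$, so $\mathbb{V}=\mathbb{R}^{d}$, $\mathbb{V}^{\perp}=\{0\}$ and the statement is vacuous. Assume therefore $G$ is nondiscrete. Since $\psi(G)$ is discrete, $G_{0}\subset N\subset\{0\}\times O(d)$, so $G_{0}$ is a nontrivial compact connected Lie subgroup of $O(d)$; let $\mathfrak{g}_{0}\subset\mathfrak{o}(d)$ be its Lie algebra and fix a basis $X_{1},\ldots,X_{m}$ of $\mathfrak{g}_{0}$. By definition, $\mathbb{V}$ is the common kernel of the $X_{i}$ acting on $\mathbb{R}^{d}$, so the quadratic form $w\mapsto\sum_{i}|X_{i}w|^{2}$ is positive-definite on $\mathbb{V}^{\perp}$: there exists $c_{0}>0$ with $\sum_{i}|X_{i}w|^{2}\ge c_{0}|w|^{2}$ for every $w\in\mathbb{V}^{\perp}$. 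This is the only algebraic input beyond what is already available in Section \ref{sec:The-nondiscrete-case}.

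Given $\epsilon>0$, pick $r>1$ large with respect to $\epsilon$, $\Phi$, $p$ and the constant $S$ coming from Lemma \ref{lem:ub int sig of fur of curve mass}, and then $R$ large with respect to $r$. For $\xi$ satisfying the hypotheses, choose $t\in\psi(G)=\beta\mathbb{Z}$ such that $2^{-t}|\pi_{\mathbb{V}^{\perp}}\xi|\in[r,2^{\beta}r)$; this is possible whenever $|\pi_{\mathbb{V}^{\perp}}\xi|\ge R$ is large enough, and makes $t$ exceed any preassigned threshold. Let $v:=\xi.\gamma_{t}$, so $\xi=v.\gamma_{-t}$. By the $G$-invariance of $\mathbb{V}^{\perp}$ (Lemma \ref{lem:G inv subspaces}), $|\pi_{\mathbb{V}^{\perp}}v|=2^{-t}|\pi_{\mathbb{V}^{\perp}}\xi|\asymp r$, and $|v|\le 2^{-t}|\pi_{\mathbb{V}^{\perp}}\xi|\sqrt{1+\epsilon^{-2}}\le 2^{\beta}r\sqrt{1+\epsilon^{-2}}$. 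Lemma \ref{lem:initial upper bd} applied to $v$ then yields
\[
|\widehat{\mu}(\xi)|^{2}\le\epsilon/2+\int\left|\int_{G}e^{i\left\langle v.g,x\right\rangle }\,d\nu(g)\right|\,d\sigma(x).
\]

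Since $\psi(G)$ is discrete, $\nu$ is a positive combination of the normalized Haar measures on finitely many $N$-cosets $Ng_{\ast}^{n}$, $0\le n\le N_{0}$. On each coset the inner integral becomes, after absorbing the factor $2^{-n\beta}$ and the orthogonal action by $(W_{\ast}^{n})^{-1}$ (which preserves $\mathbb{V}$ and $\mathbb{V}^{\perp}$), an integral $\int_{N}e^{i\left\langle V^{-1}v_{n},x\right\rangle }\,d\mathbf{m}_{N}(V)$ with $|\pi_{\mathbb{V}^{\perp}}v_{n}|\asymp r$ and $|v_{n}|$ uniformly bounded. Since $G_{0}\triangleleft N$, disintegrating $\mathbf{m}_{N}$ over $N/G_{0}$ further reduces the problem to establishing a bound
\[
\int\left|\int_{G_{0}}e^{i\left\langle h^{-1}w,x\right\rangle }\,d\mathbf{m}_{G_{0}}(h)\right|\,d\sigma(x)<\epsilon/2
\]
uniformly over $w\in\mathbb{R}^{d}$ with $|\pi_{\mathbb{V}^{\perp}}w|\asymp r$ and $|w|$ bounded.

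The final step is to mimic the $\gamma_{s}$-based decomposition from the proof of Proposition \ref{prop:case Psi(G)=00003DR}, using the $1$-parameter subgroups $\eta_{i}(s)=\exp(sX_{i})\subset G_{0}$ in place of $\gamma$ (which is unavailable since $\psi(G)$ is discrete). Disintegrating $\mathbf{m}_{G_{0}}$ along the closure $T_{i}$ of $\eta_{i}(\mathbb{R})$ reduces the inner $G_{0}$-integral to an average of $1$-dimensional oscillatory integrals $\int_{0}^{L_{i}}e^{i\left\langle c_{i,h}(s),x\right\rangle }\,ds$ where $c_{i,h}(s)=\exp(-sX_{i})h^{-1}w$. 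Because $X_{i}$ commutes with $\exp(sX_{i})$, annihilates $\mathbb{V}$ and preserves $\mathbb{V}^{\perp}$, the quantities $|c_{i,h}'(s)|=|X_{i}\pi_{\mathbb{V}^{\perp}}(h^{-1}w)|$ and $|c_{i,h}''(s)|\le\Vert X_{i}\Vert^{2}|\pi_{\mathbb{V}^{\perp}}(h^{-1}w)|$ are constant in $s$; pulling the factor $r\asymp|\pi_{\mathbb{V}^{\perp}}w|$ out of the inner product writes each integral in the form required by Lemma \ref{lem:ub int sig of fur of curve mass} with frequency $r\ge S$. The main obstacle is that the crucial lower bound $|c_{i,h}'(s)|\ge\delta r$ does not hold for any single $i$ uniformly in $h$, since individual elements of $\mathfrak{g}_{0}$ may vanish on parts of $\mathbb{V}^{\perp}$. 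We circumvent this using the inequality $\sum_{i}|X_{i}w'|^{2}\ge c_{0}|w'|^{2}$ established at the outset: for each $h$ there is an index $i=i(h)$ with $|X_{i(h)}\pi_{\mathbb{V}^{\perp}}(h^{-1}w)|\ge\sqrt{c_{0}/m}\,|\pi_{\mathbb{V}^{\perp}}w|$, and partitioning $G_{0}/T_{i}$ according to the realizing index (equivalently, writing $\int_{G_{0}}=\sum_{i}\int_{\{i(h)=i\}}$) lets us apply Lemma \ref{lem:ub int sig of fur of curve mass} on each of the $m$ pieces, producing the required $\epsilon/2$ bound once $r$ (and hence $R$) is taken sufficiently large.
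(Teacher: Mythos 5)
Your overall architecture is the same as the paper's: bound $|\widehat{\mu}(\xi)|^{2}$ via Lemma \ref{lem:initial upper bd}, use the discreteness of $\psi(G)$ to reduce the $\nu$-integral to averages over $G_{0}$-cosets, and then kill the $G_{0}$-average by one-dimensional oscillatory estimates (Lemma \ref{lem:ub int sig of fur of curve mass}) along one-parameter subgroups of $G_{0}$. The choice of $t$, the reduction to vectors $w$ with $|\pi_{\mathbb{V}^{\perp}}w|\asymp r$, and the positive-definiteness of $w\mapsto\sum_{i}|X_{i}w|^{2}$ on $\mathbb{V}^{\perp}$ (an infinitesimal form of the paper's Lemma \ref{lem:lb on der of cur}) are all fine.

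The gap is in the last step. You parametrize the $T_{i}$-orbit through $h$ as $s\mapsto h\exp(sX_{i})$, i.e.\ $c_{i,h}(s)=\exp(-sX_{i})h^{-1}w$, so the speed $|c_{i,h}'(s)|=|X_{i}(h^{-1}w)|$ depends on the coset representative $h$; to get a uniform lower bound you then partition $G_{0}$ by a ``realizing index'' $i(h)$. But the set $\{h:i(h)=i\}$ is not a union of $T_{i}$-orbits: while $h\mapsto|X_{i}(h^{-1}w)|$ is constant on $hT_{i}$, the competing functions $h\mapsto|X_{j}(h^{-1}w)|$ for $j\ne i$ are constant only on $hT_{j}$, so any genuine partition built from all of them fails to be saturated under any single $T_{i}$. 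Disintegrating the restriction of $\mathbf{m}_{G_{0}}$ to such a piece along $T_{i}$ then produces integrals over arbitrary measurable subsets of $T_{i}$-orbits (possibly very short arcs), to which Lemma \ref{lem:ub int sig of fur of curve mass} does not apply. The repair is to put the subgroup on the other side: writing $h=\exp(sX_{i})h_{0}$ gives $h^{-1}w=h_{0}^{-1}\exp(-sX_{i})w$, whose speed $|X_{i}\exp(-sX_{i})w|=|X_{i}w|$ is independent of both $s$ and $h_{0}$. Hence for each $w$ a single index $i$ with $|X_{i}w|\ge\sqrt{c_{0}/m}\,|\pi_{\mathbb{V}^{\perp}}w|$ suffices, the bound $\bigl|\int_{G_{0}}\bigr|\le\int_{G_{0}}\bigl|\int_{T_{i}}\bigr|$ holds with no partition, and (after invoking equidistribution of $\{\exp(sX_{i})\}$ in $T_{i}$, as in Lemma \ref{lem:equi dist of flow}, to replace $\mathbf{m}_{T_{i}}$ by time averages over unit intervals) Lemma \ref{lem:ub int sig of fur of curve mass} applies directly. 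This is precisely the content of the paper's Lemmata \ref{lem:exists G_1} and \ref{lem:ub int m_G_0 =000026 sig}.
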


We start working towards the proof of the proposition. If $\mathbb{V}=\mathbb{R}^{d}$
then the proposition holds trivially, and so we may assume that $\dim\mathbb{V}^{\perp}>0$.
Write $\mathbb{S}_{\mathbb{V}^{\perp}}$ for the unit sphere of $\mathbb{V}^{\perp}$.
That is $\mathbb{S}_{\mathbb{V}^{\perp}}:=\mathbb{V}^{\perp}\cap\mathbb{S}^{d-1}$,
where $\mathbb{S}^{d-1}$ is the unit sphere of $\mathbb{R}^{d}$.

Let $\mathfrak{g}_{0}$ be the Lie algebra of $G_{0}$. For the rest
of this section, fix some compact neighbourhood $B_{\mathfrak{g}_{0}}$
of $0$ in $\mathfrak{g}_{0}$. That is, $B_{\mathfrak{g}_{0}}$ is
a compact subset of $\mathfrak{g}_{0}$ and $0\in\mathrm{Int}(B_{\mathfrak{g}_{0}})$.
Since $B_{\mathfrak{g}_{0}}$ is fixed, usually the dependence of
various parameters on $B_{\mathfrak{g}_{0}}$ will not be indicated.
For $X\in\mathfrak{g}_{0}$ and $y\in\mathbb{R}^{d}$ let $c_{X,y}:\mathbb{R}\rightarrow\mathbb{R}^{d}$
be such that
\[
c_{X,y}(t)=y.\exp(tX)\text{ for }t\in\mathbb{R},
\]
where $\exp:\mathfrak{g}_{0}\rightarrow G_{0}$ is the exponential
map of $G_{0}$. 
\begin{lem}
\label{lem:lb on der of cur}There exists $\delta>0$ so that for
every $y\in\mathbb{S}_{\mathbb{V}^{\perp}}$ there exists $X\in B_{\mathfrak{g}_{0}}$,
such that $|c_{X,y}'(t)|\ge\delta$ for all $t\in\mathbb{R}$.
\end{lem}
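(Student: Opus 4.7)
The plan is to linearise the action of $G_{0}$ on $\mathbb{R}^{d}$ and then reduce to a compactness argument on $\mathbb{S}_{\mathbb{V}^{\perp}}$. Since $G_{0}\subset N\subset\{0\}\times O(d)$, for every $g\in G_{0}$ the map $x\mapsto x.g$ is an orthogonal transformation of $\mathbb{R}^{d}$; let $\Pi(g)\in O(d)$ denote the corresponding matrix. From $x.(gh)=(x.g).h$ one gets $\Pi(gh)=\Pi(h)\Pi(g)$, so although $\Pi$ is an anti-homomorphism, $g\mapsto\Pi(g^{-1})$ is a genuine Lie group homomorphism $G_{0}\to O(d)$. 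Its differential at the identity yields a linear map $\rho:\mathfrak{g}_{0}\to\mathfrak{o}(d)$ into skew-symmetric matrices, which, after absorbing the sign, satisfies
\[
\Pi(\exp(tX))=\exp(t\rho(X))\quad\text{for all }t\in\mathbb{R},\;X\in\mathfrak{g}_{0}.
\]

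With this setup I compute $c_{X,y}(t)=\Pi(\exp(tX))y=\exp(t\rho(X))y$, and hence
\[
c_{X,y}'(t)=\rho(X)\exp(t\rho(X))y=\exp(t\rho(X))\rho(X)y,
\]
where the second equality uses that $\rho(X)$ commutes with its own exponential. Since $\exp(t\rho(X))\in O(d)$ is an isometry, this yields the key identity $|c_{X,y}'(t)|=|\rho(X)y|$ for every $t\in\mathbb{R}$, so the $t$-dependence drops out. Next I identify $\mathbb{V}$ intrinsically in terms of $\rho$: as $G_{0}$ is connected, it is generated by $\exp(\mathfrak{g}_{0})$, so $y\in\mathbb{V}$ iff $\Pi(\exp(tX))y=y$ for all $t$ and $X$, iff $\rho(X)y=0$ for every $X\in\mathfrak{g}_{0}$. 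Thus
\[
\mathbb{V}=\bigcap_{X\in\mathfrak{g}_{0}}\ker\rho(X).
\]

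These two ingredients reduce the lemma to verifying that
\[
\delta:=\inf_{y\in\mathbb{S}_{\mathbb{V}^{\perp}}}\;\sup_{X\in B_{\mathfrak{g}_{0}}}|\rho(X)y|>0,
\]
a standard compactness statement. If it failed, I would pick $y_{n}\in\mathbb{S}_{\mathbb{V}^{\perp}}$ with $\sup_{X\in B_{\mathfrak{g}_{0}}}|\rho(X)y_{n}|\to0$ and pass to a convergent subsequence $y_{n}\to y_{*}\in\mathbb{S}_{\mathbb{V}^{\perp}}$ using compactness of the unit sphere. Then $\rho(X)y_{*}=0$ for every $X\in B_{\mathfrak{g}_{0}}$; since $B_{\mathfrak{g}_{0}}$ is a neighbourhood of $0$ in $\mathfrak{g}_{0}$ it spans $\mathfrak{g}_{0}$, so linearity of $\rho$ gives $\rho(X)y_{*}=0$ for all $X\in\mathfrak{g}_{0}$, whence $y_{*}\in\mathbb{V}\cap\mathbb{V}^{\perp}=\{0\}$, contradicting $|y_{*}|=1$. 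The only mild obstacle is the initial bookkeeping, namely checking that $\rho$ is well defined and linear and that the commutation identity $\rho(X)\exp(t\rho(X))=\exp(t\rho(X))\rho(X)$ is what forces the $t$-independent bound; both are standard Lie-theoretic facts.
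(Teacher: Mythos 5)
Your proof is correct and follows essentially the same route as the paper: first show $|c_{X,y}'(t)|$ is independent of $t$ (the paper does this via $c_{X,y}'(t)=c_{X,y}'(0).\exp(tX)$ and orthogonality of the action, you via the linearisation $c_{X,y}'(t)=\exp(t\rho(X))\rho(X)y$), then run a compactness-and-contradiction argument on $\mathbb{S}_{\mathbb{V}^{\perp}}$ producing a unit vector fixed by all of $G_{0}$, contradicting $y_{*}\in\mathbb{V}^{\perp}$. The only (harmless) variation is that you identify $\mathbb{V}=\bigcap_{X}\ker\rho(X)$ using that $\exp(\mathfrak{g}_{0})$ generates the connected group $G_{0}$, whereas the paper invokes surjectivity of the exponential map for compact connected Lie groups; both are valid.
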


\begin{proof}
For $y\in\mathbb{S}_{\mathbb{V}^{\perp}}$, $X\in B_{\mathfrak{g}_{0}}$
and $t,s\in\mathbb{R}$,
\[
c_{X,y}(t+s)=(y.\exp(sX)).\exp(tX)=c_{X,y}(s).\exp(tX)\:.
\]
Differentiating with respect to $s$ at $s=0$ we get
\begin{equation}
c_{X,y}'(t)=c_{X,y}'(0).\exp(tX)\:.\label{eq:inv of der}
\end{equation}
Since $\exp(tX)\in G_{0}\subset N$ and $N\subset\{0\}\times O(d)$,
it follows that $|c_{X,y}'(t)|=|c_{X,y}'(0)|$. Thus, it suffices
to show that there exists $\delta>0$ so that
\begin{equation}
\text{for every }y\in\mathbb{S}_{\mathbb{V}^{\perp}}\text{ there exists }X\in B_{\mathfrak{g}_{0}}\text{ with }|c_{X,y}'(0)|\ge\delta.\label{eq:cont assum}
\end{equation}

Assume by contradiction that such a $\delta$ does not exist. Let
$M:\mathbb{S}_{\mathbb{V}^{\perp}}\rightarrow[0,\infty)$ be with
\[
M(y)=\max_{X\in B_{\mathfrak{g}_{0}}}\:|c_{X,y}'(0)|\text{ for }y\in\mathbb{S}_{\mathbb{V}^{\perp}}\:.
\]
Since there does not exist $\delta>0$ which satisfies (\ref{eq:cont assum}),
we have
\begin{equation}
\inf_{y\in\mathbb{S}_{\mathbb{V}^{\perp}}}\:M(y)=0\:.\label{eq:inf Q =00003D 0}
\end{equation}
Since $B_{\mathfrak{g}_{0}}$ is compact and since the map which takes
$(X,y)\in\mathfrak{g}_{0}\times\mathbb{S}_{\mathbb{V}^{\perp}}$ to
$|c_{X,y}'(0)|$ is continuous, it is easy to check that $M$ is also
continuous. From this, from (\ref{eq:inf Q =00003D 0}) and since
$\mathbb{S}_{\mathbb{V}^{\perp}}$ is compact, it follows that there
exists $y_{0}\in\mathbb{S}_{\mathbb{V}^{\perp}}$ so that $M(y_{0})=0$.
That is, $c_{X,y_{0}}'(0)=0$ for all $X\in B_{\mathfrak{g}_{0}}$.
Hence, by (\ref{eq:inv of der}) we have for all $t\in\mathbb{R}$
\[
c_{X,y_{0}}'(t)=c_{X,y_{0}}'(0).\exp(tX)=0\:.
\]
This shows that for all $X\in B_{\mathfrak{g}_{0}}$ and $t\in\mathbb{R}$,
\[
y_{0}=c_{X,y_{0}}(0)=c_{X,y_{0}}(t)=y_{0}.\exp(tX)\:.
\]
Since $0\in\mathrm{Int}(B_{\mathfrak{g}_{0}})$ we have $\mathfrak{g}_{0}=\cup_{t>0}tB_{\mathfrak{g}_{0}}$,
and so $y_{0}=y_{0}.\exp(X)$ for all $X\in\mathfrak{g}_{0}$. Additionally,
since $G_{0}$ is a compact and connected Lie group its exponential
map is surjective (see e.g. \cite[Corollary 11.10]{Ha}), that is
$\exp(\mathfrak{g}_{0})=G_{0}$. Thus $y_{0}=y_{0}.g$ for all $g\in G_{0}$,
which gives $y_{0}\in\mathbb{V}$. This contradicts $y_{0}\in\mathbb{S}_{\mathbb{V}^{\perp}}$
and completes the proof of the lemma.
\end{proof}
Given a compact Hausdorff topological group $Y$, let $\mathbf{m}_{Y}$
be its Haar measure normalized so that $\mathbf{m}_{Y}(Y)=1$. The
following equidistribution lemma is standard. We provide the simple
proof for completeness.
\begin{lem}
\label{lem:equi dist of flow}Let $X\in\mathfrak{g}_{0}$, and let
$G_{1}$ be the smallest closed subgroup of $G_{0}$ containing $\{\exp(tX)\}_{t\in\mathbb{R}}$.
Then for every $f\in C_{c}(G_{1})$,
\[
\mathbf{m}_{G_{1}}(f)=\underset{T\rightarrow\infty}{\lim}\frac{1}{T}\int_{0}^{T}f(\exp(tX))\:dt\:.
\]
\end{lem}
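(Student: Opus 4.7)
The plan is to reduce to characters and invoke the Peter--Weyl/Stone--Weierstrass density theorem. Since $G_0$ is compact, so is $G_1$, and in particular $C_c(G_1)=C(G_1)$. Moreover $\{\exp(tX)\}_{t\in\mathbb{R}}$ is a connected abelian subgroup of $G_0$, and the closure of any abelian subgroup in a topological group is abelian; hence $G_1$ is a connected compact abelian Lie group, i.e., a torus. Write $\Lambda_T f := \frac{1}{T}\int_0^T f(\exp(tX))\,dt$; the goal is to show $\Lambda_T f\to \mathbf{m}_{G_1}(f)$ for every $f\in C(G_1)$.

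First I would verify the identity on every continuous character $\chi:G_1\to S^1$. The map $t\mapsto \chi(\exp(tX))$ is a continuous homomorphism $\mathbb{R}\to S^1$, so it equals $e^{i\alpha t}$ for some $\alpha\in\mathbb{R}$. If $\alpha=0$, then $\chi$ is trivial on the dense subgroup $\{\exp(tX)\}_{t\in\mathbb{R}}$ of $G_1$, hence trivial on $G_1$, and both $\Lambda_T\chi$ and $\mathbf{m}_{G_1}(\chi)$ equal $1$. If $\alpha\ne 0$, then direct integration gives $|\Lambda_T\chi|\le 2/(T|\alpha|)\to 0$, while orthogonality of characters on a compact group yields $\mathbf{m}_{G_1}(\chi)=0$. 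By linearity, $\Lambda_T p\to \mathbf{m}_{G_1}(p)$ for every trigonometric polynomial $p$, i.e., every finite linear combination of characters of $G_1$.

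To pass to an arbitrary $f\in C(G_1)$, note that the characters of the compact abelian group $G_1$ form a conjugation-closed subalgebra of $C(G_1)$ that separates points (by Pontryagin duality, or equivalently Peter--Weyl), so by Stone--Weierstrass trigonometric polynomials are uniformly dense in $C(G_1)$. Given $\epsilon>0$, choose a trigonometric polynomial $p$ with $\|f-p\|_\infty<\epsilon$; since $\Lambda_T$ and $\mathbf{m}_{G_1}$ are both probability means on $C(G_1)$, a standard $3\epsilon$ estimate gives $|\Lambda_T f-\mathbf{m}_{G_1}(f)|<3\epsilon$ for all $T$ large enough, completing the proof.

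There is no real obstacle here: the only substantive ingredients are the elementary observation that $G_1$ is a torus (hence all its irreducible representations are one-dimensional characters) and the density of trigonometric polynomials; both are standard. The character computation is explicit, and the extension step is the routine $\epsilon/3$ argument.
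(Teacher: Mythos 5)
Your proof is correct, but it takes a genuinely different route from the paper's. The paper argues softly: it uses compactness and metrizability of $\mathcal{M}(G_{1})$ in the weak-$*$ topology to extract a subsequential limit $\alpha$ of the averaging measures $\alpha_{T}$, shows by the interval-shifting computation that $\alpha$ is invariant under left translation by each $\exp(sX)$, hence (by density of $\{\exp(tX)\}_{t\in\mathbb{R}}$ in $G_{1}$) under all of $G_{1}$, and concludes $\alpha=\mathbf{m}_{G_{1}}$ by uniqueness of the Haar probability measure; since every subsequential limit is the same, the full limit exists. You instead exploit the structure of $G_{1}$: it is a compact connected abelian Lie group (a torus), so it suffices to check the identity on characters — where the computation is explicit, with the nontrivial characters giving an $O(1/(T|\alpha|))$ bound — and then pass to all of $C(G_{1})$ by Stone--Weierstrass and an $\epsilon/3$ estimate. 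The paper's argument buys generality and minimal machinery: it never uses that $G_{1}$ is abelian or a Lie group, only that it is compact and separable with a dense image of the one-parameter subgroup. Your argument buys explicitness (a quantitative rate on each character) at the cost of invoking Pontryagin duality and the torus structure. Both are complete and standard; all the small verifications in your write-up (that $G_{1}$ is the closure of the abelian connected subgroup $\{\exp(tX)\}_{t\in\mathbb{R}}$, that a character trivial on a dense subgroup is trivial, that $\Lambda_{T}$ and $\mathbf{m}_{G_{1}}$ are contractions on $C(G_{1})$) are sound.
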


\begin{proof}
Since $G_{1}$ is compact and separable, $\mathcal{M}(G_{1})$ is
compact and metrizable with respect to the weak-{*} topology. For
$T>0$ let $\alpha_{T}\in\mathcal{M}(G_{1})$ be such that,
\[
\alpha_{T}(f)=\frac{1}{T}\int_{0}^{T}f(\exp(tX))\:dt\text{ for }f\in C_{c}(G_{1})\:.
\]
Let $\{T_{k}\}_{k\ge1}\subset(0,\infty)$ and $\alpha\in\mathcal{M}(G_{1})$
be with $T_{k}\overset{k}{\rightarrow}\infty$ and $\alpha_{T_{k}}\overset{k}{\rightarrow}\alpha$
in the weak-{*} topology. In order to prove the lemma it suffices
to show that $\alpha=\mathbf{m}_{G_{1}}$.

Let $s\in\mathbb{R}$, and write $g=\exp(sX)$. Recall that we write
$L_{g}h=gh$ for $h\in G$. For $f\in C_{c}(G_{1})$,
\begin{eqnarray*}
|\alpha(f)-L_{g}\alpha(f)| & = & \underset{k\rightarrow\infty}{\lim}\:|\alpha_{T_{k}}(f)-\alpha_{T_{k}}(f\circ L_{g})|\\
 & = & \underset{k\rightarrow\infty}{\lim}\:\frac{1}{T_{k}}\left|\int_{0}^{T_{k}}f(\exp(tX))\:dt-\int_{s}^{s+T_{k}}f(\exp(tX))\:dt\right|=0\:.
\end{eqnarray*}
Thus $\alpha=L_{g}\alpha$ for all $g\in\{\exp(tX)\}_{t\in\mathbb{R}}$.
Since $G_{1}$ is the closure of $\{\exp(tX)\}_{t\in\mathbb{R}}$,
it follows that $\alpha=L_{g}\alpha$ for all $g\in G_{1}$. Hence
$\alpha$ is a Haar measure for $G_{1}$. Since it is also a probability
measure we get $\alpha=\mathbf{m}_{G_{1}}$, which completes the proof
of the lemma.
\end{proof}
\begin{lem}
\label{lem:exists G_1}For every $\epsilon>0$ there exists $R>1$
so that the following holds. Let $y\in\mathbb{S}_{\mathbb{V}^{\perp}}$,
then there exists a closed subgroup $G_{1}$ of $G_{0}$ so that for
all $r\ge R$ and $g_{0}\in G_{0}$,
\[
\int\left|\int e^{i\left\langle y.gg_{0},rx\right\rangle }\:d\mathbf{m}_{G_{1}}(g)\right|\:d\sigma(x)<\epsilon\:.
\]
\end{lem}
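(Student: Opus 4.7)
The plan is to combine the derivative lower bound from Lemma \ref{lem:lb on der of cur} with the curve Fourier decay from Lemma \ref{lem:ub int sig of fur of curve mass}, using a Fubini trick that exploits the right-invariance of $\mathbf{m}_{G_{1}}$. Given $y\in\mathbb{S}_{\mathbb{V}^{\perp}}$, I would first apply Lemma \ref{lem:lb on der of cur} to obtain a uniform $\delta>0$ and some $X\in B_{\mathfrak{g}_{0}}$ (depending on $y$) with $|c_{X,y}'(t)|\ge\delta$ for every $t\in\mathbb{R}$. Since $G_{0}\subset N\subset\{0\}\times O(d)$, I can write $X=(0,A)$ with $A\in\mathfrak{o}(d)$, so that $\exp(tX)=(0,\exp(tA))$ and, using (\ref{eq:inv of der}), $|Ay|=|c_{X,y}'(0)|\ge\delta$. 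I then define $G_{1}$ to be the closure of the one-parameter subgroup $\{\exp(tX)\}_{t\in\mathbb{R}}$ inside the compact group $G_{0}$.

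The next step is to establish uniform control on the curves $t\mapsto\tilde{c}_{g}(t):=y.(g\exp(tX)g_{0})$ for $g\in G_{1}$ and $g_{0}\in G_{0}$. Since $G_{1}$ is the closure of an abelian subgroup and the commutator map is continuous, $G_{1}$ is itself abelian; hence every $(0,U)\in G_{1}$ commutes with $\exp(tA)$ in $O(d)$, and differentiating at $t=0$ gives $UA=AU$. Writing $g=(0,U)$ and $g_{0}=(0,V)$ one has
\[
\tilde{c}_{g}(t)=V^{-1}\exp(-tA)U^{-1}y,
\]
and, using $UA=AU$ together with the orthogonality of $V^{-1}\exp(-tA)U^{-1}$, one checks that
\[
|\tilde{c}_{g}'(t)|=|Ay|\ge\delta\qquad\text{and}\qquad|\tilde{c}_{g}''(t)|\le\|A\|^{2}\le C,
\]
where $C:=\sup_{(0,B)\in B_{\mathfrak{g}_{0}}}\|B\|^{2}<\infty$ by compactness of $B_{\mathfrak{g}_{0}}$. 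Crucially, both bounds are uniform in $g\in G_{1}$ and $g_{0}\in G_{0}$.

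With this uniformity in hand, I would use right-invariance of $\mathbf{m}_{G_{1}}$ and Fubini to write, for $f(g):=e^{i\langle y.gg_{0},rx\rangle}$,
\[
\int f\,d\mathbf{m}_{G_{1}}=\int\int_{0}^{1}f(g\exp(tX))\,dt\,d\mathbf{m}_{G_{1}}(g),
\]
and then apply the triangle inequality and Fubini once more to the quantity of interest:
\[
\int\left|\int e^{i\langle y.gg_{0},rx\rangle}\,d\mathbf{m}_{G_{1}}(g)\right|d\sigma(x)\le\int\int\left|\int_{0}^{1}e^{i\langle\tilde{c}_{g}(t),rx\rangle}\,dt\right|d\sigma(x)\,d\mathbf{m}_{G_{1}}(g).
\]
Shrinking $\epsilon$ if necessary so that $\epsilon\le\min(\delta,1/C)$, every $\tilde{c}_{g}$ satisfies the hypotheses of Lemma \ref{lem:ub int sig of fur of curve mass} with parameter $\epsilon$; that lemma then supplies $R=R(\epsilon)>1$ so that for every $r\ge R$ and every $g\in G_{1}$ the inner integral is strictly less than $\epsilon$. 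Since $\mathbf{m}_{G_{1}}$ is a probability measure, the conclusion follows.

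The main obstacle I anticipate is the uniformity of the derivative bounds on $\tilde{c}_{g}$, since $R$ must not depend on $y$, $g$, or $g_{0}$. This uniformity is delicate: it relies on the abelianness of $G_{1}$, which forces $A$ to commute with every orthogonal part appearing in $G_{1}$ and is precisely what makes $|\tilde{c}_{g}'(t)|$ collapse to the $y$-only quantity $|Ay|$; and on the fact that $G_{0}$ acts on $\mathbb{R}^{d}$ through orthogonal operators, which absorbs the $g_{0}$-dependence. Once these ingredients are in place the remainder of the argument is essentially a one-line application of Lemma \ref{lem:ub int sig of fur of curve mass}.
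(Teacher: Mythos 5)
Your proposal is correct, but it takes a genuinely different route from the paper's. The paper invokes its equidistribution lemma (Lemma \ref{lem:equi dist of flow}) to write $\int f\,d\mathbf{m}_{G_{1}}$ as the limit of time averages $\frac{1}{T}\int_{0}^{T}f(\exp(tX))\,dt$, chops $[0,T]$ into unit blocks, and applies Lemma \ref{lem:ub int sig of fur of curve mass} to the curves $t\mapsto c_{X,y}(k+t).g_{0}$; since these are just right-translates of the flow by the orthogonal element $g_{0}$, the needed derivative bounds come directly from Lemma \ref{lem:lb on der of cur} and the invariance identity (\ref{eq:inv of der}), with no commutativity input. You instead exploit the exact right-invariance of $\mathbf{m}_{G_{1}}$ together with Fubini to smuggle a unit-time integral $\int_{0}^{1}\cdot\,dt$ inside the Haar average, which bypasses the ergodic-average limit entirely but forces you to control the curves $t\mapsto y.(g\exp(tX)g_{0})$ for \emph{arbitrary} $g\in G_{1}$; your observation that $G_{1}$, being the closure of a one-parameter group, is abelian — so that $U$ commutes with $A$ and $\exp(-tA)$, collapsing $|\tilde{c}_{g}'(t)|$ to $|Ay|\ge\delta$ — is exactly the extra ingredient needed, and it is sound (note $|Ay|=|c_{X,y}'(0)|\ge\delta$ by Lemma \ref{lem:lb on der of cur}, and the second-derivative bound $\Vert A\Vert^{2}\le C$ is uniform by compactness of $B_{\mathfrak{g}_{0}}$). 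The trade-off: the paper's argument generalizes without any structural assumption on $G_{1}$ and reuses a lemma it needs anyway, while yours is shorter, avoids the weak-{*} compactness argument behind Lemma \ref{lem:equi dist of flow}, and makes the uniformity of $R$ in $y$, $g$, $g_{0}$ completely transparent. The only cosmetic caveat is the parameter bookkeeping at the end: apply Lemma \ref{lem:ub int sig of fur of curve mass} with $\epsilon':=\min\{\epsilon,\delta,C^{-1}\}$ rather than "shrinking $\epsilon$", so that the resulting $R$ depends only on $\epsilon$ and the fixed data.
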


\begin{proof}
It follows from (\ref{eq:inv of der}) that for $X\in B_{\mathfrak{g}_{0}}$,
$y\in\mathbb{S}_{\mathbb{V}^{\perp}}$ and $s,t\in\mathbb{R}$,
\[
c_{X,y}'(t+s)=(c_{X,y}'(0).\exp(sX)).\exp(tX)=c_{X,y}'(s).\exp(tX)\:.
\]
Differentiating with respect to $s$ at $s=0$ and using $\exp(tX)\in G_{0}\subset\{0\}\times O(d)$,
we get $|c_{X,y}''(t)|=|c_{X,y}''(0)|$. From this and since $B_{\mathfrak{g}_{0}}$
and $\mathbb{S}_{\mathbb{V}^{\perp}}$ are compact, it follows that
there exists a constant $C>1$, depending only on $B_{\mathfrak{g}_{0}}$,
so that $|c_{X,y}''(t)|\le C$ for all $X\in B_{\mathfrak{g}_{0}}$,
$y\in\mathbb{S}_{\mathbb{V}^{\perp}}$ and $t\in\mathbb{R}$.

Let $\delta>0$ be as obtained in Lemma \ref{lem:lb on der of cur}.
Let $\epsilon>0$, and let $r>1$ be large with respect to $\epsilon$,
$\delta$ and $C$. Let $y\in\mathbb{S}_{\mathbb{V}^{\perp}}$, then
there exists $X\in B_{\mathfrak{g}_{0}}$ so that $|c_{X,y}'(t)|\ge\delta$
for all $t\in\mathbb{R}$. Let $G_{1}$ be the smallest closed subgroup
of $G_{0}$ containing $\{\exp(tX)\}_{t\in\mathbb{R}}$. For $g_{0}\in G_{0}$
and $k\in\mathbb{Z}_{\ge0}$, let $c_{X,y}^{g_{0},k}:[0,1]\rightarrow\mathbb{R}^{d}$
be with
\[
c_{X,y}^{g_{0},k}(t)=c_{X,y}(k+t).g_{0}\text{ for }t\in[0,1]\:.
\]
Let $g_{0}\in G_{0}$, then by Lemma \ref{lem:equi dist of flow},
\begin{eqnarray}
\int\left|\int e^{i\left\langle y.gg_{0},rx\right\rangle }\:d\mathbf{m}_{G_{1}}(g)\right|\:d\sigma(x) & = & \underset{T\rightarrow\infty}{\lim}\frac{1}{T}\int\left|\int_{0}^{T}e^{i\left\langle c_{X,y}(t).g_{0},rx\right\rangle }\:dt\right|\:d\sigma(x)\nonumber \\
 & \le & \underset{M\rightarrow\infty}{\limsup}\frac{1}{M}\sum_{k=0}^{M-1}\int\left|\int_{0}^{1}e^{i\left\langle c_{X,y}^{g_{0},k}(t),rx\right\rangle }\:dt\right|\:d\sigma(x)\:.\label{eq:ub by lim of avg}
\end{eqnarray}
Since $g_{0}\in\{0\}\times O(d)$, we have for $k\in\mathbb{Z}_{\ge0}$
and $0\le t\le1$
\[
|\frac{d}{dt}c_{X,y}^{g_{0},k}(t)|=|c_{X,y}'(k+t)|\ge\delta\:\text{ and }\:|\frac{d^{2}}{dt^{2}}c_{X,y}^{g_{0},k}(t)|=|c_{X,y}''(k+t)|\le C\:.
\]
From this, from Lemma \ref{lem:ub int sig of fur of curve mass},
by assuming that $r$ is large enough, and by (\ref{eq:ub by lim of avg})
\[
\int\left|\int e^{i\left\langle y.gg_{0},rx\right\rangle }\:d\mathbf{m}_{G_{1}}(g)\right|\:d\sigma(x)<\epsilon,
\]
which completes the proof of the lemma.
\end{proof}
\begin{lem}
\label{lem:ub int m_G_0 =000026 sig}For every $\epsilon>0$ there
exists $R>1$ so that for all $y\in\mathbb{S}_{\mathbb{V}^{\perp}}$
and $r\ge R$,
\[
\int\left|\int e^{i\left\langle y.g_{0},rx\right\rangle }\:d\mathbf{m}_{G_{0}}(g_{0})\right|\:d\sigma(x)<\epsilon\:.
\]
\end{lem}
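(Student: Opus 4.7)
The plan is to reduce Lemma \ref{lem:ub int m_G_0 =000026 sig} to the preceding Lemma \ref{lem:exists G_1} via a standard decomposition of the Haar measure $\mathbf{m}_{G_{0}}$ over cosets of the subgroup $G_{1}$ produced there. The point is that $G_{1}$ depends on $y$ but the constant $R$ from Lemma \ref{lem:exists G_1} is uniform in $y$, which is exactly what we need.

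Given $\epsilon>0$, first extract $R=R(\epsilon)>1$ from Lemma \ref{lem:exists G_1}. Fix $y\in\mathbb{S}_{\mathbb{V}^{\perp}}$ and let $G_{1}\subset G_{0}$ be the closed subgroup associated to $y$ by that lemma. Since $G_{0}$ is compact (as a closed subgroup of $\{0\}\times O(d)$) and $G_{1}$ is a closed subgroup, $\mathbf{m}_{G_{0}}$ disintegrates along the right cosets $G_{1}g_{0}$: for every $F\in C(G_{0})$,
\[
\int_{G_{0}}F(g')\,d\mathbf{m}_{G_{0}}(g')=\int_{G_{1}\backslash G_{0}}\int_{G_{1}}F(gg_{0})\,d\mathbf{m}_{G_{1}}(g)\,d\mathbf{m}_{G_{1}\backslash G_{0}}(G_{1}g_{0}),
\]
where the quotient Haar measure $\mathbf{m}_{G_{1}\backslash G_{0}}$ is a probability measure by our normalizations.

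Applying this to $F(g')=e^{i\langle y.g',rx\rangle}$, then taking absolute values and using the triangle inequality, gives for each $x$
\[
\left|\int_{G_{0}}e^{i\langle y.g',rx\rangle}\,d\mathbf{m}_{G_{0}}(g')\right|\le\int_{G_{1}\backslash G_{0}}\left|\int_{G_{1}}e^{i\langle y.(gg_{0}),rx\rangle}\,d\mathbf{m}_{G_{1}}(g)\right|\,d\mathbf{m}_{G_{1}\backslash G_{0}}(G_{1}g_{0}).
\]
Integrating against $\sigma$ and applying Fubini (the integrand is bounded by $1$ and everything is compactly supported), we obtain
\[
\int\left|\int_{G_{0}}e^{i\langle y.g',rx\rangle}\,d\mathbf{m}_{G_{0}}(g')\right|\,d\sigma(x)\le\int_{G_{1}\backslash G_{0}}\left(\int\left|\int_{G_{1}}e^{i\langle y.(gg_{0}),rx\rangle}\,d\mathbf{m}_{G_{1}}(g)\right|\,d\sigma(x)\right)d\mathbf{m}_{G_{1}\backslash G_{0}}(G_{1}g_{0}).
\]
For each $r\ge R$ and each coset representative $g_{0}\in G_{0}$, Lemma \ref{lem:exists G_1} bounds the inner double integral by $\epsilon$. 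Since $\mathbf{m}_{G_{1}\backslash G_{0}}$ has total mass $1$, this yields the desired estimate $<\epsilon$, completing the proof.

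There is no real obstacle here: the only point that has to be checked carefully is that the $R$ produced by Lemma \ref{lem:exists G_1} is independent of $y$ (so that it remains uniform after $y$ is fixed and $G_{1}$ chosen), and that the coset disintegration respects the side on which $G_{1}$ is multiplied so that the integrand matches the form $y.(gg_{0})$ appearing in Lemma \ref{lem:exists G_1}; both are immediate from the setup.
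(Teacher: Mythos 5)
Your proof is correct and follows essentially the same route as the paper: both reduce to Lemma \ref{lem:exists G_1} (whose $R$ is indeed uniform in $y$) by averaging over $G_{1}$-translates and applying the triangle inequality. The only cosmetic difference is that you invoke the Weil disintegration of $\mathbf{m}_{G_{0}}$ over the quotient $G_{1}\backslash G_{0}$, while the paper uses the simpler identity $\int f\:d\mathbf{m}_{G_{0}}=\int\int f(gg_{0})\:d\mathbf{m}_{G_{1}}(g)\:d\mathbf{m}_{G_{0}}(g_{0})$, which needs only the invariance of $\mathbf{m}_{G_{0}}$ and the fact that $\mathbf{m}_{G_{1}}$ is a probability measure.
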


\begin{proof}
Let $\epsilon>0$, let $r>1$ be large with respect to $\epsilon$,
and let $y\in\mathbb{S}_{\mathbb{V}^{\perp}}$. By Lemma \ref{lem:exists G_1}
and by assuming that $r$ is large enough, it follows that there exists
a closed subgroup $G_{1}$ of $G_{0}$ so that for all $g_{0}\in G_{0}$
\begin{equation}
\int\left|\int e^{i\left\langle y.gg_{0},rx\right\rangle }\:d\mathbf{m}_{G_{1}}(g)\right|\:d\sigma(x)<\epsilon\:.\label{eq:from prev lem}
\end{equation}
Additionally, for all $f\in C_{c}(G_{0})$
\[
\int f\:d\mathbf{m}_{G_{0}}=\int\int f(gg_{0})\:d\mathbf{m}_{G_{1}}(g)\:d\mathbf{m}_{G_{0}}(g_{0})\:.
\]
Hence,
\begin{eqnarray*}
\int\left|\int e^{i\left\langle y.g_{0},rx\right\rangle }\:d\mathbf{m}_{G_{0}}(g_{0})\right|d\sigma(x) & = & \int\left|\int\int e^{i\left\langle y.gg_{0},rx\right\rangle }\:d\mathbf{m}_{G_{1}}(g)\:d\mathbf{m}_{G_{0}}(g_{0})\right|d\sigma(x)\\
 & \le & \int\int\left|\int e^{i\left\langle y.gg_{0},rx\right\rangle }\:d\mathbf{m}_{G_{1}}(g)\right|d\sigma(x)\:d\mathbf{m}_{G_{0}}(g_{0})\:.
\end{eqnarray*}
This together with (\ref{eq:from prev lem}) completes the proof of
the lemma.
\end{proof}
\begin{proof}[Proof of Proposition \ref{prop:cont case psi(G) not R}]
Let $\epsilon>0$, let $r>1$ be large with respect to $\epsilon$,
$p$ and $\Phi$, and let $T>1$ be large with respect to $r$. Fix
$\xi\in\mathbb{R}^{d}$ with
\[
|\pi_{\mathbb{V}^{\perp}}\xi|\ge\max\{2^{T}r,\epsilon|\pi_{\mathbb{V}}\xi|\}\:.
\]
Since $\psi(G)\ne\mathbb{R}$, there exists $\beta>0$ so that $\psi(G)=\beta\mathbb{Z}$.
Let $t\in\psi(G_{\ge T})$ be such that
\begin{equation}
2^{t-\beta}r<|\pi_{\mathbb{V}^{\perp}}\xi|\le2^{t}r\:.\label{eq:choise of t}
\end{equation}
Since $\psi\gamma_{t}=t$,
\[
|\xi.\gamma_{t}|=|2^{-t}\xi|\le2^{-t}(|\pi_{\mathbb{V}^{\perp}}\xi|+|\pi_{\mathbb{V}}\xi|)\le2^{-t}|\pi_{\mathbb{V}^{\perp}}\xi|(1+\epsilon^{-1})\le r(1+\epsilon^{-1})\:.
\]
By Lemma \ref{lem:initial upper bd}, from $|\xi.\gamma_{t}|\le r(1+\epsilon^{-1})$,
and by assuming that $T$ is large enough with respect to $r$ and
$\epsilon$,
\[
|\widehat{\mu}(\xi)|^{2}=|\widehat{\mu}((\xi.\gamma_{t}).\gamma_{-t})|^{2}\le\epsilon+\int\left|\int e^{i\left\langle \xi.(\gamma_{t}g),x\right\rangle }\:d\nu(g)\right|\:d\sigma(x)\:.
\]
From this, since $\nu=\rho\:d\mathbf{m}_{G}$, and since
\[
\mathbf{m}_{G}(f)=\int\int f(gg_{0})\:d\mathbf{m}_{G_{0}}(g_{0})\:d\mathbf{m}_{G}(g)\text{ for }f\in C_{c}(G),
\]
we get,
\begin{eqnarray}
|\widehat{\mu}(\xi)|^{2} & \le & \epsilon+\int\left|\int e^{i\left\langle \xi.(\gamma_{t}g),x\right\rangle }\rho(g)\:d\mathbf{m}_{G}(g)\right|\:d\sigma(x)\nonumber \\
 & \le & \epsilon+\int\rho(g)\int\left|\int e^{i\left\langle \xi.(\gamma_{t}gg_{0}),x\right\rangle }\:d\mathbf{m}_{G_{0}}(g_{0})\right|\:d\sigma(x)\:d\mathbf{m}_{G}(g)\;.\label{eq:ub 3 int G_0 inside}
\end{eqnarray}
We have also used here the fact that $\rho(gg_{0})=\rho(g)$ for $g\in G$
and $g_{0}\in G_{0}$, which holds since $G_{0}\subset N$.

Let $g\in G$ and write $w_{g}=(\pi_{\mathbb{V}^{\perp}}\xi).\gamma_{t}g$
and $y_{g}=w_{g}/|w_{g}|$. It follows from Lemma \ref{lem:G inv subspaces}
that the subspaces $\mathbb{V}$ and $\mathbb{V}^{\perp}$ are $\gamma_{t}g$-invariant.
Hence,
\[
\pi_{\mathbb{V}^{\perp}}(\xi.\gamma_{t}g)=\pi_{\mathbb{V}^{\perp}}((\pi_{\mathbb{V}}\xi).\gamma_{t}g+(\pi_{\mathbb{V}^{\perp}}\xi).\gamma_{t}g)=w_{g}\:.
\]
Additionally, given $g_{0}\in G_{0}$ it follows by the definition
of $\mathbb{V}$ that $v.g_{0}=v$ for $v\in\mathbb{V}$. Thus,
\[
\xi.(\gamma_{t}gg_{0})=(\pi_{\mathbb{V}}(\xi.\gamma_{t}g)+\pi_{\mathbb{V}^{\perp}}(\xi.\gamma_{t}g)).g_{0}=\pi_{\mathbb{V}}(\xi.\gamma_{t}g)+w_{g}.g_{0}\:.
\]
From this and (\ref{eq:ub 3 int G_0 inside}),
\begin{equation}
|\widehat{\mu}(\xi)|^{2}\le\epsilon+\int\rho(g)\int\left|\int e^{i\left\langle y_{g}.g_{0},|w_{g}|x\right\rangle }\:d\mathbf{m}_{G_{0}}(g_{0})\right|\:d\sigma(x)\:d\mathbf{m}_{G}(g)\:.\label{eq:ub with y_g =000026 w_g}
\end{equation}
By the definition of $\rho$ there exists a constant $C>1$, which
depends only on $\Phi$, so that $\psi g\le C$ for all $g\in G$
with $\rho(g)\ne0$. For such a $g$ it follows by (\ref{eq:choise of t})
that,
\[
|w_{g}|=|(\pi_{\mathbb{V}^{\perp}}\xi).\gamma_{t}g|=2^{-\psi g}2^{-t}|\pi_{\mathbb{V}^{\perp}}\xi|>2^{-C-\beta}r\:.
\]
Thus, from Lemma \ref{lem:ub int m_G_0 =000026 sig}, since $y_{g}\in\mathbb{S}_{\mathbb{V}^{\perp}}$,
and by assuming as we may that $r$ is sufficiently large with respect
to $\epsilon$, $\beta$ and $C$, we get
\[
\int\left|\int e^{i\left\langle y_{g}.g_{0},|w_{g}|x\right\rangle }\:d\mathbf{m}_{G_{0}}(g_{0})\right|\:d\sigma(x)\le\epsilon\:.
\]
This together with (\ref{eq:ub with y_g =000026 w_g}) shows that
$|\widehat{\mu}(\xi)|^{2}\le2\epsilon$, which completes the proof
of the Proposition.
\end{proof}

\section{\label{sec:The-discrete-case}The discrete case}

Throughout this section we always assume that the group $G$ is discrete.
That is, we assume that the subspace topology on $G$, inherited from
$\mathbb{R}\times O(d)$, is equal to the discrete topology. Since
$N$ is a compact subset of a discrete space, it holds that $N$ is
finite. From the discreteness of $G$ it also follows that there exists
$\beta>0$ with $\psi(G)=\beta\mathbb{Z}$. Hence, $l_{i}:=\psi(g_{i})/\beta$
is a positive integer for all $1\le i\le\ell$.

Fix some $h\in G$ with $\psi(h)=\beta$, and recall that $H:=\gamma\circ\psi(G)$.
By Lemma \ref{lem:proper homo from R} we may assume that $\gamma_{\beta}=h$,
which gives $H=\{h^{j}\}_{j\in\mathbb{Z}}$. By Lemma \ref{lem:G splits and F iso}
and since $N\triangleleft G$, it follows that for every $g\in G$
\begin{equation}
g=nh^{l}=h^{l}n'\text{ for some }l\in\mathbb{Z}\text{ and }n,n'\in N\:.\label{eq:rep of each g}
\end{equation}
Let $U\in O(d)$ be such that $h=(\beta,U)$. Set $A:=2^{-\beta}U$
and $B:=A^{*}$, where $A^{*}=2^{-\beta}U^{-1}$ is the transpose
of $A$. Write, 
\[
N_{0}:=\{V\in O(d)\::\:(0,V)\in N\},
\]
so that $N_{0}$ is a finite subgroup of $O(d)$. Since $N\triangleleft G$,
we also have $A^{-1}N_{0}A=N_{0}$. From (\ref{eq:rep of each g})
it follows that for each $1\le i\le\ell$,
\begin{equation}
r_{i}U_{i}=VA^{l_{i}}=A^{l_{i}}V'\text{ for some }V,V'\in N_{0}\:.\label{eq:rep of riUi}
\end{equation}

The following proposition is the main result of this section. Its
proof is a nontrivial extension of an argument used in \cite{VY}
to prove one of the directions of Theorem \ref{thm:bermont} stated
in the introduction. Recall that $\Phi=\{\varphi_{i}(x)=r_{i}U_{i}x+a_{i}\}_{i=1}^{\ell}$
is an affinely irreducible self-similar IFS on $\mathbb{R}^{d}$,
and recall from Section \ref{subsec:The-main-result.} the definition
of a P.V. $k$-tuple. We shall consider $A$ as a linear operator
on $\mathbb{C}^{d}$ in the natural way, that is by setting $A(x+iy):=Ax+iAy$
for $x,y\in\mathbb{R}^{d}$.
\begin{prop}
\label{prop:main disc case}Suppose that $G$ is discrete and that
$a_{1}=0$. Moreover, assume that there exists a probability vector
$p=(p_{i})_{i=1}^{\ell}>0$ so that the self-similar measure corresponding
to $\Phi$ and $p$ is non-Rajchman. Let $A$ and $N_{0}$ be as defined
above. Then there exist $k\ge1$, $\theta_{1},...,\theta_{k}\in\mathbb{C}$
and $\zeta_{1},...,\zeta_{k}\in\mathbb{C}^{d}\setminus\{0\}$, so
that
\begin{enumerate}
\item \label{enu:dic P.V. tup}$\{\theta_{1},...,\theta_{k}\}$ is a P.V.
$k$-tuple;
\item \label{enu:disc eigen vec}$A^{-1}\zeta_{j}=\theta_{j}\zeta_{j}$
for $1\le j\le k$;
\item \label{enu:poly disc case}for every $1\le i\le\ell$ and $V\in N_{0}$
there exists $P_{i,V}\in\mathbb{Q}[X]$ so that $\left\langle Va_{i},\zeta_{j}\right\rangle =P_{i,V}(\theta_{j})$
for all $1\le j\le k$;
\end{enumerate}
\end{prop}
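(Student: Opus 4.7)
The plan is to extend the Varjú--Yu argument of \cite{VY}, which established the analogous one-dimensional result (one direction of Theorem \ref{thm:bermont}), and to conclude via a P.V. $k$-tuple generalization of Pisot's classical theorem essentially contained in \cite{Pi}. The argument has three stages: extract a normalized sequence of nondecaying frequencies, convert non-decay into $\ell^{2}$ summability of phase defects, then read off the P.V. tuple and the polynomial relations.

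First, I would exploit the Fourier self-similarity. Iterating (\ref{eq:SS relation}) along a minimal cut-set $\mathcal{W}_{T}$ gives
\[
\widehat{\mu}(\xi) \;=\; \sum_{w\in\mathcal{W}_{T}} p_{w}\,e^{i\langle\xi,S_{w}\rangle}\,\widehat{\mu}(\xi.g_{w}),\qquad S_{w}:=\varphi_{w}(0).
\]
Since $A=2^{-\beta}U$ is a normal operator, its eigenvectors coincide with those of $A^{*}$ (with reciprocal eigenvalues), so it is equivalent to analyze the eigenstructure of $A^{-1}$ in the Fourier domain. Using that $\mu$ is non-Rajchman, choose $\xi_{n}$ with $|\xi_{n}|\to\infty$ and $|\widehat{\mu}(\xi_{n})|\ge\delta>0$. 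By Lemma \ref{lem:G splits and F iso} every $\xi\in\mathbb{R}^{d}$ can be expressed as $\eta.(Vh^{-T})$ with $T\in\mathbb{Z}_{\ge0}$, $V\in N_{0}$ and $\eta$ lying in a compact fundamental domain for the joint $\langle A,N_{0}\rangle$-action; apply this to $\xi_{n}$ and pass to a subsequence so that $T_{n}\to\infty$, $V_{n}\equiv V_{\star}$ and $\eta_{n}\to\eta_{\star}\neq0$.

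Second, substitute $\xi=\xi_{n}$ into the cut-set identity at level $T=T_{n}$, square, apply Jensen's inequality, and invoke the discrete form of Proposition \ref{prop:conv in dist} (in which the limit $\nu$ is a finite convex combination of Dirac masses). The lower bound $|\widehat{\mu}(\xi_{n})|\ge\delta$ forces the phases $e^{i\langle\xi_{n},S_{w}-S_{w'}\rangle}$ to cluster near $1$ on a positive-density set of pairs $(w,w')\in\mathcal{W}_{T_{n}}\times\mathcal{W}_{T_{n}}$. Combining this with the recursion $S_{iw}=a_{i}+r_{i}U_{i}S_{w}$ and the hypothesis $a_{1}=0$ (so $S_{1^{n}}\equiv0$ provides a canonical base point inside every cut-set), the pairwise concentration promotes to
\[
\sum_{T=0}^{\infty}\bigl\|\tfrac{1}{2\pi}\langle\eta_{\star},VA^{-T}a_{i}\rangle\bigr\|^{2} \;<\; \infty\qquad(V\in N_{0},\ 1\le i\le\ell),
\]
where $\|\cdot\|$ denotes distance to $\mathbb{Z}$. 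This step is the main obstacle. In dimension one, \cite{VY} derive the analogous summability by sharp control of the renewal $\gamma_{-t}Y_{\tau_{t}}$ outside the support of $\nu$; in our setting one must simultaneously handle the finite rotation group $N_{0}$ and every eigendirection of $A^{-1}$, which requires a careful double count over $\mathcal{W}_{T_{n}}\times\mathcal{W}_{T_{n}}$ using that $A^{-1}N_{0}A=N_{0}$.

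Third, decompose $\eta_{\star}=\sum_{j}c_{j}\zeta_{j}$ in an orthonormal eigenbasis of the normal operator $A^{-1}$, so $A^{-1}\zeta_{j}=\theta_{j}\zeta_{j}$ with $|\theta_{j}|=2^{\beta}>1$. Expanding $\langle\eta_{\star},VA^{-T}a_{i}\rangle$ in this basis gives a linear combination of $\theta_{j}^{T}$ (up to conjugation), and the $\ell^{2}$ summability above, combined with the P.V. $k$-tuple generalization of Pisot's theorem from \cite{Pi}, forces the set $\{\theta_{j}:c_{j}\neq0\}$ to be a P.V. $k$-tuple and each coefficient $\langle Va_{i},\zeta_{j}\rangle$ to equal $P_{i,V}(\theta_{j})$ for some $P_{i,V}\in\mathbb{Q}[X]$. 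This yields conditions (\ref{enu:dic P.V. tup})--(\ref{enu:poly disc case}); the variation of $V$ over the finite group $N_{0}$ is consistent because $A^{-1}VA\in N_{0}$ for every $V\in N_{0}$, so all $V$-twists feed into the same eigensystem and produce compatible polynomial identities.
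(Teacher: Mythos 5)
Your overall skeleton matches the paper's: normalize a non-decaying frequency sequence by powers of $B$ to get a limit $\xi\ne0$, derive an $\ell^{2}$ bound $\sum_{n}\Vert\langle\cdot,B^{-n}\xi\rangle\Vert^{2}<\infty$, decompose $\xi$ over the eigenspaces of the normal operator $B^{-1}$, and finish with the P.V.-tuple generalization of Pisot's theorem. However, the step you yourself flag as "the main obstacle" is exactly where the proof lives, and the mechanism you sketch for it would not work. Non-decay of $\widehat{\mu}$ combined with "squaring, Jensen, and phase clustering on a positive-density set of pairs" can at best show that the relevant distances to $\mathbb{Z}$ are small along some (sub)sequence of scales; it cannot produce the \emph{summability} $\sum_{n}\Vert\cdot\Vert^{2}<\infty$ that Pisot's theorem requires (mere smallness, or even convergence to $0$, of $\Vert\lambda\theta^{n}\Vert$ is the open Pisot--Vijayaraghavan problem and does not imply $\theta$ is Pisot). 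The paper gets summability from a genuinely multiplicative structure: it constructs (Lemma \ref{lem:good fam of words}) a minimal cut-set containing pairs $u_{j}=j1^{b}$, $u_{j}'=1^{b}j$ with $g_{u_{j}}=g_{u_{j}'}$ but $\varphi_{u_{j}}(0)-\varphi_{u_{j}'}(0)=a_{j}-A^{L}a_{j}$ (this uses finiteness of $N$ to find commutation relations, and is where $a_{1}=0$ actually enters -- not as a "base point" $S_{1^{n}}=0$); it then shows $|\widehat{\mu}(\xi)|\le\mathbb{E}[Z_{\xi,\tau_{\beta}(k)}]$ where $Z_{\xi,n}$ is a product of factors $\alpha_{I_{m}}(\xi.Y_{m-1})\le1$ (a submartingale plus Doob's optional stopping, Lemma \ref{lem:from doob's}), and uses the renewal theorem through Lemma \ref{lem:lb on prob =00003D g} to guarantee that at every scale $k$ the walk passes through the specific element $h^{k-l_{u}}n_{V}$ with probability uniformly bounded below. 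Iterating Lemma \ref{lem:ub on cond exp} turns $|\widehat{\mu}(2\pi\xi)|\ge\epsilon$ into $\sum_{j}\Psi(VB^{j}\xi)\le C$, hence the required $\ell^{2}$ bound. None of this is recoverable from a pairwise double count over $\mathcal{W}_{T}\times\mathcal{W}_{T}$ at a single scale.

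Two further points you gloss over. First, the quantity whose phase defects are summable is $\langle VB^{j}\xi,\,(I-A^{L})a_{i}\rangle$, not $\langle\eta_{\star},VA^{-T}a_{i}\rangle$; the factor $(I-A^{L})$ is absorbed by defining $\zeta_{j}:=(1-\overline{\theta_{j}^{-L}})\pi_{\mathbb{V}_{j}}\xi$, and one must check these are nonzero. Second, the endgame needs more care than "Pisot forces the coefficients to be $P_{i,V}(\theta_{j})$": (i) one must show some coefficient $\lambda_{j_{0}}(Va_{i})$ is nonzero at all, which the paper deduces from affine irreducibility (otherwise the candidate P.V. tuple is empty); and (ii) the support set $J_{i,V}=\{j:\lambda_{j}(Va_{i})\ne0\}$ varies with $(i,V)$, so one must restrict to a single Galois conjugacy class $J$, verify $J\subset J_{i,V}$ or $J\cap J_{i,V}=\emptyset$ for each pair, and use the Galois-equivariance clause of the generalized Pisot theorem to manufacture a single polynomial $P_{i,V}$ valid for all $j\in J$. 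These are fixable details, but the missing derivation of the $\ell^{2}$ summability is a genuine gap.
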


The assumption $a_{1}=0$ might seem somewhat arbitrary. It simplifies
the statement of condition (\ref{enu:poly disc case}), and some of
the arguments that follow.

The proof of the proposition is carried out in Sections \ref{subsec:a pre prop}
and \ref{subsec:Proof-of-Proposition disc case}. In Section \ref{subsec:Construction-of-non-Rajchman}
we state and prove a converse to it.

\subsection{\label{subsec:a pre prop}A preliminary proposition}

Throughout this subsection let $p=(p_{i})_{i=1}^{\ell}$ be a fixed
positive probability vector. Let $\mu$ be the self-similar measure
corresponding to $\Phi$ and $p$. Recall that $G$ is assumed to
be discrete, and that we write $B$ in place of $A^{*}$. For a real
number $x$ let $\Vert x\Vert$ be the distance from $x$ to its nearest
integer, that is
\[
\Vert x\Vert:=\inf\{|x-k|\::\:k\in\mathbb{Z}\}\:.
\]
Recall that $\Lambda:=\{1,...,\ell\}$, and that a finite set of words
$\mathcal{W}\subset\Lambda^{*}$ is said to be a minimal cut-set for
$\Lambda^{*}$ if every infinite sequence in $\Lambda^{\mathbb{N}}$
has a unique prefix in $\mathcal{W}$. The purpose of this subsection
is to prove the following proposition.
\begin{prop}
\label{prop:ub on sum of dist to int}Let $\mathcal{W}$ be a minimal
cut-set for $\Lambda^{*}$, and let $u,u'\in\mathcal{W}$. Suppose
that $G$ is generated by $\{g_{w}\}_{w\in\mathcal{W}}$, and that
$g_{u}=g_{u'}$. Then for every $\epsilon>0$ there exists $C=C(\epsilon,\mathcal{W},p)>1$
so that for all $V\in N_{0}$,
\[
\sum_{j\ge0}\left\Vert \left\langle V(\varphi_{u}(0)-\varphi_{u'}(0)),B^{j}\xi\right\rangle \right\Vert ^{2}\le C\text{ for }\xi\in\mathbb{R}^{d}\text{ with }|\widehat{\mu}(2\pi\xi)|\ge\epsilon\:.
\]
\end{prop}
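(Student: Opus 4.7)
The strategy is to iterate the self-similarity of $\mu$ along $\mathcal W$, exploit the coincidence $g_u=g_{u'}$ to extract a Bernoulli-like random-product structure, and then convert the resulting random estimate into the desired deterministic sum bound by invoking the renewal theorem for $G$.

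First, for each $n\ge 1$ we iterate $\mu=\sum_{w\in\mathcal W}p_w\varphi_w\mu$ to get
\[
\widehat\mu(2\pi\xi)=\sum_{W\in\mathcal W^n}p_W\,e^{2\pi i\langle\xi,\varphi_W(0)\rangle}\,\widehat\mu(2\pi\xi.g_W).
\]
For $W=W_1\cdots W_n$ set $S(W):=\{k:W_k\in\{u,u'\}\}$. Since $g_u=g_{u'}$, the element $g_W$ depends on $W$ only through $S(W)$ and the letters outside $S(W)$, and a direct computation shows that swapping $u\leftrightarrow u'$ at a position $k\in S(W)$ changes $\varphi_W(0)$ by exactly $A_{W_1\cdots W_{k-1}}\Delta$, where $\Delta:=\varphi_{u'}(0)-\varphi_u(0)$ and $A_\bullet$ denotes the linear part of $\varphi_\bullet$. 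Collapsing the $2^{|S|}$ choices of $u$ vs.\ $u'$ at positions in $S$ yields the random-product identity
\[
\widehat\mu(2\pi\xi)=\mathbb E_W\!\left[e^{i\phi_0(W)}\!\!\prod_{k\in S(W)}\!(q_u+q_{u'}e^{2\pi i\alpha_k(W)})\,\widehat\mu(2\pi\xi.g_W)\right]\!,
\]
where the expectation is over $W$ with i.i.d.\ letters drawn from $p$, $q_v:=p_v/(p_u+p_{u'})$, and $\alpha_k(W):=\langle\xi.g_{W_1\cdots W_{k-1}},\Delta\rangle$.

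Using the decomposition $g_{W_1\cdots W_{k-1}}=h^{L_{k-1}}V^{\ast}_{k-1}$ with $L_{k-1}\in\mathbb Z_{\ge 0}$ and $V^{\ast}_{k-1}\in N_0$ given by \eqref{eq:rep of each g}, one gets $\xi.g_{W_1\cdots W_{k-1}}=(V^{\ast}_{k-1})^{-1}B^{L_{k-1}}\xi$, hence
\[
\alpha_k(W)=\langle B^{L_{k-1}}\xi,(V^{\ast}_{k-1})^{-1}\Delta\rangle.
\]
Cauchy--Schwarz applied to the random-product identity, together with the identity $|q_u+q_{u'}e^{2\pi i\alpha}|^2=1-c_p\sin^2(\pi\alpha)$ where $c_p:=4q_uq_{u'}$, the elementary bounds $1-x\le e^{-x}$ and $\sin^2(\pi x)\ge 4\|x\|^2$, and $|\widehat\mu|\le 1$, converts the hypothesis $|\widehat\mu(2\pi\xi)|\ge\epsilon$ into the random bound
\[
\epsilon^2\le\mathbb E_W\!\left[\exp\!\left(-4c_p\!\sum_{k\in S(W)}\!\bigl\|\langle(V^{\ast}_{k-1})^{-1}\Delta,B^{L_{k-1}}\xi\rangle\bigr\|^2\right)\right]\!.
\]

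The sequence $(L_k,V^{\ast}_k)$ is precisely the random walk on $\mathbb Z_{\ge 0}\times N_0$ driven by the step distribution $q=\sum_{w\in\mathcal W}p_w\delta_{g_w}$. Because $\mathcal W$ generates $G$ this walk is adapted and has positive drift, and by the renewal theorem for $G$ (Theorem~\ref{thm:renewal for G}) applied in the discrete setting, the expected number of visits $Q\{h^jV^{-1}\}$ of the walk to each state $(j,V^{-1})$ is bounded below by a positive constant $\kappa=\kappa(\mathcal W,p)$ uniformly in $V\in N_0$ for all $j$ exceeding some $j_0=j_0(\mathcal W)$. Combining Markov's inequality applied to the exponential bound above with the finiteness of $N_0$, the uniform visit lower bound, and a careful accounting of the contribution of each $(j,V)$-pair to the random sum, one extracts, after handling the finitely many small-$j$ exceptions separately, the desired deterministic bound
\[
\sum_{j\ge 0}\bigl\|\langle V\Delta,B^j\xi\rangle\bigr\|^2\le C(\epsilon,\mathcal W,p)\quad\text{for every }V\in N_0.
\]

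The hardest step is this final passage from random to deterministic: the naive Jensen inequality $\mathbb E[e^{-4c_pX_W}]\ge\exp(-4c_p\mathbb E[X_W])$ goes in the wrong direction for upper-bounding $\mathbb E[X_W]$. The resolution must simultaneously exploit (i) the renewal theorem to lower-bound the visit frequency of every $(j,V)$-state along the walk, (ii) the generating property of $\mathcal W$ ensuring reachability of every such state, and (iii) a probabilistic bookkeeping accounting for the $|N_0|<\infty$ possible ``types'' at each level $j$, so as to turn the random expectation bound into a uniform deterministic sum bound.
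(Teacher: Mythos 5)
Your skeleton is close in spirit to the paper's: both iterate the self-similarity along $\mathcal W$, exploit $g_u=g_{u'}$ to replace $|\widehat\mu(2\pi\xi)|$ by an expectation of a product of factors $|q_u+q_{u'}e^{2\pi i\alpha_k}|$ over the positions where $u$ or $u'$ occurs, both invoke the renewal theorem to control where the walk $(L_k,V^*_k)$ sits, and your closing trigonometric estimates ($|q_u+q_{u'}e^{2\pi i\alpha}|^2\le 1-c\|\alpha\|^2$) are exactly the ones the paper uses. However, there is a genuine gap at precisely the point you yourself flag as ``the hardest step,'' and the route you sketch for it does not close.

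The problem is that you average first and try to de-average afterwards. From $\epsilon^2\le\mathbb E_W\bigl[\exp\bigl(-c\sum_{k\in S(W)}\|\langle (V^*_{k-1})^{-1}\Delta,B^{L_{k-1}}\xi\rangle\|^2\bigr)\bigr]$, Markov's inequality only yields an event of probability at least $\epsilon^2/2$ on which the \emph{random} sum $X_W$ is bounded by a constant. But a single realization of the walk of length $n$ visits only about $n$ of the roughly $n|N_0|$ states $(j,V)$ with $j\lesssim n$, and which states it visits is path-dependent; so boundedness of $X_W$ on a positive-probability event forces no individual deterministic term $\|\langle V\Delta,B^j\xi\rangle\|^2$ to be small, let alone the full sum over all $j$ for a fixed $V$. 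Lower bounds on the \emph{expected} visit counts $\mathbb E[M_{j,V}]\ge\kappa$ go the wrong way for the same reason: they give $\mathbb E[X_W]\ge\kappa\sum_{j}\|\langle V\Delta,B^j\xi\rangle\|^2$, and an upper bound on $\mathbb E[e^{-cX_W}]$ from below does not bound $\mathbb E[X_W]$ from above. One could imagine rescuing this with a quantitative concentration/mixing argument for the visit counts, but that is substantial work you have not done, and it is not the paper's route.

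The paper avoids the issue by never fully averaging before the deterministic factor is extracted. It defines $Z_{\xi,n}=\prod_{k\le n}\alpha_{I_k}(\xi.Y_{k-1})$, shows $\{Z_{\xi,n}|\widehat\mu(\xi.Y_n)|\}_n$ is a submartingale so that $|\widehat\mu(\xi)|\le\mathbb E[Z_{\xi,\tau_\beta(k)}]$ by optional stopping, and then proves the recursion
\[
\mathbb E\bigl[Z_{\xi,\tau_\beta(k)}\bigr]\le\mathbb E\bigl[Z_{\xi,\tau_\beta(k-C)}\bigr]\bigl(1-C^{-1}(1-\alpha_u(VB^{k-l_u}\xi))\bigr)
\]
by conditioning on $I_1,\dots,I_{\tau_\beta(k-C)}$ and using the renewal-theorem lower bound $\mathbb P\{\gamma_{-\beta k}Y_{\tau_\beta(k)}=g\}>C^{-1}$, uniform over $g\in N$, to guarantee that with conditional probability at least $C^{-1}$ the walk arrives at the \emph{one prescribed} state $h^{k-l_u}n_V$ and then reads the letter $u$. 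The factor pulled out at scale $k$ is thus deterministic and targets exactly the pair $(k-l_u,V)$ you care about; iterating over $k$ and multiplying over the residues of $k$ modulo $C$ produces the deterministic product $\prod_{j}(1-C^{-1}\Psi(VB^{j-l_u}\xi))\ge\epsilon^{C}$, whose logarithm is the desired sum bound (run separately for each of the finitely many $V\in N_0$). To repair your write-up you would need to replace your final paragraph by this conditional-expectation recursion, or supply an actual proof of the de-averaging step you currently only describe.
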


For the rest of this subsection fix $\mathcal{W}\subset\Lambda^{*}$
and $u,u'\in\mathcal{W}$ as in the statement of the proposition.
Note that since $\mathcal{W}$ is a minimal cut-set, $(p_{w})_{w\in\mathcal{W}}$
is a probability vector. Let $I_{1},I_{2},...$ be i.i.d. $\mathcal{W}$-valued
random words with $\mathbb{P}\{I_{1}=w\}=p_{w}$ for $w\in\mathcal{W}$.
Set $Y_{0}=1_{G}$, and for $k\ge1$ let $X_{k}:=g_{I_{k}}$, $Y_{k}:=X_{1}\cdot...\cdot X_{k}$,
and
\[
\tau_{\beta}(k):=\inf\{m\ge1\::\:\psi Y_{m}\ge k\beta\}\:.
\]
For $\xi\in\mathbb{R}^{d}$ and $w\in\{u,u'\}$ set,
\[
\alpha_{w}(\xi):=\frac{1}{p_{u}+p_{u'}}\left|p_{u}e^{i\left\langle \xi,\varphi_{u}(0)\right\rangle }+p_{u'}e^{i\left\langle \xi,\varphi_{u'}(0)\right\rangle }\right|,
\]
and for $w\in\mathcal{W}\setminus\{u,u'\}$ write $\alpha_{w}(\xi):=1$.
Set $Z_{\xi,0}:=1$, and for $n\ge1$ let
\[
Z_{\xi,n}:=\prod_{k=1}^{n}\alpha_{I_{k}}(\xi.Y_{k-1})\:.
\]

\begin{lem}
\label{lem:from doob's}For $k\ge1$ and $\xi\in\mathbb{R}^{d}$ we
have $|\widehat{\mu}(\xi)|\le\mathbb{E}[Z_{\xi,\tau_{\beta}(k)}]$.
\end{lem}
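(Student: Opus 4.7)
My plan is to iterate the self-similarity relation along a minimal cut-set of length approximately $k\beta$, exploit the hypothesis $g_u=g_{u'}$ to group words that differ only by swapping the letters $u$ and $u'$, and identify the resulting bound with $\mathbb{E}[Z_{\xi,\tau_\beta(k)}]$.

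Concretely, set $\mathcal{W}_k:=\{w_1\cdots w_n\in\mathcal{W}^*:\psi g_{w_1\cdots w_n}\ge k\beta>\psi g_{w_1\cdots w_{n-1}}\}$. Since $\psi g_w>0$ for every $w\in\mathcal{W}$, this is a minimal cut-set for $\mathcal{W}^{\mathbb{N}}$, and composed with $\mathcal{W}$ it is a minimal cut-set for $\Lambda^{\mathbb{N}}$. Iterating \eqref{eq:SS relation} therefore gives
\[
\widehat{\mu}(\xi)=\sum_{v=w_1\cdots w_n\in\mathcal{W}_k}p_v\,e^{i\langle\xi,\varphi_v(0)\rangle}\,\widehat{\mu}(\xi.g_v).
\]
Declare $v\sim v'$ if they agree except at positions where the letter lies in $\{u,u'\}$. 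Because $g_u=g_{u'}$, every partial product $g_{w_1\cdots w_j}$ is constant on each $\sim$-orbit $[v]$, so both $\xi.g_v$ and $\widehat{\mu}(\xi.g_v)$ are orbit-invariant. Using $\varphi_v(0)=\sum_j r_{w_1\cdots w_{j-1}}U_{w_1\cdots w_{j-1}}\varphi_{w_j}(0)$, the coefficient $p_{v'}e^{i\langle\xi,\varphi_{v'}(0)\rangle}$ factors as $\prod_j p_{w_j'}e^{i\langle\xi.g_{w_1\cdots w_{j-1}},\varphi_{w_j'}(0)\rangle}$. Summing over $v'\in[v]$ distributes through the product; taking absolute values and using $\alpha_u=\alpha_{u'}$ yields
\[
\Bigl|\sum_{v'\in[v]}p_{v'}e^{i\langle\xi,\varphi_{v'}(0)\rangle}\Bigr|=\sum_{v'\sim v}p_{v'}\prod_{j=1}^n\alpha_{w_j'}(\xi.g_{w_1\cdots w_{j-1}}).
\]
Summing over orbits and invoking the triangle inequality collapses the previous displayed identity to
\[
|\widehat{\mu}(\xi)|\le\sum_{v\in\mathcal{W}_k}p_v\prod_{j=1}^{|v|}\alpha_{w_j}(\xi.g_{w_1\cdots w_{j-1}})\,|\widehat{\mu}(\xi.g_v)|.
\]
Since $\mathbb{E}[\psi X_1]>0$, the strong law of large numbers gives $\tau_\beta(k)<\infty$ a.s., and since $\mathcal{W}_k$ is a minimal cut-set for $\mathcal{W}^{\mathbb{N}}$, the random word $I_1\cdots I_{\tau_\beta(k)}$ has distribution $(p_v)_{v\in\mathcal{W}_k}$. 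Hence the right-hand side equals $\mathbb{E}[Z_{\xi,\tau_\beta(k)}|\widehat{\mu}(\xi.Y_{\tau_\beta(k)})|]\le\mathbb{E}[Z_{\xi,\tau_\beta(k)}]$, as required.

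The only delicate step is the combinatorial bookkeeping in the orbit sum: one must verify that distributing the product of ``two-term'' factors $p_u e^{i\cdot}+p_{u'}e^{i\cdot}$ at the $\{u,u'\}$-positions and ``one-term'' factors at the other positions recovers precisely the weighted sum $\sum_{v'\sim v}p_{v'}\prod_j\alpha_{w_j'}(\ldots)$. This relies on the two elementary identities $g_u=g_{u'}$ (which makes each argument $\xi.g_{w_1\cdots w_{j-1}}$ depend only on $[v]$) and $\alpha_u=\alpha_{u'}$ (which ensures both summands in a two-term factor receive the same $\alpha$-weight).
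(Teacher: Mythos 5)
Your proof is correct, but it takes a genuinely different route from the paper's. The paper first derives the one-step inequality $|\widehat{\mu}(y)|\le\sum_{w\in\mathcal{W}}p_{w}\,\alpha_{w}(y)\,|\widehat{\mu}(y.g_{w})|$ (grouping the $u$ and $u'$ terms once, using $g_{u}=g_{u'}$), then observes that $\{Z_{\xi,n}|\widehat{\mu}(\xi.Y_{n})|\}_{n\ge0}$ is a submartingale for the filtration generated by $I_{1},\dots,I_{n}$, and concludes by Doob's optional stopping theorem applied to the bounded stopping time $\tau_{\beta}(k)$ — hence the lemma's label. You instead expand $\widehat{\mu}(\xi)$ in one shot over the cut-set $\mathcal{W}_{k}$ and perform the $\{u,u'\}$-grouping globally, over orbits of the swap relation; the identity you flag as delicate does check out, since $g_{u}=g_{u'}$ makes every partial argument $\xi.g_{w_{1}\cdots w_{j-1}}$ orbit-invariant, the orbit is a full product set so the sum factors position by position, and $\alpha_{u}=\alpha_{u'}$ together with $\alpha_{w}=1$ off $\{u,u'\}$ turns the product of moduli into the claimed weighted sum. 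The two arguments encode the same mechanism (the single grouping step is exactly the submartingale increment), but yours is self-contained and elementary at the cost of the combinatorial bookkeeping, while the paper's is shorter once the martingale framework is set up and localizes the grouping to a single step. Two minor remarks: the appeal to the strong law of large numbers is unnecessary, since $\psi g_{w}\ge\beta$ for every $w\in\mathcal{W}$ forces $\tau_{\beta}(k)\le k$ deterministically; and for the final bound you should say explicitly that you are using $Z_{\xi,\tau_{\beta}(k)}\ge0$ and $|\widehat{\mu}|\le1$.
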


\begin{proof}
Since $\mathcal{W}$ is a minimal cut-set and since $g_{u}=g_{u'}$,
it follows that for $y\in\mathbb{R}^{d}$
\begin{eqnarray*}
|\widehat{\mu}(y)| & = & \left|\sum_{w\in\mathcal{W}}p_{w}\int e^{i\left\langle y,\varphi_{w}(x)\right\rangle }\:d\mu(x)\right|\\
 & = & \left|\sum_{w\in\mathcal{W}}p_{w}e^{i\left\langle y,\varphi_{w}(0)\right\rangle }\widehat{\mu}(y.g_{w})\right|\le\sum_{w\in\mathcal{W}}p_{w}|\widehat{\mu}(y.g_{w})|\cdot\alpha_{w}(y)\:.
\end{eqnarray*}
Let $n\ge0$, then by applying the last inequality with $y=\xi.Y_{n}$,
\begin{eqnarray*}
Z_{\xi,n}|\widehat{\mu}(\xi.Y_{n})| & \le & Z_{\xi,n}\sum_{w\in\mathcal{W}}p_{w}|\widehat{\mu}((\xi.Y_{n}).g_{w})|\cdot\alpha_{w}(\xi.Y_{n})\\
 & = & \mathbb{E}\left[Z_{\xi,n}|\widehat{\mu}((\xi.Y_{n}).g_{I_{n+1}})|\cdot\alpha_{I_{n+1}}(\xi.Y_{n})\:\Bigl|\:I_{1},...,I_{n}\right]\\
 & = & \mathbb{E}\left[Z_{\xi,n+1}|\widehat{\mu}(\xi.Y_{n+1})|\:\Bigl|\:I_{1},...,I_{n}\right]\:.
\end{eqnarray*}
This shows that $\{Z_{\xi,n}|\widehat{\mu}(\xi.Y_{n})|\}_{n\ge0}$
is a submartingale with respect to the filtration $\{\mathcal{F}_{n}\}_{n\ge0}$,
where $\mathcal{F}_{n}$ is the $\sigma$-algebra generated by $I_{1},...,I_{n}$.
Thus, since $\tau_{\beta}(k)$ is a bounded stopping time with respect
to the filtration $\{\mathcal{F}_{n}\}_{n\ge0}$, and by Doob's optional
stopping theorem, we get
\[
|\widehat{\mu}(\xi)|=\mathbb{E}\left[Z_{\xi,0}|\widehat{\mu}(\xi.Y_{0})|\right]\le\mathbb{E}\left[Z_{\xi,\tau_{\beta}(k)}|\widehat{\mu}(\xi.Y_{\tau_{\beta}(k)})|\right]\le\mathbb{E}\left[Z_{\xi,\tau_{\beta}(k)}\right],
\]
which completes the proof of the lemma.
\end{proof}
\begin{lem}
\label{lem:lb on prob =00003D g}There exists a constant $C=C(\mathcal{W},p)>1$
so that,
\[
\mathbb{P}\left\{ \gamma_{-\beta k}Y_{\tau_{\beta}(k)}=g\right\} >C^{-1}\text{ for every integer }k\ge C\text{ and }g\in N\:.
\]
\end{lem}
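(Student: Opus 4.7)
The plan is to apply the renewal theorem (Proposition \ref{prop:conv in dist}) to the measure $q$ governing the random walk $(Y_n)$, compute the explicit form of the limit measure $\nu$ restricted to $N$, and exploit the discreteness of $G$ so that weak convergence forces convergence at singletons.

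First I would observe that in the current setup, $X_1$ has distribution $q:=\sum_{w\in\mathcal{W}}p_{w}\delta_{g_{w}}$, which is finitely supported, concentrated on $G_{>0}$ (since each $\psi g_{w}>0$), and adapted by the hypothesis that $\{g_{w}\}_{w\in\mathcal{W}}$ generates $G$. Setting $\lambda:=\int\psi\,dq>0$, all the hypotheses of Section \ref{subsec:Limit-distribution-of} are met, so Proposition \ref{prop:conv in dist} applies with stopping times $\tau_{t}$. Specialising to $t=k\beta\in\psi(G_{>0})$, we obtain that $\gamma_{-\beta k}Y_{\tau_{\beta}(k)}$ converges in distribution to $\nu=\rho\,d\mathbf{m}_{G}$ as $k\to\infty$.

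Next I would evaluate $\nu\{g\}$ for $g\in N$. Since $\psi g=0$, the definition of $\rho$ gives $\rho(g)=\lambda^{-1}\mathbb{P}\{\psi X_{1}>0\}=\lambda^{-1}$. To compute $\mathbf{m}_{G}\{g\}$, use Corollary \ref{cor:G unimodular and m_G=00003D}: $\mathbf{m}_{G}=F(\mathbf{m}_{N}\times\mathbf{m}_{H})$. Because $G$ is discrete, $N$ is a finite group, so $\mathbf{m}_{N}\{n\}=|N|^{-1}$ for every $n\in N$. Because $\psi(G)=\beta\mathbb{Z}$, our normalisation gives $\mathbf{m}_{\psi(G)}\{0\}=\beta$, hence $\mathbf{m}_{H}\{1_{G}\}=\beta$. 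Writing $g\in N$ as $F(g,1_{G})$, we get $\mathbf{m}_{G}\{g\}=\beta/|N|$, so $\nu\{g\}=\beta/(\lambda|N|)$; in particular this is a strictly positive constant independent of $g\in N$.

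Finally, because $G$ carries the discrete topology, every singleton $\{g\}$ is clopen, and so $1_{\{g\}}:G\to\mathbb{R}$ is bounded and continuous. Proposition \ref{prop:conv in dist} therefore yields
\[
\mathbb{P}\{\gamma_{-\beta k}Y_{\tau_{\beta}(k)}=g\}=\mathbb{E}\bigl[1_{\{g\}}(\gamma_{-\beta k}Y_{\tau_{\beta}(k)})\bigr]\xrightarrow[k\to\infty]{}\nu\{g\}=\frac{\beta}{\lambda|N|}
\]
for each fixed $g\in N$. Since $N$ is finite, one can choose a single threshold $k_{0}$ beyond which this probability exceeds, say, $\beta/(2\lambda|N|)$ simultaneously for all $g\in N$. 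Taking $C>\max\{k_{0},2\lambda|N|/\beta\}$ (a constant depending only on $\mathcal{W}$ and $p$ through $\lambda$, $\beta$, and $|N|$) yields the desired lower bound $\mathbb{P}\{\gamma_{-\beta k}Y_{\tau_{\beta}(k)}=g\}>C^{-1}$ for every $k\ge C$ and every $g\in N$.

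There is no real obstacle here beyond unpacking the correct normalisation of $\mathbf{m}_{G}$ in the discrete regime; the only subtle point is verifying that $1_{\{g\}}$ is a legitimate test function, which is immediate from discreteness of $G$, so that Proposition \ref{prop:conv in dist} (rather than the stronger Lipschitz form in Corollary \ref{cor:conv in dist lip}) already suffices.
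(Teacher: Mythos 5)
Your proposal is correct and follows essentially the same route as the paper: apply Proposition \ref{prop:conv in dist} to the adapted measure $q=\sum_{w\in\mathcal{W}}p_{w}\delta_{g_{w}}$, compute $\nu\{g\}=\rho(g)\mathbf{m}_{G}\{g\}=\beta/(\lambda|N|)$ for $g\in N$, and use finiteness of $N$ to get a uniform threshold. Your added remarks (that $1_{\{g\}}$ is a valid test function by discreteness, and the explicit choice of $C$) merely spell out steps the paper leaves implicit.
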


\begin{proof}
Set $q:=\sum_{w\in\mathcal{W}}p_{w}\delta_{g_{w}}$ and $\lambda:=\int\psi\:dq$.
For $g\in G$ let,
\[
\rho(g):=\lambda^{-1}\mathbb{P}\{\psi X_{1}>\psi g\ge0\},
\]
and write $\nu$ in place of $\rho\:d\mathbf{m}_{G}$. Since $\psi(G)=\beta\mathbb{Z}$
and $|N|<\infty$, it follows by our choice of $\mathbf{m}_{G}$ (see
Section \ref{subsec:General-notations}) that $\mathbf{m}_{G}\{g\}=\beta/|N|$
for $g\in G$. For $g\in N$ we have $\psi(g)=0$, and so $\rho(g)=\lambda^{-1}$.
Since $G$ is generated by $\{g_{w}\}_{w\in\mathcal{W}}$ it holds
that $q$ is adapted, and so we can apply Proposition \ref{prop:conv in dist}.
It follows that for $g\in N$,
\[
\underset{k\rightarrow\infty}{\lim}\mathbb{P}\left\{ \gamma_{-\beta k}Y_{\tau_{\beta}(k)}=g\right\} =\nu\{g\}=\rho(g)\mathbf{m}_{G}\{g\}=\beta/(\lambda|N|)\:.
\]
Since $N$ is finite this completes the proof of the lemma.
\end{proof}
Recall that $\psi(g_{i})/\beta=:l_{i}\in\mathbb{Z}_{>0}$ for $1\le i\le\ell$.
Given a word $i_{1}...i_{n}=w\in\Lambda^{*}$ we write $l_{w}$ in
place of $l_{i_{1}}+...+l_{i_{n}}$.
\begin{lem}
\label{lem:ub on cond exp}There exists an integer $C=C(\mathcal{W},p)>1$
so that for every $k\in\mathbb{Z}_{>C}$, $V\in N_{0}$ and $\xi\in\mathbb{R}^{d}$,
\[
\mathbb{E}\left[Z_{\xi,\tau_{\beta}(k)}\right]\le\mathbb{E}\left[Z_{\xi,\tau_{\beta}(k-C)}\right]\left(1-C^{-1}(1-\alpha_{u}(VB^{k-l_{u}}\xi))\right)\:.
\]
\end{lem}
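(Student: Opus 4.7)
The plan is to decompose
$$Z_{\xi,\tau_{\beta}(k)} \;=\; Z_{\xi,\tau_{\beta}(k-C)} \prod_{j=\tau_{\beta}(k-C)+1}^{\tau_{\beta}(k)} \alpha_{I_{j}}(\xi.Y_{j-1})$$
and exhibit an event $E$ of uniformly positive conditional probability given $\mathcal{F}_{\tau_{\beta}(k-C)}$, on which the final factor of this product equals precisely $\alpha_{u}(VB^{k-l_{u}}\xi)$.

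Fix $V \in N_{0}$ and let $E$ be the event that (i) $\psi Y_{\tau_{\beta}(k-l_{u})} = (k-l_{u})\beta$ (no overshoot at that stopping time), (ii) $Y_{\tau_{\beta}(k-l_{u})} = h^{k-l_{u}}(0,V^{-1})$, and (iii) $I_{\tau_{\beta}(k-l_{u})+1} = u$. On $E$ we have $\tau_{\beta}(k) = \tau_{\beta}(k-l_{u})+1$, and using $h = (\beta,U)$ together with $B = 2^{-\beta}U^{-1}$ a direct computation gives
$$\xi.Y_{\tau_{\beta}(k)-1} \;=\; \xi.\bigl(h^{k-l_{u}}(0,V^{-1})\bigr) \;=\; VB^{k-l_{u}}\xi,$$
so this step contributes exactly $\alpha_{u}(VB^{k-l_{u}}\xi)$ to the product. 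Since $\alpha_{w} \le 1$ for every $w \in \mathcal{W}$, the product is bounded by $\alpha_{u}(VB^{k-l_{u}}\xi)$ on $E$ and by $1$ off $E$, which yields
$$Z_{\xi,\tau_{\beta}(k)} \;\le\; Z_{\xi,\tau_{\beta}(k-C)}\bigl(1 - \mathbf{1}_{E}(1 - \alpha_{u}(VB^{k-l_{u}}\xi))\bigr).$$

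I then bound $\mathbb{P}(E \mid \mathcal{F}_{\tau_{\beta}(k-C)}) \ge c$ almost surely for some $c = c(\mathcal{W},p) > 0$. By the strong Markov property at $\tau_{\beta}(k-C)$, if $Y_{\tau_{\beta}(k-C)} = y$ with $\psi y = m^{*}\beta$ and $m^{*} \in \{k-C,\ldots,k-C+\max_{i} l_{i}-1\}$, then $E$ translates into a fresh-walk event: the fresh walk must first hit level $M\beta$, with $M := k-l_{u}-m^{*}$, at a specific element of $N$ (determined by $y$ and $V$ through the decomposition $G = HN$ and the conjugation action of powers of $h$ on $N_{0}$), and then take the step $u$. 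If I choose $C \ge C_{0} + l_{u} + \max_{i} l_{i}$, where $C_{0}$ is the threshold constant in Lemma \ref{lem:lb on prob =00003D g}, then $M \ge C_{0}$ regardless of the overshoot, and that lemma gives a lower bound of $C_{0}^{-1}$ for the fresh-walk first-hit probability; multiplying by $p_{u}$ for the independent subsequent step gives $c := p_{u}/C_{0}$.

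To conclude, using that $Z_{\xi,\tau_{\beta}(k-C)}$ is $\mathcal{F}_{\tau_{\beta}(k-C)}$-measurable and nonnegative,
$$\mathbb{E}[Z_{\xi,\tau_{\beta}(k)}] \;\le\; \mathbb{E}[Z_{\xi,\tau_{\beta}(k-C)}]\bigl(1 - c(1 - \alpha_{u}(VB^{k-l_{u}}\xi))\bigr),$$
and replacing $C$ by $\max\{C,\lceil c^{-1}\rceil\}$ yields the stated inequality. The main obstacle, as I see it, is the algebraic bookkeeping in the strong Markov step: one must track the conjugation action of powers of $h$ on $N_{0}$ to pinpoint, uniformly in $y$, the specific element of $N$ that the fresh walk should land at, and one must choose $C$ so that the reduction index $M$ never falls below the threshold in Lemma \ref{lem:lb on prob =00003D g}, no matter how large the overshoot of $\psi Y_{\tau_{\beta}(k-C)}$ above $(k-C)\beta$ happens to be.
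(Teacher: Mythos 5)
Your proposal is correct and follows essentially the same route as the paper: condition at the stopping time $\tau_{\beta}(k-C)$, use the renewal/first-hitting lower bound of Lemma \ref{lem:lb on prob =00003D g} to show the event $\{Y_{\tau_{\beta}(k)-1}=h^{k-l_{u}}(0,V^{-1}),\ I_{\tau_{\beta}(k)}=u\}$ has conditional probability bounded below, and on that event the last factor of $Z_{\xi,\cdot}$ is exactly $\alpha_{u}(VB^{k-l_{u}}\xi)$. The only nit is that the overshoot of $\psi Y_{\tau_{\beta}(k-C)}$ is bounded by $\max_{w\in\mathcal{W}}l_{w}$ (steps are words of $\mathcal{W}$), not $\max_{i}l_{i}$, which only affects how large $C$ must be chosen.
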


\begin{proof}
Let $C\in\mathbb{Z}_{>1}$ be large with respect to $\Phi$, $p$
and $\mathcal{W}$. Set $l_{\mathrm{max}}=\max_{w\in\mathcal{W}}l_{w}$,
and suppose that $C>2l_{\mathrm{max}}$. Fix $k\in\mathbb{Z}_{>C}$,
$V\in N_{0}$ and $\xi\in\mathbb{R}^{d}$. Let $n_{V}\in N$ be with
$n_{V}=(0,V^{-1})$. Denote by $\mathcal{W}^{*}$ the set of finite
words over $\mathcal{W}$. For $w_{1}...w_{m}=\mathbf{w}\in\mathcal{W}^{*}$
we write,
\[
\:g_{\mathbf{w}}:=g_{w_{1}}\cdot...\cdot g_{w_{m}}\text{ and }l_{\mathbf{w}}:=l_{w_{1}}+...+l_{w_{m}}\:.
\]
Let,
\[
\mathcal{Y}:=\{w_{1}...w_{m}\in\mathcal{W}^{*}\::\:\psi(g_{w_{1}...w_{m}})\ge\beta(k-C)>\psi(g_{w_{1}...w_{m-1}})\}\:.
\]
For $\mathbf{y}\in\mathcal{Y}$ set,
\[
\eta_{\mathbf{y}}:=\mathbb{P}\left\{ Y_{\tau_{\beta}(k-l_{u})}=h^{k-l_{u}}n_{V}\mid I_{1}...I_{\tau_{\beta}(k-C)}=\mathbf{y}\right\} \:.
\]
For $m\in\mathbb{Z}_{\ge0}$ and $b\in\mathbb{Z}_{\ge1}$ write,
\[
\tau_{\beta,m}(b):=\inf\{j>m\::\:\psi(X_{m+1}\cdot...\cdot X_{j})\ge b\beta\}\:.
\]

Fix $w_{1}...w_{m}=\mathbf{y}\in\mathcal{Y}$ for the moment. From
(\ref{eq:rep of each g}) and since $\psi(h)=\beta$, it follows that
there exists $n_{\mathbf{y}}\in N$ with $g_{\mathbf{y}}=h^{l_{\mathbf{y}}}n_{\mathbf{y}}$.
Additionally, by the definition of $\mathcal{Y}$,
\begin{equation}
k-C\le l_{\mathbf{y}}<k-C+l_{\mathrm{max}}<k-l_{\mathrm{max}}\:.\label{eq:est on l_y}
\end{equation}
Note that,
\[
\mathbb{P}\left\{ Y_{m}=g_{\mathbf{y}}\mid I_{1}...I_{\tau_{\beta}(k-C)}=\mathbf{y}\right\} =1\:.
\]
From this and since $\psi(g_{\mathbf{y}})=\beta l_{\mathbf{y}}<\beta(k-l_{u})$,
\[
\mathbb{P}\left\{ \tau_{\beta}(k-l_{u})=\tau_{\beta,m}(k-l_{u}-l_{\mathbf{y}})\mid I_{1}...I_{\tau_{\beta}(k-C)}=\mathbf{y}\right\} =1\:.
\]
Hence, by multiplying from the left both sides of the equation $Y_{\tau_{\beta}(k-l_{u})}=h^{k-l_{u}}n_{V}$
by $g_{\mathbf{y}}^{-1}=n_{\mathbf{y}}^{-1}h^{-l_{\mathbf{y}}}$,
we get
\begin{eqnarray*}
\eta_{\mathbf{y}} & = & \mathbb{P}\left\{ X_{m+1}\cdot...\cdot X_{\tau_{\beta,m}(k-l_{u}-l_{\mathbf{y}})}=n_{\mathbf{y}}^{-1}h^{k-l_{u}-l_{\mathbf{y}}}n_{V}\mid I_{1}...I_{\tau_{\beta}(k-C)}=\mathbf{y}\right\} \\
 & = & \mathbb{P}\left\{ Y_{\tau_{\beta}(k-l_{u}-l_{\mathbf{y}})}=n_{\mathbf{y}}^{-1}h^{k-l_{u}-l_{\mathbf{y}}}n_{V}\right\} ,
\end{eqnarray*}
where in last equality we have used the stationarity of the process
$\{X_{j}\}_{j\ge1}$. Set,
\[
z_{\mathbf{y}}:=\gamma_{-\beta(k-l_{u}-l_{\mathbf{y}})}n_{\mathbf{y}}^{-1}h^{k-l_{u}-l_{\mathbf{y}}}n_{V},
\]
then
\begin{equation}
\eta_{\mathbf{y}}=\mathbb{P}\left\{ \gamma_{-\beta(k-l_{u}-l_{\mathbf{y}})}Y_{\tau_{\beta}(k-l_{u}-l_{\mathbf{y}})}=z_{\mathbf{y}}\right\} \:.\label{eq:alpha_u =00003D}
\end{equation}
From $\psi\circ\gamma=Id$, $\psi(h)=\beta$ and $n_{\mathbf{y}},n_{V}\in N$
it follows that $z_{\mathbf{y}}\in N$. Also, by (\ref{eq:est on l_y})
we have $k-l_{u}-l_{\mathbf{y}}>C-2l_{\mathrm{max}}$. Hence, by Lemma
\ref{lem:lb on prob =00003D g} and by assuming that $C$ is sufficiently
large, it follows that $\eta_{\mathbf{y}}>p_{u}^{-1}C^{-1}$. This
holds for all $\mathbf{y}\in\mathcal{Y}$, which implies that almost
surely
\[
\mathbb{P}\left\{ Y_{\tau_{\beta}(k-l_{u})}=h^{k-l_{u}}n_{V}\mid I_{1}...I_{\tau_{\beta}(k-C)}\right\} \ge p_{u}^{-1}C^{-1}\:.
\]

From the last inequality we get,
\begin{multline}
\mathbb{P}\left\{ Y_{\tau_{\beta}(k)-1}=h^{k-l_{u}}n_{V}\text{ and }I_{\tau_{\beta}(k)}=u\mid I_{1}...I_{\tau_{\beta}(k-C)}\right\} \\
=\mathbb{P}\left\{ Y_{\tau_{\beta}(k-l_{u})}=h^{k-l_{u}}n_{V}\text{ and }I_{\tau_{\beta}(k-l_{u})+1}=u\mid I_{1}...I_{\tau_{\beta}(k-C)}\right\} \\
=\mathbb{P}\left\{ Y_{\tau_{\beta}(k-l_{u})}=h^{k-l_{u}}n_{V}\mid I_{1}...I_{\tau_{\beta}(k-C)}\right\} \mathbb{P}\left\{ I_{1}=u\right\} \ge C^{-1}\:.\label{eq:>C^-1}
\end{multline}
Since $C>l_{\mathrm{max}}$ we have $\tau_{\beta}(k-C)\le\tau_{\beta}(k)-1$.
Thus, since $\alpha_{w}(x)\le1$ for all $w\in\mathcal{W}$ and $x\in\mathbb{R}^{d}$,
it follows that $Z_{\xi,\tau_{\beta}(k-C)}\ge Z_{\xi,\tau_{\beta}(k)-1}$.
Hence,
\[
Z_{\xi,\tau_{\beta}(k)}=Z_{\xi,\tau_{\beta}(k)-1}\cdot\alpha_{I_{\tau_{\beta}(k)}}(\xi.Y_{\tau_{\beta}(k)-1})\le Z_{\xi,\tau_{\beta}(k-C)}\cdot\alpha_{I_{\tau_{\beta}(k)}}(\xi.Y_{\tau_{\beta}(k)-1})\:.
\]
From this and (\ref{eq:>C^-1}) we get,
\begin{eqnarray*}
\mathbb{E}\left[Z_{\xi,\tau_{\beta}(k)}\mid I_{1},...,I_{\tau_{\beta}(k-C)}\right] & \le & Z_{\xi,\tau_{\beta}(k-C)}\mathbb{E}\left[\alpha_{I_{\tau_{\beta}(k)}}(\xi.Y_{\tau_{\beta}(k)-1})\mid I_{1},...,I_{\tau_{\beta}(k-C)}\right]\\
 & \le & Z_{\xi,\tau_{\beta}(k-C)}\left(1-C^{-1}+C^{-1}\alpha_{u}(\xi.h^{k-l_{u}}n_{V})\right)\\
 & = & Z_{\xi,\tau_{\beta}(k-C)}\left(1-C^{-1}+C^{-1}\alpha_{u}(VB^{k-l_{u}}\xi)\right)\:.
\end{eqnarray*}
This gives,
\begin{eqnarray*}
\mathbb{E}\left[Z_{\xi,\tau_{\beta}(k)}\right] & = & \mathbb{E}\left[\mathbb{E}\left[Z_{\xi,\tau_{\beta}(k)}\mid I_{1},...,I_{\tau_{\beta}(k-C)}\right]\right]\\
 & \le & \mathbb{E}\left[Z_{\xi,\tau_{\beta}(k-C)}\right]\left(1-C^{-1}+C^{-1}\alpha_{u}(VB^{k-l_{u}}\xi)\right),
\end{eqnarray*}
which completes the proof of the lemma.
\end{proof}
\begin{proof}[Proof of Proposition \ref{prop:ub on sum of dist to int} ]
Let $0<\epsilon<1$, let $C\in\mathbb{Z}_{>1}$ be large with respect
to $\epsilon$, $\mathcal{W}$, $p$ and $\Phi$, let $V\in N_{0}$,
let $\xi_{0}\in\mathbb{R}^{d}$ be with $|\widehat{\mu}(2\pi\xi_{0})|\ge\epsilon$,
and write $\xi=2\pi\xi_{0}$. For $y\in\mathbb{R}^{d}$ set $\Psi(y)=1-\alpha_{u}(y)$,
and note that $0\le\Psi(y)\le1$. By Lemma \ref{lem:ub on cond exp}
it follows that for $k\in\mathbb{Z}_{>C}$,
\[
\mathbb{E}\left[Z_{\xi,\tau_{\beta}(k)}\right]\le\mathbb{E}\left[Z_{\xi,\tau_{\beta}(k-C)}\right]\left(1-C^{-1}\Psi(VB^{k-l_{u}}\xi)\right)\:.
\]
Iterating this and using the fact that $0\le Z_{\xi,n}\le1$ for all
$n\in\mathbb{Z}_{\ge1}$, we get
\[
\mathbb{E}\left[Z_{\xi,\tau_{\beta}(k)}\right]\le\prod_{j=0}^{\left\lceil k/C\right\rceil -2}(1-C^{-1}\Psi(VB^{k-jC-l_{u}}\xi)),
\]
where $\left\lceil k/C\right\rceil $ is the smallest integer which
is at least as large as $k/C$. Let $n\in\mathbb{Z}_{\ge1}$, then
by applying the last inequality for $nC<k\le nC+C$ we get,
\begin{eqnarray*}
\prod_{k=nC+1}^{nC+C}\mathbb{E}\left[Z_{\xi,\tau_{\beta}(k)}\right] & \le & \prod_{k=nC+1}^{nC+C}\prod_{j=0}^{\left\lceil k/C\right\rceil -2}(1-C^{-1}\Psi(VB^{k-jC-l_{u}}\xi))\\
 & = & \prod_{k=1}^{C}\prod_{j=0}^{n-1}(1-C^{-1}\Psi(VB^{nC+k-jC-l_{u}}\xi))\\
 & = & \prod_{j=C+1}^{nC+C}(1-C^{-1}\Psi(VB^{j-l_{u}}\xi))\:.
\end{eqnarray*}
Hence, by Lemma \ref{lem:from doob's}
\[
\epsilon^{C}\le|\widehat{\mu}(\xi)|^{C}\le\prod_{j=C+1}^{nC+C}(1-C^{-1}\Psi(VB^{j-l_{u}}\xi))\:.
\]
From this and the inequality $1+t\le e^{t}$,
\[
\epsilon^{C}\le\exp\left(-C^{-1}\sum_{j=C+1}^{nC+C}\Psi(VB^{j-l_{u}}\xi)\right)\:.
\]
Since this holds for all $n\in\mathbb{Z}_{\ge1}$,
\begin{equation}
C^{2}\ln\epsilon^{-1}\ge\sum_{j=C+1}^{\infty}\Psi(VB^{j}\xi)\:.\label{eq:ub sum u}
\end{equation}

Set $\delta:=p_{u}/(p_{u}+p_{u'})$ and $\delta':=p_{u'}/(p_{u}+p_{u'})$.
By Taylor's theorem, given $0\le s\le1/8$ there exists $0\le t\le2\pi s$
so that
\[
\cos(2\pi s)-1=-\frac{\cos(t)}{2}(2\pi s)^{2}\le-\frac{\cos(\pi/4)}{2}(2\pi s)^{2}\le-s^{2}\:.
\]
Hence,
\begin{eqnarray*}
|\delta e^{2\pi is}+\delta'|^{2} & = & (\delta\cos(2\pi s)+\delta')^{2}+\delta^{2}\sin^{2}(2\pi s)\\
 & = & 1+2\delta\delta'(\cos(2\pi s)-1)\le1-2\delta\delta's^{2},
\end{eqnarray*}
and so,
\[
1-|\delta e^{2\pi is}+\delta'|\ge1-(1-2\delta\delta's^{2})^{1/2}\ge\delta\delta's^{2}\:.
\]
It follows that if $y\in\mathbb{R}^{d}$ satisfies $\left\Vert \left\langle y,\varphi_{u}(0)-\varphi_{u'}(0)\right\rangle \right\Vert \le1/8$,
then
\begin{eqnarray}
\Psi(2\pi y) & = & 1-\left|\delta\exp\left(2\pi i\left\Vert \left\langle y,\varphi_{u}(0)-\varphi_{u'}(0)\right\rangle \right\Vert \right)+\delta'\right|\nonumber \\
 & \ge & \delta\delta'\left\Vert \left\langle y,\varphi_{u}(0)-\varphi_{u'}(0)\right\rangle \right\Vert ^{2}\:.\label{eq:if <=00003D1/8}
\end{eqnarray}
Additionally, for $1/8<s\le1/2$ we have
\begin{eqnarray*}
|\delta e^{2\pi is}+\delta'|^{2} & = & 1+2\delta\delta'(\cos(2\pi s)-1)\\
 & \le & 1+2\delta\delta'(\cos(\pi/4)-1)\le1-\delta\delta'/2,
\end{eqnarray*}
and so,
\[
1-|\delta e^{2\pi is}+\delta'|\ge1-(1-\delta\delta'/2)^{1/2}\ge\delta\delta'/4\:.
\]
It follows that if $y\in\mathbb{R}^{d}$ satisfies $\left\Vert \left\langle y,\varphi_{u}(0)-\varphi_{u'}(0)\right\rangle \right\Vert >1/8$,
then
\begin{eqnarray}
\Psi(2\pi y) & = & 1-\left|\delta\exp\left(2\pi i\left\Vert \left\langle y,\varphi_{u}(0)-\varphi_{u'}(0)\right\rangle \right\Vert \right)+\delta'\right|\nonumber \\
 & \ge & \delta\delta'/4>\frac{1}{4}\delta\delta'\left\Vert \left\langle y,\varphi_{u}(0)-\varphi_{u'}(0)\right\rangle \right\Vert ^{2}\:.\label{eq:if >1/8}
\end{eqnarray}
Now recall that $\xi=2\pi\xi_{0}$, then from (\ref{eq:ub sum u}),
(\ref{eq:if <=00003D1/8}) and (\ref{eq:if >1/8})
\[
C^{2}\ln\epsilon^{-1}\ge\sum_{j=C+1}^{\infty}\Psi(2\pi VB^{j}\xi_{0})\ge\frac{1}{4}\delta\delta'\sum_{j=C+1}^{\infty}\left\Vert \left\langle VB^{j}\xi_{0},\varphi_{u}(0)-\varphi_{u'}(0)\right\rangle \right\Vert ^{2},
\]
which completes the proof of the proposition.
\end{proof}

\subsection{\label{subsec:Proof-of-Proposition disc case}Proof of Proposition
\ref{prop:main disc case}}

We continue to assume that $G$ is discrete. In order to apply Proposition
\ref{prop:ub on sum of dist to int} we need the following lemma.
\begin{lem}
\label{lem:good fam of words}Suppose that $a_{1}=0$. Then there
exists $\mathcal{W}\subset\Lambda^{*}$, $L\in\mathbb{Z}_{\ge1}$
and $\{u_{j}\}_{i=1}^{\ell},\{u_{j}'\}_{i=1}^{\ell}\subset\mathcal{W}$
so that,
\begin{enumerate}
\item $\mathcal{W}$ is a minimal cut-set for $\Lambda^{*}$;
\item $G$ is generated by $\{g_{w}\}_{w\in\mathcal{W}}$;
\item $g_{u_{j}}=g_{u_{j}'}$ for $1\le j\le\ell$;
\item $\varphi_{u_{j}}(0)-\varphi_{u_{j}'}(0)=a_{j}-A^{L}a_{j}$ for $1\le j\le\ell$;
\item $VA^{L}=A^{L}V$ for $V\in N_{0}$.
\end{enumerate}
\end{lem}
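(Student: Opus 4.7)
My plan is to realize the word pairs as $u_j=j\cdot 1^m$ and $u_j'=1^m\cdot j$ for a suitable $m$, and then enlarge the resulting collection to a minimal cut-set without destroying generation of $G$. For the choice of $L$ and $m$: since $N_0$ is finite and $U$ normalizes it (from $hNh^{-1}=N$, which holds because $N\triangleleft G$), the automorphism $V\mapsto U^{-1}VU$ of $N_0$ has finite order, so some $L_0\geq 1$ satisfies $A^{L_0}V=VA^{L_0}$ for every $V\in N_0$. Using $G=NH$ and $N\cap H=\{1_G\}$ from Lemma \ref{lem:G splits and F iso} together with the finiteness of $N$, the coset space $G/H$ has cardinality $|N|<\infty$, so $(g_1^jH)_{j\geq 0}$ is eventually periodic: there exists $m_1\geq 1$ with $g_1^{m_1}\in H$, necessarily of the form $h^{L_1}$ with $L_1=m_1l_1$. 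I would then take $L$ to be a common multiple of $L_0$ and $L_1$ large enough that $m:=L/l_1\geq 2$ (for instance $L=2\,\mathrm{lcm}(L_0,L_1)$, giving $m\geq 2m_1\geq 2$); this yields $g_1^m=h^L$ and gives condition (5).

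With $u_j:=j\cdot 1^m$ and $u_j':=1^m\cdot j$ (so that $u_1=u_1'=1^{m+1}$), conditions (3) and (4) follow easily. For (3), $g_{u_j}=g_jh^L$ and $g_{u_j'}=h^Lg_j$ coincide because (5) forces $h^L$ to commute with every element of $N$, and hence with all of $G=NH$. For (4), the hypothesis $a_1=0$ makes $\varphi_{1^m}$ the linear map $(r_1U_1)^m$, and $g_1^m=h^L$ translates at the level of linear parts to $(r_1U_1)^m=A^L$; therefore $\varphi_{u_j}(0)=\varphi_j(0)=a_j$ and $\varphi_{u_j'}(0)=A^La_j$, giving $\varphi_{u_j}(0)-\varphi_{u_j'}(0)=a_j-A^La_j$.

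For the cut-set I would take
\[
\mathcal{W}:=\{1^{m+1}\}\cup\{1^kj:1\leq k\leq m,\,j\neq 1\}\cup\{j\cdot 1^m:j\neq 1\}\cup\{j\cdot 1^{k'}\cdot j':j,j'\neq 1,\,0\leq k'<m\}.
\]
A case analysis splitting an infinite sequence in $\Lambda^{\mathbb{N}}$ by the position of its first non-$1$ letter shows that $\mathcal{W}$ is an antichain covering $\Lambda^{\mathbb{N}}$ exactly once, giving (1); by inspection all the $u_j,u_j'$ lie in $\mathcal{W}$. The hard part will be (2): letting $H':=\langle g_w:w\in\mathcal{W}\rangle$, the inclusions $1\cdot j,\,1^2\cdot j\in\mathcal{W}$ (valid because $m\geq 2$) yield $g_1=g_{1^2j}\cdot g_{1j}^{-1}\in H'$, and then $g_j=g_1^{-1}g_{1j}\in H'$ for every $j\neq 1$, so $H'$ contains the generators of $G$. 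The naive choice $\mathcal{W}=\Lambda^{m+1}$ can already fail at this step --- for instance when $l_1=\cdots=l_\ell$ one gets $\psi(H')\subsetneq\psi(G)$ --- which is why I include the short ``tail'' words and insist that $m\geq 2$.
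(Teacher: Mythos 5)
Your proof is correct, and the core strategy coincides with the paper's: pair the words $j1^{m}$ and $1^{m}j$, use the finiteness of $N$ to force some power of $g_{1}$ into $H$ (so that $g_{1}^{m}=h^{L}$, hence $(r_{1}U_{1})^{m}=A^{L}$ and, with $a_{1}=0$, $\varphi_{1^{m}}=A^{L}$), and embed these pairs in a minimal cut-set that still generates $G$. The differences are in the bookkeeping. First, you obtain the needed commutation $g_{j}h^{L}=h^{L}g_{j}$ by observing that condition (5) already makes $h^{L}$ central in $G=NH$; the paper instead runs a separate pigeonhole on commutators to arrange $g_{1}^{b}g_{j}=g_{j}g_{1}^{b}$ (and also $g_{\ell}^{b}g_{1}=g_{1}g_{\ell}^{b}$) directly. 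Your route is slightly more economical. Second, the cut-sets differ: the paper takes $\Lambda^{b+1}$ with the single word $\ell^{b}1$ subdivided into $\{\ell^{b}1i\}_{i}$, and recovers the generators via $g_{i}=(g_{1\ell^{b}})^{-1}g_{\ell^{b}1i}$, which is where the extra relation $g_{\ell}^{b}g_{1}=g_{1}g_{\ell}^{b}$ is used; you instead use a first-departure-from-the-letter-$1$ cut-set and recover $g_{1}=g_{1^{2}j}g_{1j}^{-1}$, then $g_{j}=g_{1}^{-1}g_{1j}$, which needs only $m\ge2$ and some $j\ne1$ (guaranteed since affine irreducibility forces $\ell>1$, a point worth stating explicitly, as the paper does). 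Your closing remark that the naive choice $\mathcal{W}=\Lambda^{m+1}$ can fail to generate $G$ (e.g.\ when all $l_{i}$ are equal, so $\psi(H')\subset(m+1)\beta\mathbb{Z}$) correctly identifies why both constructions must deviate from a fixed-length cut-set.
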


\begin{proof}
For every $k\ge1$ we have $h^{kl_{1}}g_{1}^{-k}\in N$. Since $N$
is finite there exist $k_{1}>k_{2}\ge1$ with $h^{k_{1}l_{1}}g_{1}^{-k_{1}}=h^{k_{2}l_{1}}g_{1}^{-k_{2}}$,
and so $g_{1}^{k_{1}-k_{2}}=h^{(k_{1}-k_{2})l_{1}}$. For every $g,g'\in G$
it holds that $[g,g']\in N$, where $[g,g']$ is the commutator of
$g$ and $g'$. Since $N$ is finite there exist $m_{1}>m_{2}\ge1$
so that $[g^{m_{1}},g']=[g^{m_{2}},g']$, and so $g^{m_{1}-m_{2}}g'=g'g^{m_{1}-m_{2}}$.
It follows that there exists $b\in\mathbb{Z}_{>1}$ so that $g_{1}^{b}=h^{bl_{1}}$,
$g_{1}^{b}g_{j}=g_{j}g_{1}^{b}$ for $1\le j\le\ell$, $g_{\ell}^{b}g_{1}=g_{1}g_{\ell}^{b}$,
and $h^{b}n=nh^{b}$ for $n\in N$. We set $L:=bl_{1}$.

Recall that $h=(\beta,U)$. For $V\in N_{0}$ we have $(0,V)\in N$,
thus
\[
(b\beta,VU^{b})=(0,V)h^{b}=h^{b}(0,V)=(b\beta,U^{b}V),
\]
and so $VU^{b}=U^{b}V$. Since $A=2^{-\beta}U$ this implies that
$VA^{L}=A^{L}V$, and so the fifth condition in the statement of the
lemma is satisfied.

For $m\ge1$ denote the set of $m$-words over $\Lambda$ by $\Lambda^{m}$.
For $1\le j\le\ell$ we write $j^{m}$ for the word $i_{1}...i_{m}\in\Lambda^{m}$
with $i_{k}=j$ for $1\le k\le m$. Given $m_{1},m_{2}\ge1$, $w_{1}\in\Lambda^{m_{1}}$
and $w_{2}\in\Lambda^{m_{2}}$, we write $w_{1}w_{2}\in\Lambda^{m_{1}+m_{2}}$
for the concatenation of $w_{1}$ with $w_{2}$.

Set
\[
\mathcal{W}:=(\Lambda^{b+1}\setminus\{\ell^{b}1\})\cup\{\ell^{b}1i\::\:1\le i\le\ell\}\:.
\]
It is clear that $\mathcal{W}$ is a minimal cut-set for $\Lambda^{*}$.
For $1\le j\le\ell$ set $u_{j}:=j1^{b}$ and $u_{j}':=1^{b}j$. Note
that since $\Phi$ is affinely irreducible we must have $\ell>1$.
From this and $b>1$, it follows that $u_{j},u_{j}'\in\mathcal{W}$.

From $g_{1}^{b}g_{j}=g_{j}g_{1}^{b}$ it follows that the third condition
is satisfied. From $g_{1}=(\log r_{1}^{-1},U_{1})$, $h=(\beta,U)$
and $g_{1}^{b}=h^{bl_{1}}$ it follows that, 
\[
r_{1}^{b}U_{1}^{b}=2^{-\beta bl_{1}}U^{bl_{1}}=A^{L}\:.
\]
Thus, since $a_{1}=0$
\[
\varphi_{u_{j}}(0)-\varphi_{u_{j}'}(0)=a_{j}-r_{1}^{b}U_{1}^{b}a_{j}=a_{j}-A^{L}a_{j},
\]
which shows that the fourth condition is satisfied.

It remains to show that $G$ is generated by $\{g_{w}\}_{w\in\mathcal{W}}$.
By definition $G$ is the closed subgroup of $\mathbb{R}\times O(d)$
generated by $\{g_{i}\}_{i=1}^{\ell}$. From this and since $G$ is
discrete, it follows that $G$ is generated by $\{g_{i}\}_{i=1}^{\ell}$.
Write $G_{1}$ for the group generated by $\{g_{w}\}_{w\in\mathcal{W}}$.
For every $1\le i\le\ell$ we have $1\ell^{b},\ell^{b}1i\in\mathcal{W}$.
Hence from $g_{\ell}^{b}g_{1}=g_{1}g_{\ell}^{b}$,
\[
g_{i}=(g_{1\ell^{b}})^{-1}g_{\ell^{b}1i}\in G_{1}\:.
\]
 This shows that $G_{1}=G$, which completes the proof of the lemma.
\end{proof}
The treatment of the $1$-dimensional case, carried out in \cite{Br}
and \cite{VY}, relies on a classical theorem of Pisot (see \cite[Theorem 2.1]{Bu}).
In the proof of Proposition \ref{prop:main disc case} we shall need
the following extension of this result. It follows directly from \cite[Chapter III, Theorem III]{Pi}
together with \cite[Theorem 1]{Ko}. A result similar to \cite[Theorem 1]{Ko}
was obtained in \cite[Lemma 2]{Ma}.
\begin{thm}
\label{thm:gen of pisot}Let $k\ge1$ and $\theta_{1},...,\theta_{k},\lambda_{1},...,\lambda_{k}\in\mathbb{C}$
be with $|\theta_{j}|>1$ and $\lambda_{j}\ne0$ for $1\le j\le k$,
and $\theta_{j}\ne\theta_{i}$ for $1\le j<i\le k$. For $n\ge0$
set $\eta_{n}=\sum_{j=1}^{k}\lambda_{j}\theta_{j}^{n}$, and suppose
that $\eta_{n}\in\mathbb{R}$ for all $n\ge0$. Moreover assume that
$\sum_{n\ge0}\Vert\eta_{n}\Vert^{2}<\infty$. Then,
\begin{enumerate}
\item $\{\theta_{1},...,\theta_{k}\}$ is a P.V. $k$-tuple;
\item $\lambda_{j}\in\mathbb{Q}(\theta_{j})$ for each $1\le j\le k$;
\item if $1\le j,i\le k$ are such that $\theta_{j}$ and $\theta_{i}$
are conjugates over $\mathbb{Q}$ and $\sigma:\mathbb{Q}(\theta_{j})\rightarrow\mathbb{Q}(\theta_{i})$
is an isomorphism with $\sigma(\theta_{j})=\theta_{i}$, then $\sigma(\lambda_{j})=\lambda_{i}$.
\end{enumerate}
\end{thm}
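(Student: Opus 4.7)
The plan is to deduce the theorem from the classical Pisot--Koksma machinery cited in the excerpt, \cite[Chapter III, Theorem III]{Pi} and \cite[Theorem 1]{Ko}. I would organise the argument around a single main step (rationality of an integer generating function), to which Pisot's theorem supplies the hardest input, and three structural corollaries.

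First I would record the reality constraint. Since the $\theta_{j}$ are distinct, the Vandermonde argument shows that the representation $\eta_{n}=\sum_{j}\lambda_{j}\theta_{j}^{n}$ is the unique expression of $(\eta_{n})$ as a sum of distinct geometric sequences; combining this with $\eta_{n}=\overline{\eta_{n}}=\sum_{j}\overline{\lambda_{j}}\,\overline{\theta_{j}}^{\,n}$, the multiset $\{(\theta_{j},\lambda_{j})\}_{j=1}^{k}$ is invariant under $(\theta,\lambda)\mapsto(\overline{\theta},\overline{\lambda})$. Hence $P(X):=\prod_{j}(X-\theta_{j})\in\mathbb{R}[X]$ and $(\eta_{n})$ satisfies the real linear recurrence of order $k$ with characteristic polynomial $P$. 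I would then set $p_{n}\in\mathbb{Z}$ to be the nearest integer to $\eta_{n}$ and $\epsilon_{n}:=\eta_{n}-p_{n}\in[-\tfrac{1}{2},\tfrac{1}{2}]$, so the hypothesis $\sum\Vert\eta_{n}\Vert^{2}<\infty$ becomes $(\epsilon_{n})\in\ell^{2}$.

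The heart of the proof is to show that the formal power series $F(z):=\sum_{n\ge0}p_{n}z^{n}$ is a rational function of $z$; this is precisely Pisot's theorem \cite[Chapter III, Theorem III]{Pi}, which I would invoke. The mechanism is Kronecker's Hankel rationality criterion: rationality of $F$ is equivalent to the vanishing, for large $N$, of the integer-valued Hankel determinants $\Delta_{N}:=\det(p_{N+i+j})_{0\le i,j\le k}$. Writing $p=\eta-\epsilon$ and expanding by row multilinearity expresses $\Delta_{N}$ as $2^{k+1}$ mixed determinants; the all-$\eta$ term vanishes via the exact factorization
\[
(\eta_{N+i+j})_{0\le i,j\le k}=V\,\mathrm{diag}\bigl(\lambda_{l}\theta_{l}^{N}\bigr)_{l=1}^{k}\,V^{T},\qquad V_{il}:=\theta_{l}^{i},
\]
which exhibits the rank as at most $k$. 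Pisot's estimate controls the remaining mixed terms by exploiting this explicit factorization (the trivial Hadamard bound is too weak given the $|\theta_{\max}|^{N}$ growth of the $\eta$-rows) together with the $\ell^{2}$-summability of $(\epsilon_{n})$, giving $\Delta_{N}\to0$ and hence $\Delta_{N}=0$ for $N$ large. The output is $F(z)=R(z)/Q(z)$ with coprime $R,Q\in\mathbb{Z}[X]$ and $Q$ primitive of degree $k$ having roots $\{\theta_{j}^{-1}\}$; a further comparison of the dominant exponential contribution against the $\ell^{2}$-tail (still inside Pisot's argument) forces the constant term $Q(0)=\pm1$, so that the reciprocal polynomial $\widetilde{Q}(X):=\pm X^{k}Q(1/X)=\prod_{j}(X-\theta_{j})$ is monic in $\mathbb{Z}[X]$ and the $\theta_{j}$ are algebraic integers.

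Given this rational presentation, the three conclusions would follow. Writing $F=\Phi-\Psi$ with $\Phi(z):=\sum_{j}\lambda_{j}/(1-\theta_{j}z)$ and $\Psi(z):=\sum\epsilon_{n}z^{n}\in H^{2}(\mathbb{D})$, the function $F$ has no poles in $\overline{\mathbb{D}}$ beyond $\{\theta_{j}^{-1}\}$: holomorphy of $\Psi$ in $\mathbb{D}$ excludes additional poles inside, and the $L^{2}$ boundary behaviour of $\Psi$ excludes them on $\partial\mathbb{D}$. Therefore every root of $\widetilde{Q}$ other than the $\theta_{j}$ has modulus $<1$, which, together with algebraic integrality, is exactly the P.V. $k$-tuple condition and gives conclusion (1). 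Partial-fraction decomposition of $R/Q\in\mathbb{Q}(z)$ yields
\[
\lambda_{j}=\lim_{z\to\theta_{j}^{-1}}(1-\theta_{j}z)\,F(z)\in\mathbb{Q}(\theta_{j}),
\]
which is conclusion (2). For conclusion (3), which is the uniqueness statement \cite[Theorem 1]{Ko}, any $\mathbb{Q}$-isomorphism $\sigma:\mathbb{Q}(\theta_{j})\to\mathbb{Q}(\theta_{i})$ with $\sigma(\theta_{j})=\theta_{i}$ fixes the rational coefficients of $R$ and $Q$; applying $\sigma$ to the formula above sends $\lambda_{j}$ to the analogous residue at $\theta_{i}^{-1}$, which is $\lambda_{i}$. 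The principal obstacle throughout is the Hankel estimate $\Delta_{N}\to0$ together with $Q(0)=\pm1$: these are not consequences of generic rank-$k$ reasoning but require Pisot's quantitative interplay between the Vandermonde structure of the $\eta$-Hankel and the $\ell^{2}$-control on $(\epsilon_{n})$.
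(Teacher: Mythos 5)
Your proposal follows the same route the paper itself takes: the paper proves Theorem~\ref{thm:gen of pisot} purely by citing Pisot's \cite[Chapter III, Theorem III]{Pi} for parts (1)--(2) and Koksma's \cite[Theorem 1]{Ko} for part (3), and you have simply unpacked the underlying Hankel-determinant/Kronecker rationality mechanism and the Fatou-type normalization $Q(0)=\pm1$ that those references rely on. The global structure of your argument (reality of $P$, generating function $F=\Phi-\Psi$ with $\Psi\in H^{2}(\mathbb{D})$, rationality via vanishing Hankel minors, extraction of residues to get (2) and (3)) is sound and is precisely what is being invoked.

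One local slip should be corrected. You assert that $Q$ has degree exactly $k$ and that $\widetilde{Q}(X)=\pm X^{k}Q(1/X)=\prod_{j=1}^{k}(X-\theta_{j})\in\mathbb{Z}[X]$. This fails already in the basic case $k=1$ with $\theta_{1}$ a Pisot number of degree $d>1$ over $\mathbb{Q}$: there $p_{n}$ is the full trace $\sum_{i=1}^{d}\theta_{i}^{n}$, $F(z)=\sum_{i=1}^{d}(1-\theta_{i}z)^{-1}$, and $\deg Q=d>1$, while $X-\theta_{1}\notin\mathbb{Z}[X]$. In general the denominator $Q$ has degree $\geq k$, its roots are $\{\theta_{j}^{-1}\}_{j=1}^{k}$ (the poles of $F$ inside $\overline{\mathbb{D}}$, forced to match those of $\Phi$ because $\Psi$ is $H^{2}$) together with possibly further roots of modulus $>1$; the reciprocal polynomial $\widetilde{Q}(X)=\pm X^{\deg Q}Q(1/X)$ is then monic in $\mathbb{Z}[X]$ with roots $\theta_{1},\dots,\theta_{k}$ plus the reciprocals of those extra roots, all of modulus $<1$. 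This is exactly what makes condition (2) of the P.V.\ $k$-tuple definition non-vacuous, and it is what your sentence ``every root of $\widetilde{Q}$ other than the $\theta_{j}$ has modulus $<1$'' is implicitly acknowledging, in tension with the earlier identification $\widetilde{Q}=\prod_{j}(X-\theta_{j})$. With $\widetilde{Q}$ read correctly as having degree $\deg Q\ge k$, conclusion (1) follows, and conclusions (2) and (3) as you derive them via residues of $R/Q$ at $\theta_{j}^{-1}$ are unaffected.
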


\begin{proof}[Proof of Proposition \ref{prop:main disc case}]
Recall that $A=2^{-\beta}U$ and $B=A^{*}$, where $U\in O(d)$.
Let $\theta_{1},...,\theta_{s}\in\mathbb{C}$ be the distinct eigenvalues
of $A^{-1}$. For $1\le j\le s$ let $\mathbb{V}_{j}\subset\mathbb{C}^{d}$
be the eigenspace of $A^{-1}$ corresponding to $\theta_{j}$. Since
$B^{-1}=(A^{-1})^{*}$, the numbers $\theta_{1},...,\theta_{s}$ are
also the distinct eigenvalues of $B^{-1}$, and $\mathbb{V}_{j}$
is the eigenspace of $B^{-1}$ corresponding to $\overline{\theta_{j}}$
for each $1\le j\le s$.

Assume that there exists a probability vector $p=(p_{i})_{i=1}^{\ell}>0$
so that the self-similar measure $\mu$ corresponding to $\Phi$ and
$p$ is non-Rajchman. There exist $\epsilon>0$ and $\xi_{1},\xi_{2},...\in\mathbb{R}^{d}$
so that $|\xi_{k}|\ge1$ and $|\widehat{\mu}(2\pi\xi_{k})|\ge\epsilon$
for all $k\ge1$, and also $|\xi_{k}|\overset{k}{\rightarrow}\infty$.
For $k\ge1$ set
\[
n_{k}:=\min\{n\ge1\::\:|B^{n}\xi_{k}|\le1\},
\]
then $2^{-\beta}\le|B^{n_{k}}\xi_{k}|\le1$. Thus, by moving to a
subsequence without changing the notation, we may assume that there
exists $0\ne\xi\in\mathbb{R}^{d}$ so that $B^{n_{k}}\xi_{k}\overset{k}{\rightarrow}\xi$.

Recall that we assume $a_{1}=0$, and let $\mathcal{W}$, $L$, $\{u_{i}\}_{i=1}^{\ell}$
and $\{u_{i}'\}_{i=1}^{\ell}$ be as obtained in Lemma \ref{lem:good fam of words}.
Let $C>1$ be large with respect to $\epsilon$, $\mathcal{W}$, $\Phi$
and $p$. For $1\le i\le\ell$ and $V\in N_{0}$ we have $g_{u_{i}}=g_{u_{i}'}$
and,
\[
V(\varphi_{u_{i}}(0)-\varphi_{u_{i}'}(0))=V(a_{i}-A^{L}a_{i})=(I-A^{L})Va_{i},
\]
where $I$ is the identity operator here. Set $b_{i,V}:=(I-A^{L})Va_{i}$,
then by Proposition \ref{prop:ub on sum of dist to int} it follows
that for all $k\ge1$,
\[
C\ge\sum_{n\ge0}\Vert\left\langle V(\varphi_{u_{i}}(0)-\varphi_{u_{i}'}(0)),B^{n}\xi_{k}\right\rangle \Vert^{2}=\sum_{n\ge-n_{k}}\Vert\left\langle b_{i,V},B^{n}B^{n_{k}}\xi_{k}\right\rangle \Vert^{2}\:.
\]
From $|\xi_{k}|\overset{k}{\rightarrow}\infty$ it follows that $n_{k}\overset{k}{\rightarrow}\infty$.
Thus, for every fixed $T\ge1$ and $k\ge1$ large enough with respect
to $T$,
\[
\sum_{n=0}^{T}\Vert\left\langle b_{i,V},B^{-n}B^{n_{k}}\xi_{k}\right\rangle \Vert^{2}\le C\:.
\]
From this and since $B^{n_{k}}\xi_{k}\overset{k}{\rightarrow}\xi$,
\[
\sum_{n=0}^{T}\Vert\left\langle b_{i,V},B^{-n}\xi\right\rangle \Vert^{2}\le C\:.
\]
Hence, since this holds for every $T\ge1$,
\begin{equation}
\sum_{n=0}^{\infty}\Vert\left\langle b_{i,V},B^{-n}\xi\right\rangle \Vert^{2}<\infty\text{ for all }1\le i\le\ell\text{ and }V\in N_{0}\:.\label{eq:< infinity}
\end{equation}

Recall that for a linear subspace $\mathbb{V}$ of $\mathbb{C}^{d}$
we denote by $\pi_{\mathbb{V}}$ the orthogonal projection onto $\mathbb{V}$.
For $1\le j\le s$ set
\[
\zeta_{j}:=(1-\overline{\theta_{j}^{-L}})\pi_{\mathbb{V}_{j}}\xi,
\]
where we consider $\xi$ as a vector in $\mathbb{C}^{d}$ here. Let
$\lambda_{j}:\mathbb{R}^{d}\rightarrow\mathbb{C}$ be with $\lambda_{j}(x)=\left\langle x,\zeta_{j}\right\rangle $
for $x\in\mathbb{R}^{d}$. Regarding $\mathbb{C}$ as a $2$-dimensional
vector space over $\mathbb{R}$, the maps $\lambda_{1},...,\lambda_{s}$
are $\mathbb{R}$-linear. Additionally, for $1\le i\le\ell$, $V\in N_{0}$
and $n\ge0$
\begin{eqnarray}
\left\langle b_{i,V},B^{-n}\xi\right\rangle  & = & \left\langle (I-A^{L})Va_{i},\sum_{j=1}^{s}\overline{\theta_{j}^{n}}\pi_{\mathbb{V}_{j}}\xi\right\rangle \nonumber \\
 & = & \sum_{j=1}^{s}\theta_{j}^{n}\left\langle Va_{i},(I-B^{L})\pi_{\mathbb{V}_{j}}\xi\right\rangle =\sum_{j=1}^{s}\theta_{j}^{n}\lambda_{j}(Va_{i}),\label{eq:fin exp dir prod}
\end{eqnarray}
which in particular implies that $\sum_{j=1}^{s}\theta_{j}^{n}\lambda_{j}(Va_{i})\in\mathbb{R}$.
From (\ref{eq:< infinity}) and (\ref{eq:fin exp dir prod}),
\begin{equation}
\sum_{n=0}^{\infty}\:\Bigl\Vert\sum_{j=1}^{s}\theta_{j}^{n}\lambda_{j}(Va_{i})\Bigr\Vert^{2}<\infty\text{ for all }1\le i\le\ell\text{ and }V\in N_{0}\:.\label{eq:comb of facts}
\end{equation}

For every $1\le j\le s$ we have $|\theta_{j}|=2^{\beta}>1$. From
this and since $\xi\ne0$, we get that there exists $1\le j_{0}\le s$
so that $\zeta_{j_{0}}\ne0$. Hence $\lambda_{j_{0}}$ is not identically
$0$, and so $\ker\lambda_{j_{0}}$ is a proper subspace of $\mathbb{R}^{d}$.
Let us show that,
\begin{equation}
\lambda_{j_{0}}(Va_{i})\ne0\text{ for some }1\le i\le\ell\text{ and }V\in N_{0}\:.\label{eq:func not all 0}
\end{equation}
Assume by contradiction that this is false. Then,
\[
\{a_{i}\}_{i=1}^{\ell}\subset\cap_{V\in N_{0}}V(\ker\lambda_{j_{0}})=:\mathbb{W}\:.
\]
For $x\in\mathbb{R}^{d}$,
\[
\lambda_{j_{0}}(A^{-1}x)=\left\langle A^{-1}x,\zeta_{j_{0}}\right\rangle =\left\langle x,B^{-1}\zeta_{j_{0}}\right\rangle =\theta_{j_{0}}\lambda_{j_{0}}(x),
\]
from which it follows that $A(\ker\lambda_{j_{0}})=\ker\lambda_{j_{0}}$.
Moreover, from $N\triangleleft G$ it follows that $AN_{0}=N_{0}A$,
which implies
\[
A(\mathbb{W})=\cap_{V\in N_{0}}VA(\ker\lambda_{j_{0}})=\mathbb{W}\:.
\]
Since $N_{0}$ is a group, we also have $V(\mathbb{W})=\mathbb{W}$
for all $V\in N_{0}$. By (\ref{eq:rep of each g}), for every $1\le i\le\ell$
there exists $V_{i}\in N_{0}$ so that $\varphi_{i}(x)=V_{i}A^{l_{i}}x+a_{i}$.
From all of this it follows that $\varphi_{i}(\mathbb{W})=\mathbb{W}$
for all $1\le i\le\ell$. Since $\mathbb{W}\subset\ker\lambda_{j_{0}}$
and since $\ker\lambda_{j_{0}}$ is a proper subspace of $\mathbb{R}^{d}$,
this contradicts the affine irreducibility of $\Phi$, which shows
that (\ref{eq:func not all 0}) must hold. For $1\le i\le\ell$ and
$V\in N_{0}$ set,
\[
J_{i,V}:=\{1\le j\le s\::\:\lambda_{j}(Va_{i})\ne0\}\:.
\]
From (\ref{eq:func not all 0}) it follows that $J_{i,V}\ne\emptyset$
for some $1\le i\le\ell$ and $V\in N_{0}$.

For $1\le i\le\ell$ and $V\in N_{0}$ it follows from (\ref{eq:comb of facts})
and Theorem \ref{thm:gen of pisot} that,
\begin{enumerate}
\item $\{\theta_{j}\}_{j\in J_{i,V}}$ is a P.V. $|J_{i,V}|$-tuple or $J_{i,V}=\emptyset$;
\item $\lambda_{j}(Va_{i})\in\mathbb{Q}(\theta_{j})$ for $1\le j\le s$;
\item $\sigma(\lambda_{j_{1}}(Va_{i}))=\lambda_{j_{2}}(Va_{i})$ for every
$j_{1},j_{2}\in J_{i,V}$ and isomorphism $\sigma:\mathbb{Q}(\theta_{j_{1}})\rightarrow\mathbb{Q}(\theta_{j_{2}})$
with $\sigma(\theta_{j_{1}})=\theta_{j_{2}}$ (if such a $\sigma$
exists).
\end{enumerate}
Let $1\le i_{0}\le\ell$ and $V_{0}\in N_{0}$ be with $J_{i_{0},V_{0}}\ne\emptyset$,
so that $\{\theta_{j}\}_{j\in J_{i_{0},V_{0}}}$ is a P.V. $|J_{i_{0},V_{0}}|$-tuple.
By the definition of a P.V. tuple, there exists a nonempty subset
$J$ of $J_{i_{0},V_{0}}$ so that $\{\theta_{j}\}_{j\in J}$ is a
P.V. $|J|$-tuple and $\theta_{j_{1}},\theta_{j_{2}}$ are conjugates
over $\mathbb{Q}$ for all $j_{1},j_{2}\in J$. For $j\in J$ we have,
\[
\left\langle V_{0}a_{i_{0}},\zeta_{j}\right\rangle =\lambda_{j}(V_{0}a_{i_{0}})\ne0,
\]
 and so $\zeta_{j}\ne0$. Recall that $\zeta_{j}:=(1-\overline{\theta_{j}^{-L}})\pi_{\mathbb{V}_{j}}\xi$,
which implies $A^{-1}\zeta_{j}=\theta_{j}\zeta_{j}$ for $j\in J$.
It remains to construct to polynomials $P_{i,V}$.

Let $1\le i\le\ell$ and $V\in N_{0}$ be given. Since $\{\theta_{j}\}_{j\in J}$
are algebraic conjugates, since $|\theta_{j}|>1$ for $j\in J$, and
since $\{\theta_{j}\}_{j\in J_{i,V}}$ is either empty or a P.V. tuple,
it follows that $J\cap J_{i,V}=\emptyset$ or $J\subset J_{i,V}$.
If $J\cap J_{i,V}=\emptyset$ we set $P_{i,V}(X):=0$. For $j\in J$
we have $j\notin J_{i,V}$, and so
\[
\left\langle Va_{i},\zeta_{j}\right\rangle =\lambda_{j}(Va_{i})=0=P_{i,V}(\theta_{j})\:.
\]
Next suppose that $J\subset J_{i,V}$, and let $j_{1}\in J$. Since
$\theta_{j_{1}}$ is algebraic and from $\lambda_{j_{1}}(Va_{i})\in\mathbb{Q}(\theta_{j_{1}})$,
it follows that there exists $P_{i,V}(X)\in\mathbb{Q}[X]$ so that
$\lambda_{j_{1}}(Va_{i})=P_{i,V}(\theta_{j_{1}})$. Let $j\in J$,
then $\theta_{j_{1}}$ and $\theta_{j}$ are conjugates over $\mathbb{Q}$,
and so there exists an isomorphism $\sigma:\mathbb{Q}(\theta_{j_{1}})\rightarrow\mathbb{Q}(\theta_{j})$
with $\sigma(\theta_{j_{1}})=\theta_{j}$. From this and $j_{1},j\in J\subset J_{i,V}$
we get,
\[
\left\langle Va_{i},\zeta_{j}\right\rangle =\lambda_{j}(Va_{i})=\sigma(\lambda_{j_{1}}(Va_{i}))=\sigma(P_{i,V}(\theta_{j_{1}}))=P_{i,V}(\sigma(\theta_{j_{1}}))=P_{i,V}(\theta_{j}),
\]
and so $P_{i,V}$ satisfies the required property. This completes
the proof of the proposition.
\end{proof}

\subsection{\label{subsec:Construction-of-non-Rajchman}Construction of non-Rajchman
self-similar measures}

The purpose of this subsection is to prove the following converse
to Proposition \ref{prop:main disc case}.
\begin{prop}
\label{prop:conv disc case}Suppose that $G$ is discrete, and let
$A$ and $N_{0}$ be as defined before the statement of Proposition
\ref{prop:main disc case}. Assume that there exist $k\ge1$, $\theta_{1},...,\theta_{k}\in\mathbb{C}$
and $\zeta_{1},...,\zeta_{k}\in\mathbb{C}^{d}\setminus\{0\}$, so
that
\begin{enumerate}
\item $\{\theta_{1},...,\theta_{k}\}$ is a P.V. $k$-tuple;
\item $A^{-1}\zeta_{j}=\theta_{j}\zeta_{j}$ for $1\le j\le k$;
\item for every $1\le i\le\ell$ and $V\in N_{0}$ there exists $P_{i,V}\in\mathbb{Q}[X]$
so that $\left\langle Va_{i},\zeta_{j}\right\rangle =P_{i,V}(\theta_{j})$
for all $1\le j\le k$;
\end{enumerate}
Then there exists a positive probability vector $p=(p_{i})_{i=1}^{\ell}$
so that the self-similar measure corresponding to $\Phi$ and $p$
is non-Rajchman.
\end{prop}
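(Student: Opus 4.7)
The plan is to exhibit a sequence $\xi_n \in \mathbb{R}^d$ with $|\xi_n| \to \infty$ along which $|\widehat{\mu}(\xi_n)|$ does not decay, for a suitable positive probability vector $p$; this is a multidimensional extension of the Br\'emont--Varj\'u--Yu construction for non-Rajchman self-similar measures associated with Pisot numbers. By restricting to a Galois-closed subset we may assume the $\theta_j$ are all conjugate over $\mathbb{Q}$, with common monic minimal polynomial $P \in \mathbb{Z}[X]$ whose remaining roots $\alpha_1,\dots,\alpha_L$ lie in the open unit disc; write $\rho := \max_\ell|\alpha_\ell| < 1$. Since $A = 2^{-\beta}U$ is a normal real matrix, the eigenvectors $\zeta_j$ of $A^{-1}$ are also eigenvectors of $B = A^*$ with $B^{-1}\zeta_j = \overline{\theta_j}\zeta_j$, and after rescaling we may arrange $\overline{\zeta_j} = \zeta_{\sigma(j)}$, where $\sigma$ is the involution with $\overline{\theta_j} = \theta_{\sigma(j)}$. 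Pick $m \in \mathbb{Z}_{\ge 1}$ so that $Q_{i,V} := mP_{i,V} \in \mathbb{Z}[X]$ for every $i, V$, set $\xi^* := \sum_j \zeta_j \in \mathbb{R}^d$, and define
\[
\xi_n := 2\pi m B^{-n}\xi^* \in \mathbb{R}^d,
\]
so that $|\xi_n| \asymp |\theta_1|^n \to \infty$.

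The key computation uses antilinearity of $\langle\cdot,\cdot\rangle$ in the second slot together with $B^{-n}\zeta_j = \overline{\theta_j}^n\zeta_j$ to yield
\[
\langle Va_i, \xi_n\rangle = 2\pi\sum_{j=1}^k\theta_j^n Q_{i,V}(\theta_j).
\]
By Newton's identities, $\sum_{r\colon P(r)=0}r^n Q_{i,V}(r)$ is a rational integer; peeling off the contribution of the $\alpha_\ell$ gives $\langle Va_i, \xi_n\rangle \in 2\pi\mathbb{Z} + O(\rho^n)$, uniformly in $i, V$. The same argument extends to the iterated frequencies $\xi_{k-1} := \xi_n.g_{I_1\cdots I_{k-1}}$: repeatedly invoking the normality relation $A^{-1}N_0 A = N_0$ (and its transpose $B N_0 = N_0 B$) together with the decomposition $r_iU_i = V_i A^{l_i}$, one rewrites $\xi_{k-1} = B^{L_{k-1}}\tilde W_{k-1}\xi_n$ for some $\tilde W_{k-1}\in N_0$ and $L_{k-1} := \sum_{j<k}l_{I_j}$, and then produces $V' = V'(I_1,\dots,I_{k-1}) \in N_0$ with
\[
\langle\xi_{k-1}, a_{I_k}\rangle = 2\pi\sum_{j=1}^k\theta_j^{n-L_{k-1}} Q_{I_k, V'}(\theta_j),
\]
which lies in $2\pi\mathbb{Z} + O(\rho^{n-L_{k-1}})$ when $L_{k-1}\le n$ and has magnitude $O(|\theta_1|^{n-L_{k-1}})$ otherwise.

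Inserting these estimates into the product representation $\widehat{\mu}(\xi_n) = \mathbb{E}\bigl[\prod_{k\ge1}e^{i\langle\xi_{k-1},a_{I_k}\rangle}\bigr]$, each factor becomes $1 + O(\rho^{|n-L_{k-1}|})$ or $1 + O(|\theta_1|^{-(L_{k-1}-n)})$. Since the sequence $L_k$ is strictly increasing in $k$, the cumulative error is dominated by a deterministic geometric-series constant independent of $n$ and of the sample path. Hence $\widehat{\mu}(\xi_n) = \mathbb{E}[e^{i\Theta_n}]$ with $|\Theta_n|$ almost surely bounded by some $M$ independent of $n$. The remaining task, which is the main obstacle, is to produce a positive $p$ for which $|\mathbb{E}[e^{i\Theta_n}]|$ does not tend to $0$. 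I would handle this by combining Proposition \ref{prop:conv in dist} (which yields convergence in distribution of $\Theta_n$ to a limit $\Theta_\infty$ whose law depends continuously on $p$) with an analysis showing that $\mathbb{E}[e^{i\Theta_\infty}]$ cannot vanish identically in $p$: for $p$ highly concentrated on a single index, the walk $(L_k)$ is nearly deterministic, $\Theta_\infty$ collapses near the origin, and $\mathbb{E}[e^{i\Theta_\infty}]$ is close to a point of the unit circle. Hence $\liminf_n|\widehat{\mu}(\xi_n)| > 0$ for such a $p$, proving that $\mu$ is non-Rajchman.
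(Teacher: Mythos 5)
Your architecture matches the paper's: restrict to a Galois-conjugate subfamily, form a real frequency $\xi^{*}=\sum_{j}\zeta_{j}$, use the P.V. property to show $\langle Va_{i},B^{-n}\xi^{*}\rangle$ is exponentially close to $2\pi\mathbb{Z}$, propagate this through the random-product representation of $\widehat{\mu_{p}}$, and finish by a continuity-in-$p$ argument anchored at a vertex of the simplex. The key arithmetic estimate is correct. However, there is a genuine gap at the anchor step, and it is exactly the step you flag as "the main obstacle". Your claim that for $p$ concentrated near $e_{1}$ the phase $\Theta_{\infty}$ "collapses near the origin" is false as stated: when $I_{k}=1$ the corresponding phase $\langle\xi_{k-1},a_{1}\rangle$ is only close to a multiple of $2\pi$, not to $0$, and the mod-$2\pi$ reductions near the transition $L_{k-1}\approx n$ sum to a quantity of size $O(C/(1-\rho))$, which in general exceeds $2\pi$ and has no reason to vanish. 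The collapse you want occurs only if $a_{1}=0$, so that the factors with $I_{k}=1$ contribute phase exactly zero (equivalently, so that $\mu_{e_{1}}=\delta_{0}$ and $\widehat{\mu_{e_{1}}}\equiv1$ on the relevant orbit). Achieving $a_{1}=0$ requires conjugating $\Phi$ by the translation taking the fixed point of $\varphi_{1}$ to the origin, and one must then verify that hypothesis (3) survives this conjugation, i.e.\ that $\langle V(I-r_{1}U_{1})^{-1}a_{1},\zeta_{j}\rangle$ is again of the form $Q_{V}(\theta_{j})$ with $Q_{V}\in\mathbb{Q}[X]$ compatibly across the conjugates. This is a nontrivial computation (the paper's Lemma \ref{lem:red to a1=00003D0}, which exploits $r_{1}^{m}U_{1}^{-m}=B^{ml_{1}}$ and the operator $\sum_{b<m}r_{1}^{b}U_{1}^{-b}$), and your proposal omits it entirely; without it the final step does not go through.

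A second, more minor, issue: you assert that "after rescaling we may arrange $\overline{\zeta_{j}}=\zeta_{\sigma(j)}$". Rescaling $\zeta_{j}$ by a scalar multiplies $\langle Va_{i},\zeta_{j}\rangle$ by the conjugate of that scalar and so destroys hypothesis (3); rescaling is not the right mechanism. The correct move is either to \emph{replace} $\zeta_{\sigma(j)}$ by $\overline{\zeta_{j}}$ (one checks directly that conditions (2) and (3) are preserved, since $P_{i,V}$ has real coefficients, and one must separately handle real $\theta_{j}$ to keep the vectors nonzero), or to invoke the paper's Lemma \ref{lem:zeta equal}, which derives $\zeta_{\sigma(j)}=\overline{\zeta_{j}}$ from the affine irreducibility of $\Phi$. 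Either way this needs an argument, not a normalization. Finally, note that your continuity claim for the law of $\Theta_{\infty}$ in $p$ should be routed through the explicit limiting density $\rho_{p}\,d\mathbf{m}_{G}$ of Proposition \ref{prop:conv in dist} (which is where the uniformity in $n$ comes from); the convergence itself uses adaptedness of $q_{p}$, which fails at $p=e_{1}$, so the anchor value must be computed from the formula for $\nu_{e_{1}}$ rather than from the renewal theorem.
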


The proof of the proposition relies on the following lemma. A version
of it can be found in \cite[Theorem 3.5]{Ca}, but we provide the
short proof for the reader's convenience.
\begin{lem}
\label{lem:exp decay}Let $\{\theta_{1},...,\theta_{k}\}$ be a P.V.
$k$-tuple and let $P\in\mathbb{Z}[X]$. Then there exist $C>1$ and
$0<\delta<1$ such that,
\[
\Vert P(\theta_{1})\theta_{1}^{n}+...+P(\theta_{k})\theta_{k}^{n}\Vert\le C\delta^{n}\text{ for all }n\ge0\;.
\]
\end{lem}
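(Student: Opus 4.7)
The plan is to approximate $S_n := \sum_{j=1}^k P(\theta_j)\theta_j^n$ by an integer-valued quantity obtained by extending the sum over all roots of the polynomial witnessing the P.V. property. The contributions from those extra roots decay exponentially because they all have modulus less than $1$, and this decaying error will control the distance of $S_n$ from the nearest integer.

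More concretely, let $Q \in \mathbb{Z}[X]$ be a monic polynomial as in the definition of a P.V. $k$-tuple. After replacing $Q$ by its squarefree part (which remains monic and in $\mathbb{Z}[X]$ by Gauss's lemma applied to $Q/\gcd(Q,Q')$), I may assume every root of $Q$ is simple. Writing the roots as $\theta_1, \ldots, \theta_k, \alpha_1, \ldots, \alpha_m$ with $\delta := \max_l |\alpha_l| < 1$, I set
\[
T_n := S_n + \sum_{l=1}^m P(\alpha_l)\alpha_l^n = \sum_{\beta\,:\,Q(\beta)=0} P(\beta)\beta^n.
\]
Expanding $P \in \mathbb{Z}[X]$ in powers of $X$, the right-hand side is a $\mathbb{Z}$-linear combination of Newton power sums $\sum_\beta \beta^p$, each of which is an integer by Newton's identities applied to the monic integer polynomial $Q$. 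Hence $T_n \in \mathbb{Z}$.

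The final step is to verify that $S_n \in \mathbb{R}$, so that $\|S_n\|$ makes sense and is bounded by $|S_n - T_n|$. The first bullet in the remarks following the definition of P.V. $k$-tuple states that complex conjugation permutes $\{\theta_1, \ldots, \theta_k\}$; since $P$ has real coefficients, this yields $\overline{S_n} = S_n$. Combining everything,
\[
\|S_n\| \le |S_n - T_n| \le \Bigl(\sum_{l=1}^m |P(\alpha_l)|\Bigr)\delta^n,
\]
which is the desired bound (with the constant adjusted upward if necessary to ensure $C>1$). There is no serious obstacle; the only mild subtlety is the squarefree reduction, needed so that the ``extra'' sum exactly matches the contribution of the $\alpha_l$ without double-counting the $\theta_j$.
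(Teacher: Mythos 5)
Your proof is correct and follows essentially the same route as the paper: extend the sum over all roots of a monic integer polynomial witnessing the P.V. property, note the full sum is an integer by symmetry of power sums, and bound the discarded terms by $C\delta^n$. The only cosmetic difference is that you pass to the squarefree part of the witness polynomial, whereas the paper takes the monic polynomial of smallest degree vanishing on the tuple (which is automatically squarefree); your explicit checks that $T_n\in\mathbb{Z}$ and $S_n\in\mathbb{R}$ are points the paper leaves implicit.
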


\begin{proof}
Let $Q\in\mathbb{Z}[X]$ be the monic polynomial of smallest degree
with $Q(\theta_{j})=0$ for $1\le j\le k$. Let $\theta_{k+1},...,\theta_{s}$
be the remaining roots of $Q$. Set
\[
\delta:=\underset{k<j\le s}{\max}\:|\theta_{j}|\text{ and }C:=\sum_{j=k+1}^{s}|P(\theta_{j})|,
\]
then $0<\delta<1$ since $\{\theta_{1},...,\theta_{k}\}$ is a P.V.
$k$-tuple. Since $\theta_{1},...,\theta_{s}$ are all the roots of
$Q$, and by the fundamental theorem of symmetric polynomials, it
follows that for all $n\ge0$
\[
P(\theta_{1})\theta_{1}^{n}+...+P(\theta_{s})\theta_{s}^{n}\in\mathbb{Z}\:.
\]
Hence,
\[
\Vert\sum_{j=1}^{k}P(\theta_{j})\theta_{j}^{n}\Vert\le\sum_{j=k+1}^{s}|P(\theta_{j})\theta_{j}^{n}|\le C\delta^{n},
\]
which completes the proof of the lemma.
\end{proof}
The following lemma is a consequence of the affine irreducibility
of $\Phi$. For $(z_{1},...,z_{d})=z\in\mathbb{C}^{d}$ we write $\overline{z}$
in place of $(\overline{z_{1}},...,\overline{z_{d}})$.
\begin{lem}
\label{lem:zeta equal}Assume the conditions of Proposition \ref{prop:conv disc case}
are satisfied. Let $1\le j_{1},j_{2}\le k$ be with $\theta_{j_{2}}=\overline{\theta_{j_{1}}}$,
then $\zeta_{j_{2}}=\overline{\zeta_{j_{1}}}$.
\end{lem}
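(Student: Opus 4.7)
\medskip

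The plan is to exploit the fact that $A$ is a real linear map (namely $A = 2^{-\beta}U$ with $U$ real orthogonal), so its eigenspace decomposition is preserved by complex conjugation, while condition (3) forces the inner products $\langle Va_{i},\zeta_{j}\rangle$ to transform correctly under the Galois involution $\theta_{j_{1}}\mapsto\overline{\theta_{j_{1}}}$. Combining these should show that $\eta := \zeta_{j_{2}} - \overline{\zeta_{j_{1}}}$ produces an invariant proper subspace, contradicting affine irreducibility.

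First I would verify that $A^{-1}\eta = \theta_{j_{2}}\eta$: since $A^{-1}$ has real entries, applying complex conjugation to $A^{-1}\zeta_{j_{1}}=\theta_{j_{1}}\zeta_{j_{1}}$ gives $A^{-1}\overline{\zeta_{j_{1}}} = \overline{\theta_{j_{1}}}\,\overline{\zeta_{j_{1}}}=\theta_{j_{2}}\overline{\zeta_{j_{1}}}$. Next, using that $P_{i,V}\in\mathbb{Q}[X]$ has real coefficients and $Va_{i}\in\mathbb{R}^{d}$,
\[
\langle Va_{i},\zeta_{j_{2}}\rangle = P_{i,V}(\theta_{j_{2}}) = P_{i,V}(\overline{\theta_{j_{1}}}) = \overline{P_{i,V}(\theta_{j_{1}})} = \overline{\langle Va_{i},\zeta_{j_{1}}\rangle} = \langle Va_{i},\overline{\zeta_{j_{1}}}\rangle,
\]
so $\langle Va_{i},\eta\rangle=0$ for all $1\le i\le\ell$ and $V\in N_{0}$.

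Assume for contradiction that $\eta\ne 0$, set $\mathbb{W}:=\{x\in\mathbb{R}^{d}:\langle x,\eta\rangle=0\}$, a proper linear subspace of $\mathbb{R}^{d}$, and let $\mathbb{W}':=\bigcap_{V\in N_{0}}V(\mathbb{W})$. The crucial step is showing $\mathbb{W}'$ is invariant under every $\varphi_{i}$. The key algebraic input, which I expect to be the main obstacle, is the $A$-invariance of $\mathbb{W}$: since $A=2^{-\beta}U$ with $U$ orthogonal, $A^{T}=2^{-2\beta}A^{-1}$, so $A^{T}\eta=2^{-2\beta}\theta_{j_{2}}\eta$, and for $x\in\mathbb{W}$,
\[
\langle Ax,\eta\rangle=\langle x,A^{T}\eta\rangle = \overline{2^{-2\beta}\theta_{j_{2}}}\langle x,\eta\rangle=0.
\]
Hence $A\mathbb{W}=\mathbb{W}$. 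Using $N\triangleleft G$ (which gives $AN_{0}A^{-1}=N_{0}$), one obtains $A\mathbb{W}'=\bigcap_{V}(AVA^{-1})A\mathbb{W}=\bigcap_{V'}V'\mathbb{W}=\mathbb{W}'$, while $N_{0}$-invariance of $\mathbb{W}'$ is immediate from its definition. Since $\{Va_{i}\}\subset\mathbb{W}$ for every $V\in N_{0}$, one gets $a_{i}\in\mathbb{W}'$; using the decomposition $r_{i}U_{i}=V_{i}A^{l_{i}}$ from (\ref{eq:rep of riUi}),
\[
\varphi_{i}(\mathbb{W}')=V_{i}A^{l_{i}}\mathbb{W}'+a_{i}\subset\mathbb{W}'+\mathbb{W}'=\mathbb{W}'.
\]

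Finally, $\mathbb{W}'$ is a closed proper linear subspace containing $0$ and stable under each $\varphi_{i}$, so iterating the maps starting at $0$ shows the attractor $K$ is contained in $\mathbb{W}'$. By the remark following the definition of affine irreducibility, this contradicts the assumption that $\Phi$ is affinely irreducible, and therefore $\eta=0$, i.e.\ $\zeta_{j_{2}}=\overline{\zeta_{j_{1}}}$.
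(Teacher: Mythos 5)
Your proposal is correct and follows essentially the same route as the paper: both use condition (3) with the reality of $P_{i,V}$ to get $\langle Va_{i},\zeta_{j_{2}}-\overline{\zeta_{j_{1}}}\rangle=0$, form the subspace $\bigcap_{V\in N_{0}}V(\{x:\langle x,\zeta_{j_{2}}-\overline{\zeta_{j_{1}}}\rangle=0\})$, check its invariance under $A$ and $N_{0}$ (hence under each $\varphi_{i}$ via $r_{i}U_{i}=V_{i}A^{l_{i}}$), and invoke affine irreducibility. The only cosmetic difference is that the paper argues directly that the hyperplane must be all of $\mathbb{R}^{d}$ rather than setting up an explicit contradiction with $\eta\ne0$.
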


\begin{proof}
The proof is similar to the argument used in the proof of Proposition
\ref{prop:main disc case} to establish (\ref{eq:func not all 0}).
Set,
\[
\mathbb{V}:=\left\{ x\in\mathbb{R}^{d}\::\:\left\langle x,\zeta_{j_{2}}-\overline{\zeta_{j_{1}}}\right\rangle =0\right\} \text{ and }\mathbb{W}:=\cap_{V\in N_{0}}V(\mathbb{V})\:.
\]
For $1\le i\le\ell$ and $V\in N_{0}$,
\[
\left\langle Va_{i},\zeta_{j_{2}}-\overline{\zeta_{j_{1}}}\right\rangle =P_{i,V}(\theta_{j_{2}})-\overline{P_{i,V}(\theta_{j_{1}})}=0,
\]
and so $a_{i}\in\mathbb{W}$. For $x\in\mathbb{V}$,
\[
\left\langle A^{-1}x,\zeta_{j_{2}}-\overline{\zeta_{j_{1}}}\right\rangle =\left\langle x,B^{-1}\zeta_{j_{2}}-\overline{B^{-1}\zeta_{j_{1}}}\right\rangle =\theta_{j_{2}}\left\langle x,\zeta_{j_{2}}-\overline{\zeta_{j_{1}}}\right\rangle =0,
\]
and so $A(\mathbb{V})=\mathbb{V}$. Moreover, since $AN_{0}=N_{0}A$,
\[
A(\mathbb{W})=\cap_{V\in N_{0}}VA(\mathbb{V})=\mathbb{W}\:.
\]
Since $N_{0}$ is a group, we also have $V(\mathbb{W})=\mathbb{W}$
for all $V\in N_{0}$. By (\ref{eq:rep of each g}), for every $1\le i\le\ell$
there exists $V_{i}\in N_{0}$ so that $\varphi_{i}(x)=V_{i}A^{l_{i}}x+a_{i}$.
From all of this it follows that $\varphi_{i}(\mathbb{W})=\mathbb{W}$
for all $1\le i\le\ell$. Since $\Phi$ is affinely irreducible and
$\mathbb{W}\subset\mathbb{V}$, we must have $\mathbb{V}=\mathbb{R}^{d}$.
This implies that $\zeta_{j_{2}}=\overline{\zeta_{j_{1}}}$, which
completes the proof of the lemma.
\end{proof}
The following lemma will enable us to assume that $a_{1}=0$, which
will be useful in the proof of Proposition \ref{prop:conv disc case}.
\begin{lem}
\label{lem:red to a1=00003D0}Assume the conditions of Proposition
\ref{prop:conv disc case} are satisfied. Suppose also that $\theta_{1},...,\theta_{k}$
are all conjugates over $\mathbb{Q}$. For $x\in\mathbb{R}^{d}$ set
$Tx=x-(I-r_{1}U_{1})^{-1}a_{1}$, where $I$ is the identity operator.
Then $T\circ\varphi_{1}\circ T^{-1}(0)=0$, and for every $1\le i\le\ell$
and $V\in N_{0}$ there exists $Q_{i,V}\in\mathbb{Q}[X]$ so that
\[
\left\langle VT\circ\varphi_{i}\circ T^{-1}(0),\zeta_{j}\right\rangle =Q_{i,V}(\theta_{j})\text{ for all }1\le j\le k\:.
\]
\end{lem}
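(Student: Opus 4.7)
The first assertion is immediate: $y_{1}:=(I-r_{1}U_{1})^{-1}a_{1}$ is the unique fixed point of $\varphi_{1}$ and $Tx=x-y_{1}$, so $T\circ\varphi_{1}\circ T^{-1}(0)=\varphi_{1}(y_{1})-y_{1}=0$. The same calculation yields $T\circ\varphi_{i}\circ T^{-1}(0)=\varphi_{i}(y_{1})-y_{1}=a_{i}-(I-r_{i}U_{i})y_{1}$ for every $i$.

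The heart of the argument will be to show that for each $V\in N_{0}$, the quantity $f_{V}(j):=\langle Vy_{1},\zeta_{j}\rangle$ is of the form $F_{V}(\theta_{j})$ for a single rational function $F_{V}\in\mathbb{Q}(X)$ not depending on $j$. Using the fixed-point identity $y_{1}=r_{1}U_{1}y_{1}+a_{1}$, I would first write $f_{V}(j)=\langle Vr_{1}U_{1}y_{1},\zeta_{j}\rangle+P_{1,V}(\theta_{j})$. Decomposing $r_{1}U_{1}=V_{1}A^{l_{1}}$ with $V_{1}\in N_{0}$ and using $AN_{0}=N_{0}A$ to rewrite $VV_{1}A^{l_{1}}=A^{l_{1}}W_{V}$ with $W_{V}:=A^{-l_{1}}VV_{1}A^{l_{1}}\in N_{0}$, an adjoint computation based on $A^{*}=2^{-\beta}U^{-1}$, $A^{-1}\zeta_{j}=\theta_{j}\zeta_{j}$, and $|\theta_{j}|=2^{\beta}$ yields $(A^{l_{1}})^{*}\zeta_{j}=\overline{\theta_{j}}^{-l_{1}}\zeta_{j}$, and hence
\[
\langle Vr_{1}U_{1}y_{1},\zeta_{j}\rangle=\langle W_{V}y_{1},(A^{l_{1}})^{*}\zeta_{j}\rangle=\theta_{j}^{-l_{1}}f_{W_{V}}(j).
\]
Since $V\mapsto W_{V}$ is a bijection of the finite set $N_{0}$, the cycle through $V$ has some finite length $m$, and iterating the recursion $f_{V}(j)=\theta_{j}^{-l_{1}}f_{W_{V}}(j)+P_{1,V}(\theta_{j})$ telescopes to
\[
f_{V}(j)=\bigl(1-\theta_{j}^{-ml_{1}}\bigr)^{-1}\sum_{k=0}^{m-1}\theta_{j}^{-kl_{1}}\,P_{1,W_{V}^{(k)}}(\theta_{j}),
\]
where $W_{V}^{(k)}$ denotes the $k$-th iterate of $V$ under the bijection. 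Because $|\theta_{j}|>1$ keeps $1-\theta_{j}^{-ml_{1}}\neq0$, this exhibits $f_{V}(j)=F_{V}(\theta_{j})$ for a fixed $F_{V}\in\mathbb{Q}(X)$.

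Applying the same adjoint trick to $r_{i}U_{i}=V_{i}A^{l_{i}}$ gives $\langle V(I-r_{i}U_{i})y_{1},\zeta_{j}\rangle=f_{V}(j)-\theta_{j}^{-l_{i}}f_{W_{V}'}(j)$, which is again $G_{i,V}(\theta_{j})$ for some $G_{i,V}\in\mathbb{Q}(X)$. Combining these,
\[
\langle V\,T\circ\varphi_{i}\circ T^{-1}(0),\zeta_{j}\rangle=P_{i,V}(\theta_{j})-G_{i,V}(\theta_{j})=R_{i,V}(\theta_{j})
\]
for a fixed $R_{i,V}\in\mathbb{Q}(X)$. To pass from a rational function to a polynomial, I would use the hypothesis that all $\theta_{j}$ share a common minimal polynomial $\mu\in\mathbb{Q}[X]$. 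After clearing negative powers, the denominator $D\in\mathbb{Q}[X]$ of $R_{i,V}$ is a product of factors of the form $X^{N}-1$, each of which is nonzero at every $\theta_{j}$ (since $|\theta_{j}|>1$) and therefore coprime to the irreducible $\mu$. A Bezout identity $uD+v\mu=1$ in $\mathbb{Q}[X]$ then gives $D(\theta_{j})^{-1}=u(\theta_{j})$ for each $j$, so taking $Q_{i,V}$ to be $u$ times the numerator of $R_{i,V}$ produces a polynomial in $\mathbb{Q}[X]$ satisfying $Q_{i,V}(\theta_{j})=R_{i,V}(\theta_{j})$ for every $j$.

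The main technical obstacle is the bookkeeping of the two bijections $V\mapsto W_{V}$ and $V\mapsto W_{V}'$ on the finite group $N_{0}$ and the resulting cycle decomposition; once that is set up, the adjoint identity for $(A^{l})^{*}\zeta_{j}$ and the Bezout step at the end are essentially routine.
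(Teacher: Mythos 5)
Your proposal is correct and follows essentially the same route as the paper: your telescoping of the fixed-point recursion $f_{V}(j)=\theta_{j}^{-l_{1}}f_{W_{V}}(j)+P_{1,V}(\theta_{j})$ along a cycle of the bijection $V\mapsto W_{V}$ reproduces exactly the paper's geometric-sum identity $(I-r_{1}U_{1}^{-1})\sum_{b=0}^{m-1}r_{1}^{b}U_{1}^{-b}=I-r_{1}^{m}U_{1}^{-m}$, yielding the same expression $(1-\theta_{j}^{-ml_{1}})^{-1}\sum_{b}\theta_{j}^{-bl_{1}}P_{1,\cdot}(\theta_{j})$. Your Bezout step for converting the resulting rational function into a polynomial is just a concrete implementation of the paper's appeal to the fact that the $\theta_{j}$ are algebraic conjugates (so that membership in $\mathbb{Q}(\theta_{j})$ is witnessed by a single polynomial), and all the auxiliary identities you use, such as $(A^{l})^{*}\zeta_{j}=\overline{\theta_{j}}^{-l}\zeta_{j}$ and $A^{-l}N_{0}A^{l}=N_{0}$, check out.
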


\begin{proof}
For $1\le i\le\ell$ we have,
\begin{equation}
T\circ\varphi_{i}\circ T^{-1}(0)=a_{i}-(I-r_{i}U_{i})(I-r_{1}U_{1})^{-1}a_{1},\label{eq:conj by tran}
\end{equation}
which shows that $T\circ\varphi_{1}\circ T^{-1}(0)=0$.

Let $V\in N_{0}$. By (\ref{eq:rep of riUi}), since $BN_{0}B^{-1}=N_{0}$
and since $N_{0}$ is finite, there exists $m\ge1$ so that $r_{1}^{m}U_{1}^{-m}=B^{ml_{1}}$
and $B^{m}V=VB^{m}$. Additionally, for every $b\in\mathbb{Z}_{\ge0}$
there exists $V_{b}\in N_{0}$ so that $r_{1}^{b}U_{1}^{-b}V=V_{b}B^{bl_{1}}$.
Set $S:=\sum_{b=0}^{m-1}r_{1}^{b}U_{1}^{-b}$, then for $1\le j\le k$,
\begin{equation}
\left\langle a_{1},SV\zeta_{j}\right\rangle =\sum_{b=0}^{m-1}\left\langle a_{1},V_{b}B^{bl_{1}}\zeta_{j}\right\rangle =\sum_{b=0}^{m-1}\theta_{j}^{-bl_{1}}P_{1,V_{b}^{-1}}(\theta_{j})\:.\label{eq:first dev}
\end{equation}
On the other hand, since
\[
(I-r_{1}U_{1}^{-1})SV\zeta_{j}=(I-r_{1}^{m}U_{1}^{-m})V\zeta_{j}=V(I-B^{ml_{1}})\zeta_{j}=(1-\overline{\theta_{j}^{-ml_{1}}})V\zeta_{j},
\]
we have
\begin{eqnarray*}
\left\langle a_{1},SV\zeta_{j}\right\rangle  & = & \left\langle (I-r_{1}U_{1})^{-1}a_{1},(I-r_{1}U_{1}^{-1})SV\zeta_{j}\right\rangle \\
 & = & (1-\theta_{j}^{-ml_{1}})\left\langle V^{-1}(I-r_{1}U_{1})^{-1}a_{1},\zeta_{j}\right\rangle \:.
\end{eqnarray*}
From this and (\ref{eq:first dev}) we get,
\[
\left\langle V^{-1}(I-r_{1}U_{1})^{-1}a_{1},\zeta_{j}\right\rangle =(1-\theta_{j}^{-ml_{1}})^{-1}\sum_{b=0}^{m-1}\theta_{j}^{-bl_{1}}P_{1,V_{b}^{-1}}(\theta_{j})\in\mathbb{Q}(\theta_{j})\:.
\]
Since $\theta_{1},...,\theta_{k}$ are algebraic conjugates, it follows
that for every $V\in N_{0}$ there exists $Q_{V}\in\mathbb{Q}[X]$
so that
\begin{equation}
\left\langle V(I-r_{1}U_{1})^{-1}a_{1},\zeta_{j}\right\rangle =Q_{V}(\theta_{j})\text{ for }1\le j\le k\:.\label{eq:exi Q}
\end{equation}

Fix $1\le i\le\ell$ and $V\in N_{0}$. There exists $V'\in N_{0}$
so that $r_{i}VU_{i}=A^{l_{i}}V'$. Hence for $1\le j\le k$,
\[
\left\langle r_{i}VU_{i}(I-r_{1}U_{1})^{-1}a_{1},\zeta_{j}\right\rangle =\left\langle V'(I-r_{1}U_{1})^{-1}a_{1},B^{l_{i}}\zeta_{j}\right\rangle =\theta_{j}^{-l_{i}}Q_{V'}(\theta_{j})\:.
\]
It follows that there exists $R_{i,V}\in\mathbb{Q}[X]$ so that,
\[
\left\langle r_{i}VU_{i}(I-r_{1}U_{1})^{-1}a_{1},\zeta_{j}\right\rangle =R_{i,V}(\theta_{j})\text{ for }1\le j\le k\:.
\]
From this, (\ref{eq:exi Q}) and (\ref{eq:conj by tran}), we get
that for $1\le j\le k$ 
\[
\left\langle VT\circ\varphi_{i}\circ T^{-1}(0),\zeta_{j}\right\rangle =P_{i,V}(\theta_{j})-Q_{V}(\theta_{j})+R_{i,V}(\theta_{j}),
\]
which completes the proof of the lemma.
\end{proof}
\begin{proof}[Proof of Proposition \ref{prop:conv disc case}]
There exists $\emptyset\ne J\subset\{1,...,k\}$ so that $\{\theta_{j}\}_{j\in J}$
is a P.V. $|J|$-tuple, and such that $\theta_{j_{1}}$ and $\theta_{j_{2}}$
are conjugates over $\mathbb{Q}$ for all $j_{1},j_{2}\in J$. Thus,
be replacing $\{\theta_{j}\}_{j=1}^{k}$ with $\{\theta_{j}\}_{j\in J}$
and $\{\zeta_{j}\}_{j=1}^{k}$ with $\{\zeta_{j}\}_{j\in J}$, without
changing the notation, we may assume that $\theta_{1},...,\theta_{k}$
are all conjugates over $\mathbb{Q}$.

Let $T:\mathbb{R}^{d}\rightarrow\mathbb{R}^{d}$ be as in Lemma \ref{lem:red to a1=00003D0}.
By that lemma $T\circ\varphi_{1}\circ T^{-1}(0)=0$, and there exists
$M\in\mathbb{Z}_{\ge1}$ so that for every $1\le i\le\ell$ and $V\in N_{0}$
there exists $Q_{i,V}\in\mathbb{Z}[X]$ such that,
\[
\left\langle VT\circ\varphi_{i}\circ T^{-1}(0),M\zeta_{j}\right\rangle =Q_{i,V}(\theta_{j})\text{ for all }1\le j\le k\:.
\]
Set $\Phi'=\{T\circ\varphi_{i}\circ T^{-1}\}_{i=1}^{\ell}$, and note
that $\Phi'$ is affinely irreducible (since $\Phi$ is), and that
the linear parts of the maps in $\Phi'$ are equal to the linear parts
of the maps is $\Phi$. Additionally, observe that if $p=(p_{i})_{i=1}^{\ell}$
is a probability vector and $\mu$ is the self-similar measure corresponding
to $\Phi$ and $p$, then $T\mu$ is the self-similar measure corresponding
to $\Phi'$ and $p$. Moreover, it is clear that $\mu$ is Rajchman
if and only if $T\mu$ is Rajchman. From all of this it follows that
by replacing $\Phi$ with $\Phi'$, $\{\zeta_{j}\}_{j=1}^{k}$ with
$\{M\zeta_{j}\}_{j=1}^{k}$ and $\{P_{i,V}\}$ with $\{Q_{i,V}\}$,
without changing the notation, we may assume that $a_{1}=0$ and $P_{i,V}\in\mathbb{Z}[X]$
for all $1\le i\le\ell$ and $V\in N_{0}$.

By Lemma \ref{lem:exp decay}, since $\{P_{i,V}\}\subset\mathbb{Z}[X]$
and since $N_{0}$ is finite, there exists $C>1$ and $0<\delta<1$
so that for all $1\le i\le\ell$, $V\in N_{0}$ and $b\in\mathbb{Z}_{\ge0}$,
\begin{equation}
\Vert\sum_{j=1}^{k}\theta_{j}^{b}\left\langle Va_{i},\zeta_{j}\right\rangle \Vert=\Vert\sum_{j=1}^{k}\theta_{j}^{b}P_{i,V}(\theta_{j})\Vert\le C\delta^{b}\:.\label{eq:exp decay}
\end{equation}

Set $\xi=\sum_{j=1}^{k}\zeta_{j}$. Since $\zeta_{1},...,\zeta_{k}$
are eigenvectors of $A^{-1}$ corresponding to distinct eigenvalues,
they are independent. In particular $\xi\ne0$, and $\zeta_{j_{1}}\ne\zeta_{j_{2}}$
for $1\le j_{1}<j_{2}\le k$. Since $\{\theta_{1},...,\theta_{k}\}$
is a P.V. $k$-tuple, for every $1\le j_{1}\le k$ there exists $1\le j_{2}\le k$
with $\theta_{j_{2}}=\overline{\theta_{j_{1}}}$. By Lemma \ref{lem:zeta equal}
this implies $\zeta_{j_{2}}=\overline{\zeta_{j_{1}}}$, which shows
that $\xi\in\mathbb{R}^{d}$. From (\ref{eq:exp decay}) it follows
that for all $1\le i\le\ell$, $V\in N_{0}$ and $b\in\mathbb{Z}_{\ge0}$,
\begin{equation}
\Vert\left\langle Va_{i},B^{-b}\xi\right\rangle \Vert=\Vert\sum_{j=1}^{k}\theta_{j}^{b}\left\langle Va_{i},\zeta_{j}\right\rangle \Vert\le C\delta^{b}\:.\label{eq:exp decay in prod}
\end{equation}

Set,
\[
\Delta:=\{(p_{1},...,p_{\ell})\in[0,1]^{\ell}\::\:p_{1}+...+p_{\ell}=1\}\:.
\]
For $(p_{i})_{i=1}^{\ell}=p\in\Delta$ let $\mu_{p}$ be the self-similar
measure corresponding to $\Phi$ and $p$. Additionally, set
\[
q_{p}:=\sum_{i=1}^{\ell}p_{i}\delta_{g_{i}}\in\mathcal{M}(G),
\]
let $X_{p,1},X_{p,2},...$ be i.i.d. $G$-valued random elements with
distribution $q_{p}$, and write $\lambda_{p}:=\mathbb{E}[\psi X_{p,1}]$.
For $g\in G$ set
\[
\rho_{p}(g)=\lambda_{p}^{-1}\mathbb{P}\{\psi X_{p,1}>\psi g\ge0\},
\]
and write $\nu_{p}$ in place of $\rho_{p}\:d\mathbf{m}_{G}$. Note
that $\nu_{p}\in\mathcal{M}(G)$.

Let $m\ge1$ be large with respect to $\delta$ and $C$. Let $f:\Delta\rightarrow\mathbb{C}$
be such that,
\[
f(p)=\int\widehat{\mu_{p}}(2\pi(B^{-m}\xi).g)\:d\nu_{p}(g)\;\text{ for }p\in\Delta\:.
\]
Let $(1,0,...,0)=:e_{1}\in\Delta$, then $\mu_{e_{1}}$ is unique
member of $\mathcal{M}(\mathbb{R}^{d})$ which satisfies $\mu_{e_{1}}=\varphi_{1}\mu_{e_{1}}$.
Since $a_{1}=0$, this relation is also satisfied by $\delta_{0}$,
where $\delta_{0}$ is the Dirac mass centred at $0$. This implies
that $\mu_{e_{1}}=\delta_{0}$, and so $f(e_{1})=1$. It is easy to
see that $f$ is continuous, and so there exists $(p_{1},...,p_{\ell})=p\in\Delta$
with $|f(p)|\ge1/2$ and $p_{i}>0$ for $1\le i\le\ell$. Fix this
$p$ until the end of the proof. We shall show that $\mu_{p}$ is
non-Rajchman. Since $p$ is positive, this will complete the proof
of the proposition.

Let $n\ge1$ be large with respect to $m$ and $p$. Set,
\[
\mathcal{W}_{n}:=\{i_{1},...,i_{s}\in\Lambda^{*}\::\:\psi(g_{i_{1}...i_{s}})\ge\beta n>\psi(g_{i_{1}...i_{s-1}})\}\:.
\]
As noted in the beginning of the present section, by Lemma \ref{lem:proper homo from R}
we may assume that $\gamma_{\beta}=h$. Thus, for $y\in\mathbb{R}^{d}$
\[
B^{-1}y=2^{\beta}Uy=y.h^{-1}=y.\gamma_{-\beta}\:.
\]
Additionally,
\[
r_{w}U_{w}^{-1}y=y.g_{w}\text{ for }w\in\Lambda^{*}\text{ and }y\in\mathbb{R}^{d}\:.
\]
Hence, since $\mathcal{W}_{n}$ is a minimal cut-set for $\Lambda^{*}$,
\begin{eqnarray}
\widehat{\mu_{p}}(2\pi B^{-m-n}\xi) & = & \sum_{w\in\mathcal{W}_{n}}p_{w}\int e^{2\pi i\left\langle B^{-m-n}\xi,\varphi_{w}(x)\right\rangle }\:d\mu_{p}(x)\nonumber \\
 & = & \sum_{w\in\mathcal{W}_{n}}p_{w}e^{2\pi i\left\langle B^{-m-n}\xi,\varphi_{w}(0)\right\rangle }\widehat{\mu_{p}}(2\pi(B^{-m}\xi).(\gamma_{-n\beta}g_{w}))\:.\label{eq:dev of fur}
\end{eqnarray}

Let,
\[
f_{n}(p):=\sum_{w\in\mathcal{W}_{n}}p_{w}\widehat{\mu_{p}}(2\pi(B^{-m}\xi).(\gamma_{-n\beta}g_{w}))\:.
\]
For $s\in\mathbb{Z}_{\ge1}$ set $Y_{s}:=X_{p,1}\cdot...\cdot X_{p,s}$,
and let
\[
\tau_{\beta}(n):=\inf\{s\in\mathbb{Z}_{\ge1}\::\:\psi Y_{s}\ge\beta n\}\:.
\]
Observe that,
\[
f_{n}(p)=\mathbb{E}\left[\widehat{\mu_{p}}(2\pi(B^{-m}\xi).(\gamma_{-n\beta}Y_{\tau_{\beta}(n)}))\right]\:.
\]
Additionally, since $\mathrm{supp}(q_{p})=\{g_{i}\}_{i=1}^{\ell}$
and since $G$ is generated by $\{g_{i}\}_{i=1}^{\ell}$, the measure
$q_{p}$ is adapted. Thus, by Proposition \ref{prop:conv in dist}
and by taking $n$ to be large enough with respect to $m$ and $p$,
we may assume that $|f_{n}(p)|\ge|f(p)|-\frac{1}{4}\ge\frac{1}{4}$.

By (\ref{eq:rep of riUi}) it follows that for every $1\le i\le\ell$
there exists $V_{i}\in N_{0}$ so that $r_{i}U_{i}=A^{l_{i}}V_{i}$.
Hence for $i_{1}...i_{s}=w\in\mathcal{W}_{n}$,
\[
\varphi_{w}(0)=\sum_{j=1}^{s}r_{i_{1}...i_{j-1}}U_{i_{1}...i_{j-1}}a_{i_{j}}=\sum_{j=1}^{s}A^{l_{i_{1}}}V_{i_{1}}...A^{l_{i_{j-1}}}V_{i_{j-1}}a_{i_{j}}\:.
\]
From $N_{0}A=N_{0}A$ it follows that there exist $V_{w,1},...,V_{w,s}\in N_{0}$
so that
\[
\varphi_{w}(0)=\sum_{j=1}^{s}A^{\sigma_{w,j}}V_{w,j}a_{i_{j}},
\]
where $\sigma_{w,j}:=l_{i_{1}}+...+l_{i_{j-1}}$ for $1\le j\le s$.
From (\ref{eq:exp decay in prod}) we now get that for all $b\in\mathbb{Z}_{\ge\sigma_{w,s}}$,
\begin{eqnarray*}
\Vert\left\langle B^{-b}\xi,\varphi_{w}(0)\right\rangle \Vert & \le & \sum_{j=1}^{s}\Vert\left\langle B^{\sigma_{w,j}-b}\xi,V_{w,j}a_{i_{j}}\right\rangle \Vert\\
 & \le & C\sum_{j=1}^{s}\delta^{b-\sigma_{w,j}}\le C\sum_{j=b-\sigma_{w,s}}^{\infty}\delta^{j}=\frac{C}{1-\delta}\delta^{b-\sigma_{w,s}}\:.
\end{eqnarray*}
Additionally, since $w\in\mathcal{W}_{n}$
\[
\beta n>\psi(g_{i_{1}...i_{s-1}})=\sum_{j=1}^{s-1}\psi(g_{i_{j}})=\beta\sigma_{w,s},
\]
which implies,
\[
\Vert\left\langle B^{-m-n}\xi,\varphi_{w}(0)\right\rangle \Vert\le\frac{C\delta^{m}}{1-\delta}\:.
\]
Hence,
\[
\left|1-e^{2\pi i\left\langle B^{-m-n}\xi,\varphi_{w}(0)\right\rangle }\right|\le\frac{2\pi C\delta^{m}}{1-\delta}\text{ for }w\in\mathcal{W}_{n}\:.
\]
Now from this, from (\ref{eq:dev of fur}) and by assuming that $m$
is large enough with respect to $\delta$ and $C$,
\[
|\widehat{\mu_{p}}(2\pi B^{-m-n}\xi)-f_{n}(p)|\le\sum_{w\in\mathcal{W}_{n}}p_{w}\left|1-e^{2\pi i\left\langle B^{-m-n}\xi,\varphi_{w}(0)\right\rangle }\right|\le\frac{1}{8}\:.
\]
Since $|f_{n}(p)|\ge1/4$, it follows that $|\widehat{\mu_{p}}(2\pi B^{-m-n}\xi)|\ge1/8$.
Note that this inequality holds for all sufficiently large $n\ge1$.
Since $\xi\ne0$, this shows that $\mu_{p}$ is not a Rajchman measure,
which completes the proof of the proposition.
\end{proof}

\section{\label{sec:Proof-of-the main}Proof of the main result}

In this section we prove Theorem \ref{thm:main}, which we now restate.
As always, recall that $\Phi=\{\varphi_{i}(x)=r_{i}U_{i}x+a_{i}\}_{i=1}^{\ell}$
is an affinely irreducible self-similar IFS on $\mathbb{R}^{d}$
\begin{thm*}
There exists a probability vector $p=(p_{i})_{i=1}^{\ell}>0$ such
that the self-similar measure corresponding to $\Phi$ and $p$ is
non-Rajchman if and only if there exists a linear subspace $\mathbb{V}\subset\mathbb{R}^{d}$,
with $d':=\dim\mathbb{V}>0$ and $U_{i}(\mathbb{V})=\mathbb{V}$ for
$1\le i\le\ell$, and an isometry $S:\mathbb{V}\rightarrow\mathbb{R}^{d'}$
so that the following conditions are satisfied.
\begin{enumerate}
\item \label{enu:first cond}For $1\le i\le\ell$ let $U_{i}'\in O(d')$
and $a_{i}'\in\mathbb{R}^{d'}$ be with $S\circ\pi_{\mathbb{V}}\circ\varphi_{i}\circ S^{-1}(x)=r_{i}U_{i}'x+a_{i}'$.
Let $\mathbf{H}\subset GL_{d'}(\mathbb{R})$ be the group generated
by $\{r_{i}U_{i}'\}_{i=1}^{\ell}$, and set $\mathbf{N}:=\mathbf{H}\cap O(d')$.
Then $\mathbf{N}$ is finite, $\mathbf{N}\triangleleft\mathbf{H}$
and $\mathbf{H}/\mathbf{N}$ is cyclic.
\item \label{enu:second cond}For every contracting $A\in\mathbf{H}$ with
$\{A^{n}\mathbf{N}\}_{n\in\mathbb{Z}}=\mathbf{H}/\mathbf{N}$, there
exist $k\ge1$, $\theta_{1},...,\theta_{k}\in\mathbb{C}$ and $\zeta_{1},...,\zeta_{k}\in\mathbb{C}^{d'}\setminus\{0\}$,
so that
\begin{enumerate}
\item $\{\theta_{1},...,\theta_{k}\}$ is a P.V. $k$-tuple;
\item $A^{-1}\zeta_{j}=\theta_{j}\zeta_{j}$ for $1\le j\le k$;
\item for every $1\le i\le\ell$ and $V\in\mathbf{N}$ there exists $P_{i,V}\in\mathbb{Q}[X]$
so that $\left\langle Va_{i}',\zeta_{j}\right\rangle =P_{i,V}(\theta_{j})$
for all $1\le j\le k$.
\end{enumerate}
\end{enumerate}
\end{thm*}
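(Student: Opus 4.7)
The plan is to prove both directions separately, with the bulk of the work going into the forward implication (existence of non-Rajchman measure implies the algebraic conditions) and the converse being a relatively direct application of Proposition \ref{prop:conv disc case} followed by a pushforward argument.

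For the forward direction, let $p=(p_i)_{i=1}^{\ell}>0$ and suppose the corresponding self-similar measure $\mu$ is non-Rajchman. I would introduce the group $G$ generated by the $g_i=(\log r_i^{-1},U_i)$ as in Section \ref{sec:Preliminaries}. Proposition \ref{prop:case Psi(G)=00003DR} immediately rules out the case $\psi(G)=\mathbb{R}$, so $\psi(G)$ is discrete. I then take $\mathbb{V}$ to be the subspace of vectors fixed by $G_0$ (as in Section \ref{subsec:Reduction-to-the disc case}). By Lemma \ref{lem:G inv subspaces}, $\mathbb{V}$ is $G$-invariant, which forces $U_i(\mathbb{V})=\mathbb{V}$ for all $i$. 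I pick any isometry $S\colon\mathbb{V}\to\mathbb{R}^{d'}$, obtaining a self-similar IFS $\Phi'$ on $\mathbb{R}^{d'}$ whose associated self-similar measure equals $S\pi_{\mathbb{V}}\mu$. Using Proposition \ref{prop:cont case psi(G) not R} and the relation $\widehat{S\pi_{\mathbb{V}}\mu}(\eta)=\widehat{\mu}(\pi_{\mathbb{V}}S^{*}\eta)$, I can show that non-Rajchmanness of $\mu$ transfers to $S\pi_{\mathbb{V}}\mu$ (any $\eta_n\to\infty$ in $\mathbb{R}^{d'}$ with lower bound on $|\widehat{S\pi_{\mathbb{V}}\mu}(\eta_n)|$ gives $\xi_n\in\mathbb{R}^{d}$ with $\pi_{\mathbb{V}^{\perp}}\xi_n=0$, so the hypothesis of the proposition is vacuously violated in a controlled way). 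The associated group $G'$ for $\Phi'$ is discrete, because by construction the connected component of $G$ acts trivially on $\mathbb{V}$. From this one reads off that $\mathbf{N}$ is finite, $\mathbf{N}\triangleleft\mathbf{H}$, and $\mathbf{H}/\mathbf{N}\cong\psi(G')=\beta'\mathbb{Z}$ is cyclic, giving condition (\ref{enu:first cond}).

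For condition (\ref{enu:second cond}), I conjugate $\Phi'$ by the translation $T(x)=x-(I-r_1U_1')^{-1}a_1'$ so that the first map fixes the origin (i.e.\ $a_1''=0$); this does not change the linear parts, the Rajchman property, or the group $G'$. I apply Proposition \ref{prop:main disc case} to the conjugated system to produce $k$, $\{\theta_j\}_{j=1}^{k}$, $\{\zeta_j\}_{j=1}^{k}$ satisfying the required conditions for a specific $A_0\in\mathbf{H}$ chosen as in the statement (namely $A_0=2^{-\beta'}U'$ for some $(\beta',U')\in G'$ generating $G'/N'$). I then need to transfer these back to the original $\Phi'$: the conjugation by $T$ replaces each $a_i'$ by $a_i'-(I-r_iU_i')(I-r_1U_1')^{-1}a_1'$, and Lemma \ref{lem:red to a1=00003D0} (read in the reverse direction) shows that new polynomials $P_{i,V}\in\mathbb{Q}[X]$ can be found. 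Finally, given any other contracting $A\in\mathbf{H}$ with $\{A^n\mathbf{N}\}_{n\in\mathbb{Z}}=\mathbf{H}/\mathbf{N}$, one has $A=A_0^{\pm 1}V$ for some $V\in\mathbf{N}$, so the eigenvalues of $A^{-1}$ are obtained from those of $A_0^{-1}$ by multiplication by an eigenvalue of a finite-order matrix (a root of unity), and the corresponding eigenvectors and polynomials transfer directly; condition (\ref{enu:second cond}) for $A$ then follows from condition (\ref{enu:second cond}) for $A_0$.

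For the converse, assume conditions (\ref{enu:first cond}) and (\ref{enu:second cond}). Choose any contracting $A\in\mathbf{H}$ as in (\ref{enu:second cond}) and apply Proposition \ref{prop:conv disc case} to the IFS $\Phi'=\{S\pi_{\mathbb{V}}\varphi_iS^{-1}\}_{i=1}^{\ell}$ on $\mathbb{R}^{d'}$. This yields a positive probability vector $p$ for which the self-similar measure $\mu'$ corresponding to $\Phi'$ and $p$ is non-Rajchman. Using the same $p$ for $\Phi$ gives a self-similar measure $\mu$ with $S\pi_{\mathbb{V}}\mu=\mu'$, and the identity $\widehat{\mu'}(\eta)=\widehat{\mu}(\pi_{\mathbb{V}}S^{*}\eta)$ forces $\mu$ to be non-Rajchman as well (take a sequence $\eta_n\to\infty$ realising non-Rajchmanness of $\mu'$ and set $\xi_n=S^{*}\eta_n\in\mathbb{V}\subset\mathbb{R}^{d}$).

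The main obstacles I anticipate are two: first, verifying that the projection $S\pi_{\mathbb{V}}\mu$ inherits non-Rajchmanness from $\mu$ in the forward direction; the cheap pushforward identity only controls $\widehat{\mu}$ at frequencies lying in $\mathbb{V}$, so one must use Proposition \ref{prop:cont case psi(G) not R} to argue that the frequencies $\xi_n\in\mathbb{R}^{d}$ along which $|\widehat{\mu}(\xi_n)|$ stays bounded away from $0$ must have $|\pi_{\mathbb{V}^{\perp}}\xi_n|=O(|\pi_{\mathbb{V}}\xi_n|)$, after which a compactness and averaging argument extracts a subsequence contradicting Rajchmanness of $S\pi_{\mathbb{V}}\mu$ if one were to assume it. Second, transferring condition (\ref{enu:second cond}) from one choice of $A$ to another requires knowing that any two contracting generators of $\mathbf{H}/\mathbf{N}$ differ by composition with an element of $\mathbf{N}$ and inversion, and then tracking how the eigenvalue/eigenvector/polynomial data transforms under the action of $\mathbf{N}$ (which fits into condition (\ref{enu:second cond}) only because the conditions are indexed over all $V\in\mathbf{N}$); handling the inversion correctly (which replaces $\theta_j$ by $\theta_j^{-1}$) needs to be compatible with the P.V.\ tuple requirement $|\theta_j|>1$, and this is where one sees that only contracting $A$ can be considered.
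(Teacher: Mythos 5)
Your overall architecture matches the paper's: rule out $\psi(G)=\mathbb{R}$ via Proposition \ref{prop:case Psi(G)=00003DR}, take $\mathbb{V}$ to be the fixed space of $G_{0}$, project, and feed the resulting discrete system into Propositions \ref{prop:main disc case} and \ref{prop:conv disc case}. However, there are genuine gaps in the forward direction. First, your plan to fix one generator $A_{0}$, apply Proposition \ref{prop:main disc case} to it, and then claim that ``the corresponding eigenvectors and polynomials transfer directly'' to any other contracting generator $A$ does not work: as Example \ref{exa:dep on A} in the paper shows, the integer $k$, the P.V.\ tuple and the eigenvectors genuinely change when one passes from $A_{1}=\frac{1}{2}I$ to $A_{2}=\frac{1}{2}U_{2}$, so there is no direct transfer of the data. (Also, the inversion case you worry about cannot occur: two contracting generators of the cyclic group $\mathbf{H}/\mathbf{N}$ necessarily satisfy $A=A_{0}V$ with $V\in\mathbf{N}$, never $A=A_{0}^{-1}V$.) The paper avoids the transfer problem entirely because the element $h\in G$ with $\psi(h)=\beta$ is arbitrary in Section \ref{sec:The-discrete-case}, so Proposition \ref{prop:main disc case} already delivers the conclusion for every admissible $A$; you should invoke it once per $A$ rather than transfer.

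Second, your handling of the normalisation $a_{1}'=0$ is backwards. Lemma \ref{lem:red to a1=00003D0} produces polynomials for the translated system \emph{from} polynomials for the original one; the reverse implication fails, because the hypothesis on the translated system gives no arithmetic information about $a_{1}'$ itself (its image under $T$ is $0$), while recovering the conditions for $a_{i}'=b_{i}+(I-r_{i}U_{i}')(I-r_{1}U_{1}')^{-1}a_{1}'$ requires exactly such information. The correct move, which the paper makes, is to build the normalisation into the choice of the isometry: let $y\in\mathbb{V}$ be the fixed point of $\pi_{\mathbb{V}}\circ\varphi_{1}|_{\mathbb{V}}$ and choose $S$ with $Sy=0$, so that $a_{1}'=0$ from the outset and no back-transfer is needed (equivalently, replace your $S$ by $T\circ S$, which is still an isometry, since the theorem only asserts existence of some $S$). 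Finally, your argument that $S\pi_{\mathbb{V}}\mu$ inherits non-Rajchmanness is only a gesture: knowing $|\pi_{\mathbb{V}^{\perp}}\xi_{n}|\le\epsilon|\pi_{\mathbb{V}}\xi_{n}|$ does not control $\widehat{\mu}$ on $\mathbb{V}$, because the perpendicular component may still be unbounded. The paper's proof of (\ref{eq:proj non-raj}) uses the self-similarity relation over a minimal cut-set $\mathcal{W}$ with $r_{w}\approx|\xi|^{-1}\epsilon^{-1/2}$ to rescale the frequency so that its $\mathbb{V}^{\perp}$-component becomes $O(\epsilon^{1/2})$ while its norm stays $\gtrsim\epsilon^{-1/2}$, and then applies uniform continuity of $\widehat{\mu}$; a ``compactness and averaging'' argument alone will not produce this.
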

We shall need following lemma.
\begin{lem}
\label{lem:aff irr of tag}Let $\mathbb{V}\subset\mathbb{R}^{d}$
be a linear subspace with $d':=\dim\mathbb{V}>0$ and $U_{i}(\mathbb{V})=\mathbb{V}$
for $1\le i\le\ell$, and let $S:\mathbb{V}\rightarrow\mathbb{R}^{d'}$
be an isometry. For $1\le i\le\ell$ set $\varphi_{i}':=S\circ\pi_{\mathbb{V}}\circ\varphi_{i}\circ S^{-1}$,
and write $\Phi'$ for the self-similar IFS $\{\varphi_{i}'\}_{i=1}^{\ell}$.
Then $\Phi'$ is affinely irreducible.
\end{lem}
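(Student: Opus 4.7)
The plan is to identify the attractor of $\Phi'$ in terms of $K$, the attractor of $\Phi$, and then pull back any hypothetical invariant proper affine subspace of $\mathbb{R}^{d'}$ to one in $\mathbb{R}^d$, contradicting the affine irreducibility of $\Phi$.

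First I would record the basic compatibility: since $U_i \in O(d)$ and $U_i(\mathbb{V}) = \mathbb{V}$, the orthogonal complement $\mathbb{V}^{\perp}$ is also $U_i$-invariant, so $U_i$ commutes with $\pi_{\mathbb{V}}$. Consequently, for every $v \in \mathbb{V}$,
\[
\pi_{\mathbb{V}} \circ \varphi_i(v) = r_i U_i v + \pi_{\mathbb{V}}(a_i) \in \mathbb{V},
\]
and hence $\varphi_i'(x) = r_i U_i' x + a_i'$ where $U_i' = S U_i S^{-1} \in O(d')$ and $a_i' = S \pi_{\mathbb{V}}(a_i)$, so $\Phi'$ is a genuine self-similar IFS.

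Next I would compute the attractor $K'$ of $\Phi'$. Using $K = \bigcup_i \varphi_i(K)$ and $U_i(\mathbb{V}) = \mathbb{V}$, one gets
\[
\pi_{\mathbb{V}}(K) = \bigcup_{i=1}^\ell \bigl( r_i U_i \pi_{\mathbb{V}}(K) + \pi_{\mathbb{V}}(a_i)\bigr),
\]
and applying $S$ shows that the nonempty compact set $S(\pi_{\mathbb{V}}(K))$ satisfies the defining relation of $\Phi'$; by uniqueness of the attractor, $K' = S(\pi_{\mathbb{V}}(K))$.

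Finally, suppose toward a contradiction that $\Phi'$ is not affinely irreducible, so $K'$ lies in some proper affine subspace $\mathbb{W}' \subsetneq \mathbb{R}^{d'}$. Then $\pi_{\mathbb{V}}(K) \subset S^{-1}(\mathbb{W}')$, which is a proper affine subspace of $\mathbb{V}$, hence
\[
K \subset \pi_{\mathbb{V}}^{-1}\bigl(S^{-1}(\mathbb{W}')\bigr) = S^{-1}(\mathbb{W}') + \mathbb{V}^{\perp},
\]
a proper affine subspace of $\mathbb{R}^d$. This contradicts the affine irreducibility of $\Phi$, completing the proof. I do not anticipate a real obstacle here; the only point that requires a moment's care is the commutation $\pi_{\mathbb{V}} \circ U_i = U_i \circ \pi_{\mathbb{V}}$, which makes the whole calculation go through.
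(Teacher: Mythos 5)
Your proof is correct. The paper's own argument is more economical and avoids the attractor entirely: given an affine subspace $\mathbb{W}\subset\mathbb{R}^{d'}$ invariant under every $\varphi_i'$, it sets $\mathbb{W}_0:=S^{-1}(\mathbb{W})\subset\mathbb{V}$ and checks directly, using the decomposition $\varphi_i(x+y)=\pi_{\mathbb{V}}\varphi_i(x)+\pi_{\mathbb{V}^{\perp}}\varphi_i(y)$ for $x\in\mathbb{V}$, $y\in\mathbb{V}^{\perp}$, that $\mathbb{W}_0+\mathbb{V}^{\perp}$ is invariant under every $\varphi_i$; affine irreducibility of $\Phi$ then forces $\mathbb{W}_0+\mathbb{V}^{\perp}=\mathbb{R}^{d}$ and hence $\mathbb{W}=\mathbb{R}^{d'}$. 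You arrive at the same set $S^{-1}(\mathbb{W}')+\mathbb{V}^{\perp}$, but you reach it by first identifying $K'=S(\pi_{\mathbb{V}}(K))$ and then invoking, in both directions, the equivalence ``$\Phi$ is not affinely irreducible iff $K$ lies in a proper affine subspace.'' That equivalence is asserted without proof in the introduction (it is standard: one direction because an invariant closed set contains the attractor, the other by taking the affine span of $K$, which is invariant), so your argument is legitimate but carries two extra ingredients the paper's proof does not need. Two small points of hygiene: the identity $\pi_{\mathbb{V}}\circ\varphi_i=\pi_{\mathbb{V}}\circ\varphi_i\circ\pi_{\mathbb{V}}$ is what actually justifies passing from $K=\cup_i\varphi_i(K)$ to the self-similarity relation for $\pi_{\mathbb{V}}(K)$ (you implicitly use it); and since $S$ is only an affine isometry, $U_i'$ is $LU_iL^{-1}$ for the linear part $L$ of $S$ and $a_i'$ need not equal $S\pi_{\mathbb{V}}(a_i)$ — writing $U_i'=SU_iS^{-1}$ is an abuse, though it does not affect the argument.
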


\begin{proof}
Let $\mathbb{W}$ be an affine subspace of $\mathbb{R}^{d'}$ so that
$\varphi_{i}'(\mathbb{W})=\mathbb{W}$ for $1\le i\le\ell$. Set $\mathbb{W}_{0}:=S^{-1}(\mathbb{W})$
and let $1\le i\le\ell$, then $\mathbb{W}_{0}\subset\mathbb{V}$
and $\pi_{\mathbb{V}}\circ\varphi_{i}(\mathbb{W}_{0})=\mathbb{W}_{0}$.
From this, $U_{i}(\mathbb{V})=\mathbb{V}$ and $U_{i}(\mathbb{V}^{\perp})=\mathbb{V}^{\perp}$,
it follows that for $x\in\mathbb{W}_{0}$ and $y\in\mathbb{V}^{\perp}$
\begin{eqnarray*}
\varphi_{i}(x+y) & = & r_{i}U_{i}x+\pi_{\mathbb{V}}a_{i}+r_{i}U_{i}y+\pi_{\mathbb{V}^{\perp}}a_{i}\\
 & = & \pi_{\mathbb{V}}\circ\varphi_{i}(x)+\pi_{\mathbb{V}^{\perp}}\circ\varphi_{i}(y)\in\mathbb{W}_{0}+\mathbb{V}^{\perp},
\end{eqnarray*}
and so $\varphi_{i}(\mathbb{W}_{0}+\mathbb{V}^{\perp})=\mathbb{W}_{0}+\mathbb{V}^{\perp}$.
Since this holds for every $1\le i\le\ell$ and $\Phi$ is affinely
irreducible, it follows that $\mathbb{W}_{0}+\mathbb{V}^{\perp}=\mathbb{R}^{d}$.
Since $\mathbb{W}_{0}\subset\mathbb{V}$, we must have $\mathbb{W}_{0}=\mathbb{V}$.
Hence $\mathbb{W}=\mathbb{R}^{d'}$, which shows that $\Phi'$ is
affinely irreducible.
\end{proof}
\begin{proof}[Proof of Theorem \ref{thm:main}]
Suppose first that there exist a linear subspace $\mathbb{V}$ and
an isometry $S$ as in the statement of the theorem. For $1\le i\le\ell$
set $\varphi_{i}':=S\circ\pi_{\mathbb{V}}\circ\varphi_{i}\circ S^{-1}$,
and let $\Phi':=\{\varphi_{i}'\}_{i=1}^{\ell}$. For every $1\le i\le\ell$
and $x\in\mathbb{R}^{d'}$ we have $\varphi_{i}'(x)=r_{i}U_{i}'x+a_{i}'$.
By Lemma \ref{lem:aff irr of tag} it follows that $\Phi'$ is affinely
irreducible.

Let $G'\subset\mathbb{R}\times O(d')$ be the group generated by $\{(\log r_{i}^{-1},U_{i}')\}_{i=1}^{\ell}$.
Let $A\in\mathbf{H}$ be contracting and with $\{A^{n}\mathbf{N}\}_{n\in\mathbb{Z}}=\mathbf{H}/\mathbf{N}$,
and let $\beta>0$ and $U\in O(d')$ be with $A=2^{-\beta}U$. From
$\{A^{n}\mathbf{N}\}_{n\in\mathbb{Z}}=\mathbf{H}/\mathbf{N}$ it follows
that,
\[
G'=\{(n\beta,U^{n}V)\::\:n\in\mathbb{Z}\text{ and }V\in\mathbf{N}\}\:.
\]
Since $\mathbf{N}$ is finite, $G'$ is easily seen to be discrete
and closed in $\mathbb{R}\times O(d')$. By Proposition \ref{prop:conv disc case}
and by condition (\ref{enu:second cond}) in the statement of the
theorem, it now follows that there exists a probability vector $p=(p_{i})_{i=1}^{\ell}>0$
so that the self-similar measure $\mu'\in\mathcal{M}(\mathbb{R}^{d'})$
corresponding to $\Phi'$ and $p$ is non-Rajchman.

Let $\mu\in\mathcal{M}(\mathbb{R}^{d})$ be the self-similar measure
corresponding to $\Phi$ and $p$. Since for $1\le i\le\ell$ we have
$U_{i}(\mathbb{V}^{\perp})=\mathbb{V}^{\perp}$, it follows that for
$x\in\mathbb{R}^{d}$
\[
\pi_{\mathbb{V}}\varphi_{i}(x)=\pi_{\mathbb{V}}(r_{i}U_{i}\pi_{\mathbb{V}}x+r_{i}U_{i}\pi_{\mathbb{V}^{\perp}}x)+\pi_{\mathbb{V}}a_{i}=\pi_{\mathbb{V}}\circ\varphi_{i}\circ\pi_{\mathbb{V}}(x)\:.
\]
From this and by the self-similarity of $\mu$,
\[
S\pi_{\mathbb{V}}\mu=\sum_{i=1}^{\ell}p_{i}\cdot S\circ\pi_{\mathbb{V}}\circ\varphi_{i}\circ\pi_{\mathbb{V}}\mu=\sum_{i=1}^{\ell}p_{i}\cdot\varphi_{i}'\circ S\circ\pi_{\mathbb{V}}\mu\:.
\]
Since $\mu'$ is the unique member of $\mathcal{M}(\mathbb{R}^{d'})$
which satisfies the relation
\[
\mu'=\sum_{i=1}^{\ell}p_{i}\cdot\varphi_{i}'\mu',
\]
it follows that $\mu'=S\pi_{\mathbb{V}}\mu$. From this and since
$\mu'$ is non-Rajchman, we get that there exist $\epsilon>0$ and
$\xi_{1},\xi_{2},...\in\mathbb{V}$ so that $|\xi_{n}|\overset{n}{\rightarrow}\infty$
and $|\widehat{\pi_{\mathbb{V}}\mu}(\xi_{n})|>\epsilon$. Since $\widehat{\pi_{\mathbb{V}}\mu}(\xi)=\widehat{\mu}(\xi)$
for $\xi\in\mathbb{V}$, this shows that $\mu$ is also non-Rajchman,
which completes the proof of the first direction of the theorem.

Suppose next that there exists a probability vector $p=(p_{i})_{i=1}^{\ell}>0$
so that the self-similar measure $\mu$ corresponding to $\Phi$ and
$p$ is non-Rajchman. By Proposition \ref{prop:case Psi(G)=00003DR}
it follows that $\psi(G)\ne\mathbb{R}$. Recall that $G_{0}$ denotes
the connected component of $G$ containing the identity. Let $\mathbb{V}$
be the linear subspace of $\mathbb{R}^{d}$ consisting of all $x\in\mathbb{R}^{d}$
so that $x.g=x$ for all $g\in G_{0}$. By Proposition \ref{prop:cont case psi(G) not R}
and since $\mu$ is non-Rajchman, we have $d':=\dim\mathbb{V}>0$.
By Lemma \ref{lem:G inv subspaces},
\begin{equation}
U_{i}(\mathbb{V})=\mathbb{V}\text{ and }U_{i}(\mathbb{V}^{\perp})=\mathbb{V}^{\perp}\text{ for all }1\le i\le\ell\:.\label{eq:inv subs}
\end{equation}

The map $\pi_{\mathbb{V}}\varphi_{1}|_{\mathbb{V}}$ is a strict contraction
of $\mathbb{V}$, and so there exists $y\in\mathbb{V}$ with $\pi_{\mathbb{V}}\varphi_{1}(y)=y$.
Let $S:\mathbb{V}\rightarrow\mathbb{R}^{d'}$ be an isometry with
$Sy=0$. For $1\le i\le\ell$ set $\varphi_{i}':=S\circ\pi_{\mathbb{V}}\circ\varphi_{i}\circ S^{-1}$,
and let $U_{i}'\in O(d')$ and $a_{i}'\in\mathbb{R}^{d'}$ be with
$\varphi_{i}'(x)=r_{i}U_{i}'x+a_{i}'$ for $x\in\mathbb{R}^{d'}$.
Let $\mathbf{H}$ be the smallest closed subgroup of $GL_{d'}(\mathbb{R})$
containing $\{r_{i}U_{i}'\}_{i=1}^{\ell}$.

Since $S$ is also an affine map, there exists a linear isometry $L:\mathbb{V}\rightarrow\mathbb{R}^{d'}$
so that $Sx=Lx-Ly$ for $x\in\mathbb{V}$. From (\ref{eq:inv subs})
it follows that $L\circ U\circ L^{-1}\in O(d')$ for every $(t,U)\in G$.
Note that $U_{i}'=L\circ U_{i}\circ L^{-1}$ for $1\le i\le\ell$.
For $(t,U)\in G$ set $F(t,U)=2^{-t}L\circ U\circ L^{-1}$, so that
$F:G\rightarrow GL_{d'}(\mathbb{R})$ is a continuous homomorphism.
It is easy to verify that $F$ is a proper map, which implies that
$F$ is a closed map. From this and since the group generated by $\{r_{i}U_{i}'\}_{i=1}^{\ell}$
is dense in $F(G)$, it follows that $\mathbf{H}=F(G)$ and that $F$
descends to an isomorphism of topological groups from $G/\ker F$
onto $\mathbf{H}$.

Since $\psi(G)\ne\mathbb{R}$, we have $G_{0}\subset\{0\}\times O(d)$.
From this and by the definition of $\mathbb{V}$, it follows that
$G_{0}\subset\ker F$. Since $G_{0}$ is an open subgroup of $G$,
it follows that $\ker F$ is also an open subgroup of $G$. This implies
that $G/\ker F$ is discrete, and so that $\mathbf{H}$ is also discrete.
From this and by the definition of $\mathbf{H}$, it follows that
$\mathbf{H}$ is equal to the group generated by $\{r_{i}U_{i}'\}_{i=1}^{\ell}$
(and not just to the closed subgroup generated by these elements).

Set $\mathbf{N}:=\mathbf{H}\cap O(d')$. Since $\mathbf{N}$ is the
kernel of the homomorphism taking $rU\in\mathbf{H}$ to $r$, where
$U\in O(d')$ and $r>0$, we have $\mathbf{N}\triangleleft\mathbf{H}$.
Since $\mathbf{H}$ is closed in $GL_{d'}(\mathbb{R})$ and $O(d')$
is compact, it follows that $\mathbf{N}$ is compact. From this and
since $\mathbf{H}$ is discrete, it follows that $\mathbf{N}$ is
finite. Since $\psi(G)\ne\mathbb{R}$ and $\mathbf{H}=F(G)$, there
exists $A\in\mathbf{H}$ so that $\Vert A\Vert<1$ and $\Vert A\Vert\ge\Vert B\Vert$
for all $B\in\mathbf{H}$ with $\Vert B\Vert<1$, where $\Vert\cdot\Vert$
is the operator norm here. It is now obvious that $\{A^{n}\mathbf{N}\}_{n\in\mathbb{Z}}=\mathbf{H}/\mathbf{N}$,
which shows that condition (\ref{enu:first cond}) in the statement
of the theorem is satisfied.

We turn to prove that condition (\ref{enu:second cond}) is also satisfied.
First we show that,
\begin{equation}
\underset{M\rightarrow\infty}{\lim}\sup\{|\widehat{\pi_{\mathbb{V}}\mu}(\xi)|\::\:\xi\in\mathbb{V}\text{ and }|\xi|\ge M\}>0\:.\label{eq:proj non-raj}
\end{equation}
Since $\mu$ is non-Rajchman, there exists $\epsilon_{0}>0$ so that
\[
\underset{|\xi|\rightarrow\infty}{\limsup}\:|\widehat{\mu}(\xi)|>\epsilon_{0}\:.
\]
Let $0<\epsilon<1$ be small with respect to $\Phi$, $p$ and $\epsilon_{0}$,
let $R>1$ be large with respect to $\epsilon$, and let $\xi\in\mathbb{R}^{d}$
be with $|\xi|>R$ and $|\widehat{\mu}(\xi)|>\epsilon_{0}$. By Proposition
\ref{prop:cont case psi(G) not R} we may assume that,
\[
|\pi_{\mathbb{V}^{\perp}}\xi|<\max\{\epsilon R/2,\epsilon|\pi_{\mathbb{V}}\xi|\}\:.
\]
If $|\pi_{\mathbb{V}^{\perp}}\xi|\ge\epsilon|\pi_{\mathbb{V}}\xi|$
then $|\pi_{\mathbb{V}^{\perp}}\xi|<\epsilon R/2$, and so
\[
R<|\pi_{\mathbb{V}}\xi|+|\pi_{\mathbb{V}^{\perp}}\xi|\le(\epsilon^{-1}+1)|\pi_{\mathbb{V}^{\perp}}\xi|<2\epsilon^{-1}(\epsilon R/2)=R,
\]
which is not possible. Hence we must have $|\pi_{\mathbb{V}^{\perp}}\xi|<\epsilon|\pi_{\mathbb{V}}\xi|\le\epsilon|\xi|$.

We may assume that $R>\epsilon^{-1/2}$, which gives $|\xi|^{-1}\epsilon^{-1/2}<1$.
Set,
\[
\mathcal{W}=\{i_{1}...i_{n}\in\Lambda^{*}\::\:r_{i_{1}...i_{n}}\le|\xi|^{-1}\epsilon^{-1/2}<r_{i_{1}...i_{n-1}}\}\:.
\]
Since $\mathcal{W}$ is a minimal cut-set,
\[
\epsilon_{0}<|\widehat{\mu}(\xi)|=\left|\sum_{w\in\mathcal{W}}p_{w}\int e^{i\left\langle \xi,\varphi_{w}(x)\right\rangle }\:d\mu(x)\right|\le\sum_{w\in\mathcal{W}}p_{w}\left|\int e^{i\left\langle r_{w}U_{w}^{-1}\xi,x\right\rangle }\:d\mu(x)\right|,
\]
and so there exists $w\in\mathcal{W}$ with $|\widehat{\mu}(r_{w}U_{w}^{-1}\xi)|>\epsilon_{0}$.
By (\ref{eq:inv subs}) and the definition of $\mathcal{W}$,
\[
|r_{w}U_{w}^{-1}\xi-\pi_{\mathbb{V}}(r_{w}U_{w}^{-1}\xi)|=|\pi_{\mathbb{V}^{\perp}}(r_{w}U_{w}^{-1}\xi)|=r_{w}|\pi_{\mathbb{V}^{\perp}}\xi|\le r_{w}\epsilon|\xi|\le\epsilon^{1/2}\:.
\]
Since $\mu$ is compactly supported, the map which takes $\eta\in\mathbb{R}^{d}$
to $\widehat{\mu}(\eta)$ is uniformly continuous. Hence, by assuming
that $\epsilon$ is sufficiently small with respect to $\Phi$, $p$
and $\epsilon_{0}$, we get
\begin{equation}
|\widehat{\pi_{\mathbb{V}}\mu}(r_{w}\pi_{\mathbb{V}}U_{w}^{-1}\xi)|=|\widehat{\mu}(r_{w}\pi_{\mathbb{V}}U_{w}^{-1}\xi)|\ge|\widehat{\mu}(r_{w}U_{w}^{-1}\xi)|-\epsilon_{0}/2>\epsilon_{0}/2\:.\label{eq:lb fur of proj}
\end{equation}
Set $r_{\mathrm{min}}:=\min_{1\le i\le\ell}\:r_{i}$, then by the
definition of $\mathcal{W}$ we have $r_{w}>r_{\mathrm{min}}|\xi|^{-1}\epsilon^{-1/2}$.
Thus,
\[
|r_{w}\pi_{\mathbb{V}}U_{w}^{-1}\xi|\ge r_{w}|\xi|-|\pi_{\mathbb{V}^{\perp}}(r_{w}U_{w}^{-1}\xi)|>r_{\mathrm{min}}\epsilon^{-1/2}-\epsilon^{1/2}\:.
\]
Since $\epsilon$ can be chosen to be arbitrarily small, the last
expression can be made arbitrarily large (while keeping $\epsilon_{0}$
fixed). This together with (\ref{eq:lb fur of proj}) gives (\ref{eq:proj non-raj}).

Set $\Phi':=\{\varphi_{i}'\}_{i=1}^{\ell}$, where recall that $\varphi_{i}':=S\circ\pi_{\mathbb{V}}\circ\varphi_{i}\circ S^{-1}$
for $1\le i\le\ell$. By Lemma \ref{lem:aff irr of tag} it follows
that $\Phi'$ is affinely irreducible. Since $\pi_{\mathbb{V}}\varphi_{1}(y)=y$
and $Sy=0$, we have $a_{1}'=\varphi_{1}'(0)=0$. As in the proof
of the first direction of the theorem, it holds that $S\pi_{\mathbb{V}}\mu$
is the self-similar measure corresponding to $\Phi'$ and $p$. From
(\ref{eq:proj non-raj}) it clearly follows that $S\pi_{\mathbb{V}}\mu$
is non-Rajchman. Since the closed subgroup generated by $\{r_{i}U_{i}'\}_{i=1}^{\ell}$
is discrete, it follows that all of the assumptions in Proposition
\ref{prop:main disc case} are satisfied for the IFS $\Phi'$. This
implies that condition (\ref{enu:second cond}) in the statement of
the theorem holds, which completes the proof.
\end{proof}

\section*{\textbf{Acknowledgment}}

This research was supported by the Herchel Smith Fund at the University
of Cambridge. I would like to thank Han Yu for helpful discussions
during the preparation of this work. I would also like to thank Amir
Algom for helpful remarks.

\bibliographystyle{plain}
\bibliography{bibfile}

\begin{thebibliography}{10}

\bibitem{AHW}
A.~Algom, F.~Rodriguez~Hertz, and Z.~Wang.
\newblock Pointwise normality and {F}ourier decay for self-conformal measures,
  2021.
\newblock arXiv:2012.06529.

\bibitem{BDGPS}
M.-J. Bertin, A.~Decomps-Guilloux, M.~Grandet-Hugot, M.~Pathiaux-Delefosse, and
  J.-P. Schreiber.
\newblock {\em Pisot and {S}alem numbers}.
\newblock Birkh\"{a}user Verlag, Basel, 1992.
\newblock With a preface by D. W. Boyd.

\bibitem{BP}
C.~J. Bishop and Y.~Peres.
\newblock {\em Fractals in probability and analysis}, volume 162.
\newblock Cambridge University Press, 2017.

\bibitem{Bo}
D.~W. Boyd.
\newblock Irreducible polynomials with many roots of maximal modulus.
\newblock {\em Acta Arith.}, 68(1):85--88, 1994.

\bibitem{Br}
J.~Br\'{e}mont.
\newblock Self-similar measures and the {R}ajchman property.
\newblock {\em To appear in Ann. H. Lebesgue}, 2020.
\newblock arXiv:1910.03463.

\bibitem{Bu}
Y.~Bugeaud.
\newblock {\em Distribution modulo one and Diophantine approximation}, volume
  193.
\newblock Cambridge University Press, 2012.

\bibitem{BDGHU}
D.~Buraczewski, E.~Damek, Y.~Guivarc'h, A.~Hulanicki, and R.~Urban.
\newblock Tail-homogeneity of stationary measures for some multidimensional
  stochastic recursions.
\newblock {\em Probab. Theory Related Fields}, 145(3-4):385, 2009.

\bibitem{Ca}
D.~G. Cantor.
\newblock On sets of algebraic integers whose remaining conjugates lie in the
  unit circle.
\newblock {\em Trans. Amer. Math. Soc.}, 105(3):391--406, 1962.

\bibitem{CM}
C.~Christopoulos and J.~McKee.
\newblock Galois theory of salem polynomials.
\newblock In {\em Math. Proc. Cambridge Philos. Soc.}, volume 148, page~47.
  Cambridge University Press, 2010.

\bibitem{Er1}
P.~Erd\H{o}s.
\newblock On a family of symmetric {B}ernoulli convolutions.
\newblock {\em Amer. J. Math.}, 61(4):974--976, 1939.

\bibitem{Er2}
P.~Erd\H{o}s.
\newblock On the smoothness properties of a family of bernoulli convolutions.
\newblock {\em Amer. J. Math.}, 62(1):180--186, 1940.

\bibitem{Fer}
R.~Ferguson.
\newblock Irreducible polynomials with many roots of equal modulus.
\newblock {\em Acta Arith.}, 78(3):221--225, 1997.

\bibitem{Ha}
B.~Hall.
\newblock {\em Lie groups, Lie algebras, and representations: an elementary
  introduction}, volume 222.
\newblock Springer, second edition, 2015.

\bibitem{Ho}
M.~Hochman.
\newblock On self-similar sets with overlaps and inverse theorems for entropy
  in {$\mathbb{R}^d$}.
\newblock {\em To appear in Mem. Amer. Math. Soc.}, 2015.
\newblock arXiv:1503.09043.

\bibitem{Hut}
J.~E. Hutchinson.
\newblock Fractals and self-similarity.
\newblock {\em Indiana Univ. Math. J.}, 30(5):713--747, 1981.

\bibitem{Kah}
J.-P. Kahane.
\newblock Sur la distribution de certaines s{\'e}ries al{\'e}atoires.
\newblock In {\em Colloque de Th{\'e}orie des Nombres (Univ. Bordeaux,
  Bordeaux, 1969)}, pages 119--122. Soc. Math. France, Paris, 1971.

\bibitem{Ko}
I.~K\"{o}rnyei.
\newblock On a theorem of {P}isot.
\newblock {\em Publ. Math. Debrecen}, 34(3-4):169--179, 1987.

\bibitem{Li}
J.~Li.
\newblock Decrease of fourier coefficients of stationary measures.
\newblock {\em Math. Ann.}, 372(3):1189--1238, 2018.

\bibitem{LS}
J.~Li and T.~Sahlsten.
\newblock Trigonometric series and self-similar sets.
\newblock {\em To appear in J. Eur. Math. Soc.}, 2019.
\newblock arXiv:1902.00426.

\bibitem{LS2}
J.~Li and T.~Sahlsten.
\newblock Fourier transform of self-affine measures.
\newblock {\em Adv. Math.}, 374:107349, 2020.

\bibitem{Ly}
R.~Lyons.
\newblock Seventy years of rajchman measures.
\newblock {\em J. Fourier Anal. Appl.}, 1:363--378, 1995.

\bibitem{Ma}
C.~Mauduit.
\newblock Caract{\'e}risation des ensembles normaux substitutifs.
\newblock {\em Invent. Math.}, 95(1):133--147, 1989.

\bibitem{Pi}
C.~Pisot.
\newblock La r{\'e}partition modulo 1 et les nombres alg{\'e}briques.
\newblock {\em Ann. Scuola Norm. Super. Pisa Cl. Sci.}, 7(3-4):205--248, 1938.

\bibitem{Re}
D.~Revuz.
\newblock {\em Markov chains}, volume~11 of {\em North-Holland Mathematical
  Library}.
\newblock North-Holland Publishing Co., Amsterdam, second edition, 1984.

\bibitem{SS}
T.~Sahlsten and C.~Stevens.
\newblock Fourier transform and expanding maps on {C}antor sets, 2020.
\newblock arXiv:2009.01703.

\bibitem{Sa}
R.~Salem.
\newblock Sets of uniqueness and sets of multiplicity.
\newblock {\em Trans. Amer. Math. Soc.}, 54(2):218--228, 1943.

\bibitem{So}
B.~Solomyak.
\newblock Fourier decay for self-similar measures, 2019.
\newblock arXiv:1906.12164.

\bibitem{St}
C.~J. Stone.
\newblock Infinite particle systems and multi-dimensional renewal theory.
\newblock {\em J. Math. Mech.}, 18(3):201--227, 1968.

\bibitem{VY}
P.~Varj\'{u} and H.~Yu.
\newblock Fourier decay of self-similar measures and self-similar sets of
  uniqueness.
\newblock {\em To appear in Anal. PDE}, 2020.
\newblock arXiv:2004.09358.

\bibitem{Wa}
P.~Walters.
\newblock {\em An introduction to ergodic theory}, volume~79.
\newblock Springer Science \& Business Media, 2000.

\end{thebibliography}

$\newline$$\newline$\textsc{Centre for Mathematical Sciences,\newline Wilberforce Road, Cambridge CB3 0WA, UK}$\newline$$\newline$\textit{E-mail: }
\texttt{ariel.rapaport2@gmail.com}
\end{document}